\documentclass[10pt, a4paper]{amsart}
\usepackage[arXiv]{rseaineng}
\usepackage[cal]{rseaincmds}
\usepackage{hhline, array, bm}

\usepackage{cellspace}
\usepackage{tikz}
\tikzstyle{rt} = [circle, draw, thin, inner sep = 2.5pt, minimum size = 2pt, fill = white]
\tikzstyle{hrt} = [circle, draw, thin, inner sep = 2.5pt, minimum size = 2pt, fill = black]

\tikzstyle{v} = [circle, draw, inner sep=2pt, minimum size=3pt, fill = white]
\tikzstyle{vvv} = [circle, draw, ultra thick, inner sep=2pt, minimum size=2pt, fill = black]
\tikzstyle{v3} = [circle, draw, ultra thick, inner sep=2pt, minimum size=2pt, fill = black]
\tikzstyle{v2} = [circle, draw, ultra thick, inner sep=2pt, minimum size=2pt, fill = black]
\tikzstyle{v6} = [circle, draw, ultra thick, inner sep=2pt, minimum size=2pt, fill = black]
\tikzstyle{vv} = [circle, draw, thick, inner sep=5pt, minimum size=9pt, fill = white]
\tikzstyle{bv} = [circle, draw, ultra thick, inner sep=2pt, minimum size=2pt, fill = black]
\tikzstyle{rv} = [circle, draw, inner sep=1pt, minimum size=3pt, fill = red]

\newcommand{\eDAi}{
	\begin{tikzpicture}
		\draw (1,0) node(a1)[hrt]{};
		\draw (2,0) node(a2)[rt]{};
		\draw (1,-1/3) node{$\alpha_0$};
		\draw (2,-1/3) node{$\alpha_1$};
		\draw (1,1/3) node{\scriptsize $1$};
		\draw (2,1/3) node{\scriptsize $1$};
		\draw (a1) -- (a2);
\end{tikzpicture}}
\newcommand{\eDAell}{
	\begin{tikzpicture}
		\draw (1,0) node(a1)[rt]{};
		\draw (2,0) node(a2)[rt]{};
		\draw (3,0) node(a3)[rt]{};
		\draw (5,0) node(a4)[rt]{};
		\draw (6,0) node(a5)[rt]{};
		\draw (7,0) node(a6)[rt]{};
		\draw (4,1.3) node(a0)[hrt]{};
		\draw (1,-1/3) node{$\alpha_1$};
		\draw (2,-1/3) node{$\alpha_2$};
		\draw (3,-1/3) node{$\alpha_3$};
		\draw (5,-1/3) node{$\alpha_{\ell-2}$};
		\draw (6,-1/3) node{$\alpha_{\ell-1}$};
		\draw (7,-1/3) node{$\alpha_\ell$};
		\draw (4,1.3-1/3) node{$\alpha_0$};
		\draw (1,1/3) node{\scriptsize $1$};
		\draw (2.1,1/3) node{\scriptsize $1$};
		\draw (3,1/3) node{\scriptsize $1$};
		\draw (5,1/3) node{\scriptsize $1$};
		\draw (6-0.1,1/3) node{\scriptsize $1$};
		\draw (7,1/3) node{\scriptsize $1$};
		\draw (4,1.3+1/3) node{\scriptsize $1$};
		\draw (a1) -- (a2) -- (a3)  (a4) -- (a5) -- (a6) -- (a0) -- (a1);
		\draw[thick, loosely dashed] (a3) -- (a4);
\end{tikzpicture}}
\newcommand{\eDBC}{
	\begin{tikzpicture}
		\draw (1,0) node(a0)[hrt]{};
		\draw (1+1,0) node(a1)[rt]{};
		\draw (2+1,0) node(a2)[rt]{};
		\draw (3+1,0) node(a3)[rt]{};
		\draw (5+1,0) node(a4)[rt]{};
		\draw (6+1,0) node(a5)[rt]{};
		\draw (7+1,0) node(a6)[rt]{};
		\draw (1,-1/3) node{$\alpha_0$};
		\draw (1+1,-1/3) node{$\alpha_1$};
		\draw (2+1,-1/3) node{$\alpha_2$};
		\draw (3+1,-1/3) node{$\alpha_3$};
		\draw (5+1,-1/3) node{$\alpha_{\ell-2}$};
		\draw (6+1,-1/3) node{$\alpha_{\ell-1}$};
		\draw (7+1,-1/3) node{$\alpha_\ell$};
		\draw (1,1/3) node{\scriptsize $1$};
		\draw (1+1,1/3) node{\scriptsize $2$};
		\draw (2+1,1/3) node{\scriptsize $2$};
		\draw (3+1,1/3) node{\scriptsize $2$};
		\draw (5+1,1/3) node{\scriptsize $2$};
		\draw (6+1,1/3) node{\scriptsize $2$};
		\draw (7+1,1/3) node{\scriptsize $2$};
		\draw (a1) -- (a2) -- (a3)  (a4) -- (a5);
		\draw[double distance = 2pt] (a5) --node[]{\Large $>$} (a6) (a0) --node[]{\Large $>$} (a1);
		\draw[thick, loosely dashed] (a3) -- (a4);
\end{tikzpicture}}
\newcommand{\eDBii}{
	\begin{tikzpicture}
		\draw (1,0) node(a1)[rt]{};
		\draw (2,0) node(a2)[rt]{};
		\draw (3,0) node(a0)[hrt]{};
		\draw (1,-1/3) node{$\alpha_1$};
		\draw (2,-1/3) node{$\alpha_2$};
		\draw (3,-1/3) node{$\alpha_0$};
		\draw (1,1/3) node{\scriptsize $1$};
		\draw (2,1/3) node{\scriptsize $2$};
		\draw (3,1/3) node{\scriptsize $1$};
		\draw[double distance = 2pt] (a1) --node[]{\Large $>$} (a2)  --node[]{\Large $<$} (a0);
\end{tikzpicture}}
\newcommand{\eDBell}{
	\begin{tikzpicture}
		\draw (2-0.7,0.7) node(a0)[hrt]{};
		\draw (2-0.7,-0.7) node(a1)[rt]{};
		\draw (2,0) node(a2)[rt]{};
		\draw (3,0) node(a3)[rt]{};
		\draw (5,0) node(a4)[rt]{};
		\draw (6,0) node(a5)[rt]{};
		\draw (7,0) node(a6)[rt]{};
		\draw (2-0.7,0.7-1/3) node{$\alpha_{0}$};
		\draw (2-0.7,-0.7-1/3) node{$\alpha_1$};
		\draw (2,-1/3) node{$\alpha_2$};
		\draw (3,-1/3) node{$\alpha_3$};
		\draw (5,-1/3) node{$\alpha_{\ell-2}$};
		\draw (6,-1/3) node{$\alpha_{\ell-1}$};
		\draw (7,-1/3) node{$\alpha_\ell$};
		\draw (2-0.7,0.7+1/3) node{\scriptsize $1$};
		\draw (2-0.7,-0.7+1/3) node{\scriptsize $1$};
		\draw (2,1/3) node{\scriptsize $2$};
		\draw (3,1/3) node{\scriptsize $2$};
		\draw (5,1/3) node{\scriptsize $2$};
		\draw (6,1/3) node{\scriptsize $2$};
		\draw (7,1/3) node{\scriptsize $2$};
		\draw (a1) -- (a2) -- (a3)  (a4) -- (a5) (a0) -- (a2);
		\draw[double distance = 2pt] (a5) --node[]{\Large $>$} (a6);
		\draw[thick, loosely dashed] (a3) -- (a4);
\end{tikzpicture}}
\newcommand{\eDC}{
	\begin{tikzpicture}
		\draw (1,0) node(a0)[hrt]{};
		\draw (1+1,0) node(a1)[rt]{};
		\draw (2+1,0) node(a2)[rt]{};
		\draw (3+1,0) node(a3)[rt]{};
		\draw (5+1,0) node(a4)[rt]{};
		\draw (6+1,0) node(a5)[rt]{};
		\draw (7+1,0) node(a6)[rt]{};
		\draw (1,-1/3) node{$\alpha_0$};
		\draw (1+1,-1/3) node{$\alpha_1$};
		\draw (2+1,-1/3) node{$\alpha_2$};
		\draw (3+1,-1/3) node{$\alpha_3$};
		\draw (5+1,-1/3) node{$\alpha_{\ell-2}$};
		\draw (6+1,-1/3) node{$\alpha_{\ell-1}$};
		\draw (7+1,-1/3) node{$\alpha_\ell$};
		\draw (1,1/3) node{\scriptsize $1$};
		\draw (1+1,1/3) node{\scriptsize $2$};
		\draw (2+1,1/3) node{\scriptsize $2$};
		\draw (3+1,1/3) node{\scriptsize $2$};
		\draw (5+1,1/3) node{\scriptsize $2$};
		\draw (6+1,1/3) node{\scriptsize $2$};
		\draw (7+1,1/3) node{\scriptsize $1$};
		\draw (a1) -- (a2) -- (a3)  (a4) -- (a5);
		\draw[double distance = 2pt] (a5) --node[]{\Large $<$} (a6) (a0) --node[]{\Large $>$} (a1);
		\draw[thick, loosely dashed] (a3) -- (a4);
\end{tikzpicture}}
\newcommand{\eDD}{
	\begin{tikzpicture}
		\draw (2-0.7,0.7) node(a0)[hrt]{};
		\draw (2-0.7,-0.7) node(a1)[rt]{};
		\draw (2,0) node(a2)[rt]{};
		\draw (3,0) node(a3)[rt]{};
		\draw (5,0) node(a4)[rt]{};
		\draw (5.7,0.7) node(a5)[rt]{};
		\draw (5.7,-0.7) node(a6)[rt]{};
		\draw (2-0.7,0.7-1/3) node{$\alpha_{0}$};
		\draw (2-0.7,-0.7-1/3) node{$\alpha_1$};
		\draw (2,-1/3) node{$\alpha_2$};
		\draw (3,-1/3) node{$\alpha_3$};
		\draw (5-0.1,-1/3) node{$\alpha_{\ell-2}$};
		\draw (5.7+0.1,0.7-1/3) node{$\alpha_{\ell-1}$};
		\draw (5.7,-0.7-1/3) node{$\alpha_\ell$};
		\draw (2-0.7,0.7+1/3) node{\scriptsize $1$};
		\draw (2-0.7,-0.7+1/3) node{\scriptsize $1$};
		\draw (2,1/3) node{\scriptsize $2$};
		\draw (3,1/3) node{\scriptsize $2$};
		\draw (5,1/3) node{\scriptsize $2$};
		\draw (5.7,0.7+1/3) node{\scriptsize $1$};
		\draw (5.7,-0.7+1/3) node{\scriptsize $1$};
		\draw (a1) -- (a2) -- (a3)  (a6) -- (a4) -- (a5) (a0) -- (a2);
		\draw[thick, loosely dashed] (a3) -- (a4);
\end{tikzpicture}}
\newcommand{\eDEvi}{
	\begin{tikzpicture}
		\draw (1,0) node(a1)[rt]{};
		\draw (3,-1) node(a2)[rt]{};
		\draw (2,0) node(a3)[rt]{};
		\draw (3,0) node(a4)[rt]{};
		\draw (4,0) node(a5)[rt]{};
		\draw (5,0) node(a6)[rt]{};
		\draw (3,-2) node(a0)[hrt]{};
		\draw (1,-1/3) node{$\alpha_1$};
		\draw (3+1/4,-1-1/3) node{$\alpha_2$};
		\draw (2,-1/3) node{$\alpha_3$};
		\draw (3+1/4,-1/3) node{$\alpha_4$};
		\draw (4,-1/3) node{$\alpha_5$};
		\draw (5,-1/3) node{$\alpha_6$};
		\draw (3,-2-1/3) node{$\alpha_0$};
		\draw (1,1/3) node{\scriptsize $1$};
		\draw (3-1/5,-1+1/3) node{\scriptsize $2$};
		\draw (2,1/3) node{\scriptsize $2$};
		\draw (3,1/3) node{\scriptsize $3$};
		\draw (4,1/3) node{\scriptsize $2$};
		\draw (5,1/3) node{\scriptsize $1$};
		\draw (3-1/5,-2+1/3) node{\scriptsize $1$};
		\draw (a1) -- (a3) -- (a4) -- (a5) -- (a6) (a4) -- (a2) -- (a0);
\end{tikzpicture}}
\newcommand{\eDEvii}{
	\begin{tikzpicture}
		\draw (0,0) node(a0)[hrt]{};
		\draw (1,0) node(a1)[rt]{};
		\draw (3,-1) node(a2)[rt]{};
		\draw (2,0) node(a3)[rt]{};
		\draw (3,0) node(a4)[rt]{};
		\draw (4,0) node(a5)[rt]{};
		\draw (5,0) node(a6)[rt]{};
		\draw (6,0) node(a7)[rt]{};
		\draw (0,-1/3) node{$\alpha_0$};
		\draw (1,-1/3) node{$\alpha_1$};
		\draw (3,-1-1/3) node{$\alpha_2$};
		\draw (2,-1/3) node{$\alpha_3$};
		\draw (3+1/4,-1/3) node{$\alpha_4$};
		\draw (4,-1/3) node{$\alpha_5$};
		\draw (5,-1/3) node{$\alpha_6$};
		\draw (6,-1/3) node{$\alpha_7$};
		\draw (0,1/3) node{\scriptsize $1$};
		\draw (1,1/3) node{\scriptsize $2$};
		\draw (3-1/5,-1+1/3) node{\scriptsize $2$};
		\draw (2,1/3) node{\scriptsize $3$};
		\draw (3,1/3) node{\scriptsize $4$};
		\draw (4,1/3) node{\scriptsize $3$};
		\draw (5,1/3) node{\scriptsize $2$};
		\draw (6,1/3) node{\scriptsize $1$};
		\draw (a0) -- (a1) -- (a3) -- (a4) -- (a5) -- (a6) -- (a7) (a4) -- (a2);
\end{tikzpicture}}
\newcommand{\eDEviii}{
	\begin{tikzpicture}
		\draw (1,0) node(a1)[rt]{};
		\draw (3,-1) node(a2)[rt]{};
		\draw (2,0) node(a3)[rt]{};
		\draw (3,0) node(a4)[rt]{};
		\draw (4,0) node(a5)[rt]{};
		\draw (5,0) node(a6)[rt]{};
		\draw (6,0) node(a7)[rt]{};
		\draw (7,0) node(a8)[rt]{};
		\draw (8,0) node(a0)[hrt]{};
		\draw (1,-1/3) node{$\alpha_1$};
		\draw (3,-1-1/3) node{$\alpha_2$};
		\draw (2,-1/3) node{$\alpha_3$};
		\draw (3+1/4,-1/3) node{$\alpha_4$};
		\draw (4,-1/3) node{$\alpha_5$};
		\draw (5,-1/3) node{$\alpha_6$};
		\draw (6,-1/3) node{$\alpha_7$};
		\draw (7,-1/3) node{$\alpha_8$};
		\draw (8,-1/3) node{$\alpha_0$};
		\draw (1,1/3) node{\scriptsize $2$};
		\draw (3-1/5,-1+1/3) node{\scriptsize $3$};
		\draw (2,1/3) node{\scriptsize $4$};
		\draw (3,1/3) node{\scriptsize $6$};
		\draw (4,1/3) node{\scriptsize $5$};
		\draw (5,1/3) node{\scriptsize $4$};
		\draw (6,1/3) node{\scriptsize $3$};
		\draw (7,1/3) node{\scriptsize $2$};
		\draw (8,1/3) node{\scriptsize $1$};
		\draw (a1) -- (a3) -- (a4) -- (a5) -- (a6) -- (a7) -- (a8) -- (a0) (a4) -- (a2);
\end{tikzpicture}}
\newcommand{\eDF}{
	\begin{tikzpicture}
		\draw (0,0) node(a0)[hrt]{};
		\draw (1,0) node(a1)[rt]{};
		\draw (2,0) node(a2)[rt]{};
		\draw (3,0) node(a3)[rt]{};
		\draw (4,0) node(a4)[rt]{};
		\draw (0,-1/3) node{$\alpha_0$};
		\draw (1,-1/3) node{$\alpha_1$};
		\draw (2,-1/3) node{$\alpha_2$};
		\draw (3,-1/3) node{$\alpha_3$};
		\draw (4,-1/3) node{$\alpha_4$};
		\draw (0,1/3) node{\scriptsize $1$};
		\draw (1,1/3) node{\scriptsize $2$};
		\draw (2,1/3) node{\scriptsize $3$};
		\draw (3,1/3) node{\scriptsize $4$};
		\draw (4,1/3) node{\scriptsize $2$};
		\draw (a0) -- (a1) -- (a2)  (a3) -- (a4);
		\draw[double distance = 2pt] (a2) --node[]{\Large $>$} (a3);
\end{tikzpicture}}
\newcommand{\eDG}{
	\begin{tikzpicture}
		\draw (1,0) node(a1)[rt]{};
		\draw (2,0) node(a2)[rt]{};
		\draw (3,0) node(a0)[hrt]{};
		\draw (1,-1/3) node{$\alpha_1$};
		\draw (2,-1/3) node{$\alpha_2$};
		\draw (3,-1/3) node{$\alpha_0$};
		\draw (1,1/3) node{\scriptsize $3$};
		\draw (2,1/3) node{\scriptsize $2$};
		\draw (3,1/3) node{\scriptsize $1$};
		\draw[double distance = 3pt] (a1) --node[]{\Large $<$} (a2);
		\draw (a1) -- (a2) -- (a0);
\end{tikzpicture}}

\newcommand{\DA}{
	\begin{tikzpicture}
		\draw (1,0) node(a1)[rt]{};
		\draw (2,0) node(a2)[rt]{};
		\draw (3,0) node(a3)[rt]{};
		\draw (5,0) node(a4)[rt]{};
		\draw (6,0) node(a5)[rt]{};
		\draw (7,0) node(a6)[rt]{};
		\draw (1,-1/3) node{$\alpha_1$};
		\draw (2,-1/3) node{$\alpha_2$};
		\draw (3,-1/3) node{$\alpha_3$};
		\draw (5,-1/3) node{$\alpha_{\ell-2}$};
		\draw (6,-1/3) node{$\alpha_{\ell-1}$};
		\draw (7,-1/3) node{$\alpha_\ell$};
		\draw (1,1/3) node{\scriptsize $1$};
		\draw (2,1/3) node{\scriptsize $1$};
		\draw (3,1/3) node{\scriptsize $1$};
		\draw (5,1/3) node{\scriptsize $1$};
		\draw (6,1/3) node{\scriptsize $1$};
		\draw (7,1/3) node{\scriptsize $1$};
		\draw (a1) -- (a2) -- (a3)  (a4) -- (a5) -- (a6);
		\draw[thick, loosely dashed] (a3) -- (a4);
\end{tikzpicture}}
\newcommand{\DB}{
	\begin{tikzpicture}
		\draw (1,0) node(a1)[rt]{};
		\draw (2,0) node(a2)[rt]{};
		\draw (3,0) node(a3)[rt]{};
		\draw (5,0) node(a4)[rt]{};
		\draw (6,0) node(a5)[rt]{};
		\draw (7,0) node(a6)[rt]{};
		\draw (1,-1/3) node{$\alpha_1$};
		\draw (2,-1/3) node{$\alpha_2$};
		\draw (3,-1/3) node{$\alpha_3$};
		\draw (5,-1/3) node{$\alpha_{\ell-2}$};
		\draw (6,-1/3) node{$\alpha_{\ell-1}$};
		\draw (7,-1/3) node{$\alpha_\ell$};
		\draw (1,1/3) node{\scriptsize $1$};
		\draw (2,1/3) node{\scriptsize $2$};
		\draw (3,1/3) node{\scriptsize $2$};
		\draw (5,1/3) node{\scriptsize $2$};
		\draw (6,1/3) node{\scriptsize $2$};
		\draw (7,1/3) node{\scriptsize $2$};
		\draw (a1) -- (a2) -- (a3)  (a4) -- (a5);
		\draw[double distance = 2pt] (a5) --node[]{\Large $>$} (a6);
		\draw[thick, loosely dashed] (a3) -- (a4);
\end{tikzpicture}}
\newcommand{\DC}{
	\begin{tikzpicture}
		\draw (1,0) node(a1)[rt]{};
		\draw (2,0) node(a2)[rt]{};
		\draw (3,0) node(a3)[rt]{};
		\draw (5,0) node(a4)[rt]{};
		\draw (6,0) node(a5)[rt]{};
		\draw (7,0) node(a6)[rt]{};
		\draw (1,-1/3) node{$\alpha_1$};
		\draw (2,-1/3) node{$\alpha_2$};
		\draw (3,-1/3) node{$\alpha_3$};
		\draw (5,-1/3) node{$\alpha_{\ell-2}$};
		\draw (6,-1/3) node{$\alpha_{\ell-1}$};
		\draw (7,-1/3) node{$\alpha_\ell$};
		\draw (1,1/3) node{\scriptsize $2$};
		\draw (2,1/3) node{\scriptsize $2$};
		\draw (3,1/3) node{\scriptsize $2$};
		\draw (5,1/3) node{\scriptsize $2$};
		\draw (6,1/3) node{\scriptsize $2$};
		\draw (7,1/3) node{\scriptsize $1$};
		\draw (a1) -- (a2) -- (a3)  (a4) -- (a5);
		\draw[double distance = 2pt] (a5) --node[]{\Large $<$} (a6);
		\draw[thick, loosely dashed] (a3) -- (a4);
\end{tikzpicture}}
\newcommand{\DD}{
	\begin{tikzpicture}
		\draw (1,0) node(a1)[rt]{};
		\draw (2,0) node(a2)[rt]{};
		\draw (3,0) node(a3)[rt]{};
		\draw (5,0) node(a4)[rt]{};
		\draw (5.7,0.7) node(a5)[rt]{};
		\draw (5.7,-0.7) node(a6)[rt]{};
		\draw (1,-1/3) node{$\alpha_1$};
		\draw (2,-1/3) node{$\alpha_2$};
		\draw (3,-1/3) node{$\alpha_3$};
		\draw (5-0.1,-1/3) node{$\alpha_{\ell-2}$};
		\draw (5.7+0.1,0.7-1/3) node{$\alpha_{\ell-1}$};
		\draw (5.7,-0.7-1/3) node{$\alpha_\ell$};
		\draw (1,1/3) node{\scriptsize $1$};
		\draw (2,1/3) node{\scriptsize $2$};
		\draw (3,1/3) node{\scriptsize $2$};
		\draw (5,1/3) node{\scriptsize $2$};
		\draw (5.7,0.7+1/3) node{\scriptsize $1$};
		\draw (5.7,-0.7+1/3) node{\scriptsize $1$};
		\draw (a1) -- (a2) -- (a3)  (a6) -- (a4) -- (a5);
		\draw[thick, loosely dashed] (a3) -- (a4);
\end{tikzpicture}}
\newcommand{\DEvi}{
	\begin{tikzpicture}
		\draw (1,0) node(a1)[rt]{};
		\draw (3,-1) node(a2)[rt]{};
		\draw (2,0) node(a3)[rt]{};
		\draw (3,0) node(a4)[rt]{};
		\draw (4,0) node(a5)[rt]{};
		\draw (5,0) node(a6)[rt]{};
		\draw (1,-1/3) node{$\alpha_1$};
		\draw (3,-1-1/3) node{$\alpha_2$};
		\draw (2,-1/3) node{$\alpha_3$};
		\draw (3+1/4,-1/3) node{$\alpha_4$};
		\draw (4,-1/3) node{$\alpha_5$};
		\draw (5,-1/3) node{$\alpha_6$};
		\draw (1,1/3) node{\scriptsize $1$};
		\draw (3-1/5,-1+1/3) node{\scriptsize $2$};
		\draw (2,1/3) node{\scriptsize $2$};
		\draw (3,1/3) node{\scriptsize $3$};
		\draw (4,1/3) node{\scriptsize $2$};
		\draw (5,1/3) node{\scriptsize $1$};
		\draw (a1) -- (a3) -- (a4) -- (a5) -- (a6) (a4) -- (a2);
\end{tikzpicture}}
\newcommand{\DEvii}{
	\begin{tikzpicture}
		\draw (1,0) node(a1)[rt]{};
		\draw (3,-1) node(a2)[rt]{};
		\draw (2,0) node(a3)[rt]{};
		\draw (3,0) node(a4)[rt]{};
		\draw (4,0) node(a5)[rt]{};
		\draw (5,0) node(a6)[rt]{};
		\draw (6,0) node(a7)[rt]{};
		\draw (1,-1/3) node{$\alpha_1$};
		\draw (3,-1-1/3) node{$\alpha_2$};
		\draw (2,-1/3) node{$\alpha_3$};
		\draw (3+1/4,-1/3) node{$\alpha_4$};
		\draw (4,-1/3) node{$\alpha_5$};
		\draw (5,-1/3) node{$\alpha_6$};
		\draw (6,-1/3) node{$\alpha_7$};
		\draw (1,1/3) node{\scriptsize $2$};
		\draw (3-1/5,-1+1/3) node{\scriptsize $2$};
		\draw (2,1/3) node{\scriptsize $3$};
		\draw (3,1/3) node{\scriptsize $4$};
		\draw (4,1/3) node{\scriptsize $3$};
		\draw (5,1/3) node{\scriptsize $2$};
		\draw (6,1/3) node{\scriptsize $1$};
		\draw (a1) -- (a3) -- (a4) -- (a5) -- (a6) -- (a7) (a4) -- (a2);
\end{tikzpicture}}
\newcommand{\DEviii}{
	\begin{tikzpicture}
		\draw (1,0) node(a1)[rt]{};
		\draw (3,-1) node(a2)[rt]{};
		\draw (2,0) node(a3)[rt]{};
		\draw (3,0) node(a4)[rt]{};
		\draw (4,0) node(a5)[rt]{};
		\draw (5,0) node(a6)[rt]{};
		\draw (6,0) node(a7)[rt]{};
		\draw (7,0) node(a8)[rt]{};
		\draw (1,-1/3) node{$\alpha_1$};
		\draw (3,-1-1/3) node{$\alpha_2$};
		\draw (2,-1/3) node{$\alpha_3$};
		\draw (3+1/4,-1/3) node{$\alpha_4$};
		\draw (4,-1/3) node{$\alpha_5$};
		\draw (5,-1/3) node{$\alpha_6$};
		\draw (6,-1/3) node{$\alpha_7$};
		\draw (7,-1/3) node{$\alpha_8$};
		\draw (1,1/3) node{\scriptsize $2$};
		\draw (3-1/5,-1+1/3) node{\scriptsize $3$};
		\draw (2,1/3) node{\scriptsize $4$};
		\draw (3,1/3) node{\scriptsize $6$};
		\draw (4,1/3) node{\scriptsize $5$};
		\draw (5,1/3) node{\scriptsize $4$};
		\draw (6,1/3) node{\scriptsize $3$};
		\draw (7,1/3) node{\scriptsize $2$};
		\draw (a1) -- (a3) -- (a4) -- (a5) -- (a6) -- (a7) -- (a8) (a4) -- (a2);
\end{tikzpicture}}
\newcommand{\DF}{
	\begin{tikzpicture}
		\draw (1,0) node(a1)[rt]{};
		\draw (2,0) node(a2)[rt]{};
		\draw (3,0) node(a3)[rt]{};
		\draw (4,0) node(a4)[rt]{};
		\draw (1,-1/3) node{$\alpha_1$};
		\draw (2,-1/3) node{$\alpha_2$};
		\draw (3,-1/3) node{$\alpha_3$};
		\draw (4,-1/3) node{$\alpha_4$};
		\draw (1,1/3) node{\scriptsize $2$};
		\draw (2,1/3) node{\scriptsize $3$};
		\draw (3,1/3) node{\scriptsize $4$};
		\draw (4,1/3) node{\scriptsize $2$};
		\draw (a1) -- (a2)  (a3) -- (a4);
		\draw[double distance = 2pt] (a2) --node[]{\Large $>$} (a3);
\end{tikzpicture}}
\newcommand{\DG}{
	\begin{tikzpicture}
		\draw (1,0) node(a1)[rt]{};
		\draw (2,0) node(a2)[rt]{};
		\draw (1,-1/3) node{$\alpha_1$};
		\draw (2,-1/3) node{$\alpha_2$};
		\draw (1,1/3) node{\scriptsize $3$};
		\draw (2,1/3) node{\scriptsize $2$};
		\draw[double distance = 3pt] (a1) --node[]{\Large $<$} (a2);
		\draw (a1) -- (a2);
\end{tikzpicture}}

\newcommand{\Dyaji}{
\begin{tikzpicture}[scale=0.8, transform shape]
	\draw (-1,0) node{$\lra$};
	\draw (0.5,0) node{$\varnothing$};
\end{tikzpicture}
}

\newcommand{\Fi}{
	
	\begin{tikzpicture}[scale=0.8, transform shape]
		\draw (-1/2-1/8,1.3/2) node{$\lra$};
		\draw (1,0) node(a1)[rt]{};
		\draw (2,0) node(a2)[rt]{};
		\draw (3,0) node(a3)[rt]{};
		\draw (5,0) node(a4)[rt]{};
		\draw (6,0) node(a5)[rt]{};
		\draw (7,0) node(a6)[rt]{};
		\draw (4,1.3) node(a0)[hrt]{};
		\draw (1,-1/2.5) node{$\bar\alpha_1^\omega$};
		\draw (2,-1/2.5) node{$\bar\alpha_2^\omega$};
		\draw (3,-1/2.5) node{$\bar\alpha_3^\omega$};
		\draw (5,-1/2.5) node{$\bar\alpha_{r-2}^\omega$};
		\draw (6,-1/2.5) node{$\bar\alpha_{r-1}^\omega$};
		\draw (7,-1/2.5) node{$\bar\alpha_r^\omega$};
		\draw (4,1.3-1/2.5) node{$\bar\alpha_0^\omega$};
		\draw (1,1/3) node{\scriptsize $1$};
		\draw (2.1,1/3) node{\scriptsize $1$};
		\draw (3,1/3) node{\scriptsize $1$};
		\draw (5,1/3) node{\scriptsize $1$};
		\draw (6-0.1,1/3) node{\scriptsize $1$};
		\draw (7,1/3) node{\scriptsize $1$};
		\draw (4,1.3+1/3) node{\scriptsize $1$};
		\draw (a1) -- (a2) -- (a3)  (a4) -- (a5) -- (a6) -- (a0) -- (a1);
		\draw[thick, loosely dashed] (a3) -- (a4);

\end{tikzpicture}
}
\newcommand{\Fii}{
	\begin{tikzpicture}[scale=0.8, transform shape]
		\draw (-1/2-1/8,0) node{$\lra$};
		\draw (1,0) node(a1)[hrt]{};
		\draw (2,0) node(a2)[rt]{};
		\draw (1,-1/2.5) node{$\bar\alpha_0^\omega$};
		\draw (2,-1/2.5) node{$\bar\alpha_1^\omega$};
		\draw (1,1/3) node{\scriptsize $1$};
		\draw (2,1/3) node{\scriptsize $1$};
		\draw (a1) -- (a2) ;
\end{tikzpicture}
}
\newcommand{\Fiii}{
	\begin{tikzpicture}[scale=0.8, transform shape]
		\draw (-1/2-1/8,0) node{$\lra$};
		\draw (1,0) node(a0)[hrt]{};
		\draw (1+1,0) node(a1)[rt]{};
		\draw (2+1,0) node(a2)[rt]{};
		\draw (3+1,0) node(a3)[rt]{};
		\draw (5+1,0) node(a4)[rt]{};
		\draw (6+1,0) node(a5)[rt]{};
		\draw (7+1,0) node(a6)[rt]{};
		\draw (1,-1/2.5) node{$\bar\alpha_0^\omega$};
		\draw (1+1,-1/2.5) node{$\bar\alpha_1^\omega$};
		\draw (2+1,-1/2.5) node{$\bar\alpha_2^\omega$};
		\draw (3+1,-1/2.5) node{$\bar\alpha_3^\omega$};
		\draw (5+1,-1/2.5) node{$\bar\alpha_{r-2}^\omega$};
		\draw (6+1,-1/2.5) node{$\bar\alpha_{r-1}^\omega$};
		\draw (7+1,-1/2.5) node{$\bar\alpha_r^\omega$};
		\draw (1,1/3) node{\scriptsize $1$};
		\draw (1+1,1/3) node{\scriptsize $1$};
		\draw (2+1,1/3) node{\scriptsize $1$};
		\draw (3+1,1/3) node{\scriptsize $1$};
		\draw (5+1,1/3) node{\scriptsize $1$};
		\draw (6+1,1/3) node{\scriptsize $1$};
		\draw (7+1,1/3) node{\scriptsize $1$};
		\draw (a1) -- (a2) -- (a3)  (a4) -- (a5);
		\draw[double distance = 2pt] (a5) --node[]{\Large $>$} (a6) (a0) --node[]{\Large $<$} (a1);
		\draw[thick, loosely dashed] (a3) -- (a4);
\end{tikzpicture}
}
\newcommand{\Fiv}{
	\begin{tikzpicture}[scale=0.8, transform shape]
		\draw (-1/2-1/8,0) node{$\lra$};
		\draw (1,0) node(a0)[hrt]{};
		\draw (1+1,0) node(a1)[rt]{};
		\draw (2+1,0) node(a2)[rt]{};
		\draw (3+1,0) node(a3)[rt]{};
		\draw (5+1,0) node(a4)[rt]{};
		\draw (6+1,0) node(a5)[rt]{};
		\draw (7+1,0) node(a6)[rt]{};
		\draw (1,-1/2.5) node{$\bar\alpha_0^\omega$};
		\draw (1+1,-1/2.5) node{$\bar\alpha_1^\omega$};
		\draw (2+1,-1/2.5) node{$\bar\alpha_2^\omega$};
		\draw (3+1,-1/2.5) node{$\bar\alpha_3^\omega$};
		\draw (5+1,-1/2.5) node{$\bar\alpha_{r-2}^\omega$};
		\draw (6+1,-1/2.5) node{$\bar\alpha_{r-1}^\omega$};
		\draw (7+1,-1/2.5) node{$\bar\alpha_r^\omega$};
		\draw (1,1/3) node{\scriptsize $1$};
		\draw (1+1,1/3) node{\scriptsize $2$};
		\draw (2+1,1/3) node{\scriptsize $2$};
		\draw (3+1,1/3) node{\scriptsize $2$};
		\draw (5+1,1/3) node{\scriptsize $2$};
		\draw (6+1,1/3) node{\scriptsize $2$};
		\draw (7+1,1/3) node{\scriptsize $2$};
		\draw (a1) -- (a2) -- (a3)  (a4) -- (a5);
		\draw[double distance = 2pt] (a5) --node[]{\Large $>$} (a6) (a0) --node[]{\Large $>$} (a1);
		\draw[thick, loosely dashed] (a3) -- (a4);
\end{tikzpicture}
}
\newcommand{\Fv}{
	\begin{tikzpicture}[scale=0.8, transform shape]
		\draw (-1/2-1/8,0) node{$\lra$};
		\draw (1,0) node(a0)[hrt]{};
		\draw (1+1,0) node(a1)[rt]{};
		\draw (2+1,0) node(a2)[rt]{};
		\draw (3+1,0) node(a3)[rt]{};
		\draw (5+1,0) node(a4)[rt]{};
		\draw (6+1,0) node(a5)[rt]{};
		\draw (7+1,0) node(a6)[rt]{};
		\draw (1,-1/2.5) node{$\bar\alpha_0^\omega$};
		\draw (1+1,-1/2.5) node{$\bar\alpha_1^\omega$};
		\draw (2+1,-1/2.5) node{$\bar\alpha_2^\omega$};
		\draw (3+1,-1/2.5) node{$\bar\alpha_3^\omega$};
		\draw (5+1,-1/2.5) node{$\bar\alpha_{r-2}^\omega$};
		\draw (6+1,-1/2.5) node{$\bar\alpha_{r-1}^\omega$};
		\draw (7+1,-1/2.5) node{$\bar\alpha_r^\omega$};
		\draw (1,1/3) node{\scriptsize $1$};
		\draw (1+1,1/3) node{\scriptsize $2$};
		\draw (2+1,1/3) node{\scriptsize $2$};
		\draw (3+1,1/3) node{\scriptsize $2$};
		\draw (5+1,1/3) node{\scriptsize $2$};
		\draw (6+1,1/3) node{\scriptsize $2$};
		\draw (7+1,1/3) node{\scriptsize $1$};
		\draw (a1) -- (a2) -- (a3)  (a4) -- (a5);
		\draw[double distance = 2pt] (a5) --node[]{\Large $<$} (a6) (a0) --node[]{\Large $>$} (a1);
		\draw[thick, loosely dashed] (a3) -- (a4);
\end{tikzpicture}
}
\newcommand{\Fvi}{
	\begin{tikzpicture}[scale=0.8, transform shape]
		\draw (-1/2-1/8,0) node{$\lra$};
		\draw (1,0) node(a0)[hrt]{};
		\draw (1+1,0) node(a1)[rt]{};
		\draw (2+1,0) node(a2)[rt]{};
		\draw (3+1,0) node(a3)[rt]{};
		\draw (5+1,0) node(a4)[rt]{};
		\draw (6+1,0) node(a5)[rt]{};
		\draw (7+1,0) node(a6)[rt]{};
		\draw (1,-1/2.5) node{$\bar\alpha_0^\omega$};
		\draw (1+1,-1/2.5) node{$\bar\alpha_1^\omega$};
		\draw (2+1,-1/2.5) node{$\bar\alpha_2^\omega$};
		\draw (3+1,-1/2.5) node{$\bar\alpha_3^\omega$};
		\draw (5+1,-1/2.5) node{$\bar\alpha_{r-2}^\omega$};
		\draw (6+1,-1/2.5) node{$\bar\alpha_{r-1}^\omega$};
		\draw (7+1,-1/2.5) node{$\bar\alpha_r^\omega$};
		\draw (1,1/3) node{\scriptsize $1$};
		\draw (1+1,1/3) node{\scriptsize $1$};
		\draw (2+1,1/3) node{\scriptsize $1$};
		\draw (3+1,1/3) node{\scriptsize $1$};
		\draw (5+1,1/3) node{\scriptsize $1$};
		\draw (6+1,1/3) node{\scriptsize $1$};
		\draw (7+1,1/3) node{\scriptsize $1$};
		\draw (a1) -- (a2) -- (a3)  (a4) -- (a5);
		\draw[double distance = 2pt] (a5) --node[]{\Large $>$} (a6) (a0) --node[]{\Large $<$} (a1);
		\draw[thick, loosely dashed] (a3) -- (a4);
\end{tikzpicture}
}
\newcommand{\Fvii}{
	\begin{tikzpicture}[scale=0.8, transform shape]
		\draw (-1/2-1/8,0) node{$\lra$};
		\draw (1,0) node(a0)[hrt]{};
		\draw (1+1,0) node(a1)[rt]{};
		\draw (2+1,0) node(a2)[rt]{};
		\draw (1,-1/2.5) node{$\bar\alpha_0^\omega$};
		\draw (1+1,-1/2.5) node{$\bar\alpha_2^\omega$};
		\draw (2+1,-1/2.5) node{$\bar\alpha_1^\omega$};
		\draw (1,1/3) node{\scriptsize $1$};
		\draw (1+1,1/3) node{\scriptsize $1$};
		\draw (2+1,1/3) node{\scriptsize $1$};
		\draw[double distance = 2pt] (a0) --node[]{\Large $<$} (a1) --node[]{\Large $>$} (a2);
\end{tikzpicture}
}
\newcommand{\Fviii}{
	\begin{tikzpicture}[scale=0.8, transform shape]
		\draw (-1/2-1/8,0) node{$\lra$};
		\draw (1,0) node(a1)[hrt]{};
		\draw (2,0) node(a2)[rt]{};
		\draw (1,-1/2.5) node{$\bar\alpha_0^\omega$};
		\draw (2,-1/2.5) node{$\bar\alpha_1^\omega$};
		\draw (1,1/3) node{\scriptsize $1$};
		\draw (2,1/3) node{\scriptsize $1$};
		\draw (a1) -- (a2) ;
\end{tikzpicture}
}
\newcommand{\Fix}{
	\begin{tikzpicture}[scale=0.8, transform shape]
		\draw (-1/2-1/8,0) node{$\lra$};
		\draw (2-0.7,0.7) node(a0)[hrt]{};
		\draw (2-0.7,-0.7) node(a1)[rt]{};
		\draw (2,0) node(a2)[rt]{};
		\draw (3,0) node(a3)[rt]{};
		\draw (5,0) node(a4)[rt]{};
		\draw (6,0) node(a5)[rt]{};
		\draw (7,0) node(a6)[rt]{};
		\draw (2-0.7,0.7-1/2.5) node{$\bar\alpha_{0}^\omega$};
		\draw (2-0.7,-0.7-1/2.5) node{$\bar\alpha_1^\omega$};
		\draw (2,-1/2.5) node{$\bar\alpha_2^\omega$};
		\draw (3,-1/2.5) node{$\bar\alpha_3^\omega$};
		\draw (5,-1/2.5) node{$\bar\alpha_{r-2}^\omega$};
		\draw (6,-1/2.5) node{$\bar\alpha_{r-1}^\omega$};
		\draw (7,-1/2.5) node{$\bar\alpha_r^\omega$};
		\draw (2-0.7,0.7+1/3) node{\scriptsize $1$};
		\draw (2-0.7,-0.7+1/3) node{\scriptsize $1$};
		\draw (2,1/3) node{\scriptsize $2$};
		\draw (3,1/3) node{\scriptsize $2$};
		\draw (5,1/3) node{\scriptsize $2$};
		\draw (6,1/3) node{\scriptsize $2$};
		\draw (7,1/3) node{\scriptsize $1$};
		\draw (a1) -- (a2) -- (a3)  (a4) -- (a5) (a0) -- (a2);
		\draw[double distance = 2pt] (a5) --node[]{\Large $<$} (a6);
		\draw[thick, loosely dashed] (a3) -- (a4);
\end{tikzpicture}
}
\newcommand{\Fx}{
	\begin{tikzpicture}[scale=0.8, transform shape]
		\draw (-1/2-1/8,0) node{$\lra$};
		\draw (1,0) node(a0)[hrt]{};
		\draw (1+1,0) node(a1)[rt]{};
		\draw (2+1,0) node(a2)[rt]{};
		\draw (3+1,0) node(a3)[rt]{};
		\draw (5+1,0) node(a4)[rt]{};
		\draw (6+1,0) node(a5)[rt]{};
		\draw (7+1,0) node(a6)[rt]{};
		\draw (1,-1/2.5) node{$\bar\alpha_0^\omega$};
		\draw (1+1,-1/2.5) node{$\bar\alpha_1^\omega$};
		\draw (2+1,-1/2.5) node{$\bar\alpha_2^\omega$};
		\draw (3+1,-1/2.5) node{$\bar\alpha_3^\omega$};
		\draw (5+1,-1/2.5) node{$\bar\alpha_{r-2}^\omega$};
		\draw (6+1,-1/2.5) node{$\bar\alpha_{r-1}^\omega$};
		\draw (7+1,-1/2.5) node{$\bar\alpha_r^\omega$};
		\draw (1,1/3) node{\scriptsize $1$};
		\draw (1+1,1/3) node{\scriptsize $1$};
		\draw (2+1,1/3) node{\scriptsize $1$};
		\draw (3+1,1/3) node{\scriptsize $1$};
		\draw (5+1,1/3) node{\scriptsize $1$};
		\draw (6+1,1/3) node{\scriptsize $1$};
		\draw (7+1,1/3) node{\scriptsize $1$};
		\draw (a1) -- (a2) -- (a3)  (a4) -- (a5);
		\draw[double distance = 2pt] (a5) --node[]{\Large $>$} (a6) (a0) --node[]{\Large $<$} (a1);
		\draw[thick, loosely dashed] (a3) -- (a4);
\end{tikzpicture}
}
\newcommand{\Fxi}{
	\begin{tikzpicture}[scale=0.8, transform shape]
		\draw (-1/2-1/8,0) node{$\lra$};
		\draw (1,0) node(a0)[hrt]{};
		\draw (1+1,0) node(a1)[rt]{};
		\draw (2+1,0) node(a2)[rt]{};
		\draw (1,-1/2.5) node{$\bar\alpha_0^\omega$};
		\draw (1+1,-1/2.5) node{$\bar\alpha_1^\omega$};
		\draw (2+1,-1/2.5) node{$\bar\alpha_2^\omega$};
		\draw (1,1/3) node{\scriptsize $1$};
		\draw (1+1,1/3) node{\scriptsize $2$};
		\draw (2+1,1/3) node{\scriptsize $1$};
		\draw[double distance = 3pt] (a1) --node[]{\Large $<$} (a2);
		\draw (a0) -- (a1) -- (a2);
\end{tikzpicture}
}
\newcommand{\Fxii}{
	\begin{tikzpicture}[scale=0.8, transform shape]
		\draw (-1/2-1/8,0) node{$\lra$};
		\draw (1,0) node(a0)[hrt]{};
		\draw (1+1,0) node(a1)[rt]{};
		\draw (2+1,0) node(a2)[rt]{};
		\draw (3+1,0) node(a3)[rt]{};
		\draw (4+1,0) node(a4)[rt]{};
		\draw (1,-1/2.5) node{$\bar\alpha_0^\omega$};
		\draw (1+1,-1/2.5) node{$\bar\alpha_1^\omega$};
		\draw (2+1,-1/2.5) node{$\bar\alpha_2^\omega$};
		\draw (3+1,-1/2.5) node{$\bar\alpha_3^\omega$};
		\draw (4+1,-1/2.5) node{$\bar\alpha_4^\omega$};
		\draw (1,1/3) node{\scriptsize $1$};
		\draw (1+1,1/3) node{\scriptsize $2$};
		\draw (2+1,1/3) node{\scriptsize $3$};
		\draw (3+1,1/3) node{\scriptsize $2$};
		\draw (4+1,1/3) node{\scriptsize $1$};
		\draw (a0) -- (a1) -- (a2) (a3) -- (a4);
		\draw[double distance = 2pt] (a2) --node[]{\Large $<$} (a3);
\end{tikzpicture}
}

\newcommand{\FFO}{
	\begin{tikzpicture}[scale=0.8, transform shape]
		\draw (1,0) node(a1)[rt]{};
		\draw (2,0) node(a2)[rt]{};
		\draw (3,0) node(a3)[rt]{};
		\draw (5,0) node(a4)[rt]{};
		\draw (6,0) node(a5)[rt]{};
		\draw (7,0) node(a6)[rt]{};
		\draw (4,1.3) node(a0)[hrt]{};
		\draw (a1) -- (a2) -- (a3)  (a4) -- (a5) -- (a6) -- (a0) -- (a1);
		\draw[thick, loosely dashed] (a3) -- (a4);
		\draw[densely dotted, {Stealth}-{Stealth}, shorten > = 1.5pt, shorten < = 1.5pt] (a0) to [out=180, in=45] (a1);
		\draw[densely dotted, {Stealth}-{Stealth}, shorten > = 1.5pt, shorten < = 1.5pt] (a0) to [out=0, in=180-45] (a6);
		\draw[densely dotted, {Stealth}-{Stealth}, shorten > = 1.5pt, shorten < = 1.5pt] (a1) to [out=-40, in=180+40] (a2);
		\draw[densely dotted, {Stealth}-{Stealth}, shorten > = 1.5pt, shorten < = 1.5pt] (a2) to [out=-40, in=180+40] (a3);
		\draw[densely dotted, {Stealth}-{Stealth}, shorten > = 1.5pt, shorten < = 1.5pt] (a4) to [out=-40, in=180+40] (a5);
		\draw[densely dotted, {Stealth}-{Stealth}, shorten > = 1.5pt, shorten < = 1.5pt] (a5) to [out=-40, in=180+40] (a6);
	\end{tikzpicture}
}
\newcommand{\FFi}{
	\begin{tikzpicture}[scale=0.8, transform shape]
		\draw (1,2);
		\draw (1,0) node(a1)[rt]{};
		\draw (2,0) node(a2)[rt]{};
		\draw (1+4,0) node(a3)[rt]{};
		\draw (1+3,0) node(a35)[rt]{};
		\draw (1+5,0) node(a4)[rt]{};
		\draw (1+6,0) node(a5)[rt]{};
		\draw (2+7,0) node(a6)[rt]{};
		\draw (1+4,1.3) node(a0)[hrt]{};
		\draw (a1) -- (a2)  (a35) -- (a3) -- (a4) -- (a5) (a6) -- (a0) -- (a1);
		\draw[thick, loosely dashed] (a2) -- (a35) (a5) -- (a6);
		
		\draw[densely dotted, {Stealth}-{Stealth}, shorten > = 1.5pt, shorten < = 1.5pt] (a0) to [out=-90, in=180-90] (a3);
		\draw[densely dotted, {Stealth}-{Stealth}, shorten > = 1.5pt, shorten < = 1.5pt] (a1) to [out=-30, in=180+30] (a4);
		\draw[densely dotted, {Stealth}-{Stealth}, shorten > = 1.5pt, shorten < = 1.5pt] (a2) to [out=-30, in=180+30] (a5);
		\draw[densely dotted, {Stealth}-{Stealth}, shorten > = 1.5pt, shorten < = 1.5pt] (a35) to [out=-30, in=180+30] (a6);
	\end{tikzpicture}
}
\newcommand{\FFii}{
	\begin{tikzpicture}[scale=0.8, transform shape]
		\draw (1,5/7);
		\draw (1,0) node(a0)[hrt]{};
		\draw (1+1,0) node(a1)[rt]{};
		\draw (2+1,0) node(a2)[rt]{};
		\draw (1,-1/2.5) node{$\alpha_0$};
		\draw (1+1,-1/2.5) node{$\alpha_2$};
		\draw (2+1,-1/2.5) node{$\alpha_1$};
		\draw[double distance = 2pt] (a0) --node[]{\Large $>$} (a1) --node[]{\Large $<$} (a2);
		\draw[densely dotted, {Stealth}-{Stealth}, shorten > = 1.5pt, shorten < = 1.5pt] (a0) to [out=50, in=180-50] (a2);
	\end{tikzpicture}
}
\newcommand{\FFiii}{
	\begin{tikzpicture}[scale=0.8, transform shape]
		\draw (2,3/2);
		\draw (2-0.7,0.7) node(a0)[hrt]{};
		\draw (2-0.7,-0.7) node(a1)[rt]{};
		\draw (2,0) node(a2)[rt]{};
		\draw (3,0) node(a3)[rt]{};
		\draw (5,0) node(a4)[rt]{};
		\draw (6,0) node(a5)[rt]{};
		\draw (7,0) node(a6)[rt]{};
		\draw (a1) -- (a2) -- (a3)  (a4) -- (a5) (a0) -- (a2);
		\draw[double distance = 2pt] (a5) --node[]{\Large $>$} (a6);
		\draw[thick, loosely dashed] (a3) -- (a4);
		\draw[densely dotted, {Stealth}-{Stealth}, shorten > = 1.5pt, shorten < = 1.5pt] (a0) to [out=180+40, in=180-40] (a1);
	\end{tikzpicture}
	}
\newcommand{\FFiv}{
	\begin{tikzpicture}[scale=0.8, transform shape]
		\draw (1,0) node(a0)[hrt]{};
		\draw (1+1,0) node(a1)[rt]{};
		\draw (2+1,0) node(a2)[rt]{};
		
		\draw (3+2,0) node(a3)[rt]{};
		\draw (4+2,0) node(a35)[rt]{};
		\draw (5+3,0) node(a4)[rt]{};
		\draw (6+3,0) node(a5)[rt]{};
		\draw (7+3,0) node(a6)[rt]{};
		\draw (1,-1/3) node{$\alpha_0$};
		\draw (1+1,-1/3) node{$\alpha_1$};
		\draw (2+1,-1/3) node{$\alpha_2$};
		\draw (3+2,-1/3) node{$\alpha_{\frac{\ell-1}{2}}$};
		\draw (4+2,-1/3) node{$\alpha_{\frac{\ell+1}{2}}$};
		\draw (5+3,-1/3) node{$\alpha_{\ell-2}$};
		\draw (6+3,-1/3) node{$\alpha_{\ell-1}$};
		\draw (7+3,-1/3) node{$\alpha_\ell$};
		\draw (a1) -- (a2) (a3) -- (a35) (a4) -- (a5);
		\draw[double distance = 2pt] (a5) --node[]{\Large $<$} (a6) (a0) --node[]{\Large $>$} (a1);
		\draw[thick, loosely dashed] (a2) -- (a3) (a35) -- (a4);
		\draw[densely dotted, {Stealth}-{Stealth}, shorten > = 1.5pt, shorten < = 1.5pt] (a0) to [out=30, in=180-30] (a6);
		\draw[densely dotted, {Stealth}-{Stealth}, shorten > = 1.5pt, shorten < = 1.5pt] (a1) to [out=30, in=180-30] (a5);
		\draw[densely dotted, {Stealth}-{Stealth}, shorten > = 1.5pt, shorten < = 1.5pt] (a2) to [out=30, in=180-30] (a4);
		\draw[densely dotted, {Stealth}-{Stealth}, shorten > = 1.5pt, shorten < = 1.5pt] (a3) to [out=50, in=180-50] (a35);
	\end{tikzpicture}
	}
\newcommand{\FFv}{
	\begin{tikzpicture}[scale=0.8, transform shape]
		\draw (1,0) node(a0)[hrt]{};
		\draw (1+1,0) node(a1)[rt]{};
		\draw (2+1,0) node(a2)[rt]{};
		
		\draw (3+2,0) node(a3)[rt]{};
		\draw (4+2,0) node(a355)[rt]{};
		\draw (4+2+1,0) node(a35)[rt]{};
		\draw (5+3+1,0) node(a4)[rt]{};
		\draw (6+3+1,0) node(a5)[rt]{};
		\draw (7+3+1,0) node(a6)[rt]{};
		\draw (1,-1/3) node{$\alpha_0$};
		\draw (1+1,-1/3) node{$\alpha_1$};
		\draw (2+1,-1/3) node{$\alpha_2$};
		\draw (3+2,-1/3) node{$\alpha_{\frac{\ell}{2}-1}$};
		\draw (4+2+1,-1/3) node{$\alpha_{\frac{\ell}{2}+1}$};
		\draw (4+2,-1/3) node{$\alpha_{\frac{\ell}{2}}$};
		\draw (5+3+1,-1/3) node{$\alpha_{\ell-2}$};
		\draw (6+3+1,-1/3) node{$\alpha_{\ell-1}$};
		\draw (7+3+1,-1/3) node{$\alpha_\ell$};
		\draw (a1) -- (a2) (a3) -- (a355) -- (a35) (a4) -- (a5);
		\draw[double distance = 2pt] (a5) --node[]{\Large $<$} (a6) (a0) --node[]{\Large $>$} (a1);
		\draw[thick, loosely dashed] (a2) -- (a3) (a35) -- (a4);
		\draw[densely dotted, {Stealth}-{Stealth}, shorten > = 1.5pt, shorten < = 1.5pt] (a0) to [out=30, in=180-30] (a6);
		\draw[densely dotted, {Stealth}-{Stealth}, shorten > = 1.5pt, shorten < = 1.5pt] (a1) to [out=30, in=180-30] (a5);
		\draw[densely dotted, {Stealth}-{Stealth}, shorten > = 1.5pt, shorten < = 1.5pt] (a2) to [out=30, in=180-30] (a4);
		\draw[densely dotted, {Stealth}-{Stealth}, shorten > = 1.5pt, shorten < = 1.5pt] (a3) to [out=50, in=180-50] (a35);
	\end{tikzpicture}
	}
\newcommand{\FFvi}{
	\begin{tikzpicture}[scale=0.8, transform shape]
		\draw (3,3/2);
		\draw (2-0.7,0.7) node(a0)[hrt]{};
		\draw (2-0.7,-0.7) node(a1)[rt]{};
		\draw (2,0) node(a2)[rt]{};
		\draw (3,0) node(a3)[rt]{};
		\draw (5,0) node(a4)[rt]{};
		\draw (5.7,0.7) node(a5)[rt]{};
		\draw (5.7,-0.7) node(a6)[rt]{};
		\draw (2-0.7,0.7-1/3) node{$\alpha_{0}$};
		\draw (2-0.7,-0.7-1/3) node{$\alpha_1$};
		\draw (2,-1/3) node{$\alpha_2$};
		\draw (3,-1/3) node{$\alpha_3$};
		\draw (5-0.1,-1/3) node{$\alpha_{\ell-2}$};
		\draw (5.7+0.1,0.7-1/3) node{$\alpha_{\ell-1}$};
		\draw (5.7,-0.7-1/3) node{$\alpha_\ell$};
		\draw (a1) -- (a2) -- (a3)  (a6) -- (a4) -- (a5) (a0) -- (a2);
		\draw[thick, loosely dashed] (a3) -- (a4);
		\draw[densely dotted, {Stealth}-{Stealth}, shorten > = 1.5pt, shorten < = 1.5pt] (a0) to [out=180+40, in=180-40] (a1);
		\draw[densely dotted, {Stealth}-{Stealth}, shorten > = 1.5pt, shorten < = 1.5pt] (a5) to [out=-40, in=40] (a6);
	\end{tikzpicture}
}
\newcommand{\FFviii}{
	\begin{tikzpicture}[scale=0.8, transform shape]
		\draw (3,3/2);
		\draw (2-0.7,0.7) node(a0)[hrt]{};
		\draw (2-0.7,-0.7) node(a1)[rt]{};
		\draw (2,0) node(a2)[rt]{};
		\draw (1+3-2,0) node(a35)[rt]{};
		\draw (2+3-2,0) node(a355)[rt]{};
		\draw (-2+5,0) node(a4)[rt]{};
		\draw (-2+5.7,0.7) node(a5)[rt]{};
		\draw (-2+5.7,-0.7) node(a6)[rt]{};
		\draw (2-0.7,0.7-1/3) node{$\alpha_{0}$};
		\draw (2-0.7,-0.7-1/3) node{$\alpha_1$};
		\draw (2,-1/3) node{$\alpha_2$};
		\draw (2+3-2,-1/3) node{$\alpha_{3}$};
		\draw (-2+5.7+0.1,0.7-1/3) node{$\alpha_{4}$};
		\draw (-2+5.7,-0.7-1/3) node{$\alpha_5$};
		\draw (a1) -- (a2) (a35) --  (a355)  (a6) -- (a4) -- (a5) (a0) -- (a2);
		\draw[thick, loosely dashed] (a2) -- (a35) (a355) -- (a4);
		\draw[densely dotted, {Stealth}-{Stealth}, shorten > = 1.5pt, shorten < = 1.5pt] (a0) to [out=180+40, in=180-40] (a1);
		\draw[densely dotted, {Stealth}-{Stealth}, shorten > = 1.5pt, shorten < = 1.5pt] (a5) to [out=-40, in=40] (a6);
		\draw[densely dotted, {Stealth}-{Stealth}, shorten > = 1.5pt, shorten < = 1.5pt] (a0) to [out=10, in=180-10] (a5);
		\draw[densely dotted, {Stealth}-{Stealth}, shorten > = 1.5pt, shorten < = 1.5pt] (a35) to [out=50, in=180-50] (a355);
	\end{tikzpicture}
}
\newcommand{\FFvii}{
	\begin{tikzpicture}[scale=0.8, transform shape]
		\draw (3,3/2);
		\draw (2-0.7,0.7) node(a0)[hrt]{};
		\draw (2-0.7,-0.7) node(a1)[rt]{};
		\draw (2,0) node(a2)[rt]{};
		\draw (1+3-2,0) node(a35)[rt]{};
		\draw (1+3-2,0) node(a355)[rt]{};
		\draw (-3+5,0) node(a4)[rt]{};
		\draw (-3+5.7,0.7) node(a5)[rt]{};
		\draw (-3+5.7,-0.7) node(a6)[rt]{};
		\draw (2-0.7,0.7-1/3) node{$\alpha_{0}$};
		\draw (2-0.7,-0.7-1/3) node{$\alpha_1$};
		\draw (2,-1/3) node{$\alpha_2$};
		\draw (-3+5.7+0.1,0.7-1/3) node{$\alpha_{3}$};
		\draw (-3+5.7,-0.7-1/3) node{$\alpha_4$};
		\draw (a1) -- (a2) (a35) --  (a355)  (a6) -- (a4) -- (a5) (a0) -- (a2);
		\draw[thick, loosely dashed] (a2) -- (a35) (a355) -- (a4);
		\draw[densely dotted, {Stealth}-{Stealth}, shorten > = 1.5pt, shorten < = 1.5pt] (a0) to [out=10, in=180-10] (a5);
		\draw[densely dotted, {Stealth}-{Stealth}, shorten > = 1.5pt, shorten < = 1.5pt] (a1) to [out=-10, in=-180+10] (a6);
	\end{tikzpicture}
}
\newcommand{\FFix}{
	\begin{tikzpicture}[scale=0.8, transform shape]
		\draw (2-0.7,0.7) node(a0)[hrt]{};
		\draw (2-0.7,-0.7) node(a1)[rt]{};
		\draw (2,0) node(a2)[rt]{};
		\draw (1+3,0) node(a35)[rt]{};
		\draw (2+3,0) node(a3)[rt]{};
		\draw (3+3,0) node(a355)[rt]{};
		\draw (3+5,0) node(a4)[rt]{};
		\draw (3+5.7,0.7) node(a5)[rt]{};
		\draw (3+5.7,-0.7) node(a6)[rt]{};
		\draw (2-0.7,0.7-1/3) node{$\alpha_{0}$};
		\draw (2-0.7,-0.7-1/3) node{$\alpha_1$};
		\draw (2,-1/3) node{$\alpha_2$};
		\draw (1+3,-1/3) node{$\alpha_{\frac{\ell}{2}-1}$};
		\draw (2+3,-1/3) node{$\alpha_{\frac{\ell}{2}}$};
		\draw (3+3,-1/3) node{$\alpha_{\frac{\ell}{2}+1}$};
		\draw (3+5-0.1,-1/3) node{$\alpha_{\ell-2}$};
		\draw (3+5.7+0.1,0.7-1/3) node{$\alpha_{\ell-1}$};
		\draw (3+5.7,-0.7-1/3) node{$\alpha_\ell$};
		\draw (a1) -- (a2) (a35) -- (a3) -- (a355)  (a6) -- (a4) -- (a5) (a0) -- (a2);
		\draw[thick, loosely dashed] (a2) -- (a35) (a355) -- (a4);
		\draw[densely dotted, {Stealth}-{Stealth}, shorten > = 1.5pt, shorten < = 1.5pt] (a0) to [out=10, in=180-10] (a5);
		\draw[densely dotted, {Stealth}-{Stealth}, shorten > = 1.5pt, shorten < = 1.5pt] (a1) to [out=-10, in=-180+10] (a6);
		\draw[densely dotted, {Stealth}-{Stealth}, shorten > = 1.5pt, shorten < = 1.5pt] (a2) to [out=20, in=180-20] (a4);
		\draw[densely dotted, {Stealth}-{Stealth}, shorten > = 1.5pt, shorten < = 1.5pt] (a35) to [out=30, in=180-30] (a355);
	\end{tikzpicture}
}
\newcommand{\FFx}{
	\begin{tikzpicture}[scale=0.8, transform shape]
		\draw (2-0.7,0.7) node(a0)[hrt]{};
		\draw (2-0.7,-0.7) node(a1)[rt]{};
		\draw (2,0) node(a2)[rt]{};
		\draw (1+3,0) node(a35)[rt]{};
		\draw (2+3,0) node(a355)[rt]{};
		\draw (2+5,0) node(a4)[rt]{};
		\draw (2+5.7,0.7) node(a5)[rt]{};
		\draw (2+5.7,-0.7) node(a6)[rt]{};
		\draw (2-0.7,0.7-1/3) node{$\alpha_{0}$};
		\draw (2-0.7,-0.7-1/3) node{$\alpha_1$};
		\draw (2,-1/3) node{$\alpha_2$};
		\draw (1+3,-1/3) node{$\alpha_{\frac{\ell-1}{2}}$};
		\draw (2+3,-1/3) node{$\alpha_{\frac{\ell+1}{2}}$};
		\draw (2+5-0.1,-1/3) node{$\alpha_{\ell-2}$};
		\draw (2+5.7+0.1,0.7-1/3) node{$\alpha_{\ell-1}$};
		\draw (2+5.7,-0.7-1/3) node{$\alpha_\ell$};
		\draw (a1) -- (a2) (a35) --  (a355)  (a6) -- (a4) -- (a5) (a0) -- (a2);
		\draw[thick, loosely dashed] (a2) -- (a35) (a355) -- (a4);
		\draw[densely dotted, {Stealth}-{Stealth}, shorten > = 1.5pt, shorten < = 1.5pt] (a0) to [out=180+40, in=180-40] (a1);
		\draw[densely dotted, {Stealth}-{Stealth}, shorten > = 1.5pt, shorten < = 1.5pt] (a5) to [out=-40, in=40] (a6);
		\draw[densely dotted, {Stealth}-{Stealth}, shorten > = 1.5pt, shorten < = 1.5pt] (a0) to [out=10, in=180-10] (a5);
		\draw[densely dotted, {Stealth}-{Stealth}, shorten > = 1.5pt, shorten < = 1.5pt] (a2) to [out=20, in=180-20] (a4);
		\draw[densely dotted, {Stealth}-{Stealth}, shorten > = 1.5pt, shorten < = 1.5pt] (a35) to [out=50, in=180-50] (a355);
	\end{tikzpicture}
}
\newcommand{\FFxi}{
	\begin{tikzpicture}[scale=0.8, transform shape]
		\draw (1,0) node(a1)[rt]{};
		\draw (3,-1) node(a2)[rt]{};
		\draw (2,0) node(a3)[rt]{};
		\draw (3,0) node(a4)[rt]{};
		\draw (4,0) node(a5)[rt]{};
		\draw (5,0) node(a6)[rt]{};
		\draw (3,-2) node(a0)[hrt]{};
		\draw (1,1/3) node{$\alpha_1$};
		\draw (3+1/4,-1-1/3) node{$\alpha_2$};
		\draw (2,1/3) node{$\alpha_3$};
		\draw (3+1/4,1/3) node{$\alpha_4$};
		\draw (4,1/3) node{$\alpha_5$};
		\draw (5,1/3) node{$\alpha_6$};
		\draw (3,-2-1/3) node{$\alpha_0$};
		\draw (a1) -- (a3) -- (a4) -- (a5) -- (a6) (a4) -- (a2) -- (a0);
		
		\draw[densely dotted, {Stealth}-{Stealth}, shorten > = 1.5pt, shorten < = 1.5pt] (a0) to [out=180, in=180+90] (a1);
		\draw[densely dotted, {Stealth}-{Stealth}, shorten > = 1.5pt, shorten < = 1.5pt] (a0) to [out=0, in=180+90] (a6);
		\draw[densely dotted, {Stealth}-{Stealth}, shorten > = 1.5pt, shorten < = 1.5pt] (a2) to [out=180, in=180+90] (a3);
		\draw[densely dotted, {Stealth}-{Stealth}, shorten > = 1.5pt, shorten < = 1.5pt] (a2) to [out=0, in=180+90] (a5);
	\end{tikzpicture}
}
\newcommand{\FFxii}{
	\begin{tikzpicture}[scale=0.8, transform shape]
		\draw (0,0) node(a0)[hrt]{};
		\draw (1,0) node(a1)[rt]{};
		\draw (3,-1) node(a2)[rt]{};
		\draw (2,0) node(a3)[rt]{};
		\draw (3,0) node(a4)[rt]{};
		\draw (4,0) node(a5)[rt]{};
		\draw (5,0) node(a6)[rt]{};
		\draw (6,0) node(a7)[rt]{};
		\draw (0,-1/3) node{$\alpha_0$};
		\draw (1,-1/3) node{$\alpha_1$};
		\draw (3,-1-1/3) node{$\alpha_2$};
		\draw (2,-1/3) node{$\alpha_3$};
		\draw (3+1/4,-1/3) node{$\alpha_4$};
		\draw (4,-1/3) node{$\alpha_5$};
		\draw (5,-1/3) node{$\alpha_6$};
		\draw (6,-1/3) node{$\alpha_7$};
		\draw (a0) -- (a1) -- (a3) -- (a4) -- (a5) -- (a6) -- (a7) (a4) -- (a2);
		\draw[densely dotted, {Stealth}-{Stealth}, shorten > = 1.5pt, shorten < = 1.5pt] (a0) to [out=30, in=180-30] (a7);
		\draw[densely dotted, {Stealth}-{Stealth}, shorten > = 1.5pt, shorten < = 1.5pt] (a1) to [out=30, in=180-30] (a6);
		\draw[densely dotted, {Stealth}-{Stealth}, shorten > = 1.5pt, shorten < = 1.5pt] (a3) to [out=30, in=180-30] (a5);
	\end{tikzpicture}
}

\allowdisplaybreaks

\newcommand{\varpivee}{\varpi^\vee}

\DeclareMathOperator{\height}{ht}
\DeclareMathOperator{\sttr}{re}
\newcommand{\re}{{\sttr}}

\newenvironment{eenumeratea}{\begin{enumerate}[label=(\alph*), leftmargin=*]
	}{\end{enumerate}}
\newenvironment{eenumerateA}{\begin{enumerate}[label=(\Alph*), leftmargin=*]
	}{\end{enumerate}}

\tikzstyle{wv} = [circle, draw, thin, inner sep=1.5pt, minimum size=1.5pt, fill = white]
\tikzstyle{bv} = [circle, draw, thin, inner sep=1.5pt, minimum size=1.5pt, fill = black]
\tikzstyle{vv} = [circle, draw, thin, inner sep=1.5pt, minimum size=1.5pt, fill = white]

\numberwithin{equation}{section}

\title[]{Root systems constructed by folding of the extended Dynkin diagrams}
\author[Ryo Uchiumi]{Ryo Uchiumi}
\date{\today}
\subjclass[2020]{05E18, 17B22} 
\keywords{root systems; folding of the extended Dynkin diagrams.}
\thanks{The author was supported by JSPS KAKENHI, Grant Number 25KJ1735}

\begin{document}

\begin{abstract}
	The extended affine Weyl group of a root system is the semidirect product of the corresponding Weyl group by its coweight lattice.
	The stabilizer subgroup of the extended affine Weyl group with respect to the corresponding fundamental alcove induces a subgroup of automorphisms of the extended Dynkin diagram.
	In this paper, we construct a finite root system by folding by the elements of the subgroup.
\end{abstract}

\maketitle

\tableofcontents



\section{Introduction}

Let $\PHI$ be an irreducible reduced root system in a Euclidean space $E$ equipped with innner product $(\cdot,\cdot)$.
The Weyl group $W$ of $\PHI$ is the group generated by reflections with respect to roots of $\PHI$.
The affine Weyl gourp $W_\aff$ is the group generated by affine reflections, and is isomorphic to the semidirect product of the Weyl group $W$ by the coroot lattice $\veeQ$.
On the other hand, the semidirect product of $W$ by the coweight lattice $Z$ is called the \textbf{extended affine Weyl group}, denoted by $\widehat{W_\aff}$.
It is clear that $W_\aff$ is a subgroup of $\widehat{W_\aff}$, but $\widehat{W_\aff}$ is not a Coxeter group.
These groups are subgroups of the group of affine transformations of $E$.

Let $\widehat{\OMEGA}$ be a stabilizer subgroup of $\widehat{W_\aff}$ with respect to the fundamental alcove $A_\circ$ of $\PHI$, whose closure is a fundamental domain for the action of $W_\aff$ on $E$.
It is well known that $\widehat{\OMEGA}$ is isomorphic to the quotient group $Z/\veeQ$.
In \cite{Garnier}, Garnier provided a fundamental domain for the action of $\widehat{W_\aff}$ on $E$, and in the process, clarified the structure of $\widehat{\OMEGA}$ and its image $\OMEGA = \pi(\widehat{\OMEGA})$ under the projection $\pi : \widehat{W_\aff} \lra W$ (he claimed that it was first obtained by Komrakov and Premet \cite{KomrakovPremet}).
In particular, it has been shown that $\OMEGA$ is a normal subgroup of the group of automorphisms of the extended Dynkin diagram of $\PHI$.

In this paper, we construct a finite root system 
\begin{align}
	\PHI_\re^\omega = \bigset{\beta^\omega}{\beta \in \PHI,\ \beta^\omega \neq 0},
\end{align}
where 
\begin{align}
	\beta^\omega = \frac{1}{o(\omega)}\sum_{t=1}^{o(\omega)}\omega^t(\beta)
\end{align}
and $o(\omega)$ denotes the order of $\omega$.
The configuration of $\PHI_\re^\omega$ corresponds to the folding of the extended Dynkin diagram of $\PHI$ by $\omega$.
Many studies, such as \cite[Section 9]{Carterbook}, \cite{Coelho, FeliksonTumarkinYildirim, KhastgirSasaki, Stembridge, Zuber}, are known about the construction of root systems and Lie algebras via the folding of the extended Dynkin diagrams.
This paper also discusses non-simply-laced cases and constructs root systems that differ slightly from those.
For each root system $\PHI$ and $\omega \in \OMEGA$, we identify the type of $\PHI_\re^\omega$ and clarify which roots disappear due to folding.

Let $\tilde{\alpha}$ be the highest root of $\PHI$ and $\alpha_0 \ceq -\tilde{\alpha}$.
For a basis (simple roots) $\DELTA$ of $\PHI$, define a set
\begin{align}
	\DELTA^\omega = \bigset{\alpha^\omega}{\alpha \in \DELTA,\ \omega^t(\alpha) \neq \alpha_0 \tforall t \in \mathbb{Z}}.
\end{align}
The main result in this paper is the following:
\begin{theorem}[see \cref{mainthm}, \cref{basistheorem}, \cref{typetheorem}]
	Let $\PHI$ be an irreducible reduced root system with a basis $\DELTA$.
	Then $\PHI_\re^\omega$ is an irreducible root system (not necessarily reduced), and $\DELTA^\omega$ is a basis of $\PHI_\re^\omega$.
\end{theorem}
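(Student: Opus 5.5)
Here $\omega \in \OMEGA$ is the linear part of an element $\hat\omega=t_z\omega \in \widehat{\OMEGA}$. My plan uses the standard facts recalled above: $\hat\omega$ stabilizes $A_\circ$, and so it permutes the affine simple roots $\{\alpha_0+\delta,\alpha_1,\dots,\alpha_\ell\}$. On linear parts this means $\omega$ permutes $\DELTA\cup\{\alpha_0\}$. Put $P=\frac{1}{o(\omega)}\sum_{t}\omega^t$, the orthogonal projection of $E$ onto the fixed space $E^\omega$, so that $\beta^\omega=P\beta$.

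\textbf{Step 1: span and linear independence of $\DELTA^\omega$.} Consider the $\langle\omega\rangle$-orbits on $\DELTA\cup\{\alpha_0\}$. Orbits of simple roots not containing $\alpha_0$ give elements of $\DELTA^\omega$, and distinct orbits give distinct elements. Since the $\alpha_i$ form a basis and $P$ averages over orbits, $E^\omega$ is spanned by the orbit sums of the $\alpha_i$. The orbit $O_0$ of $\alpha_0$ also contains simple roots, because $\omega\neq 1$ moves $\alpha_0$, and $\alpha_0=-\sum m_i\alpha_i$. Summing this relation over $O_0$ expresses $P\alpha_0$ through the other orbit averages. So a dimension count shows that $\DELTA^\omega$ (the orbits avoiding $O_0$) is a linearly independent set spanning $E^\omega$. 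Concretely, $\dim E^\omega$ equals the number of orbits on $\DELTA\cup\{\alpha_0\}$ minus one, since $\delta=\alpha_0+\sum m_i\alpha_i$ is the only relation.

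\textbf{Step 2: each $\beta^\omega$ has coefficients in $\DELTA^\omega$ all $\ge0$ or all $\le0$, and these coefficients are integers up to a fixed normalization.} Write $\beta=\sum c_i\alpha_i$ with all $c_i$ of one sign. Substitute the simple roots of $O_0$ using $P\alpha_0=-\sum m_i P\alpha_i$. Here the key is the \emph{affine} picture. Lift $\beta$ to the affine real root $\beta+k\delta$ and write it in the affine simple roots, where all coefficients have one sign. Then apply the averaging, with $P\delta=0$ on the linear parts. Choosing $k$ so that the $\alpha_0$ coefficient equals the coefficient of an appropriate simple root in $O_0$ makes the $O_0$-part cancel. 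I expect to prove this from the fact that the marks $m_i$ are constant on $\omega$-orbits, which holds because $\hat\omega$ is a diagram automorphism. This sign-coherence gives the "basis" property once the root-system property is known.

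\textbf{Step 3: $\PHI_\re^\omega$ is a root system, and it is irreducible.} It is finite, spans $E^\omega$ by Step 1, and does not contain $0$ by definition. For closure under reflections I would use the centralizer $W^\omega$. For $\gamma=\beta^\omega$, take $w_\gamma\in W$ to be the longest element of the parabolic subgroup generated by the reflections $s_{\omega^t\beta}$, or better, use the affine folding: the fixed subgroup of $W_\aff$ under $\hat\omega$ acts on $(A_\circ)^{\hat\omega}$ as an affine reflection group, whose linear roots are the $P$-images. Its reflections preserve $E^\omega$, permute $\PHI$ commuting with $\omega$, and hence permute the $\beta^\omega$. I would then show they act on $E^\omega$ as the reflection in $\gamma$.

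Integrality of $2(\gamma,\gamma')/(\gamma,\gamma)$ follows from $(\beta^\omega,\beta'^\omega)=(\beta^\omega,\beta')$ together with the orbit structure of $\beta$. Alternatively it follows from Step 2 plus the verification on $\DELTA^\omega$ against the folded diagram, where the Cartan entries are computed orbitwise. Irreducibility holds because the folded extended diagram with the $\alpha_0$-orbit removed stays connected, which can be checked directly or seen from connectivity of the quotient of the connected extended diagram.

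\textbf{Main obstacle.} The main obstacle is showing the reflection property uniformly, in particular that $s_\gamma$ maps $\PHI_\re^\omega$ to itself, including the non-reduced cases where $2\gamma$ also occurs. If no uniform argument works, I would fall back on the classification of $\OMEGA$ from \cite{Garnier} and check each type, identifying $\PHI_\re^\omega$ explicitly by computing the $\beta^\omega$ in coordinates.
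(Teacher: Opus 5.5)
\textbf{There are genuine gaps.} Your Step~1 is fine: the dimension count replaces the paper's dual basis $\{\bar\pi_k^\omega\}$. Step~2, however, does not work. Write $a_0=\alpha_0+\delta$, so that $\delta=a_0+\sum_{i\ge1}n_i\alpha_i$. Since $n_s=1$ for $s\in S_0^j$ and $\#S_K^j\bar n_K=o(\omega)m_K$, the lift $\beta+k\delta$ has orbit totals $\bar d_0=\bar c_0+k\,o(\omega)$ on $S_0^j$ and $\bar d_K=\bar c_K+k\,o(\omega)m_K$ on $S_K^j$. After averaging and taking linear parts, the $k$-dependence cancels identically and you only recover \eqref{lcom}. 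Your matching condition is also independent of $k$: the coefficients of $a_0$ and of $\alpha_s$ for $s\in S_0^j\setminus\{0\}$ are $k$ and $c_s+k$. The residue of $\bar c_0$ modulo $o(\omega)$ is invariant, so whenever $0<\bar c_0<o(\omega)$ no choice of $k$ removes the $S_0^j$-part. Examples are $\beta=\alpha_1+\alpha_2$ in $B_\ell$, or $\beta=\alpha_1+\cdots+\alpha_i$ with $i\le\ell-2$ in $D_\ell$ for $\ell$ odd and $j=\ell$. In those cases, the fact that the $\bar d_K$ all have one sign says nothing about the signs of $\bar d_K-m_K\bar d_0$. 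What you actually need is a \emph{different} root $\beta'$ with $(\beta')^\omega=\beta^\omega$ and no support on $S_0^j$, i.e.\ a $0$-extra root as in \cref{lem4.8}; in affine language, the vertex $\pi_0^j/o(\omega)$ of the folded alcove must be special. That would give sign-coherence and integral coefficients at once, including $m_k\in\mathbb{Z}$ by applying it to $\alpha_0^\omega$. It is a genuinely nontrivial statement, and the paper obtains sign-coherence only by explicit case-by-case computation in Section~\ref{sec5}.

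In Step~3, the reflection property, which you flag as the main obstacle, is left open, and the idea that resolves it uniformly is missing. That idea is \cref{lem6.2}: $(\beta,\omega^t\beta)\le0$ whenever $\omega^t\beta\neq\beta$. It follows from \cref{lem6.1} ($\omega^t$ makes a positive root negative exactly when its coefficient at $\alpha_{\sigma^{-t}(0)}$ is $1$) together with $n_s=1$ on $S_0^j$. From it, \cref{lem3.10} gives $\#N(\beta)\le2$ and $(\beta^\omega,\beta^\omega)=\frac{(2-\#N(\beta))(\beta,\beta)}{2\,\#O(\beta)}$. So when $\beta^\omega\neq0$, the orbit $O(\beta)$ spans a subsystem of type $A_1^n$ or $A_2^n$. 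Only then is your ``longest element'' pinned down: it is $w_\beta=\prod_\gamma s_\gamma$, resp.\ $\prod s_\gamma s_{\hat\gamma}s_\gamma$, which visibly commutes with $\omega$ and restricts to $s_{\beta^\omega}$ on $E^\omega$. Without the orbit structure, the longest element of the subgroup generated by the $s_{\omega^t\beta}$ depends on a choice of positive system (in an $A_2$ it may be $s_{\gamma+\hat\gamma}$ or $s_{\hat\gamma}$) and need not commute with $\omega$. The same computation, in the form $(\beta^\omega)^\vee=\frac{2}{2-\#N(\beta)}\sum_{\gamma\in O(\beta)}\gamma^\vee$ with $2-\#N(\beta)\in\{1,2\}$, is what your integrality sketch ``via the orbit structure'' requires. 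Your alternative via the $\hat\omega$-fixed subgroup of $W_\aff$ amounts to invoking Steinberg's theorem on fixed points of Coxeter diagram automorphisms. Properly cited, it could give (R2), but as written it is only a plan. Finally, irreducibility does not follow from connectivity of the quotient of $\mathcal{D}_0(\Phi)$, since deleting the vertex $\bar\alpha_0^\omega$ can disconnect a connected graph. It has to be checked on the folded diagrams, as the paper does.
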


The organization of this paper is as follows:
In \cref{sec2}, we primarily provide definitions and notations.
In particular, we will describe groups $\widehat{\OMEGA}$ and $\OMEGA$ based on \cite{Garnier} in \cref{sec2.3}.
In \cref{sec3},  we prove that $\PHI_\re^\omega$ is a root system (\cref{mainthm}). 
The proof is given in \cref{sec3.3}.
In particular, in \cref{sec3.1}, we will discuss the Euclidean space to which $\PHI_\re^\omega$ belongs.
In \cref{sec4}, we will show that $\DELTA^\omega$ is a basis of $\PHI_\re^\omega$.
The proof is based on the classification of root systems.
Additionally, we will clarify the roots that disappear due to folding.
In \cref{sec5}, we discuss folding of the extended Dynkin diagrams and determine the type of $\PHI_\re^\omega$.
A list of type of $\PHI_\re^\omega$ is given in \cref{tableP}.
The figures are attached at the end of this paper.

\section{Preliminaries} \label{sec2}

\subsection{Definition of root systems and some properties}\ 

\noindent
In this section, we recall definitions and properties of root systems, referring to \cite{Bourbaki}.

Let $E$ be a Euclidean space of dimension $\ell$ with inner product $(\cdot,\cdot)$.
Let $\alpha \in E$ be a non-zero element.
We define an element $\alpha^\vee \in E$ by 
\begin{align}
	\alpha^\vee \ceq \frac{2\alpha}{(\alpha,\alpha)}.
\end{align}
A \textbf{reflection} $s_\alpha$ is a linear map on $E$ defined by
\begin{align}
	s_\alpha(x) \ceq x - (\alpha^\vee,x)\alpha
\end{align}
for all $x \in E$.
A subspace 
\begin{align}
	H_\alpha \ceq \bigset{x \in E}{(\alpha,x) = 0}
\end{align}
is called a \textbf{hyperplane} orthogonal to $\alpha$.
It is easy to see that the reflection $s_\alpha$ fixes all elements of the hyperplane $H_\alpha$.
The hyperplane $H_\alpha$ divides $E$ into two (open) half-spaces $H_\alpha^+$ and $H_\alpha^-$:
\begin{align}
	H_\alpha^+ = \bigset{x \in E}{(\alpha,x) > 0},\qquad 
	H_\alpha^- = \bigset{x \in E}{(\alpha,x) < 0}.
\end{align}

A set $\PHI \subseteq E$ is a (finite) \textbf{root system} in $E$ if it satisfies the following conditions:
\begin{enumerate}[label={(R\arabic*)}]
	\item\label{R1} $\PHI$ is finite, does not contain $0$, and generates $E$;
	\item\label{R2} $s_\alpha(\beta) \in \PHI$ for all $\alpha,\beta \in \PHI$;
	\item\label{R3} $(\alpha^\vee,\beta) \in \mathbb{Z}$ for all $\alpha,\beta \in \PHI$.
\end{enumerate}
An element of $\PHI$ is called a \textbf{root} of $\PHI$.
The dimension of $E$ ($= \ell$) is called the \textbf{rank} of $\PHI$.
Note that the emptyset $\PHI = \varnothing$ is a root system in the trivial Euclidean space $E = \{0\}$.

A root system $\PHI \subseteq E$ is \textbf{reducible} if there exist root systems $\PHI_1 \subseteq E_1$ and $\PHI_2 \subseteq E_2$ such that $\PHI = \PHI_1 \sqcup \PHI_2$ and $E = E_1 \oplus E_2$.
A root system $\PHI \subseteq E$ is \textbf{irreducible} if it is not reducible.
We assume that a root system $\PHI$ is irreducible unless otherwise stated.

A root $\alpha \in \PHI$ is \textbf{indivisible} if $\frac{1}{2}\alpha \not\in \PHI$.
A root system $\PHI$ is \textbf{reduced} if all roots of $\PHI$ are indivisible.

\begin{proposition}[{\cite[\rom{6}, \S1.3, Proposition 8]{Bourbaki}}] \label{prop2.1}
	Let $\alpha,\beta \in \PHI$ are non-proportional roots.
	If $(\alpha,\alpha) \leq (\beta,\beta)$, then
	\begin{align}
		(\alpha,\beta) = \frac{(\beta,\beta)}{2} \tor -\frac{(\beta,\beta)}{2}.
	\end{align}
\end{proposition}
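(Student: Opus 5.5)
The plan is to use the integrality axiom \ref{R3} together with the Cauchy--Schwarz inequality, in the standard way. One point needs flagging first. If $(\alpha,\beta)=0$ the displayed conclusion fails, so I will prove the statement in the form of \cite[\rom{6}, \S1.3, Proposition 8]{Bourbaki}: for non-proportional $\alpha,\beta$ with $(\alpha,\alpha)\le(\beta,\beta)$, either $(\alpha,\beta)=0$ or $(\alpha,\beta)=\pm\frac{(\beta,\beta)}{2}$. Equivalently, the displayed claim holds under the additional hypothesis $(\alpha,\beta)\neq 0$, which I assume is intended.

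Set
\begin{align}
	n(\alpha,\beta) \ceq (\beta^\vee,\alpha) = \frac{2(\alpha,\beta)}{(\beta,\beta)}, \qquad n(\beta,\alpha) \ceq (\alpha^\vee,\beta) = \frac{2(\alpha,\beta)}{(\alpha,\alpha)}.
\end{align}
By \ref{R3} both numbers are integers. Their product is
\begin{align}
	n(\alpha,\beta)\,n(\beta,\alpha) = \frac{4(\alpha,\beta)^2}{(\alpha,\alpha)(\beta,\beta)}.
\end{align}
Since $\alpha$ and $\beta$ are non-proportional, the Cauchy--Schwarz inequality is strict, so this product is a nonnegative integer strictly less than $4$. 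Hence it lies in $\{0,1,2,3\}$.

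The product is $0$ exactly when $(\alpha,\beta)=0$, which is the excluded case. Otherwise both integers are nonzero and have the same sign. The hypothesis $(\alpha,\alpha)\le(\beta,\beta)$ gives $|n(\alpha,\beta)|\le|n(\beta,\alpha)|$. Therefore $n(\alpha,\beta)^2\le n(\alpha,\beta)\,n(\beta,\alpha)\le 3$, which forces $|n(\alpha,\beta)|=1$. This is precisely $(\alpha,\beta)=\pm\frac{(\beta,\beta)}{2}$.

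There is no genuine obstacle in this argument. The only step needing care is the strictness of Cauchy--Schwarz, which is exactly where non-proportionality is used, together with the treatment of the orthogonal case noted above.
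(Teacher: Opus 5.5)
Your proof is correct. It is the standard argument behind the Bourbaki result the paper cites without proof: integrality from \cref{R3}, plus strict Cauchy--Schwarz to get $n(\alpha,\beta)\,n(\beta,\alpha)\le 3$. You are also right that the displayed statement fails when $(\alpha,\beta)=0$; this is harmless for the paper, since its only use (in the proof of \cref{lem3.10}) is for $\gamma\in N(\beta)$, where $(\beta,\gamma)\neq 0$ holds by definition.
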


\begin{proposition}[{\cite[\rom{6}, \S1.3, Theorem 1]{Bourbaki}}] \label{prop2.2}
	Let $\alpha,\beta \in \PHI$.
	\begin{eenumerate}
		\item If $(\alpha,\beta) > 0$ and $\alpha \neq \beta$, then $\alpha - \beta \in \PHI$.
		\item If $(\alpha,\beta) < 0$ and $\alpha \neq -\beta$, then $\alpha + \beta \in \PHI$.
	\end{eenumerate}
\end{proposition}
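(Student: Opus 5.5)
The plan is to derive (b) from (a), and to prove (a) by splitting into the proportional and non-proportional cases. In the non-proportional case the difference $\alpha-\beta$ will come out as the image of a root under a simple reflection, using \cref{prop2.1}.

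\emph{Reduction of (b) to (a).} Since $s_\beta(\beta)=\beta-2\beta=-\beta$, \ref{R2} gives $-\beta\in\PHI$. If $(\alpha,\beta)<0$ and $\alpha\neq-\beta$, then $(\alpha,-\beta)>0$ and $\alpha\neq-\beta$. Applying (a) to the pair $\alpha,-\beta$ yields $\alpha+\beta\in\PHI$.

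\emph{Proportional case of (a).} Write $\beta=c\alpha$; then $(\alpha,\beta)>0$ forces $c>0$. By \ref{R3}, both $(\alpha^\vee,\beta)=2c$ and $(\beta^\vee,\alpha)=2/c$ are integers, so $c\in\{\tfrac12,1,2\}$. The hypothesis $\alpha\neq\beta$ excludes $c=1$. If $c=2$, then $\alpha-\beta=-\alpha\in\PHI$ by the reflection remark above. If $c=\tfrac12$, then $\alpha-\beta=\tfrac12\alpha=\beta\in\PHI$. This is the only place the possible non-reducedness of $\PHI$ matters, and the case analysis handles it.

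\emph{Non-proportional case of (a).} We apply \cref{prop2.1} according to which root is longer.
\begin{itemize}
\item If $(\alpha,\alpha)\le(\beta,\beta)$, \cref{prop2.1} together with $(\alpha,\beta)>0$ gives $(\alpha,\beta)=\tfrac12(\beta,\beta)$. Hence $(\beta^\vee,\alpha)=1$, so $s_\beta(\alpha)=\alpha-\beta\in\PHI$ by \ref{R2}.
\item If $(\beta,\beta)\le(\alpha,\alpha)$, the symmetric argument gives $(\alpha^\vee,\beta)=1$, so $s_\alpha(\beta)=\beta-\alpha\in\PHI$. Then $\alpha-\beta=-(\beta-\alpha)\in\PHI$ by the reflection remark.
\end{itemize}

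There is no real obstacle; the only point needing care is the proportional case, which relies solely on the integrality axiom \ref{R3}.
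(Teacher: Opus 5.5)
Your proof is correct, and it is essentially the standard argument from Bourbaki that the paper cites without reproducing. You obtain (b) from (a) via $\beta\mapsto-\beta$, you use (R3) in the proportional case to force $c\in\{\tfrac12,1,2\}$, and in the non-proportional case you show that the shorter root pairs with the longer root's coroot to give $1$, so a single reflection produces $\pm(\alpha-\beta)$. The version of \cref{prop2.1} printed in the paper omits the possibility $(\alpha,\beta)=0$, but you only apply it under $(\alpha,\beta)>0$, so this does not affect your argument.
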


A subgroup of the automorphisms of $E$ generated by the reflections $\Bigset{s_\alpha}{\alpha \in \PHI}$ is called the \textbf{Weyl group} of $\PHI$ and denoted by $W$.

The subset $\DELTA \ceq \{\alpha_1,\ldots,\alpha_\ell\} \subseteq \PHI$ is a \textbf{basis} of $\PHI$ if it satisfies the following conditions (\cite[\rom{6},\S1,7, Corollary 3]{Bourbaki}):
\begin{enumerate}[label=(B\arabic*)]
	\item $\DELTA$ is a basis for $E$;
	\item Every root of $\PHI$ can be expressed as the linear combination with non-negative or  non-positive integer coefficients of $\DELTA$;
	\item Every element of $\DELTA$ is indivisible.
\end{enumerate}
The element of $\DELTA$ is called a \textbf{simple root} of $\PHI$.
It is well known that $(\alpha_i,\alpha_j) \leq 0$ if $i \neq j$.

Let $\beta = \sum_{i=1}^\ell c_i\alpha_i \in \PHI$.
A root $\beta$ is \textbf{positive} (resp. \textbf{negative}) if $c_i \geq 0$ (resp. $c_i \leq 0$) for all $i \in \{1,\ldots,\ell\}$.
Let $\PHI^+$ (resp. $\PHI^-$) denote the set of positive (resp. negative) roots.
Every root is either positive or negative for $\DELTA$, that is, $\PHI = \PHI^+ \sqcup \PHI^-$.
The \textbf{height} of $\beta$ is a sum of all coefficients, denoted by $\height{\beta}$:
\begin{align}
	\height{\beta} = \sum_{i=1}^\ell c_i.
\end{align}
It is well known that there exists a unique root $\tilde{\alpha} \in \PHI$ with the maximum height.
The root $\tilde{\alpha}$ is called the \textbf{highest root} of $\PHI$.
Since $\tilde{\alpha} \in \PHI^+$, it can be expressed as a linear combination
\begin{align}
	\tilde{\alpha} = n_1\alpha_1 + \cdots + n_\ell\alpha_\ell
\end{align}
for $n_1,\ldots,n_\ell \in \mathbb{Z}_{>0}$.
Moreover, for any $\beta = \sum_{i=1}^\ell c_i \alpha_i \in \PHI^+$, we have $0 \leq c_i \leq n_i$ for all $i \in \{1,\ldots,\ell\}$.
Let $\alpha_0 \ceq -\tilde{\alpha} \in \PHI$, and  $n_0 \ceq 1$.
Then we have
\begin{align}
	n_0\alpha_0 + n_1\alpha_1 + \cdots + n_\ell\alpha_\ell = 0. \label{dependentdelta0}
\end{align}
We define $\DELTA_0 \ceq \{\alpha_0,\alpha_1,\ldots,\alpha_\ell\}$.

For a simple root $\alpha_i$, the reflection $s_{\alpha_i}$ is called a \textbf{simple reflection}.
The Weyl group $W$ is generated by the simple reflections.
In other words, an element $w \in W$ can be expressed as the product of simple reflections.
The \textbf{length} of $w \in W$ is the minimum number of simple reflections required to express $w$ as a product of simple reflections.
The \textbf{longest element} of $W$ is the element of $W$ with the maximum length.
The longest element $w_0$ is unique, and satisfies $w_0(\DELTA) = -\DELTA$, and $w_0^2 = 1$.


\subsection{Dynkin diagrams and extended Dynkin diagrams}\ 

\noindent
Let $\PHI$ be an irreducible root system with a basis $\DELTA = \{\alpha_1,\ldots,\alpha_\ell\}$ and the highest root $\tilde{\alpha}$.
For distinct integers $i,j \in \{0,1,\ldots,\ell\}$, let $f_{ij}$ be an integer defined as
\begin{align}
	f_{ij} \ceq \begin{cases*}
		\dfrac{(\alpha_i,\alpha_i)}{(\alpha_j,\alpha_j)}	& if $(\alpha_i,\alpha_j) \neq 0$, $\ (\alpha_i,\alpha_i) \geq (\alpha_j,\alpha_j)$;\\
		\dfrac{(\alpha_j,\alpha_j)}{(\alpha_i,\alpha_i)}	& if $(\alpha_i,\alpha_j) \neq 0$, $\ (\alpha_i,\alpha_i) < (\alpha_j,\alpha_j)$;\\
		0													& if $(\alpha_i,\alpha_j) = 0$.
	\end{cases*}
\end{align}
Construct a graph $\mathcal{D}(\PHI)$ with vertex set $\DELTA$ as follows:
\begin{enumerate}[label=(D\arabic*)]
	\item\label{D1} Draw $f_{ij}$ parallel edges connecting $\alpha_i$ and $\alpha_j$;
	\item\label{D2} Write an inequality symbol oriented towards $\alpha_j$ on the multiple edges between $\alpha_i$ and $\alpha_j$ if $(\alpha_i,\alpha_i) > (\alpha_j,\alpha_j)$.
\end{enumerate}
The graph $\mathcal{D}(\PHI)$ is called a \textbf{Dynkin diagram} of $\PHI$, and $\mathcal{D}(\PHI)$ is independent of the choice of $\DELTA$.
Similarly, let $\mathcal{D}_0(\PHI)$ be the graph with vertex set $\DELTA_0$ constructed in steps \cref{D1} and \cref{D2}.
The graph $\mathcal{D}_0(\PHI)$ is called an \textbf{extended Dynkin diagram} of $\PHI$.
It is clear that $\mathcal{D}(\PHI)$ is a subgraph of $\mathcal{D}_0(\PHI)$ induced by $\DELTA$.

\begin{proposition}[{\cite[\rom{6}, \S4, Theorem 3]{Bourbaki}}]
	Let $\PHI$ be an irreducible root system.
	Then its Dynkin diagram $\mathcal{D}(\PHI)$ is isomorphic to one of the graphs in \cref{figDD}, and its extended Dynkin diagram $\mathcal{D}_0(\PHI)$ is isomorphic to one of the graphs in \cref{figeDD}.
\end{proposition}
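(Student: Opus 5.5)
The first half of the statement is the classical classification of connected Dynkin diagrams, which I would prove in the standard way via admissible sets; the second half then follows by computing, type by type, how $\alpha_0=-\tilde\alpha$ attaches to $\mathcal{D}(\PHI)$.

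For the classification, normalise $e_i \ceq \alpha_i/\sqrt{(\alpha_i,\alpha_i)}$. By \cref{prop2.1} and the fact that $(\alpha_i,\alpha_j)\le 0$ for $i\ne j$, we have $4(e_i,e_j)^2 = f_{ij}\in\{0,1,2,3\}$. So $\{e_1,\ldots,e_\ell\}$ is a linearly independent set of unit vectors with pairwise non-positive inner products of this form, and its Gram matrix is positive definite. Irreducibility of $\PHI$ makes $\mathcal{D}(\PHI)$ connected: a disconnection would split $\DELTA$ into mutually orthogonal parts, and hence split $\PHI$.

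The standard steps are the following.
\begin{enumerate}
	\item Any subset of an admissible set is admissible.
	\item Evaluating $\bigl(\sum e_i,\sum e_i\bigr)>0$ shows there are fewer than $\ell$ connected pairs, so the diagram is a tree.
	\item Expanding $1=\|e\|^2$ in an orthonormal basis of the span of $e$ and its neighbours shows that each vertex has total edge multiplicity at most $3$.
	\item A simple chain may be contracted to a single vertex and the result is still admissible.
\end{enumerate}
From these one rules out two multiple edges, a multiple edge together with a branch point, and two branch points. One is then left with a single-edge tree with at most one branch point of arm lengths $(p,q,r)$, or a chain with one double edge at position $(p,q)$, or $G_2$. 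Testing the squared length of suitably weighted sums of the $e_i$ pins down $\frac1p+\frac1q+\frac1r>1$ and $(p-1)(q-1)<2$, which gives exactly $A_\ell,B_\ell,C_\ell,D_\ell,E_{6,7,8},F_4,G_2$ as in \cref{figDD}. The arrow direction in \ref{D2} just distinguishes $B_\ell$ from $C_\ell$.

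For the extended diagrams I would not reclassify. Instead, for each type in \cref{figDD} I would realise $\PHI$ explicitly (in the usual $\varepsilon_i$-coordinates), write down $\tilde\alpha=\sum n_i\alpha_i$, and compute $(\tilde\alpha,\alpha_i)$ and $(\tilde\alpha,\tilde\alpha)$. Since $\tilde\alpha$ is dominant and a long root, \cref{prop2.1} gives $(\alpha_0,\alpha_i)\in\{0,-\tfrac12(\alpha_i,\alpha_i)\cdot f\}$, where $f$ is the bond multiplicity. In practice $\tilde\alpha$ is orthogonal to all simple roots except one or two. This determines $f_{0i}$ and the arrows, and the coefficients $n_i$ are recorded as the labels in \cref{figeDD}.

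The main obstacle is the case-elimination in the finite classification, in particular the branch-point inequality. I would handle it by the usual contraction argument, computing the squared length of $\sum_k k\,e_{(k)}$ along each arm. The extended part is routine bookkeeping once $\tilde\alpha$ is known for each type.
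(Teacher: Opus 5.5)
Your classification of $\mathcal{D}(\PHI)$ by the admissible-set argument is correct. It also applies verbatim to non-reduced $\PHI$, since simple roots remain linearly independent with $(\alpha_i,\alpha_j)\le 0$. The paper gives no proof and only cites Bourbaki, who reaches the same list through positive definite Coxeter graphs; that difference is harmless.

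The gap is in the extended half. Root systems in this paper only have to satisfy (R1)--(R3), so $\PHI$ need not be reduced. The $BC_\ell$ entry of \cref{figeDD} is there precisely for that case. Your plan to ``realise $\PHI$ explicitly'' from its type in \cref{figDD} and then write down $\tilde\alpha$ tacitly assumes that $\mathcal{D}(\PHI)$ determines $\PHI$. That holds only for reduced systems. A non-reduced irreducible $\PHI$ of rank $\ell\ge 2$ has Dynkin diagram $B_\ell$, but its highest root is $2e_1=2(\alpha_1+\cdots+\alpha_\ell)$, not $e_1+e_2$. Running your computation on the reduced realisation would return the extended $B_\ell$ diagram, with $\alpha_0$ attached to $\alpha_2$, and that is not $\mathcal{D}_0(\PHI)$. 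So as proposed, the argument never determines $\mathcal{D}_0(\PHI)$ for non-reduced $\PHI$, and the $BC_\ell$ line is never obtained.

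The missing input is the classification of irreducible non-reduced root systems (Bourbaki, Ch.~VI, \S4, no.~14): such a $\PHI$ is $\{\pm e_i,\ \pm 2e_i,\ \pm e_i\pm e_j\ (i\neq j)\}$. Take $\alpha_i=e_i-e_{i+1}$ for $i<\ell$ and $\alpha_\ell=e_\ell$. Then $\alpha_0=-2e_1$ is orthogonal to $\alpha_2,\ldots,\alpha_\ell$, with $(\alpha_0,\alpha_1)=-2$ and $(\alpha_0,\alpha_0)=4=2(\alpha_1,\alpha_1)$. This gives a double bond with the arrow towards $\alpha_1$ and labels $n_1=\cdots=n_\ell=2$, which is exactly the $BC_\ell$ diagram.

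A smaller point: your uniform appeal to \cref{prop2.1} for $(\alpha_0,\alpha_i)$ requires $\alpha_0$ and $\alpha_i$ to be non-proportional. This fails for $A_1$ ($\alpha_0=-\alpha_1$) and for $BC_1$ ($\alpha_0=-2\alpha_1$). In those cases $f_{01}$ has to be read off directly from its definition.
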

\begin{proposition}[{\cite[\rom{6}, \S4, Theorem 3]{Bourbaki}}]
	Let $\mathcal{D}$ be a graph in \cref{figDD}. 
	Then there exists an irreducible reduced root system such that its Dynkin diagram is ismorphic to $\mathcal{D}$, which is unique up to isomorphism.
\end{proposition}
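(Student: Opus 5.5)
This is the classical existence and uniqueness theorem, and I would prove the two halves separately, relying only on the axioms \cref{R1}--\cref{R3} and the basis properties recalled above.

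\textbf{Uniqueness.} Let $\PHI \subseteq E$ and $\PHI' \subseteq E'$ be irreducible reduced root systems with bases $\DELTA=\{\alpha_1,\ldots,\alpha_\ell\}$ and $\DELTA'=\{\alpha'_1,\ldots,\alpha'_\ell\}$, numbered so that $\alpha_i \mapsto \alpha'_i$ is an isomorphism of Dynkin diagrams.

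First, the diagram determines the Cartan integers $(\alpha_i^\vee,\alpha_j)$. Indeed, $f_{ij}$ gives $(\alpha_i^\vee,\alpha_j)(\alpha_j^\vee,\alpha_i)$. The arrow in \cref{D2} tells which root is longer. By \cref{prop2.1}, the Cartan integer against the longer root is $-1$ whenever the two roots are not orthogonal.

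Next, the map $\alpha_i \mapsto \alpha'_i$ extends to a linear bijection $\varphi : E \lra E'$. After rescaling $E'$ (possible since the diagram is connected), $\varphi$ is an isometry: the squared-length ratios are read off from $f_{ij}$ and the arrows along the edges of the diagram.

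Then $\varphi s_{\alpha_i}\varphi^{-1} = s_{\alpha'_i}$, because both sides act on $\DELTA'$ by the same Cartan integers. Hence $\varphi W \varphi^{-1} = W'$.

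Finally, I use the standard fact that, for reduced $\PHI$, every root is $W$-conjugate to a simple root: take a positive root, and apply a simple reflection that lowers its height, using \cref{prop2.2}. This gives $\PHI = W\DELTA$, and similarly $\PHI' = W'\DELTA'$, so $\varphi(\PHI)=\PHI'$.

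\textbf{Existence.} I would give explicit models in $\mathbb{R}^n$ with standard basis $\varepsilon_i$:
\begin{itemize}
	\item $A_\ell$: $\{\varepsilon_i-\varepsilon_j\}$ inside the hyperplane of $\mathbb{R}^{\ell+1}$ where the coordinate sum is zero;
	\item $B_\ell$: $\{\pm\varepsilon_i,\ \pm\varepsilon_i\pm\varepsilon_j\}$;
	\item $C_\ell$: $\{\pm 2\varepsilon_i,\ \pm\varepsilon_i\pm\varepsilon_j\}$;
	\item $D_\ell$: $\{\pm\varepsilon_i\pm\varepsilon_j\}$;
	\item $G_2$: the short and long vectors of the $A_2$ lattice, in the plane of $\mathbb{R}^3$ with coordinate sum zero;
	\item $F_4$: $\{\pm\varepsilon_i,\ \pm\varepsilon_i\pm\varepsilon_j,\ \tfrac12(\pm\varepsilon_1\pm\varepsilon_2\pm\varepsilon_3\pm\varepsilon_4)\}$;
	\item $E_8$: all vectors of squared length $2$ in the lattice generated by $D_8$ and $\tfrac12\sum_i\varepsilon_i$, i.e.\ the even vectors in $\mathbb{Z}^8\cup(\mathbb{Z}+\tfrac12)^8$.
\end{itemize}
For $E_7$ and $E_6$ I take the roots of $E_8$ lying in the subspace spanned by the appropriate subset of an $E_8$ basis. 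Intersecting a root system with a subspace again gives a root system, so this costs nothing extra.

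In each case I must check \cref{R1}--\cref{R3}, exhibit a set $\DELTA$ of simple roots, and compare the resulting diagram with \cref{figDD}.
\begin{itemize}
	\item For \cref{R3}: whenever the roots lie in a lattice $L$ with $(\alpha^\vee,L)\subseteq\mathbb{Z}$ for every root $\alpha$, integrality is automatic. Reflections then preserve both $L$ and lengths, which gives \cref{R2}.
	\item For the basis: I check directly that every root is a nonnegative or nonpositive integer combination of the proposed simple roots. For example, in the classical types I use $\alpha_i=\varepsilon_i-\varepsilon_{i+1}$ together with the appropriate last root.
\end{itemize}

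The main obstacle is $F_4$ and $E_8$. There the lattice argument needs care: in $F_4$ the short roots $\tfrac12(\pm\cdots)$ have coroot $2\alpha$, and in $E_8$ one must verify that the lattice is even and integral. Writing each root of $E_8$ in terms of the chosen simple roots is also tedious. I would handle this by computing the highest root and checking that every root is obtained from a simple root by repeatedly adding simple roots, as guaranteed by \cref{prop2.2}.
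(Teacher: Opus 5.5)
Your proposal is correct. It is essentially the standard proof from the source the paper cites; the paper gives no argument of its own, only the reference to Bourbaki, Ch.~VI. As there, the diagram fixes the Cartan integers, these give an isometry (after rescaling) conjugating $W$ to $W'$, and $\Phi=W\Delta$ yields uniqueness, while existence comes from explicit lattice models with $E_7$ and $E_6$ cut out of $E_8$.

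The one step to tighten is your general claim that ``reflections preserve $L$ and lengths'' gives (R2). That inference needs $\Phi$ to be all vectors of $L$ of the relevant norms, which fails for your $C_\ell$ model when $\ell\ge 4$: for example, $\varepsilon_1+\varepsilon_2+\varepsilon_3+\varepsilon_4$ lies in any lattice containing the roots and has the length of $2\varepsilon_1$, but is not a root. For $C_\ell$, either check directly that every reflection is a signed permutation of the coordinates, or obtain $C_\ell$ as the set of coroots $\alpha^\vee$ of your $B_\ell$ model.
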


Using Dynkin diagrams, we can classify irreducible reduced root systems up to isomorphism.
A root system of \textbf{type $\bm{BC_\ell}$} is a non-reduced root system whose extended Dynkin diagram is of type $BC_\ell$.
Such a root system is uniquely determined for each rank $\ell$ up to isomorphism.
The Dynkin diagram of the root system of type $BC_\ell$ is isomorphic to the graph of type $A_1$ or type $B_\ell$ ($\ell \geq 2$).
Therefore it is not possible to determine whether a root system with the Dynkin diagram of type $A_1$ or $B_\ell$ is reduced.
The root system of type $BC_\ell$ has three different root lengths.
We will refer to roots with the shortest lenth as \textbf{short roots}, those with the medium-length as \textbf{middle roots}, and those with the longest length as \textbf{long roots}.

%
%
%

\subsection{Stabilizer subgroups of the extended affine Weyl group with respect to the fundamental alcove}\ \label{sec2.3}

\noindent
Suppose that a root system $\PHI$ is irreducible and reduced.
The \textbf{coweight lattice} $Z$ and \textbf{coroot lattice} $\veeQ$ are lattices defined by 
\begin{align}
	Z &\ceq \bigset{x \in E}{(\alpha,x) \in \mathbb{Z} \tforall \alpha \in \PHI},\\
	\veeQ &\ceq \sum_{\alpha \in \PHI}\mathbb{Z}\alpha^\vee.
\end{align}
The coroot lattice $\veeQ$ is a subgroup of the coweight lattice $Z$ with a finite index $f \ceq (Z:\veeQ)$, called an \textbf{index of connection}.
Let $\DELTA^\vee \ceq \{\varpi_1^\vee,\ldots,\varpi_\ell^\vee\} \subseteq E$ be the dual basis of $\DELTA$, that is,
\begin{align}
	(\alpha_i,\varpi_j^\vee) = \delta_{ij}
\end{align}
for all $i,j \in \{1,\ldots,\ell\}$.

For $\alpha \in \PHI$ and $k \in \mathbb{Z}$, define an affine hyperplane $H_{\alpha}^k$ by
\begin{align}
	H_\alpha^k \ceq \bigset{x \in E}{(\alpha,x) = k}.
\end{align}
A connected component of the complement of the hyperplanes $\A^\aff = \tbigset{H_\alpha^k}{\alpha \in \PHI,\ k \in \mathbb{Z}}$ is called an \textbf{alcove}.
In particular, the set
\begin{align}
	A_\circ \ceq \bigset{x \in E}{(\alpha_i,x) > 0 \tforall i \in \{1,\ldots,\ell\},\ (\tilde{\alpha},x) < 1}
\end{align}
is an alcove, called a \textbf{fundamental alcove} of $\PHI$.
The closure $\overline{A_\circ}$ is the convex hull
\begin{align}
	\overline{A_\circ} = \conv\Bigset{\frac{\varpi_i^\vee}{n_i}}{i \in \{0,\dots,\ell\}},
\end{align}
where $\varpi_0^\vee \ceq 0$.

For a vector $v \in E$, let $t_v$ denote the translation map by $v$.
Let $s_{\alpha,k}$ denote the reflection with respect to $H_{\alpha}^k$.
Then we have
\begin{align}
	s_{\alpha,k} = t_{k\alpha^\vee} \circ s_{\alpha,0}.
\end{align}
for all $\alpha \in \PHI$ and $k \in \mathbb{Z}$.
The \textbf{affine Weyl group} of $\PHI$ is a group generated by the reflections $\tbigset{s_{\alpha,k}}{\alpha \in \PHI,\ k \in \mathbb{Z}}$, denoted by $W_\aff$.
The affine Weyl group is the semidirect product of the Weyl group $W$ by the coroot lattice $\veeQ$:
\begin{align}
	W_\aff = \veeQ \rtimes W.
\end{align}
It is well known that $W_\aff$ acts simply transitively on the set of alcoves and $\overline{A_\circ}$ is a fundamental domain for $W_\aff$ acting on $E$.

The semidirect group $\widehat{W_\aff} \ceq Z \rtimes W$ is called the \textbf{extended affine Weyl group} of $\PHI$.
The affine Weyl group $W_\aff$ is a subgroup of $\widehat{W_\aff}$, but the group $\widehat{W_\aff}$ is not a reflection group.
The action of $\widehat{W_\aff}$ on the set of alcoves is transitive but not simply transitive.
Moreover, $\overline{A_\circ}$ is not a fundamental domain for $\widehat{W_\aff}$ acting on $E$.

We consider the stabilizer subgroup 
\begin{align}
	\widehat{\OMEGA} \ceq \bigset{\widehat{\omega} \in \widehat{W_\aff}}{\widehat{\omega}(A_\circ) = A_\circ}.
\end{align}
We have a decomposition $\widehat{W_\aff} \cong W_\aff \rtimes \widehat{\OMEGA}$ and isomorphism
\begin{align}
	\widehat{\OMEGA} \cong \widehat{W_\aff}/W_\aff \cong Z/\veeQ.
\end{align}
Hence $\widehat{\OMEGA}$ is a finite group since $\#\widehat{\OMEGA} = f = (Z:\veeQ) < \infty$ (see \cref{table1}).

\begin{table}[]
	
	\caption{List of $\widehat{\OMEGA}$ and $f$}
	
	\begin{tabular}{ll|c|c}
		\multicolumn{2}{c|}{$\PHI$} & \multicolumn{1}{c|}{$\widehat{\OMEGA}$} & \multicolumn{1}{c}{$f$} \\  \hline\hline
		$A_\ell$      & ($\ell \geq 1$)            & $\mathbb{Z}/(\ell+1)\mathbb{Z}$                        & $\ell+1$                                     \\ 
		$B_\ell$      & ($\ell \geq 2$)            & $\mathbb{Z}/2\mathbb{Z}$                               & 2                                            \\ 
		$C_\ell$      & ($\ell \geq 3$)            & $\mathbb{Z}/2\mathbb{Z}$                               & 2                                            \\ 
		$D_\ell$      & ($\ell \geq 4$, even)      & $\mathbb{Z}/2\mathbb{Z} \times \mathbb{Z}/2\mathbb{Z}$ & 4                                            \\ 
		$D_\ell$      & ($\ell \geq 5$, odd)       & $\mathbb{Z}/4\mathbb{Z}$                               & 4                                            \\ 
		$E_6$         &                            & $\mathbb{Z}/3\mathbb{Z}$                               & 3                                            \\ 
		$E_7$         &                            & $\mathbb{Z}/2\mathbb{Z}$                               & 2                                            \\ 
		$E_8$         &                            & 1                                                      & 1                                            \\ %
		$F_4$         &                            & 1                                                      & 1                                            \\ 
		$G_2$         &                            & 1                                                      & 1                                            \\ 
	\end{tabular}
	
	\ 
	
	\label{table1}
	
\end{table}

Let $I \ceq \{0,\ldots,\ell\}$ and set
\begin{align}
	J \ceq \bigset{i \in I}{n_i = 1}.
\end{align}
If $j \in J$, then $\varpi_j^\vee$ is called a \textbf{minuscule coweight}.
It is known that minuscule coweights form representatives of the classes of $Z/\veeQ$ (cf. \cite[\rom{6}, Exercise 24]{Bourbaki}).

For $j \in J$, let $w_j$ be the longest element of the Weyl group corresponding to $\DELTA_0 \setminus \{\alpha_0,\alpha_j\}$.
Define $\omega_j \in W$ and $\widehat{\omega_j} \in \widehat{W_\aff}$ by
\begin{align}
	\omega_j \ceq w_jw_0,\qquad
	\widehat{\omega_j} \ceq t_{\varpi_j^\vee} \circ \omega_j.
\end{align}
Then we have the following (\cite[\rom{6}, \S2, Proposition 6]{Bourbaki}, \cite[Proposition-Definition 2.2.1]{Garnier}):
\begin{align}
	\widehat{\OMEGA} = \bigset{\widehat{\omega}_j \in \widehat{W_\aff}}{j \in J}.
	\label{hatomega}
\end{align}

\begin{proposition}[{\cite[Lemma 2.2.4]{Garnier}}]\ 
	The group $\widehat{\OMEGA}$ acts on the vertex set $\tbigset{\frac{\varpivee_i}{n_i}}{i \in I}$ of $\overline{A_\circ}$.
	In particular, $\widehat{\OMEGA}$ acts on the minuscule coweights $\tbigset{\varpivee_j}{j \in J}$.
\end{proposition}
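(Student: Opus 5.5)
The statement has two parts. The first is that every $\widehat{\omega}\in\widehat{\OMEGA}$ maps the vertex set $\bigset{\frac{\varpivee_i}{n_i}}{i\in I}$ of $\overline{A_\circ}$ to itself. The second is that the minuscule vertices $\tbigset{\varpivee_j}{j\in J}$ are permuted among themselves. My plan is to use only that $\widehat{\omega}$ is an affine bijection of $E$ preserving $A_\circ$, together with the explicit description \eqref{hatomega}.

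\emph{Step 1 (vertices go to vertices).} Since $\widehat{\omega}(A_\circ)=A_\circ$ and $\widehat{\omega}$ is an affine homeomorphism of $E$, it preserves the closure $\overline{A_\circ}$. The closure is the simplex $\conv\tbigset{\frac{\varpivee_i}{n_i}}{i\in I}$. An affine bijection maps a convex set onto a convex set and sends extreme points to extreme points, since it preserves convex combinations and is invertible by an affine map. The extreme points of a simplex are exactly its vertices, so $\widehat{\omega}$ permutes the set $\tbigset{\frac{\varpivee_i}{n_i}}{i\in I}$. Composition of elements of $\widehat{\OMEGA}$ then gives a group action.

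\emph{Step 2 (minuscule vertices go to minuscule vertices).} Here I use integrality. Write $\widehat{\omega}=t_z\circ w$ with $z\in Z$ and $w\in W$. The lattice $Z$ is $W$-stable, because $W$ permutes $\PHI$ and preserves the inner product, and $Z$ is stable under translation by elements of $Z$. Hence $\widehat{\omega}(Z)=Z$.

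Now take a vertex $\frac{\varpivee_i}{n_i}$. It lies in $Z$ exactly when $(\alpha_k,\varpivee_i/n_i)\in\mathbb{Z}$ for all $k\in\{1,\ldots,\ell\}$, that is, when $1/n_i\in\mathbb{Z}$, that is, when $n_i=1$. The vertex $\varpivee_0=0$ also lies in $Z$, with $n_0=1$. So the vertices lying in $Z$ are exactly the minuscule ones, $\tbigset{\varpivee_j}{j\in J}$. Since $\widehat{\omega}$ preserves both the vertex set and $Z$, it preserves their intersection, which gives the second claim.

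\emph{Main obstacle.} I expect the only delicate point to be the description of $\overline{A_\circ}$ as a simplex with exactly these $\ell+1$ vertices, which the paper states. Granting it, the extreme-point argument is routine. One should also check that the $\frac{\varpivee_i}{n_i}$ are affinely independent, so that each is genuinely a vertex. This holds because $0$ together with positive multiples of a basis $\varpivee_1,\ldots,\varpivee_\ell$ is affinely independent.
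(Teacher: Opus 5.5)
Your proof is correct and complete. Each element of $\widehat{\Omega}$ is an affine bijection preserving $A_\circ$, hence $\overline{A_\circ}$ and its extreme points, so it permutes the vertices $\varpi_i^\vee/n_i$. Since $Z \rtimes W$ stabilizes $Z$, and the vertices lying in $Z$ are exactly those with $n_i=1$ (integrality need only be tested on simple roots, because every root is an integral combination of them), the minuscule coweights are permuted as well.

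The paper gives no argument of its own for this statement and simply cites Garnier's Lemma 2.2.4, so there is no in-paper proof to compare against; your argument is the standard self-contained justification of it.
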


For $j \in J$, let $\sigma_j$ and $\widehat{\sigma_j}$ be permutations on $I$ defined by 
\begin{align}
	\omega_j(\alpha_i) = \alpha_{\sigma_j(i)},\qquad
	\widehat{\omega_j}\left( \frac{\varpivee_i}{n_i} \right) = \frac{\varpivee_{\widehat{\sigma_j}(i)}}{n_{\widehat{\sigma_j}(i)}}.
\end{align}
for all $i \in I$.

\begin{proposition}[{\cite[Lemma 2.2.5]{Garnier}, \cite[Lemma 1]{KomrakovPremet}}] \label{prop2.6}
	Let $j \in J$.
	Then $\sigma_j$ and $\widehat{\sigma_j}$ satisfy the following:
	\begin{eenumerate}
		\item $\sigma_j = \widehat{\sigma_j}$;
		\item\label{2.6.2} $\sigma_j(0) = j$;
		\item\label{2.6.3} $n_i = n_{\sigma_j(i)}$ for all $i \in I$.
	\end{eenumerate}
\end{proposition}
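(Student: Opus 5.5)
We need to prove Proposition 2.6: (a) $\sigma_j=\widehat{\sigma_j}$, (b) $\sigma_j(0)=j$, (c) $n_i=n_{\sigma_j(i)}$.

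The plan is to first prove that $\widehat{\omega_j}$ stabilizes $A_\circ$, and then compare the linear part $\omega_j$ with the action on vertices. Since $\widehat{\omega_j}=t_{\varpivee_j}\circ\omega_j$ is affine with linear part $\omega_j$, it maps the affine function $x\mapsto(\alpha_i,x)$ (with the convention that for $i=0$ we use $x\mapsto 1+(\alpha_0,x)=1-(\tilde\alpha,x)$) to another affine function. The walls of $\overline{A_\circ}$ are $H_i\colon (\alpha_i,x)=0$ for $i\ge1$ and $H_0\colon(\tilde\alpha,x)=1$. The vertex $\varpivee_i/n_i$ is the unique vertex not on $H_i$. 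Since $\widehat{\omega_j}$ permutes the walls of $A_\circ$, it is determined by a permutation $\tau$ of $I$ with $\widehat{\omega_j}(H_i)=H_{\tau(i)}$. Then the vertex opposite $H_i$ goes to the vertex opposite $H_{\tau(i)}$, so $\widehat{\sigma_j}=\tau$.

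Next we compute $\tau$ via the linear part. For $y=\widehat{\omega_j}(x)=\omega_j x+\varpivee_j$, we have $(\omega_j\alpha_i,y)=(\alpha_i,x)+(\omega_j\alpha_i,\varpivee_j)$. This shows the linear part of the wall function transforms by $\omega_j$, so the gradient $\alpha_i$ (the inward normal, where for $i=0$ the inward normal of $H_0$ is $-\tilde\alpha=\alpha_0$) is sent to a positive multiple of the inward normal of $H_{\tau(i)}$. Since $\omega_j$ is an isometry and both normals are roots, the multiple must be $1$, giving $\omega_j(\alpha_i)=\alpha_{\tau(i)}$ provided the lengths match. This yields $\sigma_j=\tau=\widehat{\sigma_j}$, which is (a). We first need $\omega_j(\DELTA_0)=\DELTA_0$, which I would check directly: $w_0$ sends $\DELTA$ to $-\DELTA$, $w_j$ sends $\DELTA_0\setminus\{\alpha_0,\alpha_j\}$ to its negative, and one tracks $\alpha_j$ and $\alpha_0$ using that $n_j=1$. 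Alternatively, one can cite Bourbaki, Ch. VI, §2, Prop. 6, for the fact that $\widehat{\omega_j}$ stabilizes $A_\circ$. I expect this to be the main obstacle: showing $\omega_j$ really permutes $\DELTA_0$, with the correct length and sign bookkeeping.

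For (b), evaluate at the vertex $0=\varpivee_0/n_0$. We have $\widehat{\omega_j}(0)=\varpivee_j=\varpivee_j/n_j$ because $n_j=1$. Hence $\widehat{\sigma_j}(0)=j$, and by (a) $\sigma_j(0)=j$.

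For (c), apply $\omega_j$ to the relation \eqref{dependentdelta0}, $\sum_i n_i\alpha_i=0$. This gives $\sum_i n_i\alpha_{\sigma_j(i)}=0$, i.e. $\sum_k n_{\sigma_j^{-1}(k)}\alpha_k=0$. The linear relations among the $\ell+1$ vectors of $\DELTA_0$ form a one-dimensional space, since $\DELTA$ is a basis. Therefore $n_{\sigma_j^{-1}(k)}=c\,n_k$ for some constant $c$. Evaluating at $k=j$ with $\sigma_j^{-1}(j)=0$ gives $c=n_0/n_j=1$, which proves (c).
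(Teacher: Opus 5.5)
Your argument is correct. The paper gives no proof of its own here: it simply cites \cite[Lemma 2.2.5]{Garnier} and \cite[Lemma 1]{KomrakovPremet}. So the useful comparison is with what the paper takes as input. You derive all three items from the single fact that $\widehat{\omega_j}(A_\circ)=A_\circ$, which the paper already imports from \cite[VI, \S2, Proposition 6]{Bourbaki} when it writes $\widehat{\OMEGA}=\{\widehat{\omega_j} : j\in J\}$. The individual steps are all sound:
\begin{itemize}
\item the wall/vertex duality gives $\widehat{\sigma_j}=\tau$;
\item comparing linear parts of the wall functions gives $\omega_j(\alpha_i)=c\,\alpha_{\tau(i)}$ with $c>0$;
\item the evaluation $\widehat{\omega_j}(0)=\varpivee_j=\varpivee_j/n_j$ gives (b);
\item the one-dimensionality of the space of linear relations among $\DELTA_0$, combined with $\sigma_j^{-1}(j)=0$, gives (c).
\end{itemize}
A bonus of your route is that it also proves $\omega_j(\DELTA_0)=\DELTA_0$. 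That is the paper's next proposition, again only cited, and it is what makes $\sigma_j$ well defined in the first place.

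Two points to tidy. First, $c=1$ does not follow from $\omega_j$ being an isometry; that only gives $c=|\alpha_i|/|\alpha_{\tau(i)}|$. It follows from reducedness: $\omega_j(\alpha_i)$ and $\alpha_{\tau(i)}$ are both roots, and positively proportional roots of a reduced system coincide. So drop the proviso ``provided the lengths match''; the lengths are forced to agree.

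Second, you do not ``first need'' $\omega_j(\DELTA_0)=\DELTA_0$, since your wall argument proves it. Commit to the citation route. The ``direct check'' you sketch is not a proof as it stands. A permutation of $\DELTA_0$ by $\omega_j$ yields $\widehat{\omega_j}(A_\circ)=A_\circ$ exactly when $\omega_j(\alpha_0)=\alpha_j$, i.e.\ $w_j(\tilde\alpha)=\alpha_j$. That is precisely the step you leave as ``tracking'', and your proof of (b) presupposes stabilization, so it cannot supply it.

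If you want that route, the missing step is short:
\begin{itemize}
\item $w_j(\alpha_j)$ has $\alpha_j$-coefficient $1$.
\item It pairs non-negatively with $\DELTA\setminus\{\alpha_j\}$, because $w_j$ sends that set to its negative.
\item It pairs non-negatively with $\alpha_j$, since otherwise $w_j(\alpha_j)+\alpha_j$ would be a positive root with $\alpha_j$-coefficient $2>n_j$.
\item It is long, because $n_j=1$ forces $\alpha_j$ to be long: the $\alpha_j^\vee$-coefficient of $\tilde\alpha^\vee$ is $(\alpha_j,\alpha_j)/(\tilde\alpha,\tilde\alpha)\in\mathbb{Z}_{>0}$.
\end{itemize}
Being a dominant long root, $w_j(\alpha_j)=\tilde\alpha$. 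Hence $\omega_j(\alpha_0)=w_j(\tilde\alpha)=\alpha_j$, using $w_0(\tilde\alpha)=-\tilde\alpha$.
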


Define a set
\begin{align}
	\OMEGA \ceq \bigset{\omega_j \in W}{j \in J}.
\end{align}
Let $\pi : \widehat{W_\aff} \lra W$ be the projection.
Then $\pi(\widehat{\OMEGA}) = \OMEGA$, and hence $\OMEGA$ is a subgroup of $W$ isomorphic to $\widehat{\OMEGA}$.

%
%

\begin{proposition}[{\cite[Lemma 2.2.5, Corollary 2.2.7]{Garnier}}]
	The group $\OMEGA$ acts on $\DELTA_0$.
	In particular, $\OMEGA$ can be regarded as a subgroup of automorphisms of the extended Dynkin diagram.
\end{proposition}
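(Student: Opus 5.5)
The plan is to derive the action on $\DELTA_0$ from the fact that $\widehat{\omega_j}$ stabilizes $A_\circ$, using the description of $A_\circ$ by its walls. Then I will check that the induced permutation preserves the data defining $\mathcal{D}_0(\PHI)$.

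First I describe the walls. For $i \in I$ define affine functions $a_i(x) \ceq (\alpha_i,x) + \delta_{i0}$ on $E$. Then $a_0(x) = 1-(\tilde{\alpha},x)$, and
\begin{align}
	A_\circ = \bigset{x \in E}{a_i(x) > 0 \tforall i \in I}.
\end{align}
Since $\overline{A_\circ}$ is a simplex with vertices $\varpi_i^\vee/n_i$, it has exactly $\ell+1$ facets. The facet $F_i$ lies in the hyperplane $\{a_i = 0\}$; it is the face opposite the vertex $\varpi_i^\vee/n_i$, on which $a_i$ is positive.

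Second, I show that each $\alpha_i$ is sent to some $\alpha_k$. Write $\widehat{\omega_j} = t_{\varpi_j^\vee}\circ\omega_j$, so its linear part is $\omega_j$. Because $\widehat{\omega_j}$ is an affine bijection with $\widehat{\omega_j}(A_\circ) = A_\circ$, it also preserves $\overline{A_\circ}$ and permutes its facets. So for each $i$ there is a $k$ with $\widehat{\omega_j}(F_i) = F_k$. The affine function $a_i \circ \widehat{\omega_j}^{-1}$ vanishes on the affine span of $F_k$ and is positive on $A_\circ$. It is therefore a positive multiple $c\,a_k$ of $a_k$.

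Comparing linear parts, and using that $\omega_j$ is orthogonal, gives $(\alpha_i,\omega_j^{-1}y) = (\omega_j(\alpha_i),y) = c(\alpha_k,y)$ for all $y \in E$. Hence $\omega_j(\alpha_i) = c\,\alpha_k$ with $c > 0$. Since $\omega_j \in W$, the vector $\omega_j(\alpha_i)$ is a root. Because $\PHI$ is reduced, a positive multiple of the root $\alpha_k$ lying in $\PHI$ equals $\alpha_k$, so $c = 1$.

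Third, I assemble the group action. The previous step shows that $\omega_j$ maps $\DELTA_0$ into itself, and it does so bijectively because $\omega_j$ is injective. This recovers $\sigma_j$, consistent with \cref{prop2.6}. Since $\pi$ is a homomorphism, $\OMEGA = \pi(\widehat{\OMEGA})$ is a group, and we get an action of $\OMEGA$ on $\DELTA_0$ by restriction. This action is faithful: an element fixing $\DELTA_0$ pointwise fixes the basis $\DELTA$ of $E$, and so is the identity.

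Finally, I check that each permutation is a diagram automorphism. Each $\omega \in \OMEGA$ is an isometry of $E$, so it preserves all inner products $(\alpha_i,\alpha_{i'})$ and all lengths $(\alpha_i,\alpha_i)$. Hence it preserves the edge multiplicities $f_{ii'}$ and the orientations in \cref{D1} and \cref{D2}, that is, it is an automorphism of $\mathcal{D}_0(\PHI)$. Together with faithfulness, $\OMEGA$ embeds as a subgroup of the automorphism group of the extended Dynkin diagram.

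The only delicate step is the second one, identifying the image of a wall with another wall with the correct positive normalization. The constant term $\delta_{i0}$ in $a_0$ causes no trouble, since only linear parts are compared.
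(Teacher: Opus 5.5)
Your proof is correct. The paper gives no argument of its own for this statement --- it is imported from Garnier, just like \cref{hatomega} and \cref{prop2.6} --- so your wall argument is a self-contained route rather than a reconstruction of the paper's proof.

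\emph{What your route buys.} It uses only that elements of $\widehat{\OMEGA}$ stabilize $A_\circ$ and have linear part in $W$.
\begin{itemize}
\item It applies verbatim to every element of the abstract stabilizer, so it never needs the formula $\omega_j=w_jw_0$.
\item Because $F_i$ is the facet opposite $\varpi_i^\vee/n_i$, it shows at the same time that the permutation of $\DELTA_0$ coincides with the permutation of the vertices of $\overline{A_\circ}$. This is $\sigma_j=\widehat{\sigma_j}$ from \cref{prop2.6}.
\end{itemize}

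\emph{The classical alternative} is a direct computation with $\omega_j=w_jw_0$.
\begin{itemize}
\item One has $w_0(\alpha_0)=\tilde\alpha$, and $w_0(\alpha_i)=-\alpha_{i^*}$ for $i\geq 1$, where $i\mapsto i^*$ is a permutation of $\{1,\ldots,\ell\}$.
\item The element $w_j$ maps $-(\DELTA\setminus\{\alpha_j\})$ onto $\DELTA\setminus\{\alpha_j\}$.
\item Since $n_j=1$, $w_j$ also interchanges $\alpha_j$ and $\tilde\alpha$.
\item Hence $w_jw_0$ permutes $\DELTA_0$, with $\alpha_0\mapsto\alpha_j$.
\end{itemize}
That route avoids alcoves entirely and yields the permutation explicitly, in particular $\sigma_j(0)=j$. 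Its cost is the facts about longest elements of parabolic subgroups and the minuscule condition.

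In short, the classical route gives an explicit permutation, while yours gives a uniform, computation-free argument resting on \cref{hatomega} and $\pi(\widehat{\OMEGA})=\OMEGA$.
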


The permutation $\sigma_j$ is as shown in \cref{sigmaj} (cf. \cite[Table 2]{Garnier}, \cite[Table]{KomrakovPremet})

\begin{table}[]
	
	\caption{List of $J$ and $\sigma_j$}
	\label{sigmaj}
	
	\begin{tabular}{ll|c|c}
		\multicolumn{2}{c|}{$\PHI$} & \multicolumn{1}{c|}{$J$} & \multicolumn{1}{c}{$\sigma_j$} \\ \hline\hline
		$A_\ell$      & ($\ell \geq 1$)            & $J = \{0,\ldots,\ell\}$        	& $\sigma_j = (0\ 1\ \cdots \ \ell)^j \tforall j$  \\ 
		$B_\ell$      & ($\ell \geq 2$)            & $J = \{0.1\}$        				& $\sigma_1 = (0\ 1)$  \\ 
		$C_\ell$      & ($\ell \geq 3$)            & $J = \{0,\ell\}$    				& $\sigma_\ell = (0\ \ell)\displaystyle\prod_{i=1}^{\lfloor \frac{\ell-1}{2} \rfloor}(i\ \ \ell-i)$ \\ 
		$D_\ell$      & ($\ell \geq 4$, even)      & $J = \{0,1,\ell-1,\ell\}$ 			& $\left\{\begin{array}{lll}
			\sigma_1 = (0\ 1)(\ell-1\ \ \ell)\\
			\sigma_{\ell-1} = (0\ \ \ell-1)(1\ \ell)\displaystyle\prod_{i=2}^{\frac{\ell}{2}-1}(i\ \ \ell-i)\\
			\sigma_{\ell} = \sigma_1\sigma_{\ell-1} = (0\ \ell)(1\ \ \ell-1)\displaystyle\prod_{i=2}^{\frac{\ell}{2}-1}(i\ \ \ell-i)
		\end{array}\right.$ \\ 
		$D_\ell$      & ($\ell \geq 5$, odd)       & $J = \{0,1,\ell-1,\ell\}$   		& $\left\{\begin{array}{lll}
			\sigma_1 = \sigma_{\ell-1}^2 =\sigma_{\ell}^2 = (0\ 1)(\ell-1\ \ \ell)\\
			\sigma_{\ell-1} = (0\ \ \ell-1\ \ 1\ \  \ell)\displaystyle\prod_{i=2}^{\frac{\ell-1}{2}}(i\ \ \ell-i)\\
			\sigma_{\ell} = (0\ \ \ell\ \ 1\ \ \ell-1)\displaystyle\prod_{i=2}^{\frac{\ell-1}{2}}(i\ \ \ell-i)
		\end{array}\right.$ \\ 
		$E_6$         &                            & $J = \{0,1,6\}$  					& $\left\{\begin{array}{lll}
			\sigma_1 = (0\ 1\ 6)(2\ 3\ 5)\\
			\sigma_6 = \sigma_1^{-1} = (1\ 0\ 6)(2\ 5\ 3)
		\end{array}\right.$ \\ 
		$E_7$         &                            & $J = \{0,7\}$   					& $\sigma_7 = (0\ 7)(1\ 6)(3\ 5)$  \\ 
		$E_8$         &                            & $J = \{0\}$   						& ---                                            \\ 
		$F_4$         &                            & $J = \{0\}$						& ---                                            \\ 
		$G_2$         &                            & $J = \{0\}$						& ---                                            \\ 
	\end{tabular}
		
\

\end{table}

\section{Root systems constructed by folding}\label{sec3}

\subsection{Subspaces of fixed points}\ \label{sec3.1}

\noindent
Let $\PHI$ be an irreducible reduced root system.
We can assume that $\PHI$ is not of type $E_8$, $F_4$ or $G_2$, since $\OMEGA$ is trivial.

Let $j \in J \setminus \{0\}$, $\omega \ceq \omega_j \in \OMEGA$ and $\sigma \ceq \sigma_j$.
Let $o(\omega)$ denote the order of $\omega$. 
Let $S_0^j,\ldots,S_r^j$ denote all $\langle \sigma \rangle$-orbits in $I$, that is, $I = S_0^j \sqcup \cdots \sqcup S_r^j$m, where we assume that $0 \in S_0^j$.
Then \cref{prop2.6} \cref{2.6.2} implies that $S_0^j \subseteq J$, $j \in S_0^j$ and $\#S_0^j = o(\omega)$.

For $k \in \{0,\ldots,r\}$, let $\bar{n}_k$ be defined as $n_s$, where $s \in S_k^j$.
By \cref{prop2.6} \cref{2.6.3}, the definition of $\bar{n}_k$ is well-defined.
Moreover, we define $m_k$ by
\begin{align}
	m_k \ceq \frac{\#S_k^j \bar{n}_k}{\#S_0^j}.
\end{align}
Note that $m_0 = 1$, and we will see later that $m_k$ is a positive integer.

Let $\DELTA^\vee = \{\varpivee_1,\ldots,\varpivee_\ell\}$ be the dual basis of the simple roots $\DELTA$, and let $\varpivee_0 = 0$.
Then $\DELTA^\vee$ is a basis for $E$.
By definition of $\widehat{\omega} = t_{\varpivee_j}\omega$ and $\sigma$, we have
\begin{align}
	\omega(\varpivee_i) = \varpivee_{\sigma(i)} - n_i\varpivee_{j},\qquad
	\omega^t(\varpivee_i) = \varpivee_{\sigma^t(i)} - n_i\varpivee_{\sigma^t(0)}
\end{align}
for all $i \in I$ and $t \in \mathbb{Z}$.
For $k \in \{0,\ldots,r\}$, define $\pi_k^j \in E$ by
\begin{align}
	\pi_k^j \ceq \sum_{s \in S_k^j}\varpivee_s.
\end{align}
Then we have
\begin{align}
	\omega(\pi_k^j) = \pi_k^j - \#S_k^j \bar{n}_k \varpivee_j,\qquad
	\omega^t(\pi_k^j) = \pi_k^j - \#S_k^j \bar{n}_k \varpivee_{\sigma^t(0)}
\end{align}
for all $t \in \mathbb{Z}$.

Let $E^\omega$ denote the subspace of fixed points by $\omega$:
\begin{align}
	E^\omega \ceq \bigset{x \in E}{\omega(x) = x}.
\end{align}

\begin{definition}
For any $x \in E$, we define an element $x^\omega \in E$ by 
\begin{align}
	x^\omega \ceq \frac{1}{o(\omega)}\sum_{t=1}^{o(\omega)}\omega^t(x).
\end{align}
\end{definition}
It is clear that $x^\omega \in E^\omega$.
Furthermore, let $\phi_\omega : E \lra E^\omega$ be a map defined by $\phi_\omega(x) = x^\omega$.
Then this operator $\phi_\omega$ is linear and surjective.

For $k \in \{1,\ldots,r\}$, define $\bar{\pi}_k^\omega \in E^\omega$ by 
\begin{align}
	\bar{\pi}_k^\omega \ceq \phi_\omega(\pi_k^j) = \pi_k^j - m_k\pi_0^j.
\end{align}

\begin{lemma}\label{lem3.1}
	Let $x \in E$, and define $a_i \ceq (\alpha_i,x)$ for $i \in \{0,\ldots,\ell\}$.
	Then the following are equivalent:
	\begin{eenumeratei}
		\item\label{3.1i} $x \in E^\omega$;
		\item\label{3.1ii} For any $k \in \{0,\ldots,r\}$, $s_1,s_2\in S_k^j$ implies that $a_{s_1} = a_{s_2}$.
	\end{eenumeratei}
\end{lemma}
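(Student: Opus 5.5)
The plan is to use that $\omega$ permutes $\DELTA_0$ via $\sigma$, i.e.\ $\omega(\alpha_i)=\alpha_{\sigma(i)}$ for all $i\in I$, together with the fact that $\omega\in W$ is orthogonal. Orthogonality gives, for every $x\in E$ and $i\in I$,
\begin{align}
	(\alpha_{\sigma(i)},\omega(x)) = (\omega(\alpha_i),\omega(x)) = (\alpha_i,x) = a_i .
\end{align}
Equivalently, $(\alpha_i,\omega(x)) = a_{\sigma^{-1}(i)}$ for all $i\in I$.

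For \cref{3.1i}$\Rightarrow$\cref{3.1ii}, assume $\omega(x)=x$. Then the identity reads $a_{\sigma(i)} = a_i$ for every $i\in I$. By induction, $a_{\sigma^t(i)}=a_i$ for all $t\in\mathbb{Z}$. Each $S_k^j$ is a single $\langle\sigma\rangle$-orbit, so any $s_1,s_2\in S_k^j$ satisfy $s_2=\sigma^t(s_1)$ for some $t$. Hence $a_{s_1}=a_{s_2}$.

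For \cref{3.1ii}$\Rightarrow$\cref{3.1i}, assume $a$ is constant on each orbit. Then $a_{\sigma^{-1}(i)} = a_i$, because $i$ and $\sigma^{-1}(i)$ lie in the same orbit. The identity therefore gives $(\alpha_i,\omega(x)) = (\alpha_i,x)$ for all $i\in I$, and in particular for $i=1,\ldots,\ell$. Since $\DELTA=\{\alpha_1,\ldots,\alpha_\ell\}$ is a basis of $E$ and the inner product is nondegenerate, $\omega(x)-x$ is orthogonal to all of $E$. Hence $\omega(x)=x$.

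The only point needing care is the index $0$: we need $\omega(\alpha_0)=\alpha_{\sigma(0)}$ to hold in $E$ itself. This is supplied by the cited fact that $\OMEGA$ acts on $\DELTA_0$, with $\sigma$ defined through $\omega(\alpha_i)=\alpha_{\sigma(i)}$ for all $i\in I$, including $i=0$. In the converse direction the index $0$ is not needed at all. Its consistency is automatic anyway, since $a_0 = -\sum_{i\geq 1} n_i a_i$ and $n_i$ is constant on orbits by \cref{prop2.6} \cref{2.6.3}. I expect no real obstacle.
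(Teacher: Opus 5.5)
Your proof is correct, and it takes a different (dual) route from the paper's.

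The paper's route works in the coweight basis.
\begin{itemize}
\item It expands $x=\sum_{i=0}^\ell a_i\varpivee_i$ and applies $\omega(\varpivee_i)=\varpivee_{\sigma(i)}-n_i\varpivee_j$. This formula comes from $\widehat{\omega}=t_{\varpivee_j}\circ\omega$ permuting the vertices $\varpivee_i/n_i$ of $\overline{A_\circ}$.
\item The correction terms $-n_ia_i\varpivee_j$ are then removed by the relation $\sum_{i=0}^\ell n_ia_i=0$.
\item In the first direction this gives $\omega(x)=\sum_i a_{\sigma^{-1}(i)}\varpivee_i$.
\item In the second direction it gives $\omega(\pi_k^j)=\pi_k^j-\#S_k^j\bar n_k\varpivee_j$, and hence $\omega(x)=x$.
\end{itemize}

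You instead pair $\omega(x)$ against the roots $\alpha_i$. You use only that $\omega$ is an isometry with $\omega(\alpha_i)=\alpha_{\sigma(i)}$ for all $i\in I$. This gives $(\alpha_i,\omega(x))=a_{\sigma^{-1}(i)}$ immediately.

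Your route needs neither the coweight transformation rule nor the relation $\sum n_ia_i=0$. It also treats the index $0$ uniformly, whereas comparing $\varpivee_i$-coefficients only sees $i\ge 1$, because $\varpivee_0=0$.

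What the paper's route buys is reuse: it already produces the expansion $x=\sum_{k=0}^r\bar a_k\pi_k^j$ and the action of $\omega$ on $\pi_k^j$. Both are used again in the next theorem to show that $\{\bar\pi_1^\omega,\ldots,\bar\pi_r^\omega\}$ is a basis of $E^\omega$.
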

\begin{remark}
	Under the assumption of \cref{lem3.1}, we have
	\begin{align}
		x = \sum_{i=0}^\ell a_i\varpivee_i.
	\end{align}
	Furthermore, the equation \cref{dependentdelta0} implies that
	\begin{align}
		\sum_{i=0}^\ell n_ia_i = 0. \label{kei=0}
	\end{align}
\end{remark}
\begin{proof}[Proof of \cref{lem3.1}]
	First, suppose that $x \in E^\omega$.
	Then
	\begin{align}
		x = \omega(x) 
		&= \sum_{i=0}^\ell a_i(\varpivee_{\sigma(i)} - n_i\varpivee_j) \\
		&= \sum_{i=0}^\ell a_i\varpivee_{\sigma(i)} - \sum_{i=0}^\ell n_ia_i\varpivee_j \\
		&= \sum_{i=0}^\ell a_{\sigma^{-1}(i)}\varpivee_i.
	\end{align}
	Hence $a_{\sigma^{-1}(i)} = a_i$ for any $i \in I$, and \cref{3.1ii} holds.
	
	Second, suppose that the condition \cref{3.1ii} holds.
	Then 
	\begin{align}
		x = \sum_{i=0}^\ell a_i\varpivee_i = \sum_{k=0}^r \bar{a}_k \pi_k^j,
	\end{align}
	where $\bar{a}_k$ is defined as $a_s$ for $s \in S_k^j$.
	Hence we have
	\begin{align}
		\omega(x) = \sum_{k = 0}^r \bar{a}_k\omega(\pi_k^j)
		&= \sum_{k=0}^r\bar{a}_k(\pi_k^j - \#S_k^j \bar{n}_k \varpivee_j)\\
		&= \sum_{k=0}^r\bar{a}_k\pi_k^j - \sum_{k=0}^ra_k\#S_k^j \bar{n}_k \varpivee_j\\
		&= \sum_{k=0}^r\bar{a}_k\pi_k^j - \sum_{i=0}^\ell n_i a_i \varpivee_j\\
		&= \sum_{k=0}^r\bar{a}_k\pi_k^j\\
		&= x. \qedend
	\end{align}
\end{proof}

\begin{theorem}
	$\{\bar{\pi}_1^\omega,\ldots,\bar{\pi}_r^\omega\}$ is a basis for $E^\omega$.
\end{theorem}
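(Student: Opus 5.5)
Lemma~\cref{lem3.1} describes $E^\omega$ in coordinates, so I will show directly that $\bar{\pi}_1^\omega,\ldots,\bar{\pi}_r^\omega$ span $E^\omega$ and are linearly independent.

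\textbf{Each $\bar{\pi}_k^\omega$ lies in $E^\omega$.} This holds because $\phi_\omega$ maps into $E^\omega$. For completeness, the closed form $\bar{\pi}_k^\omega = \pi_k^j - m_k\pi_0^j$ can be checked by averaging
\begin{align}
	\omega^t(\pi_k^j) = \pi_k^j - \#S_k^j\bar{n}_k\varpivee_{\sigma^t(0)}
\end{align}
over $t$. Since $\sigma^t(0)$ runs over $S_0^j$, each index being hit $o(\omega)/\#S_0^j$ times, and $\#S_0^j = o(\omega)$, the average of the correction terms is
\begin{align}
	\frac{\#S_k^j\bar{n}_k}{\#S_0^j}\pi_0^j = m_k\pi_0^j.
\end{align}

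\textbf{Spanning.} Take $x \in E^\omega$ and set $a_i = (\alpha_i,x)$. By \cref{lem3.1}, $a_i$ is constant on each orbit; call the common value $\bar{a}_k$. Then
\begin{align}
	x = \sum_{i=0}^{\ell} a_i\varpivee_i = \sum_{k=0}^{r}\bar{a}_k\pi_k^j.
\end{align}
The $k=0$ term needs a convention: $\varpivee_0 = 0$, so $\pi_0^j = \sum_{s \in S_0^j\setminus\{0\}}\varpivee_s$, and the coefficient is unaffected. Relation \cref{kei=0} gives $\sum_{k}\#S_k^j\bar{n}_k\bar{a}_k = 0$. Dividing by $\#S_0^j$ and using $m_0 = 1$ gives
\begin{align}
	\bar{a}_0 = -\sum_{k=1}^{r} m_k\bar{a}_k.
\end{align}
Substituting, $x = \sum_{k=1}^{r}\bar{a}_k(\pi_k^j - m_k\pi_0^j) = \sum_{k=1}^{r}\bar{a}_k\bar{\pi}_k^\omega$.

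\textbf{Linear independence.} Suppose $\sum_{k=1}^{r} c_k\bar{\pi}_k^\omega = 0$. Expand it in the basis $\{\varpivee_1,\ldots,\varpivee_\ell\}$ of $E$. Pick any $k \geq 1$ and any $s \in S_k^j$. Then $s \neq 0$, and $s \notin S_0^j$ because orbits are disjoint, so $\varpivee_s$ occurs only in $\pi_k^j$ and its coefficient is $c_k$. Hence $c_k = 0$ for all $k$.

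The only delicate points are the bookkeeping for the index $0$ inside $S_0^j$ (because $\varpivee_0 = 0$) and the use of \cref{kei=0} to eliminate $\bar{a}_0$. The argument also shows $\dim E^\omega = r$.
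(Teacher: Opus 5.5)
Your proof is correct and follows the paper's argument. For spanning, you use Lemma~3.1 together with $\sum_{i=0}^{\ell} n_i a_i = 0$ to eliminate $\bar{a}_0$. Linear independence comes from reading off the coefficient of $\varpivee_s$ for $s \in S_k^j$ with $k \geq 1$; the paper simply calls this step clear, and you also verify the closed form $\bar{\pi}_k^\omega = \pi_k^j - m_k\pi_0^j$, which the paper only asserts.
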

\begin{proof}
	It is clear that $\{\bar{\pi}_1^\omega,\ldots,\bar{\pi}_r^\omega\}$ is linear independent.
	Let $x \in E^\omega$ and $a_i \ceq (\alpha_i,x)$ for $i \in \{0,\ldots,\ell\}$.
	The condition \cref{3.1ii} in \cref{lem3.1} and the equation \cref{kei=0} imply that 
	\begin{align}
		\sum_{k=0}^r m_k\bar{a}_k = \frac{1}{\#S_0^j}\sum_{i=0}^\ell n_ia_i = 0,
	\end{align}
	and hence we have
	\begin{align}
		\bar{a}_0 = -\sum_{k=1}^r m_k\bar{a}_k.
	\end{align}
	Therefore 
	\begin{align}
		x &= \sum_{k=0}^r \bar{a}_k\pi_k^j \\
		&= \sum_{k=1}^r \bar{a}_k\pi_k^j + \bar{a}_0\pi_0^j \\
		&= \sum_{k=1}^r \bar{a}_k\pi_k^j -\sum_{k=1}^r m_k\bar{a}_k\pi_k^j \\
		&= \sum_{k=1}^r \bar{a}_k \bar{\pi}_k^\omega.
	\end{align}
	Thus $\{\bar{\pi}_1^\omega,\ldots,\bar{\pi}_r^\omega\}$ is a spanning set of $E^\omega$.
\end{proof}

\subsection{Actions on the root systems}

We consider the $\omega$-action on the root syetem $\PHI$.
To be convenient for the action of $\omega$, we agree on the following:
When we express $\beta \in \PHI$ as a linear combination
\begin{align}
	\beta = \sum_{i=1}^\ell c_i \alpha_i,
\end{align}
we agree to let $c_0 = 0$ and consider
\begin{align}
	\beta = \sum_{i=0}^\ell c_i\alpha_i.
\end{align}
Then we have
\begin{align}
	\omega^t(\beta) = \sum_{i=0}^\ell c_i\alpha_{\sigma^t(i)} = \sum_{i=0}^\ell c_{\sigma^{-t}(i)}\alpha_{i}
\end{align}
for all $t \in \mathbb{Z}$.

\begin{lemma}\label{lem6.1}
	Let $\beta = \sum_{i=0}^\ell c_i \alpha_i \in \PHI^+$ and $t \in \mathbb{Z}$.
	Then the following are equivalent:
	\begin{eenumeratei}
		\item $\omega^t(\beta) \in \PHI^+$;
		\item $c_{\sigma^{-t}(0)} = 0$,
	\end{eenumeratei}
\end{lemma}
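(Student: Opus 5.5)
We write $\omega^t(\beta)$ in the basis $\DELTA_0$ and then remove the $\alpha_0$-term using \cref{dependentdelta0}. By the convention just fixed, $\omega^t(\beta) = \sum_{i=0}^\ell c_{\sigma^{-t}(i)}\alpha_i$. The coefficient of $\alpha_0$ here is $c_{\sigma^{-t}(0)}$, which need not vanish, so this is not yet an expression in $\DELTA$. Put $d \ceq c_{\sigma^{-t}(0)}$ and substitute $\alpha_0 = -\sum_{i=1}^\ell n_i\alpha_i$ (recall $n_0=1$). This gives
\begin{align}
	\omega^t(\beta) = \sum_{i=1}^\ell \bigl(c_{\sigma^{-t}(i)} - d\,n_i\bigr)\alpha_i .
\end{align}
This is the unique expansion of $\omega^t(\beta)$ in the basis $\DELTA$. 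Since $\omega^t \in W$, $\omega^t(\beta)$ is a root, so it lies in $\PHI^+$ or in $\PHI^-$ according to the sign pattern of these coefficients.

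For (ii)$\Rightarrow$(i), suppose $d=0$. Then every coefficient equals $c_{\sigma^{-t}(i)}$. These numbers are coefficients of the positive root $\beta$, with $c_0=0$ in the slot $0$, so they are all $\geq 0$. Hence $\omega^t(\beta)\in\PHI^+$.

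For (i)$\Rightarrow$(ii), suppose $d\neq 0$. Since $\beta\in\PHI^+$ we have $d \geq 0$, so $d\geq 1$. If $\sigma^{-t}(0)=0$, then $d=c_0=0$, a contradiction; so $\sigma^{-t}(0)=i_0$ for some $i_0\in\{1,\ldots,\ell\}$. Let $i_1 \ceq \sigma^t(0)$, which also lies in $\{1,\ldots,\ell\}$ because $\sigma^t$ is a bijection and $\sigma^t(i_0)=0$. The coefficient of $\alpha_{i_1}$ is $c_{0} - d\,n_{i_1} = -d\,n_{i_1}$. By \cref{prop2.6}\cref{2.6.2}, $\sigma^t(0)\in S_0^j\subseteq J$, so $n_{i_1}=1$ and this coefficient equals $-d<0$. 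A root is either positive or negative, so $\omega^t(\beta)\in\PHI^-$, contradicting (i). Hence $d=0$.

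The only delicate point is finding a strictly negative coefficient in the case $d\neq0$, and the index $i_1 = \sigma^t(0)$ supplies it. The bound $0\le c_i\le n_i$ is not even needed. One should still check that $t$ ranges over all integers and that $\sigma^t(0)\neq 0$ in this case, both of which follow from the bijectivity of $\sigma^t$.
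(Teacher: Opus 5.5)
Your proof is correct. It shares the paper's setup: expand $\omega^t(\beta)$ in $\DELTA_0$ and eliminate $\alpha_0$ via \eqref{dependentdelta0}. The key step for (i)$\Rightarrow$(ii), however, is argued differently.

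\emph{The paper's argument.} It first uses the bound $0\le c_i\le n_i$, together with $n_{\sigma^{-t}(0)}=1$, to get $c_{\sigma^{-t}(0)}=1$. It then uses $n_i=n_{\sigma^{-t}(i)}$ (Proposition~\ref{prop2.6}(c)) to show that \emph{every} coefficient $c_{\sigma^{-t}(i)}-n_i$ is $\le 0$. So $\omega^t(\beta)$ is negative directly from the definition.

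\emph{Your argument.} You exhibit a single strictly negative coefficient, at the index $\sigma^t(0)$, where it equals $c_0-d\,n_{\sigma^t(0)}=-d\,n_{\sigma^t(0)}$. You then invoke the dichotomy $\PHI=\PHI^+\sqcup\PHI^-$.

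\emph{Comparison.} Your route needs less. It uses only that $n_i>0$ for all $i$, so even your appeal to $\sigma^t(0)\in J$ is superfluous. It needs neither the dominance bound $c_i\le n_i$ nor the $\sigma$-invariance of the $n_i$. The paper's route yields more information: it gives $c_{\sigma^{-t}(0)}=1$ and the explicit non-positive expansion $\omega^t(\beta)=\sum_{i=1}^\ell\bigl(c_{\sigma^{-t}(i)}-n_i\bigr)\alpha_i$. That extra information is exactly what the first case of the proof of Lemma~\ref{lem6.2} reuses.
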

\begin{proof}
	
	It is clear that $\omega^t(\beta) \in \PHI^+$ if $c_{\sigma^{-t}(0)} = 0$.
	
	Suppose that $c_{\sigma^{-t}(0)} \neq 0$.
	We have $c_{\sigma^{-t}(0)} = 1$ since $0 \leq c_i \leq n_i$ for all $i \in \{1,\ldots,\ell\}$.
	Then
	\begin{align}
		\omega^t(\beta) 
		&= \sum_{i \neq \sigma^{-t}(0)}c_i\alpha_{\sigma^t(i)} + c_{\sigma^{-t}(0)}\alpha_0\\
		&= \sum_{i \neq 0} c_{\sigma^{-t}(i)}\alpha_{i} - \sum_{i=1}^\ell n_i\alpha_i\\
		&= \sum_{i=1}^\ell (c_{\sigma^{-t}(i)} - n_i)\alpha_i = \sum_{i=0}^\ell (c_{\sigma^{-t}(i)} - n_i)\alpha_i.
	\end{align}
	\cref{prop2.6} \cref{2.6.3} implies that $c_{\sigma^{-t}(i)} - n_i = c_{\sigma^{-t}(i)} - n_{\sigma^{-t}(i)} \leq 0$ for all $i \in \{1,\ldots,\ell\}$.
	Hence $\omega^t(\beta) \in \PHI^-$.
\end{proof}

\begin{lemma}\label{lem6.2}
	Let $\beta \in \PHI$ and $t \in \mathbb{Z}$.
	If $\omega^t(\beta ) \neq \beta$, then $(\beta,\, \omega^t(\beta)) \leq 0$.
\end{lemma}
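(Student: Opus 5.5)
The idea is to argue by contradiction through the affine action of $\widehat{\OMEGA}$ on the fundamental alcove. The point is that a root $\gamma$ cannot take an integer value on a point of the open alcove $A_\circ$.

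Suppose $\beta' \ceq \omega^t(\beta) \neq \beta$ and $(\beta,\beta') > 0$. Since $\omega \in W$ is orthogonal, $\beta'$ is a root of the same length as $\beta$, and $\beta' \neq -\beta$ because the inner product is positive. \cref{prop2.2}(a) then gives $\gamma \ceq \beta - \beta' \in \PHI$. The rest of the argument shows that such a $\gamma$ would take an integral value at an interior point of $A_\circ$, which is impossible.

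First I construct a fixed point. The element $\widehat{\omega}^{-t} = (t_{\varpivee_j}\circ\omega)^{-t} \in \widehat{\OMEGA}$ is an affine bijection preserving $A_\circ$, and it generates a finite cyclic group. Take any $y \in A_\circ$ and let $x$ be the average of its orbit under $\langle\widehat{\omega}^{-t}\rangle$. Since $A_\circ$ is convex, $x \in A_\circ$, and $x$ is fixed by $\widehat{\omega}^{-t}$. Write $\widehat{\omega}^{-t} = t_u \circ \omega^{-t}$ with $u \in Z$. The translation part lies in $Z$ because $\widehat{W_\aff} = Z \rtimes W$; by the formulas for $\omega^t$ it should equal $\varpivee_{\sigma^{-t}(0)}$, but only $u \in Z$ is needed. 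The fixed-point property reads $\omega^{-t}(x) = x - u$.

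Next I compute, using that $\omega$ is orthogonal:
\begin{align}
	(\beta',x) = (\omega^t(\beta),x) = (\beta,\omega^{-t}(x)) = (\beta,x) - (\beta,u).
\end{align}
Hence $(\gamma,x) = (\beta,x) - (\beta',x) = (\beta,u) \in \mathbb{Z}$, because $u \in Z$. But $x$ lies in the open alcove $A_\circ$, which by definition avoids every hyperplane $H_\gamma^k$ with $\gamma \in \PHI$ and $k \in \mathbb{Z}$. This contradicts $\gamma \in \PHI$, so $(\beta,\omega^t(\beta)) \leq 0$.

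The main step to check carefully is the fixed-point construction: that $\widehat{\omega}^{-t}$ indeed stabilizes $A_\circ$ and that its translation part lies in $Z$. Both follow from the definition of $\widehat{\OMEGA}$ as the stabilizer of $A_\circ$ inside $Z \rtimes W$. The rest is routine. Note that the argument never needs $\beta$ to be positive, nor the case distinction of \cref{lem6.1}.
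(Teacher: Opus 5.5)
Your proof is correct, and it takes a genuinely different route from the paper's.

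The paper argues with coefficients. It first reduces to $\beta=\sum_i c_i\alpha_i\in\PHI^+$. It then splits according to whether $c_{\sigma^{-t}(0)}$ is $1$ or $0$, i.e.\ whether $\omega^t(\beta)$ is negative or positive by \cref{lem6.1}. In each case, the assumption that $\omega^t(\beta)-\beta$ is a root, together with $0\le c_i\le n_i$ and $n_i=n_{\sigma(i)}$, gives a one-sided inequality between $c_{\sigma^{-t}(i)}$ and $c_i$. Running this inequality around a $\langle\sigma^t\rangle$-orbit forces equality, which contradicts either $c_0=0$ or $\omega^t(\beta)\neq\beta$.

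You instead use only that $\widehat{\omega}^{-t}=t_u\circ\omega^{-t}$ stabilizes $A_\circ$ with $u\in Z$. At a fixed point $x\in A_\circ$ you get $(\beta-\omega^t(\beta),x)=(\beta,u)\in\mathbb{Z}$, whereas every root takes values in $(-1,0)\cup(0,1)$ on $A_\circ$. The steps you flagged all hold:
\begin{itemize}
\item the orbit average lies in $A_\circ$ by convexity, and it is fixed because $\widehat{\omega}^{-t}$ is affine;
\item its linear part is $\omega^{-t}$ because $\pi$ is a homomorphism;
\item $u\in Z$ because $Z$ is $W$-stable.
\end{itemize}

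Your argument is shorter. It needs no reduction to positive roots, no case split, and neither \cref{lem6.1} nor \cref{prop2.6}. It also proves a stronger, type-free fact: the lattice $(1-\omega^t)\bigl(\sum_{\alpha\in\PHI}\mathbb{Z}\alpha\bigr)$ contains no root at all. You could even fix $x$ once and for all as the barycenter of the vertices $\varpivee_i/n_i$, which is fixed by every element of $\widehat{\OMEGA}$.

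The paper's version, in exchange, stays inside the coefficient bookkeeping with respect to $\DELTA_0$ that it reuses throughout (\cref{lem6.1}, \cref{lcom}, and the case analyses of \cref{sec5}). It pays for this with a noticeably more delicate argument in the case $c_{\sigma^{-t}(0)}=1$.
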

\begin{proof}
	We can assume that $\beta = \sum_{i=0}^\ell c_i\alpha_i \in \PHI^+$ without loss of generality.
	Assume that $(\beta,\, \omega^t(\beta)) > 0$.
	Then $\omega^t(\beta) - \beta \in \PHI$ by \cref{prop2.2}.

	First, suppose that $c_{\sigma^{-t}(0)} = 1$.
	\cref{lem6.1} implies that
	\begin{align}
		\omega^t(\beta) - \beta = \sum_{i=0}^\ell (c_{\sigma^{-t}(i)} - c_i - n_i) \alpha_i \in \PHI^-. \label{rhsd}
	\end{align}
	The coefficients on the right-hand side of \cref{rhsd} are all non-positive since $c_{\sigma^{-t}(0)} - c_0 - n_0 = 0$.
	For any $k \in \mathbb{Z}_{>1}$, since $\sigma^{-kt}(0) \in S_0^j \subseteq J$, we have 
	\begin{align}
		-1 \leq c_{\sigma^{-kt}(i)} - c_{\sigma^{-(k-1)t}(i)} - n_{\sigma^{-(k-1)t}(i)} = c_{\sigma^{-kt}(i)} - c_{\sigma^{-(k-1)t}(i)} -1.
	\end{align}
	By using induction, we can see that $c_{\sigma^{-kt}(i)} = 1$ for all $k \in \mathbb{Z}_{>0}$.
	Let $k_0 \ceq o(\omega)$, then $\sigma^{-k_0t}(0) = 0$.
	Hence $c_0 = c_{\sigma^{-k_0t}(0)} = 1$, but this is a contradiction to $c_0 = 0$.
	
	Second, suppose that $c_{\sigma^{-t}(0)} = 0$.
	Then
	\begin{align}
		\omega^t(\beta) - \beta = \sum_{i=0}^\ell (c_{\sigma^{-t}(i)} - c_i) \alpha_i.
	\end{align}
	Since $\omega^t(\beta) \neq \beta$, there exists $i \in \{1,\ldots,\ell\}$ such that $c_{\sigma^{-t}(i)} \neq c_i$.
	Assume that $c_{\sigma^{-t}(i)} < c_i$.
	Then $\omega^t(\beta) - \beta \in \PHI^-$, and hence $c_{\sigma^{-(k+1)t}(i)} \leq c_{\sigma^{-kt}(i)}$ for all $k \in \mathbb{Z}_{>0}$.
	Let $k_0 \ceq o(\omega)$, then $\sigma^{-k_0t}(i) = i$.
	Hence we have
	\begin{align}
		c_i = c_{\sigma^{-k_0t}(i)} \leq c_{\sigma^{-(k_0-1)t}(i)} \leq \cdots \leq c_{\sigma^{-t}(i)} < c_i,
	\end{align}
	but this is a contradiction.
	Similarly, assuming $c_{\sigma^{-t}(i)} > c_i$ also yields a contradiction.
	
	Therefore we have $(\beta,\, \omega^t(\beta)) \leq 0$.
\end{proof}

\subsection{Proof of being root systems}\ \label{sec3.3}

\noindent
For $\beta \in \PHI$, let $O(\beta)$ and $N(\beta)$ be subsets of $\PHI$ defiend by  
\begin{align}
	O(\beta) \ceq \bigset{\omega^t(\beta) \in \PHI}{t \in \mathbb{Z}},\qquad 
	N(\beta) \ceq \bigset{\gamma \in O(\beta)}{\gamma \neq \pm\beta,\ (\beta,\gamma) \neq 0}.
\end{align}
We can see that $O(\omega(\beta)) = O(\beta)$ and $\#N(\omega(\beta)) = \#N(\beta)$ since $(\cdot,\cdot)$ is invariant under $W$.

We consider sets
\begin{align}
	\PHI^{\omega} &\ceq \phi_\omega(\PHI) = \bigset{\beta^{\omega} \in E^\omega}{\beta \in \PHI},&
	\PHI^{\omega}_\re &\ceq \bigset{\beta^{\omega} \in \PHI^{\omega}}{\beta^{\omega} \neq 0},\\
	\DELTA^{\omega}_0 &\ceq \phi_\omega(\DELTA_0) = \bigset{\alpha_i^{\omega} \in \PHI^{\omega}}{\alpha_i \in \DELTA_0},&
	\DELTA^{\omega} &\ceq \bigset{\alpha_i^{\omega} \in \PHI^{\omega}}{\alpha_i \in \DELTA_0\setminus O(\alpha_0)}.
\end{align}
For $k \in \{0,\ldots,r\}$, define
\begin{align}
	\bar{\alpha}_k^\omega \ceq \frac{1}{\#S_k^j}\sum_{s \in S_k^j}\alpha_s.
\end{align}
Then $\bar{\alpha}_k^\omega = \alpha_{s_k}^\omega$ for some $s_k \in S_k^j$, and 
\begin{align}
	\DELTA^\omega_0 = \{\bar{\alpha}_0^\omega,\ldots,\bar{\alpha}_r^\omega\},\qquad
	\DELTA^\omega = \{\bar{\alpha}_1^\omega,\ldots,\bar{\alpha}_r^\omega\}.
\end{align}

The main result in this paper is the following.

\begin{theorem}\label{mainthm}
	Let $\PHI$ is an irreducible reduced root system and $\omega \in \OMEGA\setminus \{1\}$. 
	Then $\PHI^{\omega}_\re$ is a (not necessarily reduced) root system in $E^\omega$.
\end{theorem}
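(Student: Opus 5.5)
We verify the axioms \cref{R1}--\cref{R3} for $\PHI^\omega_\re \subseteq E^\omega$ directly, using the structure of orbits given by \cref{lem6.2}.

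\textbf{Step 1: axiom \cref{R1}.} Finiteness and $0 \notin \PHI^\omega_\re$ hold by definition. For generation, $\phi_\omega$ is a linear surjection $E \to E^\omega$ and $\PHI$ spans $E$, so $\PHI^\omega = \phi_\omega(\PHI)$ spans $E^\omega$. Discarding the zero vectors does not change the span.

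\textbf{Step 2: inner products of folded roots.} Fix $\beta \in \PHI$ and write $m = o(\omega)$. By \cref{lem6.2}, the distinct elements of $O(\beta)$ are pairwise non-acute, so for distinct $\gamma,\gamma' \in O(\beta)$ we have $(\gamma,\gamma') \in \{0, -\tfrac12(\beta,\beta), -(\beta,\beta)\}$, the last value occurring only when $\gamma' = -\gamma$. Using $W$-invariance of $(\cdot,\cdot)$,
\begin{align}
	(\beta^\omega,\beta^\omega) = (\beta,\beta^\omega) = \frac{1}{\#O(\beta)}\sum_{\gamma \in O(\beta)}(\beta,\gamma).
\end{align}
This leads to the following case split.
\begin{itemize}
	\item If $-\beta \in O(\beta)$, then $\beta^\omega = 0$.
	\item Otherwise the orbit consists of pairwise non-acute roots of equal length with nonzero sum. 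Each $\gamma$ has at most one neighbour $\gamma'$ with $(\gamma,\gamma') = -\tfrac12(\beta,\beta)$, since two such neighbours $\gamma',\gamma''$ would give $(\gamma,\gamma'+\gamma'') = -(\beta,\beta)$, and with non-acuteness this forces $\gamma'+\gamma'' = -\gamma$, i.e.\ an $A_2$-type triple summing to zero. In that triple case the sum is $0$, which is absorbed into the first situation.
\end{itemize}
Hence $(\beta^\omega,\beta^\omega)$ equals $(\beta,\beta)/\#O(\beta)$, or $(\beta,\beta)/(2\#O(\beta))$ when $\#N(\beta) = 1$. The quantity $\#N(\beta)$ was introduced precisely to track this.

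\textbf{Step 3: axiom \cref{R2}.} For $\alpha^\omega \neq 0$, define
\begin{align}
	w_\alpha = \prod_{\gamma \in O(\alpha)} s_\gamma
\end{align}
when the orbit is orthogonal, and $w_\alpha = s_{\gamma+\gamma'}$-type products over the $A_2$ pairs when $\#N(\alpha) = 1$. The factors commute, so $w_\alpha$ commutes with $\omega$ and therefore preserves $E^\omega$. The plan is to check that $w_\alpha$ restricted to $E^\omega$ is exactly the reflection $s_{\alpha^\omega}$: it fixes the hyperplane $H_{\alpha^\omega}\cap E^\omega$, since for $x\in E^\omega$ one has $(\gamma,x) = (\alpha^\omega,x)$ for every $\gamma\in O(\alpha)$, and it negates $\alpha^\omega$. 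Then $s_{\alpha^\omega}(\beta^\omega) = w_\alpha(\beta)^\omega$, because $\phi_\omega$ commutes with $w_\alpha$. This element is either $0$ or lies in $\PHI^\omega_\re$, and it cannot be $0$ since $s_{\alpha^\omega}$ is invertible and $\beta^\omega\neq0$.

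\textbf{Step 4: axiom \cref{R3}.} Compute
\begin{align}
	(\alpha^{\omega\vee},\beta^\omega) = \frac{2(\alpha^\omega,\beta)}{(\alpha^\omega,\alpha^\omega)}.
\end{align}
In the orthogonal-orbit case this equals $\sum_{\gamma\in O(\alpha)} (\gamma^\vee,\beta) \in \mathbb{Z}$, using the norm formula from Step 2. In the $\#N(\alpha) = 1$ case it equals $\sum_{\text{pairs}}((\gamma+\gamma')^\vee,\beta)$, where $\gamma+\gamma'$ is a root of the same length as $\gamma$, so the value is again an integer.

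\textbf{Main obstacle.} The delicate step is Step 2: pinning down the possible configurations of $O(\beta)$, in particular excluding longer chains of $-\tfrac12$ inner products in non-simply-laced types. If the general argument stalls, I would instead verify the possible values of $\#N(\beta)$ case by case using the explicit $\sigma_j$ in \cref{sigmaj}, noting that $o(\omega)\le 4$ except in type $A$, where $\PHI$ is simply laced and the orbit geometry is explicit.
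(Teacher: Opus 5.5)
Your overall architecture is the paper's: non-acuteness of orbits (\cref{lem6.2}), the orbit-averaged norm, $w_\alpha$ as a product of commuting reflections (your $s_{\gamma+\hat\gamma}$ is exactly the paper's $s_\gamma s_{\hat\gamma}s_\gamma$), commutation with $\omega$, restriction to $E^\omega$, and integrality via sums of coroots. Steps 1 and 3 are fine, and in places tidier than the paper: you use surjectivity of $\phi_\omega$ instead of the dual-basis computation, you check that $w_\alpha$ fixes the hyperplane and negates the normal, and you note explicitly that $w_\alpha(\beta)^\omega\neq 0$.

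The gap is in Step 2. The inference that two neighbours $\gamma',\gamma''$ of $\gamma$, together with non-acuteness, force $\gamma'+\gamma''=-\gamma$ is false. From $(\gamma,\gamma'+\gamma'')=-(\beta,\beta)$, Cauchy--Schwarz only gives $(\gamma',\gamma'')\ge-\tfrac12(\beta,\beta)$. So $(\gamma',\gamma'')=0$ is allowed, which gives a path rather than a triangle. This really occurs. For $D_5$ with $j=5$, the root $\beta=e_1-e_3$ has orbit $\{e_1-e_3,\ e_3-e_5,\ -e_1-e_3,\ e_3+e_5\}$. This is a $4$-cycle in which $\beta$ has the two mutually orthogonal neighbours $e_3\mp e_5$, and their sum $2e_3$ is not $-\beta$. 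The same happens for $\alpha_1$ in type $A_\ell$, $\ell\ge3$, when $o(\omega)=\ell+1$. Here $-\beta\notin O(\beta)$, so the ``nonzero sum'' assertion in your second bullet also fails. The structural claim you rely on is therefore wrong as stated.

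The obstacle you flag never needs to be overcome: no classification of configurations and no case-by-case check is required. If $-\beta\notin O(\beta)$, every $\gamma\in O(\beta)\setminus\{\beta\}$ has $(\beta,\gamma)\in\{0,-\tfrac12(\beta,\beta)\}$, so your own averaging formula gives
\[
(\beta^\omega,\beta^\omega)=\frac{(2-\#N(\beta))\,(\beta,\beta)}{2\,\#O(\beta)} .
\]
Hence $\#N(\beta)\le 2$, and $\beta^\omega\neq0$ if and only if $\#N(\beta)\le1$. Orbits with two neighbours, like the $4$-cycle above, are exactly the ones that fold to $0$. This is precisely the paper's argument.

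For $\#N(\beta)=1$ you should then say why the pairs $\{\gamma,\hat\gamma\}$ partition $O(\beta)$ and are mutually orthogonal. A nonzero product across two pairs would equal $-\tfrac12(\beta,\beta)$ and create a second neighbour. This orthogonality is what makes your factors commute, and since $\omega s_\gamma\omega^{-1}=s_{\omega(\gamma)}$ permutes the factors, $w_\alpha$ commutes with $\omega$.

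Finally, Step 4 has a factor-of-two slip. Since $(\gamma+\hat\gamma)^\vee=\gamma^\vee+\hat\gamma^\vee$, in the case $\#N(\alpha)=1$ one has $(\alpha^\omega)^\vee=2\sum_{\gamma\in O(\alpha)}\gamma^\vee=2\sum_{\mathrm{pairs}}(\gamma+\hat\gamma)^\vee$, which is twice your expression. For example, in $C_3$ with $j=3$ and $\alpha=\beta=\alpha_1$ the pairing is $2$, while your formula gives $1$. Integrality is unaffected.
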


We will discuss later that $\DELTA^\omega$ is a basis of $\PHI$.

\begin{remark}
	If $\PHI$ is of type $A_\ell$ and $\omega \in \OMEGA$ satisfies $o(\omega) = \ell + 1$, 
	then $S_0^j = I$, that is,
	\begin{align}
		\alpha_i^\omega = \frac{1}{o(\omega)}\sum_{i=0}^\ell \alpha_i = 0
	\end{align}
	for all $\alpha_i \in \DELTA_0$.
	Therefore we can see that $\PHI^\omega = \{0\}$ and $\PHI_\re^\omega = \emptyset$ (see also \cref{thm3.11}).
\end{remark}

We assume that $o(\omega) < \ell + 1$ if $\PHI$ is of type $A_\ell$.
First, we will show that $\DELTA_0^\omega \subseteq \PHI_\re^\omega$.

\begin{lemma} \label{lem3.9}
	Let $\beta_1,\beta_2 \in \PHI$.
	Then
	\begin{align}
		(\beta_1^\omega,\, \beta_2^\omega) = \frac{1}{\#O(\beta_2)}\sum_{\gamma_2 \in O(\beta_2)}(\beta_1,\gamma_2).
	\end{align}
\end{lemma}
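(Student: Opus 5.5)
We expand both averages and use that $\omega$ is an isometry, since $\omega \in W$ and $(\cdot,\cdot)$ is $W$-invariant. Write $N \ceq o(\omega)$. By bilinearity,
\begin{align}
	(\beta_1^\omega,\, \beta_2^\omega) = \frac{1}{N^2}\sum_{s=1}^{N}\sum_{t=1}^{N}\bigl(\omega^s(\beta_1),\, \omega^t(\beta_2)\bigr).
\end{align}

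First we remove the outer sum. Invariance gives $(\omega^s(\beta_1),\omega^t(\beta_2)) = (\beta_1,\omega^{t-s}(\beta_2))$. For fixed $s$, the map $t \mapsto t-s$ is a bijection of $\mathbb{Z}/N\mathbb{Z}$, because $\omega^N = 1$. So the inner sum over $t$ does not depend on $s$, and
\begin{align}
	(\beta_1^\omega,\, \beta_2^\omega) = \frac{1}{N}\sum_{t=1}^{N}(\beta_1,\, \omega^t(\beta_2)) = (\beta_1,\, \beta_2^\omega).
\end{align}

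Next we rewrite this average over the orbit $O(\beta_2)$. Consider the map $\mathbb{Z}/N\mathbb{Z} \to O(\beta_2)$, $t \mapsto \omega^t(\beta_2)$. It is surjective. Its fibres are cosets of the stabilizer $\mathrm{Stab}(\beta_2) = \{t : \omega^t(\beta_2) = \beta_2\}$, a subgroup of $\mathbb{Z}/N\mathbb{Z}$. By orbit–stabilizer, each fibre has exactly $N/\#O(\beta_2)$ elements. Hence
\begin{align}
	\sum_{t=1}^{N}\omega^t(\beta_2) = \frac{N}{\#O(\beta_2)}\sum_{\gamma_2\in O(\beta_2)}\gamma_2.
\end{align}
Dividing by $N$ gives $\beta_2^\omega = \frac{1}{\#O(\beta_2)}\sum_{\gamma_2 \in O(\beta_2)}\gamma_2$. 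Pairing with $\beta_1$ then yields the stated formula.

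There is no real obstacle here. The only point needing care is the orbit–stabilizer count: every element of $O(\beta_2)$ is hit equally often as $t$ runs over a full period of length $N$.
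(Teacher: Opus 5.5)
Your proof is correct and follows the same route as the paper: expand both averages, use the $W$-invariance of $(\cdot,\cdot)$ to reindex $t_2-t_1$ over a full period and reduce to $\frac{1}{o(\omega)}\sum_{t}(\beta_1,\omega^t(\beta_2))$, then rewrite this as an average over $O(\beta_2)$. Your orbit--stabilizer argument (each element of $O(\beta_2)$ is hit exactly $o(\omega)/\#O(\beta_2)$ times) justifies that final step, which the paper states without comment.
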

\begin{proof}
	Recall that $(\cdot,\cdot)$ is invariant under $W$.
	We can see that the following:
	\begin{align}
		(\beta_1^{\omega},\, \beta_2^{\omega})
		&= \frac{1}{o(\omega)^2}\left( \sum_{t_1=1}^{o(\omega)}\omega^{t_1}(\beta_1),\ \sum_{t_2=1}^{o(\omega)}\omega^{t_2}(\beta_2) \right)\\
		&= \frac{1}{o(\omega)^2}\sum_{t_1=1}^{o(\omega)}\left( \beta_1,\ \sum_{t_2=1}^{o(\omega)}\omega^{t_2-t_1}(\beta_2) \right)\\
		&= \frac{1}{o(\omega)}\sum_{t=1}^{o(\omega)}\left( \beta_1,\ \omega^{t}(\beta_2) \right)\\
		&= \frac{1}{\#O(\beta_2)}\sum_{\gamma_2 \in O(\beta_2)}(\beta_1,\gamma_2). \qedend
	\end{align}
\end{proof}

\begin{lemma} \label{lem3.10}
	Let $\beta \in \PHI$.
	Then $\#N(\beta) \leq 2$.
	Furthermore, the following are equivalent:
	\begin{eenumeratei}
		\item $\beta^\omega \neq 0$;
		\item $-\beta \not\in O(\beta)$ and $\#N(\beta) < 2$.
	\end{eenumeratei}
\end{lemma}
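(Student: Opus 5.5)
The plan is to compute the squared length of $\beta^\omega$ via \cref{lem3.9}, taking $\beta_1=\beta_2=\beta$:
\begin{align}
	(\beta^\omega,\beta^\omega) = \frac{1}{\#O(\beta)}\sum_{\gamma\in O(\beta)}(\beta,\gamma).
\end{align}
Then I will evaluate each term of the sum exactly, using \cref{lem6.2} and \cref{prop2.1}. Both assertions of the lemma will follow from the fact that this quantity is non-negative, and vanishes exactly when $\beta^\omega=0$.

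First I split $O(\beta)$ into four disjoint pieces: the element $\gamma=\beta$; the element $-\beta$, if it lies in $O(\beta)$; the set $N(\beta)$; and the remaining $\gamma\neq\pm\beta$ with $(\beta,\gamma)=0$.
\begin{itemize}
\item The term $\gamma=\beta$ contributes $(\beta,\beta)$.
\item The term $\gamma=-\beta$, if present, contributes $-(\beta,\beta)$.
\item The last piece contributes nothing.
\item For $\gamma\in N(\beta)$, write $\gamma=\omega^t(\beta)$. Then $\gamma\neq\beta$, so \cref{lem6.2} gives $(\beta,\gamma)\leq 0$, and $(\beta,\gamma)\neq 0$ by definition of $N(\beta)$, so $(\beta,\gamma)<0$. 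Since $\omega\in W$ preserves lengths, $(\gamma,\gamma)=(\beta,\beta)$. Also $\gamma$ and $\beta$ are non-proportional: $\PHI$ is reduced and $\gamma\neq\pm\beta$. Hence \cref{prop2.1} gives $(\beta,\gamma)=-\frac{(\beta,\beta)}{2}$.
\end{itemize}

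Writing $\varepsilon=1$ if $-\beta\in O(\beta)$ and $\varepsilon=0$ otherwise, this yields
\begin{align}
	(\beta^\omega,\beta^\omega) = \frac{(\beta,\beta)}{\#O(\beta)}\left(1-\varepsilon-\frac{\#N(\beta)}{2}\right).
\end{align}
Since the left-hand side is a squared norm and $(\beta,\beta)>0$, we get $\varepsilon+\frac{\#N(\beta)}{2}\leq 1$. In particular $\#N(\beta)\leq 2$. Moreover, $\beta^\omega\neq0$ if and only if $\varepsilon+\frac{\#N(\beta)}{2}<1$. Since $\varepsilon\in\{0,1\}$ and $\#N(\beta)$ is a non-negative integer, this holds exactly when $\varepsilon=0$ and $\#N(\beta)<2$, which is the claimed equivalence.

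The main point needing care is the $N(\beta)$ term. There I must check that \cref{lem6.2} applies to every orbit element $\omega^t(\beta)\neq\beta$, for arbitrary $t$, which it does as stated. I must also check that reducedness of $\PHI$ excludes $\gamma=\pm2\beta$ or $\pm\frac12\beta$, so that \cref{prop2.1} is applicable. Everything else is bookkeeping.
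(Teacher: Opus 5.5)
Your proof is correct. It differs from the paper's in how the case $-\beta\in O(\beta)$ is handled.

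The paper splits into two cases. When $\omega^{t_0}(\beta)=-\beta$, it applies \cref{lem6.2} twice, to $\omega^t(\beta)$ and to $\omega^{t_0+t}(\beta)=-\omega^t(\beta)$. This shows every orbit element other than $\pm\beta$ is orthogonal to $\beta$, so $N(\beta)=\emptyset$ and the norm is $0$. Only in the case $-\beta\notin O(\beta)$ does the paper carry out the computation you do.

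You instead fold the $-\beta$ term into a single formula via $\varepsilon$. Non-negativity of $(\beta^\omega,\beta^\omega)$ then forces $\#N(\beta)=0$ when $\varepsilon=1$. This removes the need for the separate double application of \cref{lem6.2}, and both assertions come from one identity.

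Your four-piece decomposition of $O(\beta)$ also tracks orbit elements orthogonal to $\beta$ explicitly. In the first case the paper asserts $O(\beta)=\{\beta,-\beta\}$, which is stronger than its argument shows. It is false, for example, for $D_\ell$ with $\ell$ odd, $j=\ell$ and $\beta=e_1-e_\ell$, whose orbit is $\{\pm e_1\pm e_\ell\}$. This is harmless to the paper's conclusion, since those terms contribute $0$ to the sum, and your bookkeeping avoids the slip.
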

	\begin{proof}
		Suppose that $\beta \in \PHI$ satisfies $-\beta \in O(\beta)$.
		Then there exists $t_0 \in \mathbb{Z}_{>0}$ such that $\omega^{t_0}(\beta) = -\beta$.
		If $\omega^t(\beta) \neq \pm\beta$, then \cref{lem6.2} implies that 
		\begin{align}
			0 \leq -(\beta,\, \omega^{t}(\beta)) = (\beta,\, \omega^{t_0 + t}(\beta)) \leq 0.
		\end{align}
		Hence we can see that $N(\beta) = \emptyset$, that is, $\#N(\beta) = 0$.
		Therefore, we have $O(\beta) = \{\beta, -\beta\}$, and it follows from \cref{lem3.9} that 
		\begin{align}
			(\beta^{\omega},\, \beta^{\omega}) = \frac{1}{\#O(\beta)}\sum_{\gamma \in O(\beta)}(\beta,\gamma) = \frac{(\beta,\beta) + (\beta,-\beta)}{\#O(\beta)} = 0.
		\end{align}
		Hence $\beta^\omega = 0$.
		
		Suppose that $\beta \in \PHI$ satisfies $-\beta \not\in O(\beta)$.
		\cref{lem6.2} and \cref{prop2.1} imply that 
		\begin{align}
			(\beta,\gamma) = -\frac{(\beta,\beta)}{2}
		\end{align}
		for all $\gamma \in N(\beta)$.
		Therefore
		\begin{align}
			(\beta^{\omega},\, \beta^{\omega}) = \frac{1}{\#O(\beta)}\sum_{\gamma \in O(\beta)}(\beta,\gamma) = \frac{1}{\#O(\beta)}\left((\beta,\beta) - \#N(\beta) \cdot \frac{(\beta,\beta)}{2}\right) = \frac{(2 - \#N(\beta)) \cdot (\beta,\beta)}{2 \cdot \#O(\beta)}.
			\label{aa}
		\end{align}
		Thus, $\#N(\beta) \leq 2$ since $(\beta^{\omega},\, \beta^{\omega}) \geq 0$.
		Furthermore, $\#N(\beta) = 2$ if and only if $\beta^{\omega} = 0$ under the assumption $-\beta \not\in O(\beta)$.
	\end{proof}

\begin{theorem} \label{thm3.11}
	Suppose that $o(\omega) < \ell + 1$ if $\PHI$ is of type $A_\ell$.
	Then $\bar{\alpha}_k^\omega \neq 0$ for all $k \in \{0,\ldots,r\}$, that is, $\DELTA_0^\omega \subseteq \PHI^\omega_\re$.
\end{theorem}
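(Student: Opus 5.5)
The plan is to avoid Lemma~\ref{lem3.10} and use a direct linear-algebra argument: the only linear relation among the elements of $\DELTA_0$ is \eqref{dependentdelta0}, up to scalar multiples. By definition, for each $k \in \{0,\ldots,r\}$,
\begin{align}
	\bar{\alpha}_k^\omega = \frac{1}{\#S_k^j}\sum_{s \in S_k^j}\alpha_s ,
\end{align}
so $\bar{\alpha}_k^\omega = 0$ exactly when $\sum_{s\in S_k^j}\alpha_s = 0$. We will show that this vanishing forces $S_k^j = I$, and that $S_k^j = I$ can only happen in the excluded case.

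\textbf{Step 1: the relation space.} Since $\{\alpha_1,\ldots,\alpha_\ell\}$ is a basis of $E$, the space of coefficient vectors $(d_0,\ldots,d_\ell)\in\mathbb{R}^{\ell+1}$ with $\sum_{i=0}^\ell d_i\alpha_i = 0$ has dimension $(\ell+1)-\ell = 1$. By \eqref{dependentdelta0} it is spanned by $(n_0,n_1,\ldots,n_\ell)$. Now suppose $\sum_{s\in S_k^j}\alpha_s=0$. Then the indicator vector of $S_k^j$ equals $c\,(n_0,\ldots,n_\ell)$ for some $c\in\mathbb{R}$. The vector is nonzero because $S_k^j\neq\varnothing$, so $c\neq 0$. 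All $n_i$ are positive integers, so every coordinate of $c\,(n_0,\ldots,n_\ell)$ is nonzero. Hence the indicator vector has no zero entries, which means $S_k^j = I$. Moreover, the indicator vector is then constant, so all $n_i$ are equal; since $n_0=1$, we get $n_i = 1$ for every $i$.

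\textbf{Step 2: ruling out $S_k^j = I$.} If $S_k^j = I$, then there is only one $\langle\sigma\rangle$-orbit, so $k=0$ and $S_0^j = I$. Since $S_0^j\subseteq J$, this gives $J = I$, which agrees with the condition $n_i=1$ for all $i$ from Step~1. Inspecting the extended Dynkin diagrams in \cref{figeDD}, or equivalently \cref{sigmaj}, the only irreducible reduced type with all marks $n_i=1$ is $A_\ell$. In that case $\#S_0^j = o(\omega)$ gives $o(\omega) = \#I = \ell+1$, and this case is excluded by hypothesis.

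Therefore $\sum_{s\in S_k^j}\alpha_s\neq 0$, so $\bar{\alpha}_k^\omega\neq 0$ for every $k$. Since $\bar{\alpha}_k^\omega = \alpha_{s_k}^\omega\in\PHI^\omega$, this gives $\DELTA_0^\omega\subseteq\PHI_\re^\omega$.

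The only mildly delicate point is the appeal to the classification in Step~2, namely that only type $A$ has all $n_i=1$. This is immediate from the marks shown in \cref{figeDD}, or from \cref{sigmaj}, where $J = I$ holds only for $A_\ell$.
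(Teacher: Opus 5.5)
Your proof is correct, and it takes a genuinely different route from the paper's.

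The paper's argument goes through \cref{lem3.10}. It first notes that $-\alpha_{i_0}\notin O(\alpha_{i_0})$, so $\alpha_{i_0}^\omega=0$ could only occur if $\#N(\alpha_{i_0})=2$. It then chains the two orbit-neighbours step by step to build a cycle in the extended Dynkin diagram, which forces type $A_\ell$ with $o(\omega)=\ell+1$.

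You instead use that the relation space of $\Delta_0$ is the line spanned by the strictly positive mark vector $(n_0,\ldots,n_\ell)$. Hence $\sum_{s\in S}\alpha_s=0$ for a nonempty $S\subseteq I$ forces $S=I$ and all $n_i=1$, and only $A_\ell$ remains, with $\sigma$ transitive on $I$.

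What your route buys:
\begin{itemize}
\item It is shorter and independent of \cref{lem6.2} and \cref{lem3.10}.
\item It does not even use that $S_k^j$ is a $\langle\sigma\rangle$-orbit.
\item It avoids the rather terse step in the paper where one must see that the cycle built from the sets $N(\alpha_{i_p})$ is all of $\tilde{A}_\ell$, and hence the whole orbit.
\end{itemize}

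Both proofs use the classification exactly once. Yours needs that only $A_\ell$ has all marks equal to $1$; the paper's needs that only $\tilde{A}_\ell$ contains a cycle.

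What the paper's route buys: it records directly that $\#N(\alpha_i)\le 1$ for every $\alpha_i\in\Delta_0$, and this is what the length formula \eqref{foldrule} and \cref{reducedlemma} rely on. Nothing is lost with your approach, though. Combining your conclusion $\alpha_i^\omega\neq 0$ with \cref{lem3.10} recovers $\#N(\alpha_i)<2$ immediately.
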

\begin{proof}
	Let $\alpha_{i_0} \in \DELTA_0$.
	It is clear that $-\alpha_{i_0} \not\in O(\alpha_{i_0})$ since $\OMEGA$ acts on $\DELTA_0$.
	
	Assume that $N(\alpha_{i_0}) = \{\alpha_{i_{-1}},\, \alpha_{i_1}\}$ for some $i_{-1},i_1 \in I$.
	Since $(\cdot,\cdot)$ is invariant under $W$, there exists $i_2 \in I$ such that $N(\alpha_{i_1}) = \{\alpha_{i_0},\alpha_{i_2}\}$.
	By the finiteness of $I$, we can take a sequence $(i_0,i_1,\ldots,i_h = i_{-1})$ of $I$ such that $N(\alpha_{i_p}) = \{\alpha_{i_{p-1}},\alpha_{i_{p+1}}\}$ for all $p \in \{0,\ldots,h\}$.
	Hence $\{\alpha_{i_0},\ldots,\alpha_{i_h}\} \subseteq O(\alpha_{i_0})$ forms a cycle in the extended Dynkin diagram.
	Thus $\PHI$ must be of type $A_\ell$ and $o(\omega) = \ell+1$.
	In other words, under the assumptions of this theorem, this is a contradiction.
	
	Hence $\#N(\alpha_{i_0}) < 2$, and \cref{lem3.10} implies that $\alpha_{i_0}^\omega \neq 0$.
\end{proof}

We will varify that the condition \cref{R1} holds.
Clearly, $\PHI_\re^\omega$ is a finite set not containing $0$.

\begin{lemma}
	The set $\DELTA^\omega$ is the dual basis of $\{\bar{\pi}_1^\omega,\ldots,\bar{\pi}_r^\omega\}$.
	Hence $\DELTA^\omega$ and $\PHI_\re^\omega$ generates $E^\omega$.
\end{lemma}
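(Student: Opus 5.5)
The plan is to compute the pairing matrix between $\DELTA^\omega = \{\bar{\alpha}_1^\omega,\ldots,\bar{\alpha}_r^\omega\}$ and $\{\bar{\pi}_1^\omega,\ldots,\bar{\pi}_r^\omega\}$ directly from the definitions, show it is the identity, and then use the preceding theorem together with \cref{thm3.11}.

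\emph{Step 1: the pairing.} Fix $k,l \in \{1,\ldots,r\}$. Since $0 \in S_0^j$ and the orbits are disjoint, every $s \in S_k^j$ satisfies $s \neq 0$. Hence $\alpha_s \in \DELTA$, and $(\alpha_s,\varpivee_t) = \delta_{st}$ for all $t \in \{1,\ldots,\ell\}$. For $t=0$ we have $(\alpha_s,\varpivee_0)=0$ because $\varpivee_0=0$. From this we get
\begin{align}
	(\alpha_s,\pi_l^j) = \#\bigl(\{s\}\cap S_l^j\bigr) = \delta_{kl}, \qquad (\alpha_s,\pi_0^j) = \#\bigl(\{s\}\cap S_0^j\bigr) = 0 .
\end{align}
Since $\bar{\pi}_l^\omega = \pi_l^j - m_l\pi_0^j$, it follows that $(\alpha_s,\bar{\pi}_l^\omega) = \delta_{kl}$ for every $s \in S_k^j$. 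Averaging over $s \in S_k^j$ then gives
\begin{align}
	(\bar{\alpha}_k^\omega,\bar{\pi}_l^\omega) = \frac{1}{\#S_k^j}\sum_{s\in S_k^j}\delta_{kl} = \delta_{kl}.
\end{align}
Both families lie in $E^\omega$: each $\bar{\alpha}_k^\omega = \alpha_{s_k}^\omega$ is in the image of $\phi_\omega$, and each $\bar{\pi}_l^\omega$ is in $E^\omega$ by definition. By the preceding theorem, $\{\bar{\pi}_1^\omega,\ldots,\bar{\pi}_r^\omega\}$ is a basis of $E^\omega$. So $\DELTA^\omega$ is its dual basis with respect to the restriction of $(\cdot,\cdot)$ to $E^\omega$.

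\emph{Step 2: spanning.} The identity pairing matrix forces $\bar{\alpha}_1^\omega,\ldots,\bar{\alpha}_r^\omega$ to be linearly independent, and there are $r = \dim E^\omega$ of them. Hence $\DELTA^\omega$ spans $E^\omega$. By \cref{thm3.11} (under the standing assumption $o(\omega)<\ell+1$ in type $A_\ell$), each $\bar{\alpha}_k^\omega$ is nonzero. Each is also of the form $\beta^\omega$ with $\beta=\alpha_{s_k}\in\PHI$, so $\DELTA^\omega \subseteq \PHI_\re^\omega \subseteq E^\omega$. Therefore $\PHI_\re^\omega$ also generates $E^\omega$.

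There is no real obstacle. The only point needing care is the observation that no orbit $S_k^j$ with $k\ge1$ contains $0$ or meets $S_0^j$. This is what makes the $-m_l\pi_0^j$ correction term invisible to $\bar{\alpha}_k^\omega$, and it is also what lets us use the duality $(\alpha_s,\varpivee_t)=\delta_{st}$, which is only available for genuine simple roots $\alpha_s \in \DELTA$.
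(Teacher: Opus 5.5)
Your proof is correct and is essentially the paper's argument. The paper likewise computes $(\bar\alpha_{k_1}^\omega,\pi_{k_2}^j)=\delta_{k_1,k_2}$ for all $k_2\in\{0,\ldots,r\}$ from $(\alpha_{s_1},\varpivee_{s_2})=\delta_{s_1s_2}$, then subtracts $m_{k_3}$ times the $k_2=0$ term, and it leaves implicit the spanning consequence that you spell out in Step~2. One small point: the identity pairing already forces each $\bar\alpha_k^\omega\neq 0$, so citing \cref{thm3.11} there is harmless but not needed.
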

\begin{proof}
	Let $k_1\in \{1,\ldots,r\}$.
	For any $k_2 \in \{0,\ldots,r\}$,
	\begin{align}
		(\bar{\alpha}_{k_1}^\omega,\, \pi_{k_2}^j) = \frac{1}{\#S_{k_1}^j}\sum_{s_1 \in S_{k_1}^j}\sum_{s_2 \in S_{k_2}^j}(\alpha_{s_1},\, \varpivee_{s_2}) = \delta_{k_1,k_2}.
	\end{align}
	Hence, for any $k_3 \in \{1,\ldots,r\}$, we have
	\begin{align}
		(\bar{\alpha}_{k_1}^\omega,\, \bar{\pi}_{k_3}^\omega)
		&= \left(\bar\alpha_{k_1}^{\omega},\ \pi_{k_3}^j - m_{k_3}\pi_0^j\right) = (\bar\alpha_{k_1}^{\omega},\, \pi_{k_3}^j) - m_{k_3}(\bar\alpha_{k_1}^\omega,\, \pi_0^j) = \delta_{k_1,k_3}. \qedend
	\end{align}
\end{proof}

To show that the condition \cref{R2} holds, we descrive the reflection of $\beta^\omega$ as a product of reflections of $W$.

\begin{lemma}\label{vees}
	Let $\beta \in \PHI$ satisfy $\beta^\omega \in \PHI_\re^\omega$.
	Then
	\begin{align}
		(\beta^\omega)^\vee = \frac{2}{2 - \#N(\beta)} \sum_{\gamma \in O(\beta)}\gamma^\vee.
	\end{align}
\end{lemma}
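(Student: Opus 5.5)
The plan is to compute both sides directly from the definitions, using the identity $(\beta^\omega)^\vee = 2\beta^\omega/(\beta^\omega,\beta^\omega)$ together with the norm formula \cref{aa} obtained in the proof of \cref{lem3.10}. Since $\beta^\omega \neq 0$, \cref{lem3.10} gives $-\beta \notin O(\beta)$ and $\#N(\beta) < 2$. Then \cref{aa} yields
\begin{align}
	(\beta^\omega,\beta^\omega) = \frac{(2-\#N(\beta))(\beta,\beta)}{2\cdot\#O(\beta)}.
\end{align}

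Next I would express $\beta^\omega$ as an orbit average. The map $t \mapsto \omega^t(\beta)$ for $t \in \{1,\ldots,o(\omega)\}$ hits each element of $O(\beta)$ exactly $o(\omega)/\#O(\beta)$ times, because the stabilizer of $\beta$ in $\langle\omega\rangle$ has index $\#O(\beta)$. Hence
\begin{align}
	\beta^\omega = \frac{1}{\#O(\beta)}\sum_{\gamma\in O(\beta)}\gamma.
\end{align}
Every $\gamma \in O(\beta)$ lies in the $W$-orbit of $\beta$, since $\omega \in W$, so $(\gamma,\gamma)=(\beta,\beta)$ and $\gamma^\vee = 2\gamma/(\beta,\beta)$. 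Therefore
\begin{align}
	\sum_{\gamma\in O(\beta)}\gamma^\vee = \frac{2\,\#O(\beta)}{(\beta,\beta)}\,\beta^\omega.
\end{align}

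Finally I would combine the two computations:
\begin{align}
	(\beta^\omega)^\vee = \frac{2\beta^\omega}{(\beta^\omega,\beta^\omega)} = \frac{4\,\#O(\beta)}{(2-\#N(\beta))(\beta,\beta)}\,\beta^\omega = \frac{2}{2-\#N(\beta)}\sum_{\gamma\in O(\beta)}\gamma^\vee,
\end{align}
which is the claim. No step is a real obstacle, since everything is already supplied by \cref{lem3.9} and \cref{lem3.10}. The only point needing care is the orbit-counting identity: one has to confirm that the normalization $1/o(\omega)$ in the definition of $\beta^\omega$ becomes $1/\#O(\beta)$ when the sum is taken over distinct orbit elements.
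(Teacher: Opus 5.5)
Your proposal is correct and follows the paper's proof: substitute the norm formula \cref{aa} into $2\beta^\omega/(\beta^\omega,\beta^\omega)$, write $\beta^\omega$ as the average over $O(\beta)$, and use that all $\gamma \in O(\beta)$ have the same length to turn $2\gamma/(\beta,\beta)$ into $\gamma^\vee$. The paper uses the orbit-counting identity and the equal-length fact without comment; you justify both explicitly.
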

\begin{proof}
	By \cref{aa}, we have
	\begin{align}
		(\beta^{\omega})^\vee = \frac{2\beta^{\omega}}{(\beta^{\omega},\, \beta^{\omega})}
		&= \frac{2 \cdot \#O(\beta)}{(2 - \#N(\beta)) \cdot (\beta,\beta)} \cdot \frac{2}{\#O(\beta)}\sum_{\gamma \in O(\beta)}\gamma\\
		&= \frac{2}{2 - \#N(\beta)} \sum_{\gamma \in O(\beta)}\frac{2\gamma}{(\gamma,\gamma)}\\
		&= \frac{2}{2 - \#N(\beta)}\sum_{\gamma \in O(\beta)}\gamma^\vee.  \qedend
	\end{align}
\end{proof}

Suppose that $\beta \in \PHI$ satisfies $\#N(\beta) = 1$.
Then, for any $\gamma \in O(\beta)$, there exists a unique root $\hat\gamma \in O(\beta)$ such that $N(\gamma) = \hat\gamma$, that is,
\begin{align}
	(\gamma,\gamma') = \begin{cases*}
		-\frac{(\gamma,\gamma)}{2} & if $\gamma' = \hat\gamma$;\\
		(\gamma,\gamma) & if $\gamma' = \gamma$;\\
		0 & otherwise.
	\end{cases*} \label{ggp}
\end{align}
Define a set
\begin{align}
	\hat{O}(\beta) \ceq \Bigset{\{\gamma,\hat\gamma\} \in \binom{O(\beta)}{2}}{\gamma \in O(\beta)},
\end{align}
where $\binom{O(\beta)}{2}$ is the set of subset $S$ of $O(\beta)$ satisfying $\#S= 2$.
For any distinct $\{\gamma_1,\hat{\gamma_1}\},\ \{\gamma_2,\hat{\gamma_2}\} \in \hat{O}(\beta)$, the uniqueness of $\hat{\gamma}$ implies that $\gamma_1,\hat{\gamma_1} \not\in \{\gamma_2,\hat\gamma_2\}$.
Therefore
\begin{align}
	O(\beta) = \bigsqcup_{\{\gamma,\hat\gamma\} \in \hat{O}(\beta)}\{\gamma,\hat\gamma\}.
\end{align}

Let $\beta \in \PHI$ satisfy $\beta^\omega \in \PHI_\re^\omega$.
Define $w_\beta \in W$ by
\begin{align}
	w_\beta \ceq \begin{cases*}
		\displaystyle\prod_{\gamma \in O(\beta)}s_{\gamma} & if $\#N(\beta) = 0$;\\
		\displaystyle\prod_{\{\gamma,\hat\gamma\} \in \hat{O}(\beta)}s_\gamma s_{\hat\gamma}s_\gamma & if $\#N(\beta) = 1$.
	\end{cases*} \label{sss}
\end{align}
Note that the reflections $s_\gamma$ and $s_{\gamma'}$ commute under the composition if $(\gamma,\gamma') = 0$.
Moreover, we can see that $s_\gamma s_{\hat\gamma}s_\gamma = s_{\hat\gamma}s_\gamma s_{\hat\gamma}$ for all $\{\gamma,\hat\gamma\} \in \hat{O}(\beta)$ since $\gamma$ and $\hat\gamma$ play the same role (see \cref{sssx} below for details).
Thus $w_\beta$ is well-defined.

\begin{lemma}
	Let $\beta \in \PHI$ satisfy $\beta^\omega \in \PHI_\re^\omega$.
	Then
	\begin{align}
		w_\beta(x) = 
		\begin{cases*}
			x - \displaystyle\sum_{\gamma \in O(\beta)} (\gamma^\vee,x)\gamma & if $\#N(\beta) = 0$;\\
			x - \displaystyle\sum_{\{\gamma,\hat\gamma\} \in \hat{O}(\beta)} (\gamma^\vee + \hat{\gamma}^\vee,\, x)(\gamma+\hat\gamma) & if $\#N(\beta) = 1$.
		\end{cases*}
	\end{align}
	for all $x \in E$.
\end{lemma}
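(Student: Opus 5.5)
The formula for $w_\beta$ follows from the orthogonality structure of $O(\beta)$, worked out separately in the two cases $\#N(\beta)=0$ and $\#N(\beta)=1$. Throughout I use \cref{lem3.10}: since $\beta^\omega\neq 0$, we have $-\beta\notin O(\beta)$ and $\#N(\beta)<2$. I also use that $\#N(\gamma)=\#N(\beta)$ for every $\gamma\in O(\beta)$, which follows by $W$-invariance of $(\cdot,\cdot)$.

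\textbf{Case $\#N(\beta)=0$.} Here distinct elements of $O(\beta)$ are mutually orthogonal. Indeed, for $\gamma\neq\gamma'$ in $O(\beta)$, write $\gamma=\omega^t(\beta)$. If $\gamma'=-\gamma$, then $-\beta\in O(\beta)$, which is excluded. Otherwise $(\gamma,\gamma')=(\beta,\omega^{-t}(\gamma'))=0$, because $N(\beta)=\emptyset$.

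Hence the reflections $s_\gamma$, $\gamma\in O(\beta)$, pairwise commute. I compose them one at a time. After applying $s_{\gamma_1},\dots,s_{\gamma_p}$ to $x$, the result is $x-\sum_{q\le p}(\gamma_q^\vee,x)\gamma_q$. The inductive step is
\begin{align}
s_{\gamma_{p+1}}\Bigl(x-\sum_{q\le p}(\gamma_q^\vee,x)\gamma_q\Bigr)=x-\sum_{q\le p}(\gamma_q^\vee,x)\gamma_q-(\gamma_{p+1}^\vee,x)\gamma_{p+1},
\end{align}
which holds since $(\gamma_{p+1}^\vee,\gamma_q)=0$ for $q\le p$. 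This gives the first formula.

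\textbf{Case $\#N(\beta)=1$.} I proceed in two steps.

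\emph{Step 1: a single pair.} For a pair $\{\gamma,\hat\gamma\}$, use \eqref{ggp}: $\gamma,\hat\gamma$ have equal length and $(\gamma^\vee,\hat\gamma)=(\hat\gamma^\vee,\gamma)=-1$. Compute $s_\gamma s_{\hat\gamma}s_\gamma(x)$ directly. We have $s_\gamma(x)=x-(\gamma^\vee,x)\gamma$. Applying $s_{\hat\gamma}$ uses $s_{\hat\gamma}(\gamma)=\gamma+\hat\gamma$. Applying $s_\gamma$ again uses $s_\gamma(\gamma+\hat\gamma)=\hat\gamma$ and $s_\gamma(\hat\gamma)=\gamma+\hat\gamma$. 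Collecting terms gives
\begin{align}
s_\gamma s_{\hat\gamma}s_\gamma(x)=x-(\gamma^\vee+\hat\gamma^\vee,x)(\gamma+\hat\gamma).
\end{align}
Equivalently, $s_\gamma s_{\hat\gamma}s_\gamma=s_{\gamma+\hat\gamma}$, because $\gamma+\hat\gamma$ is a root with $(\gamma+\hat\gamma)^\vee=\gamma^\vee+\hat\gamma^\vee$. The expression is symmetric in $\gamma,\hat\gamma$; this symmetry is the identity \cref{sssx} referred to above.

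\emph{Step 2: distinct pairs are orthogonal.} I check that for distinct pairs $\{\gamma_1,\hat\gamma_1\}\neq\{\gamma_2,\hat\gamma_2\}$, every element of one pair is orthogonal to every element of the other. By \eqref{ggp} and the disjointness of the pairs, $(\gamma_1,\gamma')=0$ for all $\gamma'\in O(\beta)$ other than $\gamma_1$ and $\hat\gamma_1$. One also needs $\gamma'\neq-\gamma_1$, which holds since $-\beta\notin O(\beta)$. The same argument applies to $\hat\gamma_1$.

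Consequently each vector $\gamma_1+\hat\gamma_1$ is orthogonal to each $\gamma_2+\hat\gamma_2$. So the factors $s_{\gamma+\hat\gamma}$ commute, and the same telescoping induction as in the first case yields the second formula.

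The only step requiring care is Step 1, namely the bookkeeping of the inner products $-1$ in the triple product. Once $s_\gamma s_{\hat\gamma}s_\gamma$ is recognised as $s_{\gamma+\hat\gamma}$, the rest is routine.
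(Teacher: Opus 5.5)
Your proposal is correct and follows the paper's proof: in both cases you use the same telescoping induction over mutually orthogonal reflections, with the triple product $s_\gamma s_{\hat\gamma}s_\gamma$ evaluated as in \cref{sssx}. You also spell out details the paper leaves implicit. These are the orthogonality of distinct elements of $O(\beta)$ and of distinct pairs in $\hat O(\beta)$, which uses $-\beta\notin O(\beta)$, the explicit computation behind \cref{sssx}, and the observation $s_\gamma s_{\hat\gamma}s_\gamma=s_{\gamma+\hat\gamma}$.
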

\begin{proof}
	We will prove by induction.
	
	Suppose that $\#N(\beta) = 0$.
	Let $S \subsetneq O(\beta)$ and $\gamma_0 \in O(\beta) \setminus S$.
	Then $s_{\gamma_0}(\gamma) = \gamma$ since $(\gamma_0,\gamma) = 0$ for all $\gamma \in S$.
	Hence, for any $x \in E$, we have
	\begin{align}
		s_{\gamma_0}\left( x - \sum_{\gamma \in S}(\gamma^\vee,x)\gamma \right) = x - (\gamma_0^\vee,x)\gamma_0 - \sum_{\gamma \in S}(\gamma^\vee,x)\gamma 
		= x - \sum_{\gamma \in S \cup \{\gamma_0\}}(\gamma^\vee,x)\gamma
	\end{align}
	By induction, we have 
	\begin{align}
		\left(\prod_{\gamma \in O(\beta)}s_\gamma\right)(x) = x - \displaystyle\sum_{\gamma \in O(\beta)} (\gamma^\vee,x)\gamma.
	\end{align}
	
	Suppose that $\#N(\beta) = 1$.
	Let $\{\gamma,\hat\gamma\} \in \hat{O}(\beta)$.
	Then, for any $x \in E$, 
	\begin{align}
		s_\gamma s_{\hat\gamma} s_\gamma(x) = x - (\gamma^\vee + \hat{\gamma}^\vee,\, x)(\gamma+\hat\gamma). \label{sssx}
	\end{align}
	Let $S \subsetneq \hat{O}(\beta)$ and $\{\gamma_0,\hat{\gamma_0}\} \in \hat{O}(\beta)\setminus S$.
	Then $s_{\gamma_0}s_{\hat{\gamma_0}}s_{\gamma_0}(\gamma + \hat\gamma) = \gamma + \hat\gamma$ for all $\{\gamma,\hat\gamma\} \in S$.
	Hence, for any $x \in E$, we have
	\begin{align}
		s_{\gamma_0}s_{\hat{\gamma_0}}s_{\gamma_0}\left( x -  \sum_{\{\gamma,\hat\gamma\} \in S}(\gamma^\vee + \hat{\gamma}^\vee,\, x)(\gamma+\hat\gamma) \right) 
		&= x - (\gamma_0^\vee + \hat{\gamma_0}^\vee,\, x)(\gamma_0 + \hat{\gamma_0}) - \sum_{\{\gamma,\hat\gamma\} \in S}(\gamma^\vee + \hat{\gamma}^\vee,\, x)(\gamma+\hat\gamma)\\
		&= x - \sum_{\{\gamma,\hat\gamma\} \in S \sqcup \{\{\gamma_0,\hat{\gamma_0}\}\}}(\gamma^\vee + \hat{\gamma}^\vee,\, x)(\gamma+\hat\gamma).
	\end{align}
	By induction, we have 
	\begin{align}
		\left(\prod_{\{\gamma,\hat\gamma\} \in \hat{O}(\beta)}s_\gamma s_{\hat\gamma}s_\gamma\right)(x) = x - \displaystyle\sum_{\{\gamma,\hat\gamma\} \in \hat{O}(\beta)} (\gamma^\vee + \hat{\gamma}^\vee,\, x)(\gamma+\hat\gamma). \qedend
	\end{align}
\end{proof}

\begin{lemma}\label{lem6.8}
	Let $\beta \in \PHI$ satisfy $\beta^\omega \in \PHI_\re^\omega$.
	Then $w_\beta$ and $\omega$ commute under the composition.
\end{lemma}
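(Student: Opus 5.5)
The plan is to use the explicit formula for $w_\beta$ from the preceding lemma together with the fact that $\omega$ is an orthogonal map permuting the orbit $O(\beta)$. I will show that $\omega w_\beta \omega^{-1} = w_\beta$ by evaluating both sides on an arbitrary $x \in E$.

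First, I record the basic conjugation identity $\omega s_\gamma \omega^{-1} = s_{\omega(\gamma)}$ for every root $\gamma$. It follows from $W$-invariance of $(\cdot,\cdot)$:
\begin{align}
	\omega s_\gamma \omega^{-1}(x) = x - (\gamma^\vee,\omega^{-1}(x))\,\omega(\gamma) = x - (\omega(\gamma)^\vee,x)\,\omega(\gamma).
\end{align}
Next, $\omega$ restricts to a bijection of $O(\beta)$ onto itself, since $O(\omega(\beta)) = O(\beta)$.

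In the case $\#N(\beta) = 0$, I substitute into the formula of the preceding lemma. This gives
\begin{align}
	\omega w_\beta \omega^{-1}(x) = x - \sum_{\gamma \in O(\beta)}(\omega(\gamma)^\vee,x)\,\omega(\gamma).
\end{align}
Reindexing the sum by $\gamma' = \omega(\gamma)$ recovers $w_\beta(x)$.

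In the case $\#N(\beta) = 1$, I additionally need $\omega$ to permute the set $\hat{O}(\beta)$ of pairs. That is, I must check $\widehat{\omega(\gamma)} = \omega(\hat\gamma)$. This follows from the characterization \cref{ggp}: $\hat\gamma$ is the unique element $\gamma'$ of $O(\beta)$ with $\gamma' \neq \pm\gamma$ and $(\gamma,\gamma') \neq 0$. This property is preserved by the isometry $\omega$, which maps $O(\beta)$ onto itself. Consequently $\{\gamma,\hat\gamma\} \mapsto \{\omega(\gamma),\omega(\hat\gamma)\}$ is a bijection of $\hat{O}(\beta)$. Then
\begin{align}
	\omega w_\beta \omega^{-1}(x) = x - \sum_{\{\gamma,\hat\gamma\}\in\hat{O}(\beta)}\bigl(\omega(\gamma)^\vee+\omega(\hat\gamma)^\vee,\,x\bigr)\bigl(\omega(\gamma)+\omega(\hat\gamma)\bigr),
\end{align}
and reindexing the sum again gives $w_\beta(x)$.

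The only point requiring care is the well-definedness and equivariance of the pairing $\gamma \mapsto \hat\gamma$. This needs the uniqueness in \cref{ggp}, and it needs that $N(\omega(\gamma))$ is computed within the same orbit $O(\gamma) = O(\beta)$. Both are already available, so I expect this step to be the main (though mild) obstacle. The rest is bookkeeping.
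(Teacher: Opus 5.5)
Your proposal is correct and takes essentially the same route as the paper: both use the explicit formula for $w_\beta$ from the preceding lemma, move $\omega$ across the inner product by $W$-invariance, and reindex the sum over $O(\beta)$ (respectively $\hat{O}(\beta)$). Your explicit check that $\widehat{\omega(\gamma)} = \omega(\hat\gamma)$, so that $\omega$ permutes $\hat{O}(\beta)$, is a step the paper uses only tacitly in the $\#N(\beta)=1$ case, so spelling it out is worthwhile.
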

\begin{proof}
	Suppose that $\#N(\beta) = 0$.
	For any $x \in E$, 
	\begin{align}
		w_\beta(\omega(x)) 
		&= \omega(x) - \sum_{\gamma \in O(\beta)}(\gamma^\vee,\omega(x)) \gamma\\
		&= \omega(x) - \sum_{\gamma \in O(\beta)}(\omega^{-1}(\gamma)^\vee,x) \gamma\\
		&= \omega(x) - \sum_{\gamma \in O(\beta)}(\gamma^\vee,x) \omega(\gamma)\\
		&= \omega(w_\beta(x)).
	\end{align}
	
	Suppose that $\#N(\beta) = 1$.
	For any $x \in E$,
	\begin{align}
		w_\beta(\omega(x))
		&= \omega(x) - \sum_{\{\gamma,\hat\gamma\} \in \hat{O}(\beta)} (\gamma^\vee + \hat{\gamma}^\vee,\, \omega(x))(\gamma+\hat\gamma)\\
		&= \omega(x) - \sum_{\{\gamma,\hat\gamma\} \in \hat{O}(\beta)} (\omega^{-1}(\gamma)^\vee + \omega^{-1}(\hat{\gamma})^\vee,\, x)(\gamma+\hat\gamma)\\
		&= \omega(x) - \sum_{\{\gamma,\hat\gamma\} \in \hat{O}(\beta)} (\gamma^\vee + \hat{\gamma}^\vee,\, x)(\omega(\gamma) + \omega(\hat\gamma))\\
		&= \omega(w_\beta(x)). \qedend
	\end{align}
\end{proof}

\begin{lemma}\label{lem6.7}
	Let $\beta \in \PHI$ satisfy $\beta^\omega \in \PHI_\re^\omega$.
	Then $s_{\beta^\omega} = w_\beta$ as the function on $E^\omega$.
\end{lemma}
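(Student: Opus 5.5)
The plan is to compare the two linear maps on $E^\omega$ directly, using the explicit formula for $w_\beta$ from the preceding lemma and the formula for $(\beta^\omega)^\vee$ from \cref{vees}. For $x \in E^\omega$ we have $s_{\beta^\omega}(x) = x - ((\beta^\omega)^\vee, x)\beta^\omega$, so it suffices to show that $w_\beta(x) - x$ equals $-((\beta^\omega)^\vee,x)\beta^\omega$. The key input is that $x \in E^\omega$ is $\omega$-invariant. By $W$-invariance of the inner product this gives $(\omega^t(\gamma)^\vee, x) = (\gamma^\vee, \omega^{-t}(x)) = (\gamma^\vee, x)$. Hence $(\gamma^\vee,x)$ is constant as $\gamma$ runs over $O(\beta)$, and equals $(\beta^\vee, x)$.

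In the case $\#N(\beta) = 0$ I get
\begin{align}
	w_\beta(x) = x - (\beta^\vee,x)\sum_{\gamma\in O(\beta)}\gamma = x - (\beta^\vee,x)\,\#O(\beta)\,\beta^\omega,
\end{align}
using $\sum_{\gamma \in O(\beta)}\gamma = \#O(\beta)\beta^\omega$. The latter holds because the orbit sum over $t=1,\dots,o(\omega)$ counts each element of $O(\beta)$ exactly $o(\omega)/\#O(\beta)$ times. On the other side, \cref{vees} with $\#N(\beta)=0$ gives $((\beta^\omega)^\vee,x) = \sum_{\gamma\in O(\beta)}(\gamma^\vee,x) = \#O(\beta)(\beta^\vee,x)$. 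The two expressions match.

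In the case $\#N(\beta)=1$, the same constancy gives $(\gamma^\vee+\hat\gamma^\vee, x) = 2(\beta^\vee,x)$ for every pair $\{\gamma,\hat\gamma\}\in\hat O(\beta)$. Since $O(\beta)$ is the disjoint union of these pairs, I obtain
\begin{align}
	w_\beta(x) = x - 2(\beta^\vee,x)\sum_{\gamma\in O(\beta)}\gamma = x - 2(\beta^\vee,x)\,\#O(\beta)\,\beta^\omega.
\end{align}
\cref{vees} with $\#N(\beta)=1$ gives $(\beta^\omega)^\vee = 2\sum_{\gamma\in O(\beta)}\gamma^\vee$, so $((\beta^\omega)^\vee,x) = 2\#O(\beta)(\beta^\vee,x)$, which again matches. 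Finally, \cref{lem6.8} ensures that $w_\beta$ preserves $E^\omega$, so the equality is genuinely one of maps $E^\omega \to E^\omega$.

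The computation itself is routine. The only point needing care is the constancy of $(\gamma^\vee,x)$ over the orbit. This uses that all $\gamma\in O(\beta)$ have the same length, so that $\omega^t(\gamma)^\vee = \omega^t(\gamma^\vee)$. It also uses that each element of $O(\beta)$ appears in exactly one pair of $\hat O(\beta)$, which is the disjoint decomposition established before the definition of $w_\beta$.
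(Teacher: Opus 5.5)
Your proof is correct and follows the paper's argument. Both use the constancy of $(\gamma^\vee,x)$ over $O(\beta)$ for $x\in E^\omega$, substitute the formula $(\beta^\omega)^\vee=\frac{2}{2-\#N(\beta)}\sum_{\gamma\in O(\beta)}\gamma^\vee$, and match the result against the explicit formula for $w_\beta$, with the case $\#N(\beta)=1$ handled through the pair decomposition $\hat O(\beta)$. The only difference is cosmetic: you reduce everything to $(\beta^\vee,x)\sum_{\gamma\in O(\beta)}\gamma$, while the paper keeps the form $\frac{2}{2-\#N(\beta)}\sum_{\gamma\in O(\beta)}(\gamma^\vee,x)\gamma$.
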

\begin{proof}
	Let $x \in E^\omega$.
	Note that $(\gamma_1^\vee,x) = (\gamma_2^\vee,x)$ for all $\gamma_1,\gamma_2 \in O(\beta)$ since $\omega(x) = x$.
	\cref{vees} implies that
	\begin{align}
		s_{\beta^{\omega}}(x)
		&= x - ((\beta^{\omega})^\vee,x)\beta^{\omega}\\
		&= x - \left( \frac{2}{2 - \#N(\beta)} \sum_{\gamma_1 \in O(\beta)}(\gamma_1^\vee,x) \right)\frac{1}{\#O(\beta)}\sum_{\gamma_2 \in O(\beta)}\gamma_2\\
		&= x - \frac{2}{2 - \#N(\beta)}\sum_{\gamma_2 \in O(\beta)}\left( \frac{1}{\#O(\beta)} \sum_{\gamma_1 \in O(\beta)}(\gamma_1^\vee,x) \right)\gamma_2\\
		&= x - \frac{2}{2 - \#N(\beta)}\sum_{\gamma_2 \in O(\beta)} (\gamma_2^\vee,x)\gamma_2\\
		&= 
		\begin{cases*}
			x - \displaystyle\sum_{\gamma \in O(\beta)} (\gamma^\vee,x)\gamma & if $\#N(\beta) = 0$;\\
			x - 2\displaystyle\sum_{\gamma \in O(\beta)} (\gamma^\vee,x)\gamma & if $\#N(\beta) = 1$.
		\end{cases*} 
	\end{align}
	
	Suppose that $\#N(\beta) = 1$.
	Let $x \in E^\omega$ and $\{\gamma,\hat\gamma\} \in \hat{O}(\beta)$.
	Since  $(\gamma^\vee,x) = (\hat\gamma^\vee,x)$, we have
	\begin{align}
		\sum_{\{\gamma,\hat\gamma\} \in \hat{O}(\beta)} (\gamma^\vee + \hat{\gamma}^\vee,\, x)(\gamma+\hat\gamma)
		&= \sum_{\{\gamma,\hat\gamma\} \in \hat{O}(\beta)} \Bigl( 2(\gamma^\vee,x)\gamma + 2(\hat\gamma^\vee,x)\hat\gamma \Bigr)\\
		&= 2\displaystyle\sum_{\gamma \in O(\beta)} (\gamma^\vee,x)\gamma.
	\end{align}
	
	Therefore $s_{\beta^\omega}(x) = w_\beta(x)$ for all $x \in E^\omega$.
\end{proof}


\begin{lemma}
	Let $\beta_1,\beta_2 \in \PHI$ satisfy $\beta_1^\omega, \beta_2^\omega \in \PHI_\re^\omega$.
	Then $s_{\beta_1^{\omega}}(\beta_2^{\omega}) \in \PHI^{\omega}_\re$.
	Hence $s_{\beta^{\omega}}(\PHI^{\omega}_\re) = \PHI^{\omega}_\re$ for all $\beta^\omega \in \PHI_\re^\omega$.
\end{lemma}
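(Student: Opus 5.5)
The plan is to use \cref{lem6.7}, which identifies $s_{\beta_1^\omega}$ on $E^\omega$ with the Weyl group element $w_{\beta_1}$, together with \cref{lem6.8}, which says $w_{\beta_1}$ commutes with $\omega$. Since $\beta_2^\omega \in E^\omega$, \cref{lem6.7} gives
\begin{align}
	s_{\beta_1^\omega}(\beta_2^\omega) = w_{\beta_1}(\beta_2^\omega).
\end{align}

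The key step is to show that $w_{\beta_1}$ commutes with the averaging map $\phi_\omega$. By linearity and \cref{lem6.8},
\begin{align}
	w_{\beta_1}(\beta_2^\omega) = \frac{1}{o(\omega)}\sum_{t=1}^{o(\omega)} w_{\beta_1}\omega^t(\beta_2) = \frac{1}{o(\omega)}\sum_{t=1}^{o(\omega)} \omega^t\bigl(w_{\beta_1}(\beta_2)\bigr) = \bigl(w_{\beta_1}(\beta_2)\bigr)^\omega.
\end{align}
Set $\beta_3 \ceq w_{\beta_1}(\beta_2)$. Because $w_{\beta_1} \in W$ is a product of reflections in roots of $\PHI$, we have $\beta_3 \in \PHI$ by \cref{R2} for $\PHI$. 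Hence $s_{\beta_1^\omega}(\beta_2^\omega) = \beta_3^\omega \in \PHI^\omega$.

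It remains to check that this vector is nonzero. A reflection is injective, and $\beta_2^\omega \neq 0$, so $s_{\beta_1^\omega}(\beta_2^\omega) = \beta_3^\omega \neq 0$. Therefore it lies in $\PHI_\re^\omega$.

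For the second assertion, the first part gives $s_{\beta^\omega}(\PHI_\re^\omega) \subseteq \PHI_\re^\omega$. Since $s_{\beta^\omega}$ is a bijection of $E^\omega$ and $\PHI_\re^\omega$ is finite, the inclusion is an equality. (Alternatively, apply the inclusion to $s_{\beta^\omega}^2 = \mathrm{id}$.)

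There is essentially no serious obstacle here. The one point needing care is that $w_{\beta_1}$ is a genuine element of $W$ (not just a map on $E^\omega$), which holds by its definition \cref{sss} as a product of reflections $s_\gamma$ with $\gamma \in O(\beta_1) \subseteq \PHI$. Combined with the commutation of \cref{lem6.8}, this is what lets us pass $w_{\beta_1}$ through the averaging.
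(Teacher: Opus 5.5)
Your proof is correct and follows the paper's argument: you combine \cref{lem6.7} and \cref{lem6.8} to get $s_{\beta_1^\omega}(\beta_2^\omega) = (w_{\beta_1}(\beta_2))^\omega$, and you note that $w_{\beta_1}(\beta_2) \in \PHI$. The paper leaves two details implicit, namely that the image is nonzero (which you get from injectivity of the reflection) and the upgrade from inclusion to equality (which you get from bijectivity on a finite set); you supply both, and both are sound.
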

\begin{proof}
	It follows from \cref{lem6.8} and \cref{lem6.7} that 
	\begin{align}
		s_{\beta_1^{\omega}}(\beta_2^{\omega})
		&= \frac{1}{o(\omega)}\sum_{t=1}^{o(\omega)}w_{\beta_1}(\omega^t(\beta_2))\\
		&= \frac{1}{o(\omega)}\sum_{t=1}^{o(\omega)}\omega^t(w_{\beta_1}(\beta_2))\\
		&= (w_{\beta_1}(\beta_2))^{\omega}.
	\end{align}
	Since $w_{\beta_1}(\beta_2) \in \PHI$, we have $s_{{\beta_1}^{\omega}}(\beta_2^{\omega})= (w_{\beta_1}(\beta_2))^{\omega} \in \PHI^{\omega}_\re$.
\end{proof}

We will varify that the condition \cref{R3} holds.
\begin{lemma}
	Let $\beta_1, \beta_2 \in \PHI$ satisfy $\beta_1^\omega, \beta_2^\omega \in \PHI_\re^\omega$.
	Then $((\beta_1^\omega)^\vee,\, \beta_2^\vee) \in \mathbb{Z}$.
\end{lemma}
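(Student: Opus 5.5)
The plan is to reduce the pairing to pairings between coroots of $\PHI$ itself, using \cref{vees}.

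\textbf{Step 1: expand $(\beta_1^\omega)^\vee$.} Since $\beta_1^\omega \in \PHI_\re^\omega$, \cref{lem3.10} gives $\#N(\beta_1) \in \{0,1\}$. Then \cref{vees} gives
\begin{align}
	(\beta_1^\omega)^\vee = c\sum_{\gamma \in O(\beta_1)}\gamma^\vee, \qquad c = \frac{2}{2-\#N(\beta_1)} \in \{1,2\}.
\end{align}
Hence
\begin{align}
	((\beta_1^\omega)^\vee,\, \beta_2^\vee) = c\sum_{\gamma \in O(\beta_1)}(\gamma^\vee,\, \beta_2^\vee).
\end{align}
Because $c$ is an integer, it suffices to show that $(\gamma^\vee,\beta_2^\vee) \in \mathbb{Z}$ for all $\gamma,\beta_2 \in \PHI$. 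Every $\gamma \in O(\beta_1)$ lies in $\PHI$, since $\omega \in W$.

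\textbf{Step 2: reduce to the Cartan integers.} Write
\begin{align}
	(\gamma^\vee,\beta_2^\vee) = \frac{2}{(\gamma,\gamma)}\,(\gamma,\beta_2^\vee).
\end{align}
The factor $(\gamma,\beta_2^\vee) = (\beta_2^\vee,\gamma)$ is an integer by \cref{R3} for the reduced root system $\PHI$.

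\textbf{Step 3: handle the scale factor $2/(\gamma,\gamma)$.} This is the main point, because the quantity in the statement is not invariant under rescaling the inner product. Pairing two coroots scales like the inverse of the square of a length. So integrality must be read with the standard normalization of the inner product, in which long roots satisfy $(\alpha,\alpha)=2$. I will make this normalization explicit; it costs nothing, since rescaling does not change $\PHI_\re^\omega$ as a root system.

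Recall that we only treat the types with $\OMEGA \neq 1$, namely $A_\ell$, $B_\ell$, $C_\ell$, $D_\ell$, $E_6$ and $E_7$. With this normalization, every root $\gamma$ of these types has $(\gamma,\gamma) \in \{1,2\}$: in types $A$, $D$, $E$ all roots have squared length $2$, and in types $B$, $C$ short roots have squared length $1$. Hence $2/(\gamma,\gamma) \in \{1,2\}$ is an integer. Combined with Step 2, this gives $(\gamma^\vee,\beta_2^\vee) \in \mathbb{Z}$, and summing over $O(\beta_1)$ completes the proof.

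\textbf{Main obstacle.} The only delicate point is this normalization dependence in Step 3. The rest is a direct consequence of \cref{vees} and \cref{R3}. I would also add a remark that the scale-free quantity $((\beta_1^\omega)^\vee,\beta_2)$, which is what \cref{R3} actually requires for $\PHI_\re^\omega$, is integral by the same expansion. Indeed $((\beta_1^\omega)^\vee,\beta_2^\omega) = ((\beta_1^\omega)^\vee,\beta_2)$, because $(\beta_1^\omega)^\vee$ is $\omega$-invariant, and $(\gamma^\vee,\beta_2) \in \mathbb{Z}$ for every $\gamma \in O(\beta_1)$.
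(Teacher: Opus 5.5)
Your closing remark is the paper's proof. The $\beta_2^\vee$ in the statement is a typo for $\beta_2^\omega$: the paper's first equality is $\frac{2(\beta_1^\omega,\beta_2^\omega)}{(\beta_1^\omega,\beta_1^\omega)}$, and the lemma exists to verify \cref{R3} for $\PHI_\re^\omega$. The paper reaches your expression $\frac{2}{2-\#N(\beta_1)}\sum_{\gamma\in O(\beta_1)}(\gamma^\vee,\beta_2)$ via \cref{lem3.9} and \cref{aa}, where you use \cref{vees} and $\omega$-invariance, and concludes exactly as you do. So make that remark the main argument. Steps 2--3 and the length normalization are not needed for the intended claim, although you are right that the literal coroot--coroot pairing depends on the scale of the inner product.
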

\begin{proof}
	It follows from \cref{lem3.9} and \cref{aa} that
	\begin{align}
		((\beta_1^\omega)^\vee,\, \beta_2^\vee)
		&= \frac{2(\beta_1^\omega,\, \beta_2^\omega)}{(\beta_1^\omega,\, \beta_1^\omega)}\\
		&= \frac{2 \cdot \#O(\beta_1)}{(2 - \#N(\beta_1)) \cdot (\beta_1,\beta_1)} \cdot \frac{1}{\#O(\beta_1)}\sum_{\gamma_1 \in O(\beta_1)}(\gamma_1,\beta_2)\\
		&= \frac{2}{2 - \#N(\beta_1)} \sum_{\gamma_1 \in O(\beta_1)}(\gamma_1^\vee,\beta_2)
	\end{align}
	\cref{lem3.10} implies that $2 - \#N(\beta_1)$ divides $2$.
	Since $(\gamma_1^\vee,\beta) \in \mathbb{Z}$ for all $\gamma_1 \in O(\beta_1)$, we have $((\beta_1^\omega)^\vee,\, \beta_2^\vee) \in \mathbb{Z}$.
\end{proof}

The proof that $\PHI^\omega_\re$ is a root system is now complete.

\section{Simple roots} \label{S4.2}\label{sec4}

\subsection{Bases}\ 

\noindent
In this section, we will discuss bases of $\PHI_\re^\omega$ and their construction.
A basis of $\PHI_\re^\omega$ is obtained as follows.
We will leave the proof for later.

\begin{theorem}\label{basistheorem}
	Let $\PHI$ be an arbitrary irreducible redued root system, and $\omega \in \OMEGA$.
	Then $\DELTA^\omega$ is a basis of $\PHI_\re^\omega$.
	Furthermore, a lattice generated by $\{\bar\pi_1^\omega,\ldots,\bar\pi_r^\omega\}$ is the coweight lattice of $\PHI_\re^\omega$.
\end{theorem}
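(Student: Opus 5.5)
We have to show (B1)–(B3) for $\DELTA^\omega=\{\bar\alpha_1^\omega,\ldots,\bar\alpha_r^\omega\}$, and then identify the coweight lattice. Condition (B1) is already available: the preceding lemma shows that $\DELTA^\omega$ is the dual basis of $\{\bar\pi_1^\omega,\ldots,\bar\pi_r^\omega\}$, hence a basis of $E^\omega$.

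For (B2), let $\beta=\sum_{i=0}^\ell c_i\alpha_i\in\PHI^+$ with $c_0=0$. Since $\phi_\omega$ is linear and $\alpha_s^\omega=\bar\alpha_k^\omega$ for every $s\in S_k^j$, we get
\begin{align}
	\beta^\omega=\sum_{k=0}^r\Bigl(\sum_{s\in S_k^j}c_s\Bigr)\bar\alpha_k^\omega .
\end{align}
To eliminate $\bar\alpha_0^\omega$, apply $\phi_\omega$ to \cref{dependentdelta0}. This gives $\sum_{k=0}^r \#S_k^j\bar n_k\bar\alpha_k^\omega=0$, so $\bar\alpha_0^\omega=-\sum_{k\ge1}m_k\bar\alpha_k^\omega$. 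Substituting,
\begin{align}
	\beta^\omega=\sum_{k=1}^r\Bigl(\sum_{s\in S_k^j}c_s-m_kC_0\Bigr)\bar\alpha_k^\omega,\qquad C_0\ceq\sum_{s\in S_0^j}c_s .
\end{align}
Integrality needs $m_k\in\mathbb{Z}$. I would check this from \cref{sigmaj}, or more conceptually: $m_k=(\bar\pi^\omega_k$ paired with $\bar\alpha_0^\omega)$ up to sign, and $\bar\pi_k^\omega\in Z$ because $\pi_k^j,\pi_0^j\in Z$.

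The sign pattern is the substantive point. Pairing $\beta^\omega$ with $\bar\pi_k^\omega$ gives $(\beta,\bar\pi_k^\omega)=\sum_{s\in S_k^j}c_s-m_kC_0$, since $(\beta,\pi_0^j)=C_0$. Using the $\omega$-invariance of the orbit data, this equals the $\bar\alpha_k^\omega$-coefficient. A cleaner route is to choose in the orbit $O(\beta)$ a suitable representative. By \cref{lem6.1}, $\omega^t(\beta)$ stays positive exactly when $c_{\sigma^{-t}(0)}=0$, and $\beta^\omega=\omega^t(\beta)^\omega$. Since $c_s\le n_s=1$ on $S_0^j\subseteq J$, we have $0\le C_0\le\#S_0^j$.

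\begin{itemize}
	\item If $C_0<\#S_0^j$ for some representative, pick $t$ with $c_{\sigma^{-t}(0)}=0$. We may then assume the representative has $C_0=0$? This is not automatic, and it is the main obstacle.
\end{itemize}

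My plan for this obstacle is to use the bound $\sum_{s\in S_k^j}c_s\le\#S_k^j\bar n_k=m_kC'$ and to argue that the coefficients all share a sign. The natural tool is $\bar\pi_k^\omega$ viewed as a point: $\frac{1}{m_k}\bar\pi^\omega_k$ should be a vertex of the $\omega$-fixed part of $\overline{A_\circ}$, which is a simplex. Positive roots of $\PHI^\omega_\re$ should correspond to affine functionals that are positive near the $\omega$-fixed barycenter direction. Failing a clean conceptual argument, I would fall back on the paper's announced strategy and do the check type by type using \cref{sigmaj}: $A_\ell$, $B_\ell$, $C_\ell$, $D_\ell$ (even and odd), $E_6$, $E_7$. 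In each case I would list the orbits, compute the $m_k$, and verify the sign property directly on the explicit roots, identifying $\PHI^\omega_\re$ with a known (possibly $BC$) type.

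Condition (B3) would also be handled case-wise, by checking that no $\bar\alpha_k^\omega/2$ lies in $\PHI^\omega_\re$. The length formula \cref{aa} constrains which halves could occur.

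Finally, the coweight lattice claim follows from the dual-basis lemma. Once $\DELTA^\omega$ is a basis, the coweight lattice of $\PHI^\omega_\re$ is exactly the $\mathbb{Z}$-dual of $\DELTA^\omega$, which is $\bigoplus_k\mathbb{Z}\bar\pi_k^\omega$.
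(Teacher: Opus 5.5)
\textbf{The gap.} The decisive step, (B2), is never proved. The expansion $\beta^\omega=\sum_{k\ge 1}(\bar c_k-m_k\bar c_0)\bar\alpha_k^\omega$ and the dual-basis pairing are the easy part. The content of the theorem is that these coefficients all have the same sign (\cref{claim}), and your proposed reduction to a representative of $O(\beta)$ with $C_0=0$ cannot work in general.

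Indeed, $\omega^t(\pi_0^j)=\pi_0^j-o(\omega)\varpivee_{\sigma^t(0)}$ gives $(\omega^{-t}(\beta),\pi_0^j)=C_0-o(\omega)\,c_{\sigma^t(0)}$. So $C_0 \bmod o(\omega)$ is constant on $O(\beta)$. For instance, in $B_\ell$ with $\omega=\omega_1$ (which acts as $s_{e_1}$), the root $\beta=e_1+e_2$ has $O(\beta)=\{e_1+e_2,\ e_2-e_1\}$ with $C_0=\pm1$, while $\beta^\omega=e_2$ is a root. The $0$-extra root $e_2=\alpha_2+\cdots+\alpha_\ell$ lies in a different orbit. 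The existence of such roots (\cref{lem4.8}) is obtained in the paper only after $\PHI_\re^\omega$ has been identified, so you cannot invoke it here.

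\textbf{The alcove argument has the same limit.} Let $b_k$ be the barycentre of the vertices $\varpivee_s/n_s$, $s\in S_k^j$. Then $b_0=\pi_0^j/o(\omega)$ and $\bar\pi_k^\omega$ is a positive multiple of $b_k-b_0$. So sign-coherence says that $y\mapsto(\beta,y)-(\beta,b_0)$ does not change sign on the $\widehat\omega$-fixed face $\mathrm{conv}(b_0,\ldots,b_r)$. The relative interior of that face lies in the open alcove, so this follows when $(\beta,b_0)=C_0/o(\omega)$ is an integer. For fractional levels there is no wall of the affine arrangement, hence no control.

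What remains of your plan is the type-by-type check. That is precisely the paper's entire proof: an interval-counting argument for $A_\ell$, explicit lists of the positive roots with $\bar c_0>0$ for $B_\ell$, $C_\ell$, $D_\ell$, and the matrices $X$ for $E_6$, $E_7$. You have not carried it out.

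\textbf{Two smaller points.} First, your \emph{conceptual} argument for $m_k\in\mathbb{Z}$ is circular. $\bar\pi_k^\omega=\pi_k^j-m_k\pi_0^j$ is an average of vectors of $Z$, and it lies in $Z$ only if $m_k$ is already an integer. The identity $(\bar\alpha_0^\omega,\bar\pi_k^\omega)=-m_k$ merely restates the definition of $m_k$. Integrality has to be read off from the orbits, as the paper does.

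Second, (B3) needs no case check. Once (B1) holds and (B2) holds with integer coefficients, $\frac12\bar\alpha_k^\omega$ would have coefficient $\frac12$ in its unique expansion in $\DELTA^\omega$, so it is not a root. The coweight-lattice statement then follows from the dual-basis lemma, as you say.
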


In the process of the proof, we will also verify the following.
\begin{corollary}
	Let $\PHI$ be an arbitrary irreducible reduced root system, and $j \in J$.
	For any $k \in \{1,\ldots,r\}$,
	\begin{align}
		m_k = \frac{\#S_k^j \bar{n}_k}{\#S_0^j}
	\end{align}
	is a positive integer.
\end{corollary}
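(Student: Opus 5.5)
The corollary asserts that $m_k=\#S_k^j\bar n_k/\#S_0^j$ is a positive integer. I would prove it in two ways: first a structural argument that does not use the classification, and then a check against \cref{sigmaj} as a safeguard.

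Positivity is immediate, since $\#S_k^j\ge1$ and $\bar n_k\ge1$. For integrality, the natural approach is to compute the expansion of the highest root of $\PHI_\re^\omega$ in terms of $\DELTA^\omega$. From \cref{dependentdelta0}, grouping indices by $\langle\sigma\rangle$-orbits gives
\begin{align}
	\sum_{k=0}^r \#S_k^j\,\bar n_k\,\bar\alpha_k^\omega = \sum_{i=0}^\ell n_i\alpha_i^\omega = 0 .
\end{align}
Dividing by $\#S_0^j$, and using $\bar n_0=1$ and $\bar\alpha_0^\omega=\alpha_0^\omega$, we obtain
\begin{align}
	-\alpha_0^\omega=\sum_{k=1}^r m_k\bar\alpha_k^\omega .
\end{align}
By \cref{thm3.11}, $\alpha_0^\omega\in\PHI_\re^\omega$. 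Once \cref{basistheorem} is established, $\DELTA^\omega$ is a basis of $\PHI_\re^\omega$. Condition (B2) then forces the coefficients of the root $-\alpha_0^\omega$ with respect to $\DELTA^\omega$ to be integers. They are uniquely determined because $\DELTA^\omega$ is a basis of $E^\omega$, so $m_k\in\mathbb{Z}$. Equivalently, one can pair $\alpha_0^\omega$ with $\bar\pi_k^\omega$, which the lemma preceding \cref{vees} shows is the dual basis: $(\alpha_0^\omega,\bar\pi_k^\omega)=-m_k$. Integrality then follows if the lattice spanned by the $\bar\pi_k^\omega$ is the coweight lattice of $\PHI_\re^\omega$, which is the second assertion of \cref{basistheorem}.

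The main obstacle is circularity, because \cref{basistheorem} itself is proved through the classification. To avoid this dependence, I would check the corollary directly from \cref{sigmaj}.

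For each $\PHI$ and $j\in J\setminus\{0\}$, list the orbits $S_k^j$ together with their sizes and the values $\bar n_k$. The integers $n_i$ are read off the extended diagrams in \cref{figeDD}.

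\begin{itemize}
	\item For type $A_\ell$, all $n_i=1$ and $\langle\sigma\rangle$ acts freely on $I$ by a rotation. Every orbit therefore has size $o(\omega)$, so $m_k=1$.
	\item For types $B_\ell$, $C_\ell$ and $E_7$, the order is $o(\omega)=2$. Orbits of size $2$ give $m_k=\bar n_k$, and fixed points have $\bar n_k=2$, giving $m_k=1$. For example, in $C_\ell$ with $\ell$ even the fixed vertex $\ell/2$ has $n=2$.
	\item For type $E_6$, the order is $o(\omega)=3$ and there are two orbits of size $3$. The fixed vertex $4$ has $n_4=3$, so $m=1$.
	\item For type $D_\ell$, when $o(\omega)=2$, fixed points occur only among the middle vertices, which have $n_i=2$. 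When $o(\omega)=4$, the middle vertices form orbits of size $2$ with $n=2$, or a single fixed vertex with $n=2$. The latter is impossible for $\ell$ odd. In every case $\#S_k^j\bar n_k$ is divisible by $o(\omega)$.
\end{itemize}

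Doing this bookkeeping completes the verification independently of \cref{basistheorem}.
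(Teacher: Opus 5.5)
Your operative argument, the orbit-by-orbit divisibility check against \cref{sigmaj}, is the paper's own route. In each subsection of \cref{sec5} the paper reads off $\#S_k^j$ and $\bar n_k$ and records the resulting integers $m_k$.

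One line of your bookkeeping is false, however. For $E_7$ the fixed vertices of $\sigma_7=(0\ 7)(1\ 6)(3\ 5)$ are $\alpha_2$ and $\alpha_4$, and $n_4=4$. So the orbit $\{4\}$ gives $m=2$ (the paper's $m_3=2$), not $m=1$. When $o(\omega)=2$, all you need is that each fixed vertex has even $n_i$. That does hold ($n_2=2$, $n_4=4$), so the conclusion survives once you correct that sentence. Since the statement allows $j\in J$, you should also mention the trivial case $j=0$, where every orbit is a singleton and $m_k=n_k$.

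Your first route is circular in a sharper sense than you say, not merely classification-dependent. The paper's proof of \cref{basistheorem} only checks the sign pattern of the coefficients $\bar c_k-m_k\bar c_0$ in \cref{cre}. Their integrality, which (B2) requires, comes from the explicitly computed values of $m_k$, i.e.\ from this very corollary. You were right to drop that route.

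If you want a proof that avoids the classification, there is a short one.
\begin{enumerate}
\item For $x\in Z$ and $w\in W$ one has $x-w(x)\in Q^\vee$. This holds for reflections since $s_\alpha(x)=x-(\alpha,x)\alpha^\vee$ with $(\alpha,x)\in\mathbb{Z}$, and the general case follows by induction because $W$ preserves $Q^\vee$.
\item Apply this to $x=\varpi_i^\vee$ and $w=\omega$, using $\omega(\varpi_i^\vee)=\varpi_{\sigma(i)}^\vee-n_i\varpi_j^\vee$. Summing over $i\in S_k^j$, the terms $\varpi_i^\vee-\varpi_{\sigma(i)}^\vee$ cancel, so $\#S_k^j\bar n_k\,\varpi_j^\vee\in Q^\vee$.
\item The map $t_xw\mapsto x+Q^\vee$ is a homomorphism $Z\rtimes W\to Z/Q^\vee$, again because $y-w(y)\in Q^\vee$. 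Its kernel is $Q^\vee\rtimes W$, which meets $\widehat\Omega$ trivially.
\item Hence the class of $\varpi_j^\vee$, which is the image of $\widehat{\omega_j}=t_{\varpi_j^\vee}\omega_j$, has the same order as $\widehat{\omega_j}$. Since $\pi$ restricts to an isomorphism $\widehat\Omega\cong\Omega$, this order is $o(\omega)=\#S_0^j$.
\item Therefore $\#S_0^j$ divides $\#S_k^j\bar n_k$.
\end{enumerate}
This gives a uniform explanation of why the $m_k$ are integers, whereas the table check (yours and the paper's) only confirms it type by type.
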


Let $\beta = \sum_{i=0}^\ell c_i\alpha_i \in \PHI$.
Then we have
\begin{align}
	\beta^{\omega} = \frac{1}{o(\omega)} \sum_{t=1}^{o(\omega)}\omega^t\left(\sum_{i=0}^\ell c_i\alpha_i\right)
	= \sum_{i=0}^\ell c_i \left(\frac{1}{o(\omega)} \sum_{t=1}^{o(\omega)}\omega^t(\alpha_i)\right)
	= \sum_{i=0}^\ell c_i \alpha_i^{\omega}
	= \sum_{k=0}^r\bar{c}_k \bar\alpha_{k}^{\omega},
\end{align}
where
\begin{align}
	\bar{c}_k = \sum_{s \in S_k^j}c_s.
\end{align}
In particular, for $\alpha_0 = -\sum_{i=1}^\ell n_i\alpha_i$, we can see that
\begin{align}
	\bar{\alpha}_0^\omega = \alpha_0^\omega = -\sum_{k=1}^r m_k \bar\alpha_{k}^{\omega}. 
\end{align}
Hence $\beta^\omega$ can be expressed as a linear combination of $\DELTA^\omega$ as follows:
\begin{align}
	\beta^{\omega} = \sum_{k=1}^r \left(\bar{c}_k - m_k\bar{c}_0 \right)\bar\alpha_{k}^{\omega}. \label{lcom}
\end{align}

\subsection{Folding of diagrams}\ 

\noindent
\textbf{Folding} the extended Dynkin diagram $\mathcal{D}_0(\PHI)$ by $\omega \in \OMEGA$ refers to identifying vertices that lie on the same $\langle \omega\rangle$-orbits and redrawing the edges as described in \cref{D1} and \cref{D2}.
Let $\mathcal{D}_0(\PHI)^\omega$ denote the diagram obtained by folding $\mathcal{D}_0(\PHI)$ by $\omega$.
Then $\DELTA_0^\omega$ is the vertex set of $\mathcal{D}_0(\PHI)^\omega$.
Since $\DELTA^\omega$ is a basis of $\PHI_\re^\omega$, the subgraph $\mathcal{D}(\DELTA^\omega)$ of $\mathcal{D}_0(\PHI)^\omega$ induced by $\DELTA^\omega$ is isomorphic to the Dynkin diagram of $\PHI_\re^\omega$.
Hence we can identify the type of the root system $\PHI_\re^\omega$ by folding.

Let $k_1,k_2 \in \{0,\ldots,r\}$ be distinct.
It follows from \cref{lem3.9} that
\begin{align}
	(\bar{\alpha}_{k_1}^\omega,\bar{\alpha}_{k_2}^\omega) = \frac{1}{\#O(\alpha_{s_2})}\sum_{\gamma_2 \in O(\alpha_{s_2})}(\alpha_{s_1},\gamma_2), \label{aapro}
\end{align}
where $s_1 \in S_{k_1}^j$ and $s_2 \in S_{k_2}^j$, and it does not depend on the choice of $s_1$ and $s_2$.
Since $(\alpha_{s_1},\gamma_2) \leq 0$ for all $\gamma_2 \in O(\alpha_{s_2})$, we can see that $(\bar\alpha_{k_1},\bar\alpha_{k_2}) \neq 0$ if and only if $(\alpha_{s_1},\alpha_{s_2}) \neq 0$ for some $s_1 \in S_{k_1}^j$ and $s_2 \in S_{k_2}^j$.
Furthermore, the equation \cref{aa} implies that 
\begin{align}
	(\bar\alpha_{k_1}^\omega,\bar\alpha_{k_1}^\omega) = \frac{(2-\#N(\alpha_{s_{1}})) \cdot (\alpha_{s_{1}},\alpha_{s_{1}})}{2 \cdot \#S_{k_1}^j},\qquad
	(\bar\alpha_{k_2}^\omega,\bar\alpha_{k_2}^\omega) = \frac{(2-\#N(\alpha_{s_{2}})) \cdot (\alpha_{s_{2}},\alpha_{s_{2}})}{2 \cdot \#S_{k_2}^j},
\end{align}
and hence
\begin{align}
	\frac{(\bar\alpha_{k_1}^\omega,\bar\alpha_{k_1}^\omega)}{(\bar\alpha_{k_2}^\omega,\bar\alpha_{k_2}^\omega)} = \frac{(2-\#N(\alpha_{s_{1}})) \cdot \#S_{k_2}^j \cdot (\alpha_{s_{1}},\alpha_{s_{1}})}{(2-\#N(\alpha_{s_{2}})) \cdot \#S_{k_1}^j \cdot (\alpha_{s_{2}},\alpha_{s_{2}})},
	\label{foldrule}
\end{align}
where $s_1 \in S_{k_1}^j$ and $s_2 \in S_{k_2}^j$, and it does not depend on the choice of $s_1$ and $s_2$.
The equation \cref{foldrule} allows us to determine how the edge between $\bar\alpha_{k_1}^\omega$ and $\bar\alpha_{k_2}^\omega$ should be drawn.


\begin{lemma}
	Folding the extended Dynkin diagram $\mathcal{D}_0(\PHI)$ by $\omega$ is as shown in \cref{figfDDI} and \cref{figfDDII}.
\end{lemma}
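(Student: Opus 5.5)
The plan is a case-by-case computation over the pairs $(\PHI,\omega)$ with $\omega\neq 1$, reading the permutations $\sigma_j$ off \cref{sigmaj}. For each pair we first list the $\langle\sigma\rangle$-orbits $S_0^j,\ldots,S_r^j$ of $I$. We may discard $E_8$, $F_4$, $G_2$, and also $A_\ell$ with $o(\omega)=\ell+1$, where $\PHI_\re^\omega=\varnothing$. For $D_\ell$ with $\ell$ odd it suffices to treat $\omega_1$ and $\omega_{\ell-1}$, since $\omega_\ell=\omega_{\ell-1}^{-1}$ has the same orbits. For $E_6$ it suffices to treat $\omega_1$, since $\omega_6=\omega_1^{-1}$. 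The folded diagram has vertex set $\DELTA_0^\omega=\{\bar\alpha_0^\omega,\ldots,\bar\alpha_r^\omega\}$, so all that is needed is the data \cref{D1} and \cref{D2} for each pair of vertices, together with the labels $m_k=\#S_k^j\bar n_k/\#S_0^j$ to be written in the figures.

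For each orbit $S_k^j$ we compute $\#N(\alpha_s)$ for some $s\in S_k^j$. Since $\OMEGA$ permutes $\DELTA_0$, every $\gamma\in O(\alpha_s)$ is a simple root of the extended diagram. Hence $N(\alpha_s)$ is just the set of vertices in the same orbit that are adjacent to $\alpha_s$ in $\mathcal{D}_0(\PHI)$. This quantity is $0$ except in a few situations: $A_\ell$, where an orbit can contain adjacent vertices of the cycle exactly when the step $j$ satisfies $j\equiv\pm1$ modulo the orbit structure; and the middle vertices in $C_\ell$ and $D_\ell$ with $\ell$ even, where $\sigma$ swaps $i$ and $\ell-i$ and these happen to be adjacent. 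Plugging this into \cref{foldrule} gives the length ratio of any two folded vertices. Adjacency is decided by the criterion stated after \cref{aapro}: two folded vertices are joined exactly when some representatives are adjacent in $\mathcal{D}_0(\PHI)$.

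The number of edges $f_{k_1k_2}$ then follows from the length ratio, because the Cartan integers satisfy $\langle\bar\alpha_{k_1}^{\vee},\bar\alpha_{k_2}\rangle\langle\bar\alpha_{k_2}^{\vee},\bar\alpha_{k_1}\rangle\in\{1,2,3,4\}$. As a consistency check we will also compute this product directly from \cref{aapro} as
\[
\frac{4(\bar\alpha_{k_1}^\omega,\bar\alpha_{k_2}^\omega)^2}{(\bar\alpha_{k_1}^\omega,\bar\alpha_{k_1}^\omega)(\bar\alpha_{k_2}^\omega,\bar\alpha_{k_2}^\omega)}.
\]
When the product is $4$, as happens for $A_1\times$-type or $BC$-type collapses such as $A_3$ with $\omega_2$ or $C_\ell$ with $\ell$ odd, the two vertices are joined by a quadruple edge or the pair is non-reduced. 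We match such configurations with the diagrams $A_1^{(1)}$ or $BC_r$ in \cref{figeDD}. The arrow direction is read off from which of the two squared lengths in \cref{foldrule} is larger.

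The main obstacle is the bookkeeping for the infinite families, especially $A_\ell$ with general divisor $d=o(\omega)$ and $D_\ell$ split by the parity of $\ell$. For $A_\ell$ we will argue uniformly. Here $\sigma_j$ is rotation by $j$, the orbits are residue classes modulo $g=\gcd(j,\ell+1)$, and $o(\omega)=(\ell+1)/g$. When $g\geq2$ we have $\#N=0$ and $r=g-1$, and consecutive classes are adjacent, which yields a cycle of type $\tilde A_{g-1}$ with equal lengths. The case $g=2$ is the degenerate one: it gives two vertices whose squared length ratio is $1$ and whose Cartan product is $4$, which is the $A_1^{(1)}$ picture. For the remaining types the computation is a finite check. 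The delicate middle orbits of $C_\ell$ and $D_\ell$ (the pairs $\{\ell/2\}$, or $\{(\ell\pm1)/2\}$ with $\#N=1$) are where a factor $2-\#N$ changes lengths. There we will check carefully that the resulting double edges and arrows agree with \cref{figfDDI} and \cref{figfDDII}.
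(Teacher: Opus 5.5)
Your overall route is the one the paper relies on: orbits from \cref{sigmaj}, adjacency by the criterion after \cref{aapro}, squared lengths from \cref{foldrule}, edges by (D1)--(D2), and the labels $m_k$. However, two specific claims in your plan are wrong. Carried out as written, they would give pictures that disagree with \cref{figfDDI} and \cref{figfDDII}.

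\textbf{Where $\#N=1$ occurs.} Your first description of where $\#N\neq 0$ has the parity reversed. An orbit $\{i,\ell-i\}$ consists of adjacent vertices only if $\ell-i=i+1$, i.e.\ only if $\ell$ is odd. So $\#N=1$ occurs exactly at the middle pair $\{\frac{\ell-1}{2},\frac{\ell+1}{2}\}$, in two cases: $C_\ell$ with $\ell$ odd, and $D_\ell$ with $\ell$ odd and $j\in\{\ell-1,\ell\}$. There it makes $\bar\alpha_r^\omega$ the short end of the diagram. For $\ell$ even the vertex $\alpha_{\ell/2}$ is fixed by $\sigma$, so its orbit has $\#N=0$. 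In that case it is the factor $\#S_r^j=1$ in \cref{foldrule} that makes $\bar\alpha_r^\omega$ twice as long as $\bar\alpha_{r-1}^\omega$. This produces the double edge, with the symbol pointing to $\bar\alpha_{r-1}^\omega$, in the $C_\ell$ and $D_\ell$ ($\ell$ even, $j\in\{\ell-1,\ell\}$) pictures. With $\#N=1$ there you would get a single edge instead. Your later parenthetical about $\{(\ell\pm1)/2\}$ is correct, so the plan contradicts itself on this point. Similarly, in type $A_\ell$ two adjacent vertices share an orbit only when $g=1$, which is the discarded case. The ``$j\equiv\pm1$'' criterion is not the right one, although your conclusion $\#N=0$ for $g\geq 2$ is correct.

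\textbf{The edge count.} In this paper $f_{ij}$ is \emph{defined} in (D1) as the ratio of squared lengths. So the number of edges is read directly off \cref{foldrule}, and the Cartan product serves at most as a check. Your rule that product $4$ means a quadruple edge is false under this convention.
\begin{itemize}
\item For $A_\ell$ with $g=2$, $B_2$ with $j=1$, and $D_5$ with $j\in\{4,5\}$, one has $\bar\alpha_0^\omega=-\bar\alpha_1^\omega$ with equal lengths. The Cartan product is $4$, but $f=1$, and the figures draw a single edge (the $A_1$ diagram of \cref{figeDD}).
\item Only $C_3$ gives $\bar\alpha_0^\omega=-2\bar\alpha_1^\omega$, hence length ratio $4$ and four edges.
\end{itemize}
Your list of product-$4$ cases is therefore off. $B_2$ and $D_5$ are missing from it. For $C_\ell$ with $\ell\geq 5$ odd, both end edges have ratio $2$, not $4$.
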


The above arguments imply the irreducibility of $\PHI_\re^\omega$.
\begin{lemma}
	Let $\PHI$ be an arbitrary irreducible redued root system, and $\omega \in \OMEGA$.
	Then the root system $\PHI_\re^\omega$ is irreducible.
\end{lemma}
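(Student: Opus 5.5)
The plan is to prove irreducibility by showing that the Dynkin diagram of $\PHI_\re^\omega$ with respect to $\DELTA^\omega$ is connected. We use \cref{basistheorem}, which says that $\DELTA^\omega = \{\bar\alpha_1^\omega,\ldots,\bar\alpha_r^\omega\}$ is a basis. Once this is known, a standard fact applies: a root system is irreducible if and only if its basis cannot be partitioned into two nonempty mutually orthogonal subsets. Indeed, any such decomposition $\PHI_\re^\omega = \PHI_1 \sqcup \PHI_2$ with $E^\omega = E_1 \oplus E_2$ induces an orthogonal partition of the basis, since each simple root lies in exactly one $\PHI_i$. So it suffices to show that the graph on $\{\bar\alpha_1^\omega,\ldots,\bar\alpha_r^\omega\}$, with an edge whenever the inner product is nonzero, is connected. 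The case $\PHI_\re^\omega = \varnothing$ (type $A_\ell$ with $o(\omega) = \ell+1$) is trivially irreducible by convention, so we exclude it.

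The key input is the criterion derived from \cref{aapro}. For distinct $k_1,k_2$, we have $(\bar\alpha_{k_1}^\omega,\bar\alpha_{k_2}^\omega) \neq 0$ if and only if some $\alpha_{s_1}$ with $s_1 \in S_{k_1}^j$ is joined in $\mathcal{D}_0(\PHI)$ to some $\alpha_{s_2}$ with $s_2 \in S_{k_2}^j$. This holds because all summands $(\alpha_{s_1},\gamma_2)$ are non-positive, so the sum cannot cancel. Hence the folded graph on $\DELTA^\omega$ is the quotient of the subgraph of $\mathcal{D}(\PHI)$ induced on $I\setminus S_0^j$, obtained by contracting $\langle\sigma\rangle$-orbits. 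A quotient of a connected graph is connected, so it is enough to show that $\mathcal{D}_0(\PHI)$ with the orbit $S_0^j = O(\alpha_0)$ deleted has the property that any two of its vertices are joined by a path visiting only vertices outside $S_0^j$, after identification modulo $\sigma$.

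This last step is the main obstacle. Deleting the vertices in $S_0^j$ from $\mathcal{D}_0(\PHI)$ may disconnect it; in type $D_\ell$ with $\sigma_1$, for example, removing $\alpha_0,\alpha_1$ still leaves a connected graph, but for other $\sigma_j$ the removed vertices are not all leaves. However, the components of $\mathcal{D}_0(\PHI)\setminus S_0^j$ are permuted by $\sigma$, since $\sigma$ is a diagram automorphism preserving $S_0^j$. So the real claim is that $\langle\sigma\rangle$ acts transitively on these components. I would argue this directly where possible. Every vertex of $S_0^j$ has $n=1$, and the elements of $J$ are leaves of $\mathcal{D}_0(\PHI)$ except in type $A_\ell$, where the diagram is a cycle. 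In the non-$A$ cases, deleting leaves keeps $\mathcal{D}_0(\PHI)\setminus S_0^j$ connected, and we are done.

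In type $A_\ell$, the set $S_0^j$ consists of equally spaced vertices on the $(\ell+1)$-cycle. The complementary arcs are cyclically permuted transitively by the rotation $\sigma$, so their images in the quotient all coincide and the folded graph is connected. As a cross-check, the same conclusion can be read off from the explicit folded diagrams in \cref{figfDDI} and \cref{figfDDII}: after removing the vertex $\bar\alpha_0^\omega$, each diagram there is connected.
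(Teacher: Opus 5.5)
Your proposal is correct and follows essentially the paper's route. The paper also combines \cref{basistheorem} with the edge criterion from \cref{aapro}, so that the Dynkin diagram of $\PHI_\re^\omega$ is the folded diagram $\mathcal{D}_0(\PHI)^\omega$ with $\bar\alpha_0^\omega$ deleted. It then reads connectivity off \cref{figfDDI} and \cref{figfDDII}, whereas you justify it structurally: outside type $A$ the extended diagram is a tree and $S_0^j\subseteq J$ consists of leaves, and in type $A_\ell$ the rotation $\sigma$ permutes the arcs between points of $S_0^j$ transitively.

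One slip: your aside that for $D_\ell$ with $j\in\{\ell-1,\ell\}$ the removed vertices are not all leaves is false. There $S_0^j$ is $\{0,\ell-1\}$, $\{0,\ell\}$ or $\{0,1,\ell-1,\ell\}$, all of which are leaves, and this is exactly the fact your final step uses.
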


In most cases, we can determine the type of $\PHI_\re^\omega$ by $\mathcal{D}_0(\PHI)^\omega$.
However, only when $\mathcal{D}(\DELTA^\omega)$ is of type $A_1$ or $B_r$, we need to check whether $\PHI_\re^\omega$ is reduced or non-reduced.


The diagram $\mathcal{D}(\DELTA^\omega)$ is of type $A_1$ or $B_r$ if the following condition is satisfied:
\begin{enumerate}[label=(r-\arabic*)]
	\item\label{BCa} $\PHI$ is of type $B_\ell$ and $\ell \geq 3$;
	\item\label{BCb} $\PHI$ is of type $C_\ell$ and $\ell \geq 3$ is odd;
	\item\label{BCc} $\PHI$ is of type $D_\ell$ and $j = 1$;
	\item\label{BCd} $\PHI$ is of type $D_4$ and $j \in \{\ell-1,\, \ell\}$;
	\item\label{BCe} $\PHI$ is of type $D_\ell$, $\ell \geq 5$ is odd and $j \in \{\ell-1,\, \ell\}$.
\end{enumerate}

\begin{lemma}\label{reducedlemma}
	If $\PHI$ has a simple root $\alpha_i \in \DELTA$ such that $\#N(\alpha_i) = 1$, then $\PHI_\re^\omega$ is non-reduced (i.e. $\PHI_\re^\omega$ is of type $BC_r$).
\end{lemma}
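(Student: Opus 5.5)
Given a simple root $\alpha_i \in \DELTA$ with $\#N(\alpha_i) = 1$, I will produce a root $\beta \in \PHI$ with $\beta^\omega = 2\alpha_i^\omega$. Then $\alpha_i^\omega$ and $2\alpha_i^\omega$ both lie in $\PHI_\re^\omega$, so $\alpha_i^\omega$ is divisible and $\PHI_\re^\omega$ is non-reduced. Since $\PHI_\re^\omega$ is irreducible by the preceding lemma, the classification forces type $BC_r$.

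\textbf{Construction of $\beta$.} Put $\gamma \ceq \alpha_i$ and let $\hat\gamma \in O(\alpha_i)$ be the unique element of $N(\alpha_i)$. Since $\alpha_i \in \DELTA_0$ and $\OMEGA$ permutes $\DELTA_0$, we have $\hat\gamma = \alpha_{i'}$ for some $i' \in I$ with $i' \neq i$. By \cref{ggp},
\begin{align}
(\alpha_i,\alpha_{i'}) = -\tfrac{1}{2}(\alpha_i,\alpha_i) < 0,
\end{align}
so \cref{prop2.2} gives $\beta \ceq \alpha_i + \alpha_{i'} \in \PHI$. Applying $\phi_\omega$, and using $\alpha_{i'}^\omega = \alpha_i^\omega$ (the two lie in the same $\langle\omega\rangle$-orbit and $\phi_\omega\circ\omega = \phi_\omega$), we get
\begin{align}
\beta^\omega = \alpha_i^\omega + \alpha_{i'}^\omega = 2\alpha_i^\omega .
\end{align}
This vector is nonzero because $\alpha_i^\omega \neq 0$ by \cref{thm3.11}. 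The hypotheses of \cref{thm3.11} hold here: for type $A_\ell$ with $o(\omega) = \ell+1$ every vertex has $\#N = 2$, which the assumption $\#N(\alpha_i)=1$ excludes. Hence $\alpha_i^\omega$ and $2\alpha_i^\omega$ both belong to $\PHI_\re^\omega$.

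\textbf{Identifying the type.} A root system containing both $\alpha$ and $2\alpha$ is non-reduced, and by the classification (Bourbaki) the only irreducible non-reduced root system of rank $r$ is $BC_r$. If one prefers to stay inside the paper, the Dynkin diagram $\mathcal{D}(\DELTA^\omega)$ is already known in cases \ref{BCa}--\ref{BCe} to be of type $A_1$ or $B_r$; once $\PHI_\re^\omega$ is non-reduced, the definition of $BC_r$ given in the paper applies.

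\textbf{Main obstacle.} The argument does not need $\alpha_i \in \DELTA$ specifically; any $\alpha_i \in \DELTA_0$ works. The step requiring care is confirming that $\hat\gamma$ really is a root of $\DELTA_0$ distinct from $\alpha_i$, with strictly negative inner product. This follows from $\hat\gamma \in N(\alpha_i) \subseteq O(\alpha_i) \subseteq \DELTA_0$ and the value $-\tfrac{1}{2}(\alpha_i,\alpha_i)$ recorded in \cref{ggp}, which comes from \cref{lem6.2} together with \cref{prop2.1}.
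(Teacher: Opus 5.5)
Your proposal is correct and follows the paper's proof: you take the unique $\alpha_{i'}\in N(\alpha_i)$, note that $\alpha_i+\alpha_{i'}\in\PHI$ and $(\alpha_i+\alpha_{i'})^\omega = 2\alpha_i^\omega$, and conclude that $\PHI_\re^\omega$ is non-reduced. One small slip, which is harmless: your justification for applying \cref{thm3.11} is inaccurate for $A_1$ with $o(\omega)=2$, where $\alpha_0=-\alpha_1$ gives $\#N(\alpha_1)=0$ rather than $2$; that case is still excluded by the hypothesis $\#N(\alpha_i)=1$, and the paper has already assumed $o(\omega)<\ell+1$ in type $A_\ell$.
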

\begin{proof}
	Suppose that $\alpha_{i_0} \in \DELTA$ satisfies $\#N(\alpha_{i_0}) = 1$. 
	Then $\alpha_{i_0}^\omega$ is a short simple root of $\PHI_\re^\omega$.
	Let $\alpha_{i_1} \in N(\alpha_{i_0})$.
	Then $\alpha_{i_0} + \alpha_{i_1} \in \PHI$, and
	\begin{align}
		(\alpha_{i_0} + \alpha_{i_1})^\omega = \alpha_{i_0}^\omega + \alpha_{i_1}^\omega = 2\alpha_{i_0}^\omega.
	\end{align}
	Thus $2\alpha_{i_0}^\omega \in \PHI_\re^\omega$, and hence $\PHI_\re^\omega$ is non-reduced. 
\end{proof}

Only in the case of \cref{BCb} and \cref{BCe} does it have a simple root $\alpha_i \in \DELTA$ such that $\#N(\alpha_i) = 1$.
We can see that $\PHI_\re^\omega$ becomes reduced only when the condition of the above is satisfied, although we will defer the proof that $\PHI_\re^\omega$ is reduced in cases \cref{BCa}, \cref{BCc} and \cref{BCd}.

\begin{lemma}\label{lem4.6}
	Of the five above, $\PHI_\re^\omega$ is non-reduced (i.e. $\PHI_\re^\omega$ is of type $BC_r$) only for \cref{BCb} and \cref{BCe}.
	As the result,  $\PHI_\re^\omega$ is non-reduced if and only if $\PHI$ has a simple root $\alpha_i \in \DELTA$ such that $\#N(\alpha_i) = 1$.
\end{lemma}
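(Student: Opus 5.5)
The statement has two halves. For \cref{BCb} and \cref{BCe} we only need to exhibit a simple root $\alpha_i$ with $\#N(\alpha_i)=1$ and then apply \cref{reducedlemma}. For \cref{BCa}, \cref{BCc} and \cref{BCd} we have to prove directly that $\PHI_\re^\omega$ is reduced. The final ``as the result'' sentence then follows by combining the two halves with the observation below.

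\emph{Non-reduced cases.} Read off the pairs of $\sigma_j$ from \cref{sigmaj}.
\begin{itemize}
\item In case \cref{BCb}, $\sigma_\ell$ contains the transposition $(i\ \ \ell-i)$ with $i=\frac{\ell-1}{2}$, so $\ell-i=\frac{\ell+1}{2}=i+1$. Thus $\alpha_i$ and $\alpha_{i+1}$ lie in one orbit. Since they are adjacent in the Dynkin diagram, $(\alpha_i,\alpha_{i+1})\neq 0$, and hence $N(\alpha_i)=\{\alpha_{i+1}\}$, i.e. $\#N(\alpha_i)=1$.
\item In case \cref{BCe}, $\sigma_{\ell-1}$ and $\sigma_\ell$ both contain $(i\ \ \ell-i)$ with $i=\frac{\ell-1}{2}$, and the same argument applies.
\end{itemize}
For either choice of $j$, the orbit $O(\alpha_i)$ is $\{\alpha_i,\alpha_{i+1}\}$ (of order $2$ under $\omega^2$ as well). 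This gives $\#N(\alpha_i)=1$, and \cref{reducedlemma} yields that $\PHI_\re^\omega$ is of type $BC_r$.

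\emph{Reduced cases.} The plan is to realize $\omega$ concretely in the standard $\varepsilon$-coordinates and identify $\phi_\omega$ with an orthogonal projection.
\begin{itemize}
\item In case \cref{BCa}, $\sigma_1=(0\ 1)$ swaps $\alpha_1=\varepsilon_1-\varepsilon_2$ and $\alpha_0=-\varepsilon_1-\varepsilon_2$, and fixes $\alpha_2,\dots,\alpha_\ell$. Hence $\omega=s_{\varepsilon_1}$ is the sign change of $\varepsilon_1$. So $E^\omega=\operatorname{span}(\varepsilon_2,\dots,\varepsilon_\ell)$ and $\beta^\omega$ is the orthogonal projection of $\beta$. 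Projecting the roots $\pm\varepsilon_a\pm\varepsilon_b$ and $\pm\varepsilon_a$ gives exactly $\{\pm\varepsilon_a\pm\varepsilon_b,\ \pm\varepsilon_a : 2\le a<b\le\ell\}$, together with $0$. No vector $\pm2\varepsilon_a$ occurs, so $\PHI_\re^\omega$ is the reduced system $B_{\ell-1}$.
\item In case \cref{BCc}, $\sigma_1=(0\ 1)(\ell-1\ \ \ell)$ is realized by $\omega=s_{\varepsilon_1}s_{\varepsilon_\ell}$, the simultaneous sign change of $\varepsilon_1$ and $\varepsilon_\ell$. One checks this on $\alpha_0,\alpha_1,\alpha_{\ell-1},\alpha_\ell$ and notes that $\alpha_2,\dots,\alpha_{\ell-2}$ are fixed. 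The projection onto $\operatorname{span}(\varepsilon_2,\dots,\varepsilon_{\ell-1})$ gives $\pm\varepsilon_a\pm\varepsilon_b$ and $\pm\varepsilon_a$. The latter arise from $\pm\varepsilon_1\pm\varepsilon_a$ and $\pm\varepsilon_a\pm\varepsilon_\ell$. Again no $\pm2\varepsilon_a$ appears: the only way to get $2\varepsilon_a$ would be from a root with two $\varepsilon_a$-components, which does not exist. Hence the result is reduced, of type $B_{\ell-2}$.
\item In case \cref{BCd}, I will reduce to \cref{BCc}. A triality automorphism $\tau$ of $D_4$ permutes $\{\alpha_1,\alpha_3,\alpha_4\}$, fixes $\alpha_2$, preserves the highest root and hence $\alpha_0$, and conjugates $\omega_1$ to $\omega_3$ or $\omega_4$. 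Since $\tau$ is an isometry commuting appropriately with the averaging, $(\tau\beta)^{\tau\omega\tau^{-1}}=\tau(\beta^\omega)$. So $\PHI_\re^{\omega_j}\cong\PHI_\re^{\omega_1}$ is reduced. Alternatively, one can do the direct coordinate computation, e.g. $\omega_4$ swapping $\alpha_0\leftrightarrow\alpha_4$ and $\alpha_1\leftrightarrow\alpha_3$.
\end{itemize}

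\emph{Conclusion.} A non-reduced irreducible system must have Dynkin diagram of type $A_1$ or $B_r$, so outside \cref{BCa}--\cref{BCe} the system $\PHI_\re^\omega$ is reduced. In cases \cref{BCa}, \cref{BCc} and \cref{BCd} no simple root has $\#N=1$: no $\sigma_j$-orbit contains two adjacent nodes of $\DELTA$. Together with the two halves above, this gives the equivalence. The main obstacle is verifying that the claimed sign-change maps really equal $\omega_j=w_jw_0$. I would check this on $\DELTA_0$ using \cref{sigmaj}, since $\omega$ is determined by its action on $\DELTA$.
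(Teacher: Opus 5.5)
Your treatment of \cref{BCb} and \cref{BCe} is the paper's. The transposition $(i\ \ \ell-i)$ with $i=\frac{\ell-1}{2}$ swaps two adjacent simple roots, so $\#N(\alpha_i)=1$ and \cref{reducedlemma} applies.

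For the reduced cases your route is genuinely different. The paper (\cref{reducedB}, \cref{reducedD1} and the $D_4$ proposition in Section 5) stays in simple-root coordinates. It uses \cref{basistheorem} to know that non-reducedness would put $2\bar\alpha^\omega_{\mathrm{short}}$ in $\PHI_\re^\omega$. It then reuses the tables of $\bar c_k-m_k\bar c_0$ compiled for \cref{claim} to force $\bar c_0=0$ for any preimage, which leaves only non-roots such as $2\alpha_\ell$ or $\alpha_1+\alpha_3$. You instead realize $\omega$ as a coordinate sign change, so that $\phi_\omega$ is an orthogonal projection. You then compute $\PHI_\re^\omega$ outright as the standard $B_{\ell-1}$ or $B_{\ell-2}$, and transport $D_4$ by triality. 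Your version is self-contained for these cases and yields the type directly, without \cref{basistheorem}. The paper's version has the merit of recycling bookkeeping it needs anyway. The ``main obstacle'' you flag is not one: $\omega$ is linear and \cref{sigmaj} prescribes its values on $\DELTA$.

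One step in your Conclusion does not follow as written. The paper's sentence introducing \cref{BCa}--\cref{BCe} makes the same tacit assumption. Deducing ``reduced outside the five cases'' from ``non-reduced forces diagram $A_1$ or $B_r$'' requires the five cases to exhaust the foldings with $\mathcal{D}(\DELTA^\omega)$ of type $A_1$ or $B_r$, and they do not:
\begin{itemize}
\item $A_\ell$ with $\gcd\{\ell+1,j\}=2$ gives diagram $A_1$;
\item $B_2$ with $j=1$ gives diagram $A_1$ (excluded from \cref{BCa} since that case requires $\ell\ge 3$);
\item $C_4$ with $j=4$ gives $C_2$, whose diagram is $B_2$.
\end{itemize}
All three are reduced, but this has to be checked for the ``if and only if''. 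Each check is short:
\begin{itemize}
\item For $B_2$, your projection argument for \cref{BCa} works verbatim with $\ell=2$.
\item For $A_\ell$ with $g=2$, \cref{lcom} gives $\beta^\omega=(\bar c_1-\bar c_0)\bar\alpha_1^\omega$. Here $\bar c_1-\bar c_0$ is the difference between the numbers of odd and even indices in an interval of consecutive integers, so it lies in $\{0,\pm1\}$.
\item For $C_4$, \cref{sigmaj} forces $\omega_4(e_p)=-e_{5-p}$. The averaging map sends $e_1,e_4\mapsto \pm u$ and $e_2,e_3\mapsto\pm v$, where $u=\frac12(e_1-e_4)$ and $v=\frac12(e_2-e_3)$. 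So $\PHI_\re^\omega=\{\pm2u,\pm2v,\pm u\pm v\}$, a reduced $C_2$.
\end{itemize}
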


\begin{theorem}\label{typetheorem}
	The type of $\PHI_\re^\omega$ is as shown in \cref{tableP}.
\end{theorem}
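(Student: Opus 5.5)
Theorem \ref{typetheorem} is a case-by-case identification, and I will run it through the classification. The relevant data are the types $A_\ell$, $B_\ell$, $C_\ell$, $D_\ell$, $E_6$, $E_7$ (the others have trivial $\OMEGA$) and each $j \in J\setminus\{0\}$ with the permutation $\sigma_j$ of Table \ref{sigmaj}. For each such pair I first list the $\langle\sigma_j\rangle$-orbits $S_0^j,\ldots,S_r^j$, with $S_0^j$ the orbit of $0$. Then I compute the folded diagram $\mathcal{D}_0(\PHI)^\omega$ using \eqref{aapro} and \eqref{foldrule}.

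The key inputs are two numbers per vertex. The first is $\#S_k^j$, read off from $\sigma_j$. The second is $\#N(\alpha_s)$ for $s \in S_k^j$. This is $1$ exactly when $\alpha_s$ is joined in $\mathcal{D}_0(\PHI)$ to another vertex of its own orbit, and $0$ otherwise. The value $2$ is excluded by Theorem \ref{thm3.11}, after we discard type $A_\ell$ with $o(\omega)=\ell+1$, where $\PHI_\re^\omega=\varnothing$.

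With these numbers, \eqref{foldrule} gives the squared-length ratio between adjacent folded vertices. By \eqref{aapro}, $(\bar\alpha^\omega_{k_1},\bar\alpha^\omega_{k_2})\neq 0$ exactly when the orbits are adjacent, so the number of edges and the arrow direction follow from rules \ref{D1} and \ref{D2}. I then remove the vertex $\bar\alpha_0^\omega$. By Theorem \ref{basistheorem}, $\DELTA^\omega$ is a basis, so the induced subgraph $\mathcal{D}(\DELTA^\omega)$ is the Dynkin diagram of $\PHI_\re^\omega$. Matching it against Figure \ref{figDD} gives the type up to the reduced/non-reduced ambiguity.

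Two sample cases show the pattern. For $A_\ell$ with $\omega=\omega_j$, the orbits of $i\mapsto i+j \bmod \ell+1$ are $\gcd(j,\ell+1)$ cycles, each of length $d=(\ell+1)/\gcd(j,\ell+1)$. No two consecutive indices lie in the same orbit unless $d=\ell+1$, so the folded diagram is a cycle of $\gcd(j,\ell+1)$ equal-length vertices. This gives type $A_{\gcd(j,\ell+1)-1}$. For $C_\ell$ with $\ell$ odd, the middle pair $\{\frac{\ell-1}{2},\frac{\ell+1}{2}\}$ is an adjacent orbit, so its vertex has $\#N=1$. Then \eqref{foldrule} shrinks its length by a factor $2\cdot 2=4$ relative to an orbit with $\#N=0$, which produces a diagram of type $B$. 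The remaining types and the values of $j$ in $D_\ell$ ($\ell$ even or odd), $E_6$ and $E_7$ are handled the same way, and the results fill Table \ref{tableP}.

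The main obstacle is deciding reducedness in the cases \ref{BCa}--\ref{BCe}, where $\mathcal{D}(\DELTA^\omega)$ is of type $A_1$ or $B_r$. Lemma \ref{reducedlemma} settles \ref{BCb} and \ref{BCe}: in both, some simple root satisfies $\#N=1$, so $\PHI_\re^\omega$ is of type $BC_r$. For \ref{BCa}, \ref{BCc} and \ref{BCd}, I must show that no $2\beta^\omega$ is a root, i.e. that $\beta^\omega$ and $2\beta^\omega$ never both lie in $\PHI_\re^\omega$. My first attempt would use \eqref{lcom}: write $\beta^\omega=\sum_k(\bar c_k-m_k\bar c_0)\bar\alpha_k^\omega$ and show that for every $\beta\in\PHI$ the coefficient on the short simple root is at most $1$ in absolute value. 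That is what the folded highest root of a reduced $B_r$ requires, while $BC_r$ would need $2$. Since every root of $\PHI$ has coefficients bounded by $n_i$, and the fold sums $\bar c_k$ are explicit for the standard $B_\ell$ and $D_\ell$ root lists in coordinates $\pm e_a\pm e_b$, this bound can be checked directly.

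As an alternative, I would check that every $\beta$ with $\beta^\omega\neq 0$ has $\#N(\beta)=0$. This suffices: by \eqref{aa}, the length $(\beta^\omega,\beta^\omega)$ then depends only on $\#O(\beta)$ and $(\beta,\beta)$, which I expect to rule out the extra length needed for $BC_r$. That completes Lemma \ref{lem4.6} and with it the table.
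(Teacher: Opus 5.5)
Your treatment of the folded diagrams follows the paper's own route: orbit data, \cref{foldrule}, \cref{basistheorem}, and \cref{reducedlemma} for \ref{BCb} and \ref{BCe}. The gap is your first reducedness test for \ref{BCa}, \ref{BCc}, \ref{BCd}: it rests on a false criterion and would fail.

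In a reduced $B_r$ with $r\ge2$ the highest root is $\alpha_1+2\alpha_2+\cdots+2\alpha_r$, so the short simple root already has coefficient $2$. In all three cases $r\ge2$, so the bound you want to verify is false for exactly the systems you are trying to prove reduced. The highest roots of $B_r$ and $BC_r$ both have coefficient $2$ on $\alpha_r$; they differ in the coefficient of $\alpha_1$. Concretely, for $B_\ell$ ($\ell\ge3$, $j=1$) the root $e_2+e_3=\alpha_2+2\alpha_3+\cdots+2\alpha_\ell$ has $\bar c_0=0$ and folds to $\bar\alpha_1^\omega+2\bar\alpha_2^\omega+\cdots+2\bar\alpha_r^\omega$.

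What separates $BC_r$ from $B_r$ is whether \emph{twice the short simple root} is a root, and that is what the paper tests. If $\beta^\omega$ were twice the short simple root, the explicit case lists force $\bar c_0=0$. Then $\beta$ would be a root of $\PHI$ supported on a single orbit $S_k^j$ with $\bar c_k=2$ (e.g.\ $\alpha_1+\alpha_3$ for $D_4$), and no such root exists.

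Your alternative does work and is genuinely different from the paper. It can also be shortened, since you do not need $\#N(\beta)=0$. By \cref{aa} (equivalently, because $\phi_\omega$ is the orthogonal projection onto $E^\omega$), $(\beta^\omega,\beta^\omega)\le(\beta,\beta)\le 2$ for every $\beta\in\PHI$. In all three cases the short simple root of $\PHI_\re^\omega$ has squared length $1$:
\begin{itemize}
\item $\alpha_\ell=e_\ell$ for $B_\ell$;
\item $\frac12(\alpha_{\ell-1}+\alpha_\ell)=e_{\ell-1}$ for $D_\ell$ with $j=1$;
\item $\frac12(\alpha_1+\alpha_3)$ for $D_4$ with $j=4$.
\end{itemize}
Twice this root would have squared length $4$, so it is not in $\PHI_\re^\omega$, and the system is reduced. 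This replaces the paper's coefficient bookkeeping with a one-line length bound. The paper's version costs it nothing extra, because the coefficient tables are needed anyway for \cref{claim} and for $\PHI_{\omega,0}^+$.

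Two smaller points:
\begin{itemize}
\item For $C_\ell$ with $\ell\ge5$ odd, \cref{foldrule} gives ratio $(2-0)/(2-1)=2$ between $\bar\alpha_{r-1}^\omega$ and $\bar\alpha_r^\omega$, not $4$. A factor $4$ between adjacent simple roots would not produce a $B$ diagram.
\item The list \ref{BCa}--\ref{BCe} you rely on is not exhaustive. $A_\ell$ with $\gcd(\ell+1,j)=2$ and $B_2$ also give $A_1$, and $C_4$ gives $C_2\cong B_2$. These cases need the same reducedness check; all three turn out to be reduced, as the table says.
\end{itemize}
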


\begin{table}[h]
	
	\caption{List of type of $\PHI_\re^\omega$}\label{tableP}
	
	\begin{tabular}{ll|c|c|c}
		\multicolumn{2}{c|}{$\PHI$}                                                        & $j$                              & $\PHI^{\omega}_\re$ 		& cf. \\ \hline\hline
		$A_\ell$                  &                         & $\gcd\{\ell+1,j\} = 1$		  & $\varnothing$         		& \multirow{2}{*}{\cref{ZA}} \\ 
		$A_\ell$                  &                         & $g \ceq \gcd\{\ell+1,j\} \neq 1$		  & $A_{g-1}$         		& \\ 
		$B_2$                     &                            & $j = 1$                              & $A_1$               			    & \multirow{2}{*}{\cref{ZB}} \\ 
		$B_\ell$                  & $(\ell \geq 3)$                        & $j = 1$                              & $B_{\ell-1}$            &\\ 
		$C_\ell$                  & $(\ell \geq 3$, odd)                  & $j = \ell$                           & $BC_{\frac{\ell-1}{2}_{_{}}}$    & \multirow{2}{*}{\cref{ZC}} \\ 
		$C_\ell$                  & $(\ell \geq 4$, even)                  & $j = \ell$                           & $C_{\frac{\ell}{2}_{_{}}}$    &\\ 
		$D_\ell$                  & $(\ell \geq 4$)                  & $j = 1 $                           & $B_{\ell-2}$    & \cref{ZD1} \\ 
		$D_\ell$                  & $(\ell \geq 4$, even)                  & $j \in \{\ell-1,\, \ell\}$                           & $C_{\frac{\ell}{2}_{_{}}}$    & \cref{ZDe} \\ 
		$D_\ell$                  & $(\ell \geq 5$, odd)                  & $j \in \{\ell-1,\, \ell\}$                           & $BC_{\frac{\ell-3}{2}_{_{}}}$    & \cref{ZDo} \\ 
		$E_6$                     &                           & $j \in \{1,6\}$                         & $G_2$          			        & \cref{ZE6} \\ 
		$E_7$                     &                            & $j = 7$                              & $F_4$               			    & \cref{ZE7}\\ 
	\end{tabular}
	
	\

\end{table}

\subsection{Disappearing roots and Extra roots}\ 

\noindent
Let $\PHI_{\omega,0}^+$ denote a set of positive roots that disappear due to folding by $\omega$:
\begin{align}
	\PHI_{\omega,0}^+ \ceq \bigset{\beta \in \PHI^+}{\beta^\omega = 0}.
\end{align}
By \cref{lcom}, a root $\beta = \sum_{i=1}^\ell c_i \alpha_i \in \PHI$ belongs to $\PHI_{\omega,0}^+$ if and only if
\begin{align}
	\bar{c}_k - m_k\bar{c}_0 = 0
\end{align}
for all $k \in \{1,\ldots,r\}$.

Let $\DELTA' \ceq \DELTA \setminus S_0^j$ and $\mathcal{D}(\DELTA')$ denote the subgraph of $\mathcal{D}(\PHI)$ induced by $\DELTA'$.
Then $\mathcal{D}(\DELTA')$ is isomorphic to some Dynkin diagram, and let $\PHI'$ be a root system corresponding to $\mathcal{D}(\DELTA')$.
Each $\omega \in \OMEGA$ can be regarded as a graph automorphism of $\mathcal{D}(\DELTA')$, and a root system $(\PHI')^\omega$, obtained by folding $\PHI'$ with $\omega$, is isomorphic to $\PHI_\re^\omega$.
Hence we have the following:
\begin{lemma}\label{lem4.8}
	Let $\beta^\omega \in \PHI_\re^\omega$.
	Then there exists $\beta' = \sum_{i=1}^\ell c_i\alpha_i \in \PHI$ such that $(\beta')^\omega = \beta^\omega$ and $c_s = 0$ for all $s \in S_0^j$.
\end{lemma}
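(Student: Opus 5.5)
The plan is to reduce the statement to a Weyl group orbit argument inside the subsystem $\PHI'$ spanned by $\DELTA' = \DELTA\setminus S_0^j$. Here $\DELTA'$ means the simple roots $\alpha_s$ with $s \notin S_0^j$. For each $k \in \{1,\ldots,r\}$ the orbit $S_k^j$ is disjoint from $S_0^j$, so every simple root $\alpha_s$ with $s\in S_k^j$ lies in $\DELTA'$. The whole orbit $O(\alpha_s)=\{\alpha_{s'}\}_{s'\in S_k^j}$ also lies in $\DELTA'\subseteq \PHI'$. Consequently the element $w_{\alpha_s}\in W$ defined in \cref{sss} is a product of reflections $s_\gamma$ with $\gamma\in\PHI'$. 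It therefore lies in the Weyl group $W'$ of $\PHI'$, and $W'$ preserves $\PHI' = \PHI\cap\sum_{i\notin S_0^j}\mathbb{Z}\alpha_i$. By \cref{lem6.7}, $w_{\alpha_s}$ restricts to $s_{\bar\alpha_k^\omega}$ on $E^\omega$, and by \cref{lem6.8} it commutes with $\omega$. The key identity, as in the proof of (R2), is
\begin{align}
	s_{\bar\alpha_k^\omega}(\gamma^\omega) = (w_{\alpha_s}(\gamma))^\omega \qquad (\gamma\in\PHI).
\end{align}

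The main step is then the following. Let $W^\omega$ be the group generated by the reflections $s_{\bar\alpha_k^\omega}$ for $k=1,\ldots,r$. I will show that every $\beta^\omega\in\PHI_\re^\omega$ has the form $u(\bar\alpha_k^\omega)$ for some $u\in W^\omega$ and some $k\ge1$. Granting this, write $u=s_{\bar\alpha_{k_1}^\omega}\cdots s_{\bar\alpha_{k_p}^\omega}$ and put $\beta' \ceq w_{\alpha_{s_{k_1}}}\cdots w_{\alpha_{s_{k_p}}}(\alpha_{s_k})$. Iterating the identity gives $(\beta')^\omega=u(\alpha_{s_k}^\omega)=\beta^\omega$. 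Moreover $\beta'\in\PHI'$, since it is a $W'$-image of a root of $\PHI'$, so its coefficients $c_s$ vanish for $s\in S_0^j$. This gives the lemma.

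To justify that every root is $W^\omega$-conjugate to an element of $\DELTA^\omega$, I would use the standard fact that in any root system every root is conjugate under the Weyl group to a simple root of a fixed basis. This requires knowing that $\DELTA^\omega$ is a basis of $\PHI_\re^\omega$, i.e.\ \cref{basistheorem}. It also requires that the Weyl group of $\PHI_\re^\omega$ is generated by the simple reflections $s_{\bar\alpha_k^\omega}$, which follows once $\DELTA^\omega$ is a basis.

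The main obstacle is circularity. The paper says \cref{basistheorem} will be proved later, possibly using this lemma. If that is the case, I would first try a direct, classification-free route. Replace $\beta$ by $\pm\beta$ so that $\beta\in\PHI^+$, noting that $(-\beta)^\omega = -\beta^\omega$. Then search the orbit $O(\beta)$ for a representative with vanishing $S_0^j$-coefficients. By \cref{lem6.1}, $\omega^t(\beta)$ is positive exactly when $c_{\sigma^{-t}(0)}=0$, and then its coefficients are the $c_{\sigma^{-t}(i)}$. When $\omega^t(\beta)$ is negative, its coefficients are $c_{\sigma^{-t}(i)}-n_i$, and these vanish on $S_0^j$ exactly where $c=1$. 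Since $n_i=1$ on $S_0^j$, the $S_0^j$-part of $\beta$ is a $0/1$ vector. I would need to show that $\beta^\omega\neq0$ forces this vector to be constant along $S_0^j$, using \cref{lem6.2}, \cref{lem3.10} and \cref{prop2.2} as in the proof of \cref{lem6.2}. If it is all zeros, $\beta$ itself works; if it is all ones, then $\omega(\beta)$, which is negative with zero $S_0^j$-coefficients, works. If this direct route fails, I would fall back on a case-by-case check using the foldings in \cref{figfDDI} and \cref{figfDDII}.
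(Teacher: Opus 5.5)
\textbf{Gap in the main route: divisible roots in the non-reduced cases.} Your main route is sound apart from one step, and it is not circular. The paper proves \cref{basistheorem} by checking \cref{claim} through explicit coefficient computations for each type, and never uses this lemma.

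The step that fails is the ``standard fact'' that every root is Weyl-conjugate to a simple root. That holds only for \emph{indivisible} roots. In a non-reduced system, a divisible root $2\delta$ is conjugate to twice a simple root and never to a simple root.

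This case really occurs here. By \cref{typetheorem}, $\PHI_\re^\omega$ is of type $BC_r$ for $C_\ell$ with $\ell$ odd, and for $D_\ell$ with $\ell$ odd and $j\in\{\ell-1,\ell\}$. For $C_3$, $j=3$, you get $\PHI_\re^\omega=\{\pm\bar\alpha_1^\omega,\pm2\bar\alpha_1^\omega\}$ and $W^\omega=\{1,s_{\bar\alpha_1^\omega}\}$. So $2\bar\alpha_1^\omega$ is not of the form $u(\bar\alpha_k^\omega)$, and your construction gives it no lift.

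The repair is short. In these cases every divisible root is $W^\omega$-conjugate to $2\bar\alpha_r^\omega$. Here $\bar\alpha_r^\omega$ is the short simple root coming from the middle orbit $S_r^j=\{\frac{\ell-1}{2},\frac{\ell+1}{2}\}$, which consists of two adjacent simple roots with $\#N=1$. As in the proof of \cref{reducedlemma}, $2\bar\alpha_r^\omega=(\alpha_{\frac{\ell-1}{2}}+\alpha_{\frac{\ell+1}{2}})^\omega$, and $\alpha_{\frac{\ell-1}{2}}+\alpha_{\frac{\ell+1}{2}}\in\PHI'$. Applying your $w_{\alpha_{s_k}}$-lifting to this root finishes the argument.

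\textbf{Comparison with the paper.} Once patched, your argument is genuinely different from the paper's. The paper folds the subsystem $\PHI'$ (basis $\DELTA'$) by $\omega$, viewed as a diagram automorphism. It then recognizes $(\PHI')^\omega\cong\PHI_\re^\omega$ from the known foldings and \cref{tableP}, and upgrades the inclusion $(\PHI')^\omega\subseteq\PHI_\re^\omega$ to equality. That route covers the divisible roots automatically, since folding $A_{2n}$ by its flip already creates them, but it leans on the classification. Yours needs only \cref{basistheorem}, \cref{lem6.7}, \cref{lem6.8} and the identification of the single doubled simple root.

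\textbf{The fallback route cannot work.} A lift is in general not in $\pm O(\beta)$. Take $\beta\in\PHI^+$ and $t\not\equiv 0$. If $\omega^t(\beta)$ is positive, its $S_0^j$-coefficients are those of $\beta$, permuted. If it is negative, it has coefficient $c_0-n_{\sigma^t(0)}=-1$ at $\sigma^t(0)\in S_0^j$, by the computation in the proof of \cref{lem6.1}. In particular your ``all ones'' case never yields a lift, and searching the orbit helps only when $\beta$ itself already works.

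Concretely, $\beta=\alpha_j$ has $\beta^\omega=\bar\alpha_0^\omega\neq0$ by \cref{thm3.11}. Yet every element of $O(\alpha_j)=\{\alpha_s\}_{s\in S_0^j}$ has a nonzero coefficient on $S_0^j\setminus\{0\}$. For example, for $D_5$ with $j=5$, $\alpha_5^\omega=-\bar\alpha_1^\omega$ is lifted by $-\alpha_2$, which is not in $\pm O(\alpha_5)$. So in general a lift has to be a different root, and that is exactly what the Weyl-group or folding argument supplies.
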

The $\beta'$ in the above lemma satisfies $(\beta',\pi_0^j) = \sum_{s \in S_0^j}c_s = 0$.

\begin{definition}
	Let $\beta^\omega \in \PHI^\omega$ and $p \in \mathbb{Z}_{\geq 0}$.
	We say that $\beta^\omega$ has a \textbf{${\bm p}$-extra root} if there exists a root 
	$\beta' \in \PHI$ such that $(\beta')^\omega = \beta^\omega$ and $(\beta',\pi_0^j) = p$.
	We refer to such root $\beta'$ as \textbf{${\bm p}$-extra root} of $\beta^\omega$.
	Moreover, define a set $P(\beta^\omega)$ by
	\begin{align}
		P(\beta^\omega) &= \bigset{p \in \mathbb{Z}}{\text{$\beta^\omega$ has a $p$-extra root}}\\
		&= \bigset{(\gamma,\pi_0^j)}{\gamma \in \PHI,\ \gamma^\omega = \beta^\omega}.
	\end{align}
\end{definition}
For any $\beta^\omega \in \PHI_\re^\omega$, \cref{lem4.8} implies that $0$ must belong to $P(\beta^\omega)$.
Since $\#S_0^j = o(\omega)$, we have
\begin{align}
	P(\beta^\omega) \subseteq \{0,\, \pm1,\, \ldots,\, \pm(o(\omega)-1)\}
\end{align}
for all $\beta^\omega \in \PHI^\omega$.

In the next section, we will determine $\PHI_{\omega,0}$ for each $\PHI$ and $\omega$, and $P(\beta^\omega)$ for each $\beta^\omega \in (\PHI^\omega_\re)^+ \sqcup \{0\}$.

\section{Considerations for each type}\label{sec5}

\noindent
Let $\PHI$ be an irreducible reduced root system and $\omega \in \OMEGA$.

In this section, we will prove that $\DELTA^\omega$ is a basis for $\PHI_\re^\omega$ (proof of \cref{basistheorem}).
In addition, do the following:
\begin{iitemize}
	\item Prove that $\PHI_\re^\omega$ is reduced in cases \cref{BCa}, \cref{BCc} and \cref{BCd} (proof of \cref{lem4.6});
	\item Represent $\PHI_{\omega,0}^+$ explicitly;
	\item Determine $P(\beta^\omega)$ for each $\beta^\omega \in (\PHI^\omega_\re)^+ \sqcup \{0\}$.
\end{iitemize}

%
%
%
%
%
%

Let $\beta = \sum_{i=0}^\ell c_i\alpha_i \in \PHI$.
Recall that 
\begin{align}
	\beta^{\omega} = \sum_{k=1}^r \left(\bar{c}_k - m_k\bar{c}_0 \right)\bar\alpha_{k}^{\omega}, \label{cre}
\end{align}
as shown in \cref{lcom}.
To prove \cref{basistheorem}, it suffices to show that the following claim:
\begin{claim}\label{claim}
	For any $\beta \in \PHI^+$,
	the coefficients on the right-hand side of \cref{cre} satisfy one of the following:
	\begin{iitemize}
		\item all non-negative;
		\item all non-positive;
		\item all zero (then $\beta \in \PHI_{\omega,0}^+$).
	\end{iitemize}
\end{claim}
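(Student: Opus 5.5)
The plan is to reduce the claim to a sign computation that can be organised orbit by orbit, and then to verify it type by type using the explicit descriptions of the roots.

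\textbf{Step 1: identify the coefficients.} Write $d_k(\beta)\ceq\bar c_k-m_k\bar c_0$. Since $\DELTA^\omega$ is dual to $\{\bar\pi^\omega_1,\ldots,\bar\pi^\omega_r\}$, we have $d_k(\beta)=(\beta^\omega,\bar\pi_k^\omega)$. The map $\phi_\omega$ is the orthogonal projection onto $E^\omega$, because $\omega$ is orthogonal. Hence
\begin{align}
	d_k(\beta)=(\beta,\pi_k^j)-m_k(\beta,\pi_0^j).
\end{align}
The vector $\beta^\omega$ and its coefficients are unchanged when $\beta$ is replaced by $\omega^t(\beta)$, and all signs flip when $\beta$ is replaced by $-\beta$. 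So it suffices to check one representative $\beta$ of each $\langle\omega\rangle$-orbit in $\PHI/\{\pm1\}$.

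\textbf{Step 2: choose good representatives.} For $s\in S_0^j\subseteq J$ we have $c_s\in\{0,1\}$. By \cref{lem6.1}, if $c_{\sigma^{-t}(0)}=0$ then $\omega^t(\beta)$ is positive, with coefficients obtained by permuting those of $\beta$ via $\sigma^t$. I would use this to move $\beta$, inside $\pm O(\beta)$, to a positive root whose support meets $S_0^j$ as little as possible.

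If $\bar c_0=0$ can be reached, all $d_k=\bar c_k\ge 0$ and we are done. This covers every root whose orbit meets the root subsystem $\PHI'$ spanned by $\DELTA'=\DELTA\setminus S_0^j$. This is essentially \cref{lem4.8} read in reverse; I would not invoke it, to avoid circularity.

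The remaining roots are those whose support always meets $S_0^j$. For these the comparison between $\bar c_k$ and $m_k\bar c_0$ has to be made by hand.

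\textbf{Step 3: case analysis by type, using \cref{sigmaj}.} In the classical types I would use the standard models.
\begin{iitemize}
	\item $A_\ell$: roots $e_a-e_b$, with $\omega$ acting as a cyclic shift of order $o(\omega)$ on the indices. The orbits sort out according to the residues of $a,b$ modulo $g$, giving $A_{g-1}$ or zero.
	\item $B_\ell$, $C_\ell$, $D_\ell$: roots $\pm e_a\pm e_b$ and $\pm e_a$ or $\pm 2e_a$. Here $\omega$ is the explicit signed permutation $e_1\mapsto -e_1$ (for $B_\ell$ and for $D_\ell$ with $j=1$), or $e_a\mapsto -e_{\ell+1-a}$ (for $C_\ell$ and for $D_\ell$ with $j\in\{\ell-1,\ell\}$).
\end{iitemize}
In each case I would compute $\beta^\omega$ directly in the $e$-coordinates and write it in the explicit basis $\DELTA^\omega$, which is given by the folded diagrams. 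It is then visible that all coefficients have a common sign. The same computation lists $\PHI^+_{\omega,0}$ and confirms $m_k\in\mathbb{Z}_{>0}$.

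\textbf{Step 4: the exceptional cases, which are the main obstacle.} For $E_6$ and $E_7$ there is no convenient coordinate model, so the roots must be handled through their coefficient vectors. I would proceed as follows.
\begin{iitemize}
	\item Using Step 2, discard every root whose orbit contains a root with $\bar c_0=0$.
	\item For $E_6$ ($m_k\in\{1,2\}$ in the $G_2$ folding) and for $E_7$ ($F_4$ folding), the leftover roots are those containing $\alpha_1$ or $\alpha_6$ (resp. $\alpha_7$) in every orbit representative.
	\item List these leftover roots by height from the highest root downwards, check $d_k$ for each, and use the duality under $\sigma$ to halve the work.
\end{iitemize}
This finite verification, which may be tedious or computer-assisted, is where I expect the real effort to lie.
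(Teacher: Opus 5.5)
Your strategy is the same as the paper's: an exhaustive, type-by-type check that the coefficients have a common sign. Your coordinate models for $A_\ell$, $B_\ell$, $C_\ell$, and for $D_\ell$ with $\ell$ even and $j=\ell$, are correct. However, two of your type-$D$ maps are not $\omega$, so Step 3 as written would verify the claim for the wrong foldings.

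\textbf{The case $D_\ell$, $j=1$.} Here $\sigma_1=(0\ 1)(\ell-1\ \ \ell)$, so $\omega$ must also exchange $\alpha_{\ell-1}=e_{\ell-1}-e_\ell$ and $\alpha_\ell=e_{\ell-1}+e_\ell$. The correct map is $e_1\mapsto-e_1$, $e_\ell\mapsto-e_\ell$, with the other $e_a$ fixed. Your single sign change $e_1\mapsto-e_1$ fails on three counts:
\begin{itemize}
\item it is not in $W(D_\ell)$;
\item it fixes $\alpha_{\ell-1}$ and $\alpha_\ell$ instead of swapping them;
\item its fixed space has dimension $\ell-1$, not $r=\ell-2$.
\end{itemize}

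\textbf{The case $D_\ell$, $\ell$ odd, $j=\ell$.} Here $\omega$ has order $4$, with $0\mapsto\ell\mapsto1\mapsto\ell-1\mapsto0$. It is $e_1\mapsto-e_\ell$, $e_\ell\mapsto e_1$, and $e_a\mapsto-e_{\ell+1-a}$ for $2\le a\le\ell-1$. Your involution $e_a\mapsto-e_{\ell+1-a}$ also fails on three counts:
\begin{itemize}
\item it makes an odd number of sign changes, so it is not in $W(D_\ell)$;
\item it sends $\alpha_\ell$ to $\alpha_0$ rather than to $\alpha_1$;
\item it folds to rank $\frac{\ell-1}{2}$ instead of $\frac{\ell-3}{2}$.
\end{itemize}
This is exactly the most delicate case in the paper, where $\bar c_0\in\{1,2,3\}$.

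With the correct maps, both cases work. In each, $\phi_\omega(e_1)=\phi_\omega(e_\ell)=0$. For $2\le a\le\ell-1$, $\phi_\omega(e_a)$ equals $e_a$ in the first case and $\frac12(e_a-e_{\ell+1-a})$ in the second. Then $\DELTA^\omega$ is the standard basis of $B_{\ell-2}$, respectively $BC_{\frac{\ell-3}{2}}$, and your argument goes through.

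\textbf{Step 2 gives no real reduction.} From $\omega^{-t}(\pi_0^j)=\pi_0^j-o(\omega)\varpi^\vee_{\sigma^{-t}(0)}$ you get $(\omega^t(\beta),\pi_0^j)=\bar c_0-o(\omega)\,c_{\sigma^{-t}(0)}$. So $\bar c_0$ modulo $o(\omega)$ is constant, up to sign, on $\pm O(\beta)$. Since $0\le\bar c_0\le o(\omega)-1$ on $\PHI^+$, the positive roots in $\pm O(\beta)$ have $\bar c_0\in\{\bar c_0(\beta),\,o(\omega)-\bar c_0(\beta)\}$. In particular, a root with $\bar c_0>0$ is never moved into $\PHI'$; the roots of $\PHI'$ supplied by \cref{lem4.8} lie in other orbits. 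Your ``leftover'' roots in Step 4 are therefore all positive roots with $\bar c_0>0$. For $E_6$ and $E_7$ you are doing exactly the paper's full tabulation via the matrices $C$ and $X$.

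\textbf{Comparison with the paper in the classical types.} This is where your route genuinely differs. The paper lists, from Bourbaki's plates, every positive root with $\bar c_0>0$ in simple-root coordinates. It then tabulates $\bar c_k-m_k\bar c_0$ subcase by subcase, with an interval-counting argument for $A_\ell$. Your corrected projection in $e$-coordinates instead identifies $\PHI_\re^\omega$ with a standard model whose standard basis is $\DELTA^\omega$. Sign-coherence is then immediate, and the type and (non)reducedness come out at the same time.
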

It is clear when $\bar{c}_0 = 0$ or $\beta^\omega = \alpha_0^\omega$.
Moreover, if $\bar{c}_k - m_k\bar{c}_0 = 0$ for all $k \in \{1,\ldots,r\}$ except exactly one, then the above claim holds.
We will show the above claim separately for each type of $\PHI$.

\subsection{Type $A_\ell$}\ 

\noindent
Let $\PHI$ be a root system of type $A_\ell$.
Define $\alpha_1,\ldots,\alpha_\ell$ by 
\begin{align}
	\alpha_1 \ceq e_1 - e_2,\quad \alpha_2 \ceq e_2 - e_3,\quad \ldots,\quad \alpha_\ell \ceq e_\ell - e_{\ell+1}
\end{align}
for the standard basis $\{e_1,\ldots,e_{\ell+1}\}$ of $\mathbb{R}^{\ell+1}$.
Then $\DELTA = \{\alpha_1,\ldots,\alpha_\ell\}$ is a basis of $\PHI$, and
\begin{align}
	\PHI^+ &= \bigset{e_{p_1} - e_{p_2}}{p_1,p_2 \in \{1,\ldots,\ell+1\},\ p_1 < p_2}\\
	&=\bigset{\alpha_{i_1} + \cdots + \alpha_{i_2}}{i_1,i_2 \in \{1,\ldots,\ell\},\ i_1 \leq i_2}
\end{align}
by \cite[Plate \rom{1}]{Bourbaki}.

Let $j \in J \setminus\{0\}$ and $\omega = \omega_j$.
Then $\sigma = \sigma_j = (0\ 1\ \cdots\ \ell)^j$.
We can see that $g \ceq \gcd\{\ell+1,\, j\} = r+1$ and $o(\omega) = \frac{\ell+1}{r+1}$.
Suppose that $o(\omega) \neq \ell+1$ (that is, $r \neq 0$) so that $\PHI_\re^\omega \neq \emptyset$.
Let $S_0^j,\ldots,S_r^j$ denote the $\langle \sigma \rangle$-orbits of $I$.
For any $k \in \{0,\ldots,r\}$, we may assume that
\begin{align}
	S_k^j = \bigset{\sigma^t(k)}{t \in \{1,\ldots,o(\omega)\}} = \bigset{k + bg}{b \in \{0,\ldots,o(\omega)-1\}}.
\end{align}
Since $\#S_k^j = o(\omega)$ and $\bar{n}_k = 1$, we have $m_k = 1$ for all $k \in \{0,\ldots,r\}$.


We consider \cref{claim}.
Let $\beta = \alpha_{i_1} + \cdots + \alpha_{i_2} \in \PHI^+$.
Then
\begin{align}
	\bar{c}_k = \#\bigset{s \in S_k^j}{i_1 \leq s \leq i_2}
\end{align}
for all $k \in \{0,\ldots,r\}$.
Suppose that $p \ceq \bar{c}_0 > 0$ and there exists $k_0 \in \{1,\ldots,r\}$ such that $\bar{c}_{k_0} - m_{k_0}\bar{c}_0 > 0$, that is, $\bar{c}_{k_0} > p$.
Let 
\begin{align}
	s_1\ceq \min\bigset{s \in S_{k_0}^j}{i_1 \leq s \leq i_2},\qquad
	s_2 \ceq \max\bigset{s \in S_{k_0}^j}{i_1 \leq s \leq i_2}.
\end{align}
Then $s_2 \geq s_1 + pg$ since $\bar{c}_{k_0} > p$.
Hence we have
\begin{align}
	\bar{c}_k \geq \#\bigset{i \in S_k^j}{s_1 \leq i \leq s_2} \geq p
\end{align}
for all $k \in \{1,\ldots,r\}$.
Therefore $\bar{c}_k - m_k\bar{c}_0 \geq p - p = 0$ for all $k \in \{1,\ldots,r\}$.

Assume that there exists $k_1,k_2 \in \{1,\ldots,r\}$ such that 
\begin{align}
	\bar{c}_{k_1} - m_{k_1}\bar{c}_0 < 0,\qquad
	\bar{c}_{k_2} - m_{k_2}\bar{c}_0 > 0.
\end{align}
It follows from the above that $\bar{c}_{k_1} - m_{k_1}\bar{c}_0 \geq 0$, but this is a contradiction.

Hence \cref{claim} holds true when $\PHI$ is of type $A_\ell$.

 \label{ZA}

\begin{proposition}\label{disapA}
	Let $\PHI$ be a root system of type $A_\ell$, and $j \in J \setminus\{0\}$.
	Then
	\begin{align}
		\PHI_{\omega,0}^+ 
		&= \bigset{e_{p_1} - e_{p_2}}{p_1,p_2 \in \{1,\ldots,\ell+1\},\ p_2 - p_1 \in g\mathbb{Z}_{>0}}\\
		&= \bigset{\alpha_{i_1} + \cdots + \alpha_{i_2}}{i_1,i_2 \in \{1,\ldots,\ell\},\ i_2 - i_1 + 1 \in g\mathbb{Z}_{>0}}.
	\end{align}
	Hence
	\begin{align}
		P(0) = \{\pm1,\, \ldots,\, \pm(o(\omega)-1)\}.
	\end{align}
\end{proposition}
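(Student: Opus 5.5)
The plan is to use the criterion derived right after \cref{cre}: a positive root $\beta=\alpha_{i_1}+\cdots+\alpha_{i_2}$ lies in $\PHI_{\omega,0}^+$ if and only if $\bar c_k - m_k\bar c_0 = 0$ for all $k\in\{1,\ldots,r\}$. Since $m_k=1$ in type $A_\ell$, this amounts to all the counts $\bar c_0,\bar c_1,\ldots,\bar c_r$ being equal. Here $\bar c_k$ is the number of indices in the interval $[i_1,i_2]$ that are congruent to $k$ modulo $g$. We adopt the convention that $c_0=0$ and that index $0$ never lies in $[i_1,i_2]\subseteq\{1,\ldots,\ell\}$. For $k\ne 0$ we write $\ell+1\equiv 0 \pmod g$, so $S_0^j$ meets $\{1,\ldots,\ell\}$ exactly in the multiples of $g$.

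\emph{Step 1 (sufficiency).} Suppose the interval length satisfies $L \ceq i_2-i_1+1 = pg$ with $p\ge 1$. Then $[i_1,i_2]$ consists of $p$ complete blocks of $g$ consecutive integers. Each residue class mod $g$ therefore occurs exactly $p$ times, so $\bar c_k=p$ for every $k$ and $\beta^\omega=0$.

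\emph{Step 2 (necessity).} Conversely, suppose all the $\bar c_k$ are equal to some common value $p$. Then
\begin{align}
L=\sum_{k=0}^r \bar c_k=(r+1)p=gp.
\end{align}
We also need $p>0$. If $p=0$ then every count vanishes, so $L=0$, which is impossible for a root. Hence $L\in g\mathbb{Z}_{>0}$. Translating via $\alpha_{i_1}+\cdots+\alpha_{i_2}=e_{i_1}-e_{i_2+1}$, with $p_1=i_1$ and $p_2=i_2+1$, we get $p_2-p_1=L$, which gives the first description of $\PHI_{\omega,0}^+$.

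\emph{Step 3 (determining $P(0)$).} By definition, $P(0)$ is the set of values $(\gamma,\pi_0^j)$ for $\gamma\in\PHI$ with $\gamma^\omega=0$. For a positive root, $(\gamma,\pi_0^j)=\sum_{s\in S_0^j}c_s=\bar c_0=p$, where $p=L/g$ ranges over $1,\ldots,\lfloor \ell/g\rfloor$. Since $\ell+1=g\,o(\omega)$, we have $\lfloor \ell/g\rfloor=o(\omega)-1$, and every such $p$ is attained, e.g.\ by $i_1=1$, $i_2=pg$. Negative roots give the negatives of these values. Finally, $0\notin P(0)$: a root with $\bar c_0=0$ and $\beta^\omega=0$ would have all $\bar c_k=0$ by Step 2, which is impossible. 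Whether $0$ should be included via $\gamma=0$ is not an issue, since $\gamma$ must be a root.

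The main obstacle is bookkeeping rather than mathematics. One must confirm that residues modulo $g$ correctly model the orbits $S_k^j$ restricted to $\{1,\ldots,\ell\}$, with index $0$ absent and $c_0=0$. One must also check the identification $\lfloor \ell/g\rfloor=o(\omega)-1$ that bounds the attainable values of $p$.
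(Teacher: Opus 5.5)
Your proof is correct. The reduction to $\bar c_0=\bar c_1=\cdots=\bar c_r$ (via \cref{lcom} and $m_k=1$) and the sufficiency direction are the same as in the paper, but your necessity step takes a different route.

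The paper argues positionally. The orbit $S_{k_1}^j$ containing $i_1$ meets $[i_1,i_2]$ exactly in $i_1,i_1+g,\ldots,i_1+(p-1)g$, which forces $i_2<i_1+pg$. The neighbouring orbit $S_{k_0}^j$, with $k_0\equiv k_1-1 \pmod g$, meets it in $i_1+g-1+bg$ for $b<p$, which forces $i_2\geq i_1+pg-1$. Together these pin $i_2$ down.

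You instead use the identity $\sum_{k=0}^r\bar c_k=\height(\beta)=i_2-i_1+1$ together with $r+1=g$, which gives $L=gp$ at once, and you rule out $p=0$ explicitly. This is shorter and avoids the neighbouring-orbit bookkeeping, including the separate definition of $k_0$ when $k_1=0$. Its underlying observation, $\height(\beta)=\bar c_0\sum_{k=0}^r m_k$ for every disappearing positive root, holds in any type. The paper's version stays within the interval-position reasoning it also uses to verify \cref{claim} in type $A_\ell$.

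You also actually prove the formula for $P(0)$, which the paper only asserts with ``Hence''. You compute $(\gamma,\pi_0^j)=\bar c_0=L/g$, bound it by $\lfloor \ell/g\rfloor=o(\omega)-1$, exhibit the witnesses $i_1=1$, $i_2=pg$, and pass to negatives.
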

\begin{proof}
	It is clear that $\alpha_{i_1} + \cdots + \alpha_{i_2} \in \PHI_{\omega,0}^+$ if $i_2 - i_1 + 1 \in g\mathbb{Z}_{>0}$.
	Suppose that $\alpha_{i_1} + \cdots + \alpha_{i_2} \in \PHI_{\omega,0}^+$.
	Then $p \ceq \bar{c}_0 = \bar{c}_1 = \cdots = \bar{c}_r$.
	Let $k_1 \in \{0,\ldots,r\}$ satisfy $i_1 \in S_{k_1}^j$.
	Then 
	\begin{align}
		\bigset{s \in S_{k_1}^j}{i_1 \leq s \leq i_2} = \bigset{i_1 + bg}{b \in \{0,\ldots,p-1\}}.
	\end{align}
	In particular, we have $i_2 < i_1 + pg$.
	For $k_0 \in \{0,\ldots,r\}$ defined as $k_0 = k_1 - 1$ if $k_1 > 0$, and $k_0 = r$ if $k_1 = 0$, we have
	\begin{align}
		\bigset{s \in S_{k_0}^j}{i_1 \leq s \leq i_2} = \bigset{i_1 + (g-1) + bg}{b \in \{0,\ldots,p-1\}}.
	\end{align}
	In particular, $i_1 + pg - 1 \leq i_2$.
	Hence $i_2 - i_1 + 1 = pg \in g\mathbb{Z}_{>0}$.
\end{proof}

By \cref{typetheorem}, $\PHI_\re^\omega$ is of type $A_{g-1}$ ($r = g-1$).
\begin{proposition}\label[prop]{extraA}
	Let $\PHI$ be a root system of type $A_\ell$, and $j \in J \setminus\{0\}$.
	For any $\beta^\omega \in (\PHI_\re^\omega)^+$, we have
	\begin{align}
		P(\beta^\omega) = \{0,\ \pm1,\ \ldots,\ \pm(o(\omega)-1)\}.
	\end{align}
\end{proposition}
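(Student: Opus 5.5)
The plan is to work in the explicit model $\alpha_i = e_i - e_{i+1}$ and to describe the action of $\omega$ on the vectors $e_a$ directly. Since $\alpha_0 = -\tilde\alpha = e_{\ell+1} - e_1$, we may write $\alpha_i = e_i - e_{i+1}$ for all $i \in I$, reading indices of $e$ modulo $\ell+1$ with representatives in $\{1,\ldots,\ell+1\}$. Because $\sigma = (0\ 1\ \cdots\ \ell)^j$, the relation $\omega(\alpha_i) = \alpha_{i+j}$ forces $\omega(e_a) = e_{a+j}$ modulo the common shift, which is killed since $\omega$ preserves the hyperplane $\sum x_a = 0$. Hence the $\langle\omega\rangle$-orbit of $e_a$ is the set $\{e_{a'} : a' \equiv a \pmod g\}$. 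For $\beta = e_a - e_b$ this gives $\beta^\omega = \varepsilon_{\bar a} - \varepsilon_{\bar b}$, where $\varepsilon_u$ is the average of $e_{a'}$ over the residue class $u$ modulo $g$. In particular $\beta^\omega = 0$ iff $a \equiv b \pmod g$, which is consistent with \cref{disapA}, and $\gamma^\omega = \beta^\omega \neq 0$ iff $\gamma = e_{a'} - e_{b'}$ with $a' \equiv a$ and $b' \equiv b$. So the fibre of $\phi_\omega$ over a nonzero folded root is indexed by pairs of representatives of two fixed distinct residue classes $u \neq v$ modulo $g$.

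Next I compute the pairing with $\pi_0^j$. Here $S_0^j = \{0, g, 2g, \ldots, (o(\omega)-1)g\}$ and $\varpivee_0 = 0$, so $(\gamma, \pi_0^j) = \sum_{s \in S_0^j\setminus\{0\}} c_s$. For $\gamma = e_{a'} - e_{b'} = \alpha_{a'} + \cdots + \alpha_{b'-1}$ with $a' < b'$, this is the number of multiples of $g$ in $[a', b'-1]$. For $a' > b'$, it is minus the number of multiples of $g$ in $[b', a'-1]$. The earlier bound $P(\beta^\omega) \subseteq \{0, \pm1, \ldots, \pm(o(\omega)-1)\}$ gives one inclusion, so it remains to realize every value.

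Fix a positive folded root and choose residues $1 \le u < v \le g$ with $\beta^\omega = \varepsilon_u - \varepsilon_v$; up to sign this covers every element of $(\PHI_\re^\omega)^+$, and the sign only swaps the roles of $p$ and $-p$. For $0 \le x \le y \le o(\omega)-1$, set $a' = u + xg$ and $b' = v + yg$, both in $\{1,\ldots,\ell+1\}$ since $\ell + 1 = o(\omega) g$. Then $a' < b'$, and the multiples of $g$ in $[a', b'-1]$ are exactly $(x+1)g, \ldots, yg$, so the pairing equals $y - x$, which takes every value in $\{0, \ldots, o(\omega)-1\}$. Symmetrically, taking $a' = u + xg$ and $b' = v + yg$ with $y < x$ gives $a' > b'$, and the multiples of $g$ in $[b', a'-1]$ are $(y+1)g, \ldots, xg$, so the pairing is $-(x-y)$; this realizes $-1, \ldots, -(o(\omega)-1)$. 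Together these give the full set claimed in \cref{extraA}.

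The main obstacle I expect is the bookkeeping in the first step. One must check that the identification $\omega(e_a) = e_{a+j}$ is correct, in particular with the index convention $e_0 = e_{\ell+1}$ and the boundary residue $v = g$. One must also confirm that $\pi_0^j$ involves only the $\varpivee_s$ with $s \in g\mathbb{Z}$, $1 \le s \le \ell$, so that counting multiples of $g$ is correct. I would first double-check this on a small case such as $\ell = 3$, $j = 2$ before writing the general argument.
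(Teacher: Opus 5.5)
Your proof is correct, and it takes a different route from the paper's.

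\textbf{The paper's route.} It works entirely in simple-root coordinates. For $\beta^\omega=\bar\alpha_{k_1}^\omega+\cdots+\bar\alpha_{k_2}^\omega$ it takes the $0$-extra root $\beta'=\alpha_{k_1}+\cdots+\alpha_{k_2}$. It then shifts $\beta'$ by disappearing roots from \cref{disapA}: the strings $\gamma_p^+=\alpha_{k_2+1}+\cdots+\alpha_{k_2+pg}$ and $\gamma_p^-=\alpha_{k_1}+\cdots+\alpha_{k_1+pg-1}$ of $pg$ consecutive simple roots. These lie in $\PHI_{\omega,0}^+$ and pair to $p$ with $\pi_0^j$. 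So $\beta'+\gamma_p^+$ and $\beta'-\gamma_p^-$ are roots with the same image as $\beta'$, realizing the values $\pm p$. The reverse inclusion is the a priori bound $P(\beta^\omega)\subseteq\{0,\pm1,\ldots,\pm(o(\omega)-1)\}$. This ``$0$-extra root plus disappearing root'' template is what the paper reuses for types $B$, $C$, $D$ and $E$.

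\textbf{Your route.} You identify $\omega$ with the cyclic coordinate shift by $j$. This lets you write the whole fibre of $\phi_\omega$ over $\varepsilon_u-\varepsilon_v$ as $\{e_{u+xg}-e_{v+yg} : 0\le x,y\le o(\omega)-1\}$. You then evaluate the pairing with $\pi_0^j$ as a signed count of multiples of $g$, which comes out to $y-x$ in both cases $x\le y$ and $x>y$. The boundary cases $v=g$ and $y=o(\omega)-1$ work as you claim.

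\textbf{What each buys.} Your computation determines $P(\beta^\omega)$ exactly, so both inclusions come at once without the a priori bound. It also recovers \cref{disapA} as a byproduct, since $\beta^\omega=0$ if and only if $a\equiv b \pmod g$. The price is that the method is tied to the permutation model of type $A$. The paper's template is less explicit but works uniformly across the other types.

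\textbf{On the bookkeeping you flagged.} Since $e_a\notin E$, the relevant fact is not that $\omega$ preserves the hyperplane. It is that $W$ acts on $\mathbb{R}^{\ell+1}$ by permuting coordinates, so $\omega$ fixes $e_1+\cdots+e_{\ell+1}$. Equivalently, you can use the projections of the $e_a$ to $E$, which sum to zero. In any case, all you actually need is the telescoped identity $\omega(e_a-e_b)=e_{a+j}-e_{b+j}$. This follows directly from $\omega(\alpha_i)=\alpha_{i+j}$ with the cyclic convention $\alpha_0=e_{\ell+1}-e_1$.
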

\begin{proof}
	Suppose that $\beta^\omega = \bar\alpha_{k_1}^\omega + \cdots + \bar\alpha_{k_2}^\omega \in (\PHI_\re^\omega)^+$ for $k_1,k_2 \in \{1,\ldots,r\}$, and $k_1 \leq k_2$
	Define $\beta' \ceq \alpha_{k_1} + \cdots + \alpha_{k_2} \in \PHI^+$.
	Then $\beta'$ is a $0$-extra root of $\beta^\omega$.
	For any $p \in \{1,\ldots,o(\omega)-1\}$, \cref{disapA} implies that 
	\begin{align}
		\gamma_p^+ \ceq \alpha_{k_t + 1} + \cdots + \alpha_{k_t + 1 + pr},\qquad 
		\gamma_p^- \ceq \alpha_{k_1} + \cdots + \alpha_{k_1 + pr}
	\end{align}
	belong to $\PHI_{\omega,0}^+$, and $\beta' + \gamma_p^+,\ \beta' - \gamma_p^- \in \PHI$ satisfy $(\gamma_p^+,\pi_0^j) = (\gamma_p^-,\pi_0^j) = p$. 
	Furthermore, $(\beta' + \gamma_p^+)^\omega = (\beta' - \gamma_p^-)^\omega = \beta^\omega$.
	Hence $\beta' + \gamma_p^+$ is a $p$-extra root and $\beta' - \gamma_p^-$ is a $(-p)$-extra root of $\beta^\omega$.
\end{proof}

\subsection{Type $B_\ell$}\ 

\noindent
Let $\PHI$ be a root system of type $B_\ell$.
Define $\alpha_1,\ldots,\alpha_\ell$ by 
\begin{align}
	\alpha_1 \ceq e_1 - e_2,\quad \alpha_2 \ceq e_2 - e_3,\quad \ldots,\quad \alpha_\ell \ceq e_\ell
\end{align}
for the standard basis $\{e_1,\ldots,e_{\ell}\}$ of $\mathbb{R}^{\ell}$.
Then $\DELTA = \{\alpha_1,\ldots,\alpha_\ell\}$ is a basis of $\PHI$, and
\begin{align}
	\PHI^+ 
	&= \bigset{e_{p_0}}{p_0 \in \{1,\ldots,\ell\}} \sqcup \bigset{e_{p_1} - e_{p_2}}{p_1,p_2 \in \{1,\ldots,\ell\},\ p_1 < p_2}\\
	&\qquad  
	\sqcup \bigset{e_{p_1} + e_{p_2}}{p_1,p_2 \in \{1,\ldots,\ell\},\ p_1 < p_2} \\
	&= \bigset{\alpha_{i_0} + \cdots + \alpha_{\ell}}{i_0 \in \{1,\ldots,\ell\}} \sqcup \bigset{\alpha_{i_1} + \cdots + \alpha_{i_2}}{i_1,i_2 \in \{1,\ldots,\ell-1\},\ i_1 \leq i_2} \\
	&\qquad 
	\sqcup \bigset{\alpha_{i_1} + \cdots + \alpha_{i_2-1} + 2\alpha_{i_2} + \cdots + 2\alpha_{\ell}}{i_1,i_2 \in \{1,\ldots,\ell\},\ i_1 < i_2}
\end{align}
by \cite[Plate \rom{2}]{Bourbaki}.
The set of short roots of $\PHI^+$ is 
\begin{align}
	\bigset{e_{p_0}}{p_0 \in \{1,\ldots,\ell\}} = \bigset{\alpha_{i_0} + \cdots + \alpha_{\ell}}{i_0 \in \{1,\ldots,\ell\}}.
\end{align}

Let $j = 1$ and $\omega = \omega_1$.
Then $\sigma = (0\ 1)$ and $r = \ell-1$.
For any $k \in \{1,\ldots,r\}$, we may assume that
\begin{align}
	S_0^j = \{0,1\},\qquad
	S_k^j = \{k+1\}.
\end{align}
For any $k \in \{1,\ldots,r\}$, since $\bar{n}_k = 2$, we have $m_k = 1$.


We consider \cref{claim}.
Let $\beta = \sum_{i=1}^\ell c_i \alpha_i \in \PHI^+$ satisfy $\bar{c}_0 = c_1 = 1$.
Then $\beta$ is equal to either of the following:
\begin{eenumeratea}
	\item\label{Ba} $\alpha_1 + \cdots + \alpha_{i_2}$ for $i_2 \in \{1,\ldots,\ell\}$;
	\item\label{Bb} $\alpha_1 + \cdots + \alpha_{i_2-1} + 2\alpha_{i_2} + \cdots + 2\alpha_{\ell}$ for $i_2 \in \{2,\ldots,\ell\}$.
\end{eenumeratea}
In the case of \cref{Ba}, we have
\begin{align}
	\bar{c}_k - m_k\bar{c}_0 = \begin{cases*}
		1 - 1 = 0 	& if $k < i_2$;\\
		0 - 1 = -1 	& if $k \geq i_2$
	\end{cases*}
\end{align}
for all $k \in \{1,\ldots,r\}$.
In particular, if $i_2 = r + 1 = \ell$, then $\beta^\omega = 0$.

In the case of \cref{Bb}, we have
\begin{align}
	\bar{c}_k - m_k\bar{c}_0 = \begin{cases*}
		1 - 1 = 0 	& if $k < i_2 - 1$;\\
		2 - 1 = 1 	& if $k \geq i_2 - 1$
	\end{cases*}
\end{align}
for all $k \in \{1,\ldots,r\}$.
In particular, $\beta^\omega \neq 0$ regardless of $i_2$.

Hence \cref{claim} holds true when $\PHI$ is of type $B_\ell$.

If $\ell \geq 3$, then the diagram $\mathcal{D}(\DELTA^\omega)$ is of type $B_{\ell-1}$.
\begin{proposition}\label{reducedB}
	Let $\PHI$ be a root system of type $B_\ell$ for $\ell \geq 3$.
	Then $\PHI_\re^\omega$ is reduced.
\end{proposition}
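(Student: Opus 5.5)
The plan is to describe $\PHI_\re^\omega$ completely through coefficient vectors with respect to $\DELTA^\omega = \{\bar\alpha_1^\omega,\ldots,\bar\alpha_r^\omega\}$, where $r = \ell-1$ and $\bar\alpha_k^\omega = \alpha_{k+1}^\omega$. By the theorem in \cref{sec3.1}, $\DELTA^\omega$ is dual to the basis $\{\bar\pi_1^\omega,\ldots,\bar\pi_r^\omega\}$ of $E^\omega$, so it is linearly independent. Hence each element of $\PHI_\re^\omega$ has a unique coefficient vector $(d_1,\ldots,d_r)\in\mathbb{Z}^r$, and $\PHI_\re^\omega$ is non-reduced if and only if some vector $v$ and $2v$ both occur. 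I will show that the occurring vectors are exactly the coefficient vectors of the roots of the subsystem $\PHI' \subseteq \PHI$ spanned by $\alpha_2,\ldots,\alpha_\ell$. This $\PHI'$ is of type $B_{\ell-1}$ and is reduced.

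The computation uses \cref{cre} with $m_k = 1$, split by $c_1$; it suffices to treat $\beta\in\PHI^+$, since $(-\beta)^\omega = -\beta^\omega$.
\begin{itemize}
\item If $c_1 = 0$, then $\beta\in\PHI'$ and $\beta^\omega = \sum_{k=1}^r c_{k+1}\bar\alpha_k^\omega$. The coefficient vector is exactly that of $\beta$ in $\PHI'$ relative to $\alpha_2,\ldots,\alpha_\ell$, and every root of $\PHI'$ arises this way.
\item If $c_1 = 1$ and $\beta$ is as in \cref{Ba}, the computation before the statement gives $\beta^\omega = -(\bar\alpha_{i_2}^\omega+\cdots+\bar\alpha_r^\omega)$, or $0$ when $i_2=\ell$. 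This is the vector of $-(\alpha_{i_2+1}+\cdots+\alpha_\ell) = -e_{i_2+1}$, a short root of $\PHI'$.
\item If $c_1 = 1$ and $\beta$ is as in \cref{Bb}, then $\beta^\omega = \bar\alpha_{i_2-1}^\omega+\cdots+\bar\alpha_r^\omega$. This is the vector of $\alpha_{i_2}+\cdots+\alpha_\ell = e_{i_2}$, again a root of $\PHI'$.
\end{itemize}

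Therefore the set of coefficient vectors of $\PHI_\re^\omega$ equals the set of coefficient vectors of $\PHI'$ relative to its basis $\{\alpha_2,\ldots,\alpha_\ell\}$. Suppose $\gamma^\omega$ and $2\gamma^\omega$ both lay in $\PHI_\re^\omega$ with vectors $v$ and $2v$. Then $\PHI'$ would contain roots $\delta$ and $2\delta$, contradicting that $\PHI'$, as a closed subsystem of the reduced system $\PHI$, is reduced. Hence $\PHI_\re^\omega$ is reduced, and by the same correspondence it is in fact of type $B_{\ell-1}$.

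The only step requiring care is the case analysis above. Its content is already in the verification of \cref{claim} for type $B_\ell$: one must check that the resulting vectors $\pm(0,\ldots,0,1,\ldots,1)$ are indeed roots of $\PHI'$, namely its short roots $\pm e_p$. So I expect no real obstacle, only bookkeeping of indices.
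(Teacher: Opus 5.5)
Your proof is correct, but it argues differently from the paper. The paper only rules out $2\bar\alpha_r^\omega \in \PHI_\re^\omega$. From the coefficient computations for \cref{Ba} and \cref{Bb}, a root $\beta$ with $\beta^\omega = 2\bar\alpha_r^\omega$ must have $\bar{c}_0 = 0$, which forces $\beta = 2\alpha_\ell$, and that is not a root. For this to suffice, the paper implicitly relies on two facts:
\begin{enumerate}
\item $\DELTA^\omega$ is a basis, so $\bar\alpha_r^\omega$ is the short simple root;
\item an irreducible non-reduced system with diagram $B_r$ is of type $BC_r$, in which twice the short simple root is a root.
\end{enumerate}

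Your route instead identifies the whole of $\PHI_\re^\omega$, via coefficient vectors, with the reduced subsystem $\PHI'$ of type $B_{\ell-1}$. Reducedness is then inherited directly from $\PHI$, you get the type $B_{\ell-1}$ at the same time, and you need neither the classification of non-reduced systems nor the basis property (only linear independence of $\DELTA^\omega$ and \cref{cre}). The paper's version is shorter to write and follows a template it reuses for the $D_\ell$ cases; both rest on the same case computations.

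In fact your identification is an equality of sets. Since $\sigma = (0\ 1)$ fixes $2,\ldots,\ell$, we have $\bar\alpha_k^\omega = \alpha_{k+1}$. Indeed $\omega = s_{e_1}$, so $\beta^\omega$ is the orthogonal projection of $\beta$ onto $e_1^\perp$ and $\PHI_\re^\omega = \PHI'$ literally, which would shorten your argument further.

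A minor citation slip: the duality between $\DELTA^\omega$ and $\{\bar\pi_1^\omega,\ldots,\bar\pi_r^\omega\}$ is the lemma in \cref{sec3.3}, not the theorem of \cref{sec3.1}.
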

\begin{proof}
	We can see that $\bar\alpha_r^\omega$ is a short simple root of $\PHI_\re^\omega$.
	Assume that $2\bar\alpha_r^\omega \in \PHI_\re^\omega$.
	Let $\beta = \sum_{i=1}^\ell c_i\alpha_i \in \PHI$ satisfy $\beta^\omega = 2\bar\alpha_r^\omega$.
	By the above observation, $\bar{c}_0$ must be $0$ since $\bar{c}_r - m_r\bar{c}_0 = 2$.
	Thus we have 
	\begin{align}
		\bar{c}_k = \bar{c}_k - m_k\bar{c}_0 = \begin{cases*}
			0 & if $k \neq r$;\\
			2 & if $k = r$.
		\end{cases*}
	\end{align}
	However, there exists no root $\beta$ that satisfies this condition, which is a contradiction.
	Hence $\PHI_\re^\omega$ is reduced in this case.
\end{proof}

\label{ZB}
\begin{proposition}\label{disapB}
	Let $\PHI$ be a root system of type $B_\ell$.
	Then
	\begin{align}
		\PHI_{\omega,0}^+ = \{e_1\} = \{\alpha_1 + \cdots + \alpha_\ell\}.
	\end{align}
	Hence 
	\begin{align}
		P(0) = \{\pm 1\}.
	\end{align}
\end{proposition}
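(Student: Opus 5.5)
We must show $\PHI_{\omega,0}^+ = \{\alpha_1+\cdots+\alpha_\ell\}$ and $P(0)=\{\pm1\}$ for type $B_\ell$ with $\omega=\omega_1$, $\sigma=(0\ 1)$. The plan is to go through the positive roots using the case analysis already carried out for \cref{claim}, via the formula \cref{cre}. A positive root $\beta$ has $\beta^\omega=0$ exactly when $\bar{c}_k - m_k\bar{c}_0 = 0$ for every $k\in\{1,\ldots,r\}$, where $m_k=1$ and $\bar{c}_k = c_{k+1}$.

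First take the case $\bar{c}_0 = c_1 = 0$. Then $\beta^\omega=0$ forces $c_{k+1}=0$ for all $k$, so $\beta=0$, which is impossible. Hence every root in $\PHI_{\omega,0}^+$ has $c_1=1$, and $\beta$ is of form \cref{Ba} or \cref{Bb}.

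In case \cref{Bb} we computed a coefficient equal to $1$ for $k\ge i_2-1$. Since $i_2\le\ell$ means $i_2-1\le r$, such a $k$ exists, so $\beta^\omega\neq0$. In case \cref{Ba} the coefficients are $-1$ for $k\ge i_2$. These all vanish if and only if no $k\in\{1,\ldots,\ell-1\}$ satisfies $k\ge i_2$, that is, $i_2=\ell$. This gives $\beta=\alpha_1+\cdots+\alpha_\ell=e_1$, and conversely $e_1^\omega=0$ by the same computation. This proves the first equality.

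For $P(0)$, the roots $\gamma\in\PHI$ with $\gamma^\omega=0$ are exactly $\pm e_1$, since $\PHI_{\omega,0}^-=-\PHI_{\omega,0}^+$. Here $\pi_0^j=\varpivee_0+\varpivee_1=\varpivee_1$, and $(e_1,\varpivee_1)=c_1=1$. So $P(0)=\{(\pm e_1,\pi_0^j)\}=\{\pm1\}$.

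No step is a real obstacle. The only points needing care are the boundary cases: $r=\ell-1$, so $k=i_2-1$ lies in range in case \cref{Bb}, and when $\ell=2$ the index range $\{1,\ldots,r\}=\{1\}$ is still handled by the same check.
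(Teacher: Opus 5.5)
Your proof is correct and follows the paper's own route. The paper's proof just reads $\Phi_{\omega,0}^+$ off the case analysis of roots with $\bar{c}_0 = c_1 = 1$ (forms (a) and (b)) done for the claim, and you additionally spell out the two details it leaves implicit: that $\bar{c}_0 = 0$ would force $\beta = 0$, and that $\pi_0^j = \varpi_1^\vee$ gives $(\pm e_1, \pi_0^j) = \pm 1$, hence $P(0) = \{\pm 1\}$.
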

\begin{proof}
	It follows from the above observation.
\end{proof}

By \cref{typetheorem}, if $\ell = 2$, then $\PHI_\re^\omega$ is of type $A_1$.

\begin{proposition}\label[prop]{extraB2}
	Let $\PHI$ be a root system of type $B_2$.
	Then $P(\bar\alpha_1^\omega) = \{0,\pm1\}$.
\end{proposition}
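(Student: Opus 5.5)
We need to show that $P(\bar\alpha_1^\omega)=\{0,\pm1\}$ for type $B_2$. Here $\ell=2$, $r=1$, $S_0^j=\{0,1\}$, $S_1^j=\{2\}$, $m_1=1$, and $o(\omega)=2$. The inclusion $P(\bar\alpha_1^\omega)\subseteq\{0,\pm1\}$ is automatic from the general bound $P(\beta^\omega)\subseteq\{0,\pm1,\ldots,\pm(o(\omega)-1)\}$. So it suffices to exhibit, for each $p\in\{0,1,-1\}$, a root $\gamma\in\PHI$ with $\gamma^\omega=\bar\alpha_1^\omega$ and $(\gamma,\pi_0^j)=p$.

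Note that $\pi_0^j=\varpivee_0+\varpivee_1=\varpivee_1$. Hence for $\gamma=c_1\alpha_1+c_2\alpha_2$ we have $(\gamma,\pi_0^j)=c_1=\bar c_0$. By \cref{lcom}, the condition $\gamma^\omega=\bar\alpha_1^\omega$ is equivalent to
\begin{align}
	\bar c_1-m_1\bar c_0=c_2-c_1=1.
\end{align}
So I need roots $c_1\alpha_1+c_2\alpha_2$ of $B_2$ with $c_2-c_1=1$ and $c_1\in\{0,1,-1\}$.

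The positive roots of $B_2$ are $\alpha_1$, $\alpha_2$, $\alpha_1+\alpha_2$ and $\alpha_1+2\alpha_2$. The candidates are as follows.
\begin{itemize}
	\item $p=0$: take $\gamma=\alpha_2$, with $(c_1,c_2)=(0,1)$. This is also the $0$-extra root guaranteed by \cref{lem4.8}.
	\item $p=1$: take $\gamma=\alpha_1+2\alpha_2$, with $(c_1,c_2)=(1,2)$. This is case \cref{Bb} with $i_2=2$, where the coefficient was computed to be $2-1=1$.
	\item $p=-1$: take $\gamma=-\alpha_1$, with $(c_1,c_2)=(-1,0)$, so that $c_2-c_1=1$.
\end{itemize}

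As a sanity check on the $p=-1$ case, I will verify directly that $(-\alpha_1)^\omega=\bar\alpha_1^\omega$. Since $\sigma=(0\ 1)$, we get $(-\alpha_1)^\omega=-\tfrac12(\alpha_0+\alpha_1)$. Using $\alpha_0=-(\alpha_1+2\alpha_2)$, this equals $\alpha_2=\bar\alpha_1^\omega$, as required. Alternatively, the $p=\pm1$ roots arise as $\alpha_2\pm\gamma$ with $\gamma=e_1=\alpha_1+\alpha_2\in\PHI_{\omega,0}^+$ from \cref{disapB}, which mirrors the argument of \cref{extraA}.

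There is no real obstacle. The only care needed is the sign bookkeeping: $\pi_0^j$ reduces to $\varpivee_1$, and the $(-1)$-extra root is a negative root rather than a positive one.
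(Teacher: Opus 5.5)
Your proposal is correct and takes essentially the same approach as the paper: both exhibit $\alpha_2$, $\alpha_1+2\alpha_2$ and $-\alpha_1$ as the $0$-, $1$- and $(-1)$-extra roots of $\bar\alpha_1^\omega=\alpha_2$. Your appeal to the general bound $P(\beta^\omega)\subseteq\{0,\pm1,\ldots,\pm(o(\omega)-1)\}$ for the reverse inclusion states explicitly a step the paper leaves implicit.
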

\begin{proof}
	We have $(\alpha_1 + 2\alpha_2,\, \pi_0^j) = 1$, $(-\alpha_1,\pi_0^j) = -1$, and
	\begin{align}
		(\alpha_1 + 2\alpha_2)^\omega = (-\alpha_1)^\omega = \alpha_2^\omega = \bar\alpha_1^\omega. \qedend
	\end{align}
\end{proof}

If $\ell \geq 3$, then $\PHI_\re^\omega$ is of type $B_{\ell-1}$ ($r = \ell-1$).

\begin{proposition}\label[prop]{extraB}
	Let $\PHI$ be a root system of type $B_\ell$ for $\ell \geq 3$.
	For $\beta^\omega \in (\PHI_\re^\omega)^+$, we have
	\begin{align}
		P(\beta^\omega) = \begin{cases*}
			\{0,\pm1\}	& if $\beta^\omega$ is a short root of $\PHI_\re^\omega$;\\
			\{0\}		& if $\beta^\omega$ is a long root of $\PHI_\re^\omega$.
		\end{cases*}
	\end{align}
\end{proposition}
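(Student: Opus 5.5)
The plan is to classify, for a given positive root $\beta^\omega \in (\PHI_\re^\omega)^+$, every $\gamma \in \PHI$ with $\gamma^\omega = \beta^\omega$, using the explicit description of $\PHI^+$ for $B_\ell$ and the formula \cref{cre}.

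First, compute $\pi_0^j$. Here $S_0^j = \{0,1\}$ and $\varpivee_0 = 0$, so $\pi_0^j = \varpivee_1$. Hence for $\gamma = \sum_i c_i\alpha_i$ we have $(\gamma,\pi_0^j) = c_1 \in \{0,\pm1\}$, since $n_1 = 1$. This already gives $P(\beta^\omega) \subseteq \{0,\pm1\}$, consistent with $o(\omega)=2$. By \cref{lem4.8}, $0 \in P(\beta^\omega)$ always. So it remains to decide, for each $\beta^\omega$, whether some $\gamma$ with $c_1 = 1$, resp. $c_1 = -1$, folds onto it.

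Second, use the case analysis already done for \cref{claim}. Since $m_k = 1$, \cref{cre} reads $\gamma^\omega = \sum_{k=1}^{r}(c_{k+1}-c_1)\bar\alpha_k^\omega$.

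Positive $\gamma$ with $c_1 = 1$ fall into two families.
\begin{itemize}
\item Type \cref{Ba}: $\gamma = \alpha_1+\cdots+\alpha_{i_2} = e_1 - e_{i_2+1}$. Then $\gamma^\omega = -(\bar\alpha_{i_2}^\omega+\cdots+\bar\alpha_r^\omega)$, which is a negative short root, or $0$ when $i_2 = \ell$.
\item Type \cref{Bb}: $\gamma = e_1 + e_{i_2}$. Then $\gamma^\omega = \bar\alpha_{i_2-1}^\omega+\cdots+\bar\alpha_r^\omega$, a positive short root of $\PHI_\re^\omega$ (the simple roots are $\bar\alpha_k^\omega = \alpha_{k+1}^\omega$, with short end $\bar\alpha_r^\omega$).
\end{itemize}
Roots with $c_1 = -1$ are the negatives of these. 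So $-(e_1 - e_{i_2+1}) = e_{i_2+1}-e_1$ folds to the positive short root $\bar\alpha_{i_2}^\omega+\cdots+\bar\alpha_r^\omega$, while $-(e_1+e_{i_2})$ folds to a negative short root.

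Third, read off the answer. Every positive short root of $\PHI_\re^\omega$ has the form $\bar\alpha_{k}^\omega+\cdots+\bar\alpha_r^\omega$, i.e.\ it is the image of $e_p$ with $p = k+1 \geq 2$. It has three preimages:
\begin{itemize}
\item the $0$-extra root $e_p$;
\item the $1$-extra root $e_1+e_p$;
\item the $(-1)$-extra root $e_p - e_1$.
\end{itemize}
So $P = \{0,\pm1\}$ in the short case.

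For a long root of $\PHI_\re^\omega$, the list above shows that no $\gamma$ with $c_1 = \pm1$ folds to a positive long root: all such images are short or zero. Hence $P = \{0\}$ in the long case.

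The only delicate point is ensuring completeness of the list of roots with $c_1 = \pm1$, and identifying which folded images are short versus long in $\PHI_\re^\omega$. The first is supplied by the enumeration \cref{Ba}/\cref{Bb}. The second follows from \cref{foldrule}, or directly from the fact that $\phi_\omega$ sends $e_p \mapsto e_p$ for $p \geq 2$, so short roots of $\PHI_\re^\omega$ are exactly the $e_p$ with $p \geq 2$.
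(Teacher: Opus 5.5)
Your proposal is correct and follows essentially the paper's argument. The $\pm1$-extra roots $e_p \pm e_1$ that you exhibit are exactly the paper's $\beta' \pm \gamma$, where $\beta' = e_p$ and $\gamma = e_1 = \alpha_1+\cdots+\alpha_\ell \in \PHI_{\omega,0}^+$; the exclusion of $\pm1$-extra roots for long roots is the same use of the enumeration \cref{Ba}/\cref{Bb}, with your computation $\pi_0^j=\varpivee_1$ (so $(\gamma,\pi_0^j)=c_1$) just making explicit a step the paper uses implicitly.
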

\begin{proof}
	Suppose that $\beta^\omega \in (\PHI_\re^\omega)^+$ is a short root of $\PHI_\re^\omega$, that is, 
	\begin{align}
		\beta^\omega = \bar\alpha_{k_0}^\omega + \cdots + \bar\alpha_r^\omega
	\end{align}
	for some $k_0 \in \{1,\ldots,r\}$.
	Then $\beta' \ceq \alpha_{k_0+1} + \cdots + \alpha_{\ell}$ is a $0$-extra root of $\beta^\omega$.
	Let $\gamma \ceq \alpha_1 + \cdots + \alpha_\ell \in \PHI_{\omega,0}^+$.
	Then $\beta' + \gamma$ and $\beta' - \gamma$ belong to $\PHI$.
	Since $(\beta' + \gamma)^\omega = (\beta' - \gamma)^\omega = (\beta')^\omega = \beta^\omega$, the root $\beta' + \gamma$ is a $1$-extra root and $\beta' - \gamma$ is a $(-1)$-extra root of $\beta^\omega$.

	Let $\beta^\omega \in (\PHI_\re^\omega)^+$.
	Suppose that $\beta' = \sum_{i=1}^\ell c_i'\alpha_i \in \PHI^+$ be a $1$-extra root of $\beta^\omega$, that is, $c_1' = 1$.
	Since $(\beta')^\omega \in (\PHI_\re^\omega)^+$, $\beta'$ must satisfy the conditions in \cref{Bb}.
	Therefore the above observation implies that
	\begin{align}
		\beta^\omega = (\beta')^\omega = \bar\alpha_{k_0}^\omega + \cdots + \bar\alpha_r^\omega
	\end{align}
	for some $k_0 \in \{1,\ldots,r\}$, and hence $\beta^\omega$ is a short root of $\PHI_\re^\omega$.
	
	Suppose that $\beta' = \sum_{i=1}^\ell c_i'\alpha_i \in \PHI^-$ be a $(-1)$-extra root of $\beta^\omega$, that is, $c_1' = -1$.
	Since $(\beta')^\omega \in (\PHI_\re^\omega)^+$, $\beta'$ must satisfy the conditions in \cref{Ba}, that is, $\beta' = - (\alpha_1 + \cdots + \alpha_{i_2})$ for some $i_2 \in \{1,\ldots,\ell-1\}$
	Hence
	\begin{align}
		\beta^\omega = (\beta')^\omega = \bar\alpha_{i_2}^\omega + \cdots + \bar\alpha_r^\omega.
	\end{align} 
	Therefore $\beta^\omega$ is a short root of $\PHI_\re^\omega$.
\end{proof}

\subsection{Type $C_\ell$}\ 

\noindent
Let $\PHI$ be a root system of type $C_\ell$.
Define $\alpha_1,\ldots,\alpha_\ell$ by 
\begin{align}
	\alpha_1 \ceq e_1 - e_2,\quad \alpha_2 \ceq e_2 - e_3,\quad \ldots,\quad \alpha_\ell \ceq 2e_\ell
\end{align}
for the standard basis $\{e_1,\ldots,e_{\ell}\}$ of $\mathbb{R}^{\ell}$.
Then $\DELTA = \{\alpha_1,\ldots,\alpha_\ell\}$ is a basis of $\PHI$, and
\begin{align}
	\PHI^+ 
	&= \bigset{e_{p_1} - e_{p_2}}{p_1,p_2 \in \{1,\ldots,\ell\},\ p_1 < p_2} \sqcup \bigset{e_{p_1} + e_{p_2}}{p_1,p_2 \in \{1,\ldots,\ell\},\ p_1 < p_2}\\
	&\qquad  
	\sqcup \bigset{2e_{p_0}}{p_0 \in \{1,\ldots,\ell\}}  \\
	&= \bigset{\alpha_{i_1} + \cdots + \alpha_{i_2}}{i_1,i_2 \in \{1,\ldots,\ell-1\},\ i_1 \leq i_2}\\
	&\qquad \sqcup \bigset{\alpha_{i_1} + \cdots + \alpha_{i_2-1} + 2\alpha_{i_2} + \cdots + 2\alpha_{\ell-1} + \alpha_\ell}{i_1,i_2 \in \{1,\ldots,\ell\},\ i_1 < i_2}\\
	&\qquad  \sqcup \bigset{2\alpha_{i_0} + \cdots + 2\alpha_{\ell-1} +\alpha_{\ell}}{i_0 \in \{1,\ldots,\ell\}}
\end{align}
by \cite[Plate \rom{3}]{Bourbaki}.
The set of short roots of $\PHI^+$ is 
\begin{align}
	&\bigset{e_{p_1} - e_{p_2}}{p_1,p_2 \in \{1,\ldots,\ell\},\ p_1 < p_2} \sqcup \bigset{e_{p_1} + e_{p_2}}{p_1,p_2 \in \{1,\ldots,\ell\},\ p_1 < p_2} \\
	&\qquad = \bigset{\alpha_{i_1} + \cdots + \alpha_{i_2}}{i_1,i_2 \in \{1,\ldots,\ell-1\},\ i_1 \leq i_2} \\
	&\qquad \qquad \sqcup \bigset{\alpha_{i_1} + \cdots + \alpha_{i_2-1} + 2\alpha_{i_2} + \cdots + 2\alpha_{\ell-1} + \alpha_\ell}{i_1,i_2 \in \{1,\ldots,\ell\},\ i_1 < i_2}
\end{align}

Let $j = \ell$ and $\omega = \omega_\ell$.
Then $\sigma = (0\ \ell)\prod_{i=1}^{\lfloor \frac{\ell-1}{2} \rfloor}(i\ \ \ell-i)$ and $r = \lfloor \frac{\ell}{2} \rfloor$.
For any $k \in \{1,\ldots,r\}$, we may assume that
\begin{align}
	S_k^j = \{k,\, \ell-k\}.
\end{align}
For any $k \in \{1,\ldots,r\}$, since $\bar{n}_k = 2$, we have 
\begin{align}
	m_k = \begin{cases*}
		1 & if $k = \frac{\ell}{2}$;\\
		2 & if $k \neq \frac{\ell}{2}$.
	\end{cases*}
\end{align}

	
We consider \cref{claim}.
Let $\beta = \sum_{i=1}^\ell c_i \alpha_i \in \PHI^+$ satisfy $\bar{c}_0 = c_\ell = 1$.
Then $\beta$ is equal to one of the following:
\begin{eenumeratea}
	\item\label{Ca} $\alpha_{i_1} + \cdots + \alpha_\ell$ for $i_1 \in \{1,\ldots,\ell\}$;
	\item\label{Cb} $\alpha_{i_1} + \cdots + \alpha_{i_2 - 1} + 2\alpha_{i_2} + \cdots + 2\alpha_{\ell-1} + \alpha_\ell$ for $i_1,i_2 \in \{1,\ldots,\ell\}$ and $i_1 < i_2$;
	\item\label{Cc} $2\alpha_{i_0} + \cdots + 2\alpha_{\ell-1} + \alpha_\ell$ for $i_0 \in \{1,\ldots,\ell\}$.
\end{eenumeratea}

Suppose that $\beta$ satisfies \cref{Ca}.
If $i_1 > \frac{\ell}{2}$, then we have
\begin{align}
	\bar{c}_k - m_k\bar{c}_0 = \begin{cases*}
		1 - 2 = -1	& if $k \leq \ell - i_1$;\\
		0 - 2 = -2	& if $\ell - i_1 < k < \frac{\ell}{2}$;\\
		0 - 1 = -1	& if $k = \frac{\ell}{2}$
	\end{cases*}
\end{align}
for all $k \in \{1,\ldots,r\}$.
In particular, $\beta^\omega\neq 0$ regardless of $i_1$.
If $i_1 \leq \frac{\ell}{2}$, then we have
\begin{align}
	\bar{c}_k - m_k\bar{c}_0 = \begin{cases*}
		1 - 2 = -1	& if $k < i_1$;\\
		2 - 2 = 0	& if $i_1 \leq k < \frac{\ell}{2}$;\\
		1 - 1 = 0	& if $k = \frac{\ell}{2}$
	\end{cases*}
\end{align}
for all $k \in \{1,\ldots,r\}$.
In particular, if $i_1 = 1$, then $\beta^\omega = 0$.

Suppose that $\beta$ satisfies \cref{Cb}.
If $i_2 > i_1 > \frac{\ell}{2}$, then we have
\begin{align}
	\bar{c}_k - m_k\bar{c}_0 = \begin{cases*}
		2 - 2 = 0	& if $k \leq \ell - i_2$;\\
		1 - 2 = -1	& if $\ell - i_2 < k \leq \ell - i_1$;\\
		0 - 2 = -2	& if $\ell - i_1 < k < \frac{\ell}{2}$;\\
		0 - 1 = -1	& if $k = \frac{\ell}{2}$
	\end{cases*}
\end{align}
for all $k \in \{1,\ldots,r\}$.
In particular, $\beta^\omega\neq 0$ regardless of $i_1$ and $i_2$.
If $i_2 > \frac{\ell}{2} \geq i_1$ and $\ell - i_2 < i_1$, then we have
\begin{align}
	\bar{c}_k - m_k\bar{c}_0 = \begin{cases*}
		2 - 2 = 0	& if $k \leq \ell - i_2$;\\
		1 - 2 = -1	& if $\ell - i_2 < k < i_1$;\\
		2 - 2 = 0	& if $i_1 \leq k < \frac{\ell}{2}$;\\
		1 - 1 = 0	& if $k = \frac{\ell}{2}$
	\end{cases*}
\end{align}
for all $k \in \{1,\ldots,r\}$.
In particular, if $i_1 + i_2 = \ell + 1$, then $\beta^\omega = 0$.
If $i_2 > \frac{\ell}{2} \geq i_1$ and $\ell - i_2 \geq i_1$, then we have
\begin{align}
	\bar{c}_k - m_k\bar{c}_0 = \begin{cases*}
		2 - 2 = 0	& if $k < i_1$;\\
		3 - 2 = 1	& if $i_1 \leq  k \leq \ell - i_2$;\\
		2 - 2 = 0	& if $\ell - i_2 < k < \frac{\ell}{2}$;\\
		1 - 1 = 0	& if $k = \frac{\ell}{2}$
	\end{cases*}
\end{align}
for all $k \in \{1,\ldots,r\}$.
In particular, $\beta^\omega\neq 0$ regardless of $i_1$ and $i_2$.
If $\frac{\ell}{2} \geq i_2 > i_1$, then we have
\begin{align}
	\bar{c}_k - m_k\bar{c}_0 = \begin{cases*}
		2 - 2 = 0	& if $k < i_1$;\\
		3 - 2 = 1	& if $i_1 \leq  k < i_2$;\\
		4 - 2 = 2	& if $i_2 \leq k < \frac{\ell}{2}$;\\
		2 - 1 = 1	& if $k = \frac{\ell}{2}$
	\end{cases*}
\end{align}
for all $k \in \{1,\ldots,r\}$.
In particular, $\beta^\omega\neq 0$ regardless of $i_1$ and $i_2$.

Suppose that $\beta$ satisfies \cref{Cc}.
If $i_0 > \frac{\ell}{2}$, then we have
\begin{align}
	\bar{c}_k - m_k\bar{c}_0 = \begin{cases*}
		2 - 2 = 0	& if $k \leq \ell - i_0$;\\
		0 - 2 = -2	& if $\ell - i_0 < k < \frac{\ell}{2}$;\\
		0 - 1 = -1	& if $k = \frac{\ell}{2}$
	\end{cases*}
\end{align}
for all $k \in \{1,\ldots,r\}$.
In particular, $\beta^\omega\neq 0$ regardless of $i_0$.
If $i_0 \leq \frac{\ell}{2}$, then we have
\begin{align}
	\bar{c}_k - m_k\bar{c}_0 = \begin{cases*}
		2 - 2 = 0	& if $k < i_0$;\\
		4 - 2 = 2	& if $i_0 \leq k < \frac{\ell}{2}$;\\
		2 - 1 = 1	& if $k = \frac{\ell}{2}$
	\end{cases*}
\end{align}
for all $k \in \{1,\ldots,r\}$.
In particular, $\beta^\omega\neq 0$ regardless of $i_0$.

Hence \cref{claim} holds true when $\PHI$ is of type $C_\ell$.

\label{ZC}
\begin{proposition}\label{disapC}
	Let $\PHI$ be a root system of type $C_\ell$.
	Then
	\begin{align}
		\PHI_{\omega,0}^+ 
		& = \bigset{e_{p_1} + e_{\ell-p_1+1}}{p_1 \in \bigl\{1,\ldots, \lfloor \tfrac{\ell+1}{2} \rfloor\bigr\}}\\
		&= \Bigset{\alpha_{i_1} + \cdots + \alpha_{\ell-i_1} + 2\alpha_{\ell-i_1+1} + \cdots + 2\alpha_{\ell-1} + \alpha_\ell}{i_1 \in \bigl\{1,\ldots, \lfloor \tfrac{\ell+1}{2} \rfloor\bigr\}}.
	\end{align}
	Hence
	\begin{align}
		P(0) = \{\pm 1\}.
	\end{align}
\end{proposition}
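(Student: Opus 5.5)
The plan is to read $\PHI_{\omega,0}^+$ off the case analysis just carried out for \cref{claim}. The one extra step is to show that roots with $\bar{c}_0 = 0$ never disappear.

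\textbf{Step 1: roots with $c_\ell = 0$ survive.} Here $S_0^j = \{0,\ell\}$, so $\bar{c}_0 = c_0 + c_\ell = c_\ell$. Let $\beta = \sum_{i=1}^\ell c_i\alpha_i \in \PHI^+$ with $c_\ell = 0$. By \cref{cre} its coefficients are $\bar{c}_k = c_k + c_{\ell-k}$, with the single term $c_k$ when $k = \ell/2$. All of these are non-negative. Since $\beta \neq 0$, some $c_i > 0$ with $1 \le i \le \ell-1$, and $i$ lies in some orbit $S_k^j$ with $k \geq 1$. Then $\bar{c}_k > 0$ and $\beta^\omega \neq 0$. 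Positive roots of $C_\ell$ have $c_\ell \in \{0,1\}$, so every element of $\PHI_{\omega,0}^+$ has $\bar{c}_0 = c_\ell = 1$ and falls into one of the cases \cref{Ca}, \cref{Cb}, \cref{Cc}.

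\textbf{Step 2: pick out the vanishing cases from the tables above.} A root disappears exactly when all listed coefficients $\bar{c}_k - m_k\bar{c}_0$ are zero. I will go through the tables and note that a nonzero entry occurs unless the range of $k$ carrying it is empty.

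In \cref{Ca}, the case $i_1 > \ell/2$ always has a $-1$ or $-2$ entry. For $i_1 \le \ell/2$, only the $-1$ entries for $k < i_1$ can be nonzero, so the root vanishes iff $i_1 = 1$. That root is $\alpha_1 + \cdots + \alpha_\ell = e_1 + e_\ell$.

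In \cref{Cb}, the subcases $i_2 > i_1 > \ell/2$, $\ell - i_2 \ge i_1$, and $\ell/2 \ge i_2$ always have a nonzero entry. In the remaining subcase the only possibly nonzero entries are the $-1$'s on $\ell - i_2 < k < i_1$. That range is empty iff $i_1 + i_2 = \ell + 1$, giving the roots $e_{i_1} + e_{\ell - i_1 + 1}$ with $i_1 \le \ell/2$.

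In \cref{Cc}, the case $i_0 \le \ell/2$ is nonzero. For $i_0 > \ell/2$, the $-2$ range $\ell - i_0 < k < \ell/2$ and the $k = \ell/2$ row are both empty iff $\ell$ is odd and $i_0 = (\ell+1)/2$. This gives $2e_{(\ell+1)/2} = e_{p_1} + e_{\ell - p_1 + 1}$ with $p_1 = \lfloor \frac{\ell+1}{2} \rfloor$.

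Collecting the three cases gives exactly $p_1 \in \{1,\ldots,\lfloor \frac{\ell+1}{2}\rfloor\}$. Converting each root to $\alpha$-coordinates via the displayed description of $\PHI^+$ yields the second expression in the statement.

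\textbf{Step 3: compute $P(0)$.} Since $\varpivee_0 = 0$, we have $\pi_0^j = \varpivee_\ell$, so $(\gamma, \pi_0^j) = c_\ell(\gamma)$. The roots $\gamma$ with $\gamma^\omega = 0$ are $\pm\PHI_{\omega,0}^+$. By Step 2 these all have $c_\ell = \pm 1$, and both signs occur. Hence $P(0) = \{\pm 1\}$.

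The main obstacle is boundary bookkeeping: keeping track of when a range of $k$ is empty, especially the parity-dependent row $k = \ell/2$ (absent when $\ell$ is odd), which is what produces the extra root $2e_{(\ell+1)/2}$ in the odd case.
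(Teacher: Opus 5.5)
Your proposal is correct and takes the same route as the paper, whose proof is just ``It follows from the above observation'': read off from the case tables (a)--(c) for type $C_\ell$ exactly which rows make every coefficient $\bar{c}_k - m_k\bar{c}_0$ vanish. Your bookkeeping of when the $k$-ranges are empty is right, including the parity-dependent row $k = \ell/2$ that yields $2e_{(\ell+1)/2}$ for odd $\ell$. Your Step 1 (roots with $c_\ell = 0$ never vanish) and Step 3 ($\pi_0^j = \varpi_\ell^\vee$, so $(\gamma,\pi_0^j) = c_\ell(\gamma) = \pm 1$) make explicit what the paper leaves implicit.
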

\begin{proof}
	It follows from the above observation.
\end{proof}

By \cref{typetheorem}, if $\ell$ is odd, then $\PHI_\re^\omega$ is of type $BC_{\frac{\ell-1}{2}}$ ($r = \frac{\ell-1}{2}$).
If $\ell$ is even, then $\PHI_\re^\omega$ is of type $C_{\frac{\ell}{2}}$ ($r = \frac{\ell}{2}$).

\begin{proposition}\label[prop]{extraC}
	Let $\PHI$ be a root system of type $C_\ell$.
	For any $\beta^\omega \in (\PHI_\re^\omega)^+$, 
	\begin{align}
		P(\beta^\omega) = \{0,\pm1\}.
	\end{align}
\end{proposition}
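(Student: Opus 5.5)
We have $o(\omega)=2$, so $P(\beta^\omega)\subseteq\{0,\pm1\}$, and $0\in P(\beta^\omega)$ by \cref{lem4.8}. It therefore suffices to produce, for each $\beta^\omega\in(\PHI_\re^\omega)^+$, a root $\beta'$ with $(\beta')^\omega=\beta^\omega$ and $(\beta',\pi_0^j)=1$, and another with $(\beta',\pi_0^j)=-1$. Since $\pi_0^j=\varpivee_\ell$, the quantity $(\beta',\pi_0^j)$ is simply the coefficient $c_\ell$ of $\alpha_\ell$ in $\beta'$. In the coordinates $e_1,\ldots,e_\ell$ this coefficient equals $1$ for $e_a+e_b$ and for $2e_a$, equals $0$ for $e_a-e_b$, and equals $-1$ for the negatives of the first two kinds.

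The first step is to make $\omega$ explicit in coordinates. From $\sigma$ we have $\omega(\alpha_i)=\alpha_{\ell-i}$ for $1\le i\le \ell-1$ and $\omega(\alpha_\ell)=\alpha_0=-2e_1$. I will check that this forces
\begin{align}
	\omega(e_i)=-e_{\ell+1-i}
\end{align}
for all $i$. Indeed, this formula sends $e_i-e_{i+1}$ to $e_{\ell-i}-e_{\ell+1-i}=\alpha_{\ell-i}$ and $2e_\ell$ to $-2e_1$. Consequently $e_a^\omega=\tfrac12(e_a-e_{\ell+1-a})$, so the folding map identifies $e_a$ with $-e_{\ell+1-a}$. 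As a consistency check, this recovers $\PHI_{\omega,0}^+=\{e_a+e_{\ell+1-a}\}$ as in \cref{disapC}.

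Next, let $\beta^\omega\in(\PHI_\re^\omega)^+$ and pick a $0$-extra root $\beta_0=e_p-e_q$ with $(\beta_0)^\omega=\beta^\omega$, which exists by \cref{lem4.8}. Using $e_q^\omega=(-e_{\ell+1-q})^\omega$ and $e_p^\omega=(-e_{\ell+1-p})^\omega$, we get
\begin{align}
	\beta^\omega=(e_p+e_{\ell+1-q})^\omega=(-e_{\ell+1-p}-e_q)^\omega .
\end{align}
Both vectors on the right are roots of $\PHI$. If $p\neq \ell+1-q$, the first is $e_p+e_{\ell+1-q}$, a root with $c_\ell=1$. If $p=\ell+1-q$, it degenerates to the long root $2e_p$, which still has $c_\ell=1$. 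Likewise the second vector is $-(e_{\ell+1-p}+e_q)$ or $-2e_q$, a root with $c_\ell=-1$. (One should check that $\ell+1-p\neq q$ is the only coincidence to worry about; both cases are covered above.) Hence $1,-1\in P(\beta^\omega)$.

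The main obstacle is the coordinate formula for $\omega$, together with the degenerate case $q=\ell+1-p$. In that case the approach used for type $B$, namely adding a disappearing root $\gamma\in\PHI_{\omega,0}^+$ to $\beta_0$ via \cref{prop2.2}, fails because $(\beta_0,\gamma)=0$. That is why I rely on the explicit identification $e_a\sim -e_{\ell+1-a}$, which handles every case uniformly, including the long roots $2e_p$.
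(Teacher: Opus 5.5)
Your proof is correct and is essentially the paper's proof in coordinates. Replacing $-e_q$ by $e_{\ell+1-q}$ (resp.\ $e_p$ by $-e_{\ell+1-p}$) is exactly adding $\gamma^+=e_q+e_{\ell+1-q}$ to (resp.\ subtracting $\gamma^-=e_p+e_{\ell+1-p}$ from) the $0$-extra root $e_p-e_q$, where $\gamma^\pm\in\PHI_{\omega,0}^+$ are the disappearing roots the paper uses, so you land on the same $(\pm1)$-extra roots $e_p+e_{\ell+1-q}$ and $-(e_q+e_{\ell+1-p})$. This also means your closing remark is off: the disappearing-root method does not fail when $q=\ell+1-p$, only the justification via \cref{prop2.2} does, and the paper simply asserts membership in $\PHI$, which is checked directly (the vector is $2e_p$ in that case).
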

\begin{proof}
	Let $\beta' \in \PHI^+$ be a $0$-extra root of $\beta^\omega$.
	Since $(\beta',\pi_0^j) = 0$, there exists $i_0,i_1 \in \{1,\ldots,\ell-1\}$ such that $i_0 \leq i_1$ and $\beta' = \alpha_{i_0} + \alpha_{i_0+1} + \cdots + \alpha_{i_1}$.
	Define $\gamma^+, \gamma^- \in \PHI_{\omega,0}^+$ by
	\begin{align}
		\gamma^+ &\ceq \begin{cases*}
			\alpha_{i_1 + 1} + \cdots + \alpha_{\ell-i_1-1} + 2\alpha_{\ell-i_1} + \cdots + 2\alpha_{\ell-1} + \alpha_\ell	& if $i_1 < \lfloor \frac{\ell+1}{2} \rfloor$;\\
			\alpha_{\ell - i_1} + \cdots + \alpha_{i_1} + 2\alpha_{i_1 + 1} + \cdots + 2\alpha_{\ell-1} + \alpha_\ell	& if $i_1 \geq \lfloor \frac{\ell+1}{2} \rfloor$,
		\end{cases*}\\
		\gamma^- &\ceq \begin{cases*}
			\alpha_{i_0} + \cdots + \alpha_{\ell-i_0} + 2\alpha_{\ell-i_0+1} + \cdots + 2\alpha_{\ell-1} + \alpha_\ell	& if $i_0 \leq \lfloor \frac{\ell+1}{2} \rfloor$;\\
			\alpha_{\ell - i_0+1} + \cdots + \alpha_{i_0-1} + 2\alpha_{i_0} + \cdots + 2\alpha_{\ell-1} + \alpha_\ell	& if $i_0 > \lfloor \frac{\ell+1}{2} \rfloor$.
		\end{cases*}
	\end{align}
	Then $\beta' + \gamma^+$ and $\beta' - \gamma^-$ belong to $\PHI$.
	Since $(\beta' + \gamma^+)^\omega = (\beta' - \gamma^-)^\omega = (\beta')^\omega = \beta^\omega$, the root $\beta' + \gamma^+$ is a $1$-extra root and $\beta' - \gamma^-$ is a $(-1)$-extra root of $\beta^\omega$.
\end{proof}

\subsection{Type $D_\ell$ ($j = 1$)}\ \label{S4.2.4}

\noindent
Let $\PHI$ be a root system of type $D_\ell$.
Define $\alpha_1,\ldots,\alpha_\ell$ by 
\begin{align}
	\alpha_1 \ceq e_1 - e_2,\quad \alpha_2 \ceq e_2 - e_3,\quad \ldots,\quad \alpha_{\ell-1} \ceq e_{\ell-1} - e_\ell,\quad \alpha_\ell \ceq e_{\ell-1} + e_\ell
\end{align}
for the standard basis $\{e_1,\ldots,e_{\ell}\}$ of $\mathbb{R}^{\ell}$.
Then $\DELTA = \{\alpha_1,\ldots,\alpha_\ell\}$ is a basis of $\PHI$, and
\begin{align}
	\PHI^+ 
	&= \bigset{e_{p_1} - e_{p_2}}{p_1,p_2 \in \{1,\ldots,\ell\},\ p_1 < p_2}\\ 
	&\qquad \sqcup \bigset{e_{p_0} + e_{\ell}}{p_0 \in \{1,\ldots,\ell-1\}}
	 \sqcup \bigset{e_{p_1} + e_{p_2}}{p_1,p_2 \in \{1,\ldots,\ell-1\},\ p_1 < p_2}\\
	&= \bigset{\alpha_{i_1} + \cdots + \alpha_{i_2}}{i_1,i_2 \in \{1,\ldots,\ell-1\},\ i_1 \leq i_2}\\
	&\qquad \sqcup \bigset{\alpha_{i_0} + \cdots + \alpha_{\ell-2} + \alpha_\ell}{i_0 \in \{1,\ldots,\ell-1\}}\\
	&\qquad \sqcup \bigset{\alpha_{i_1} + \cdots + \alpha_{i_2-1} + 2\alpha_{i_2} + \cdots + 2\alpha_{\ell-2} + \alpha_{\ell-1} + \alpha_\ell}{i_1,i_2 \in \{1,\ldots,\ell-1\},\ i_1 < i_2}
\end{align}
by \cite[Plate \rom{4}]{Bourbaki}.

Let $j = 1$ and $\omega = \omega_1$.
Then $\sigma = (0\ 1)(\ell-1\ \ \ell)$ and $r = \ell-2$.
For any $k \in \{1,\ldots,r\}$, we may assume that 
\begin{align}
	S_0^j = \{0,1\},\qquad
	S_k^j = \{k+1\},\qquad 
	S_r^j = \{\ell-1,\ \ell\}.
\end{align}
Since $\bar{n}_k = 2$ if $k \neq r$, and $\bar{n}_r = 1$, we have $m_k = 1$ for all $k \in \{1,\ldots,r\}$.


We consider \cref{claim}.
Let $\beta = \sum_{i=1}^\ell c_i\alpha_i \in \PHI^+$ satisfy $\bar{c}_0 = c_1 = 1$.
Then $\beta$ is equal to one of the following:
\begin{eenumeratea}
	\item\label{D1a} $\alpha_1 + \cdots + \alpha_{i_2}$ for $i_2 \in \{1,\ldots,\ell-1\}$;
	\item\label{D1b} $\alpha_1 + \cdots + \alpha_{\ell-2} + \alpha_\ell$;
	\item\label{D1c} $\alpha_1 + \cdots + \alpha_{i_2-1} + 2\alpha_{i_2} + \cdots + 2\alpha_{\ell-2} + \alpha_{\ell-1} + \alpha_\ell$ for $i_2 \in \{2,\ldots,\ell-1\}$.
\end{eenumeratea}

In the case of \cref{D1a}, we have
\begin{align}
	\bar{c}_k - m_k\bar{c}_0 = \begin{cases*}
		1 - 1 = 0	& if $k < i_2$;\\
		0 - 1 = -1	& if $k \geq i_2$
	\end{cases*}
\end{align}
for all $k \in \{1,\ldots,r\}$.
In particular, if $i_2 = \ell-1$, then $\beta^\omega = 0$.

In the case of \cref{D1b}, we have
\begin{align}
	\bar{c}_k - m_k\bar{c}_0 = 1 - 1 = 0
\end{align}
for all $k \in \{1,\ldots,r\}$.
Hence $\beta^\omega = 0$.

In the case of \cref{D1c}, we have
\begin{align}
	\bar{c}_k - m_k\bar{c}_0 = \begin{cases*}
		1 - 1 = 0	& if $k < i_2-1$;\\
		2 - 1 = 1	& if $k \geq i_2-1$
	\end{cases*}
\end{align}
for all $k \in \{1,\ldots,r\}$.
In particular, $\beta^\omega \neq 0$ regardless of $i_2$.

Hence \cref{claim} holds true when $\PHI$ is of type $D_\ell$ and $j = 1$.

\label{ZD1}
\begin{proposition}\label{disapD1}
	Let $\PHI$ be a root system of type $D_\ell$, and $j=1$.
	Then
	\begin{align}
		\PHI_{\omega,0}^+ = \{ e_1 - e_\ell,\ e_1 + e_\ell \} = \{\alpha_1 + \cdots + \alpha_{\ell-1},\quad 
		\alpha_1 + \cdots + \alpha_{\ell-2} + \alpha_\ell\}.
	\end{align}
	Hence 
	\begin{align}
		P(0) = \{\pm1\}.
	\end{align}
\end{proposition}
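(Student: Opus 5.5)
Since $\beta \mapsto \beta^\omega$ commutes with negation, it suffices to determine the positive roots $\beta = \sum_{i=1}^\ell c_i\alpha_i \in \PHI^+$ with $\beta^\omega = 0$. By \cref{lcom}, $\beta^\omega = 0$ holds exactly when $\bar{c}_k - m_k\bar{c}_0 = 0$ for every $k \in \{1,\ldots,r\}$, and here $m_k = 1$ for all $k$. The proof is a case split on $\bar{c}_0 = c_1 \in \{0,1\}$.

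If $\bar{c}_0 = 0$, then $c_1 = 0$, and the condition forces $\bar{c}_k = 0$ for all $k \geq 1$. This makes every coefficient of $\beta$ zero, which is impossible for a root. So every disappearing positive root has $c_1 = 1$, and $\beta$ is one of the roots listed in \cref{D1a}, \cref{D1b} or \cref{D1c}.

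For these three families I use the coefficient tables already computed above.
\begin{itemize}
	\item In case \cref{D1a}, the coefficient $\bar{c}_k - m_k\bar{c}_0$ equals $-1$ for $k \geq i_2$. It therefore vanishes for all $k \leq r = \ell-2$ exactly when $i_2 > \ell-2$, that is, $i_2 = \ell-1$. This gives $\beta = \alpha_1 + \cdots + \alpha_{\ell-1} = e_1 - e_\ell$.
	\item In case \cref{D1b}, all coefficients vanish, giving $\beta = \alpha_1 + \cdots + \alpha_{\ell-2} + \alpha_\ell = e_1 + e_\ell$.
	\item In case \cref{D1c}, the coefficient is $1$ for $k \geq i_2 - 1$. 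Since $i_2 - 1 \leq \ell-2 = r$, at least one coefficient is nonzero, so no root of this family disappears.
\end{itemize}
Translating back through the coordinate description from \cite[Plate \rom{4}]{Bourbaki} yields the two stated forms of $\PHI_{\omega,0}^+$.

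For $P(0)$, recall that $(\gamma,\pi_0^j) = c_0 + c_1 = c_1$ for $\gamma \in \PHI$, using the convention $c_0 = 0$. The roots with $\gamma^\omega = 0$ are $\pm(e_1 - e_\ell)$ and $\pm(e_1 + e_\ell)$. Those with positive sign have $c_1 = 1$ and those with negative sign have $c_1 = -1$, so $P(0) = \{\pm1\}$.

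The main point needing care is the boundary in case \cref{D1a}: one must check that the index $k = r = \ell-2$ is the last folded vertex, namely the orbit $\{\ell-1,\ell\}$. Here $\bar{c}_r = c_{\ell-1} + c_\ell$, which equals $1$ in cases \cref{D1a} with $i_2 = \ell-1$ and \cref{D1b}, and $2$ in case \cref{D1c}; this is consistent with the tables. Everything else is bookkeeping from the observation preceding the statement.
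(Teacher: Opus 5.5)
Your proof is correct and takes essentially the same approach as the paper, whose proof just reads the result off the preceding case analysis of the positive roots with $\bar{c}_0 = c_1 = 1$ (the three families (a)--(c)). You also spell out two points the paper leaves implicit: the exclusion of the case $\bar{c}_0 = 0$ (which would force $\beta = 0$), and the identity $(\gamma,\pi_0^j) = c_1$, which yields $P(0) = \{\pm 1\}$.
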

\begin{proof}
	It follows from the above observation.
\end{proof}

In this case, the diagram $\mathcal{D}(\DELTA^\omega)$ is of type $B_{\ell-2}$.

\begin{proposition}\label{reducedD1}
	Let $\PHI$ be a root system of type $D_\ell$, and $j = 1$.
	Then $\PHI_\re^\omega$ is reduced.
\end{proposition}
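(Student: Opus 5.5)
We mirror the proof of \cref{reducedB}. In this case $r = \ell-2$, and $\DELTA^\omega = \{\bar\alpha_1^\omega,\ldots,\bar\alpha_r^\omega\}$ with $\bar\alpha_k^\omega = \alpha_{k+1}^\omega$ for $k<r$ and $\bar\alpha_r^\omega = \tfrac12(\alpha_{\ell-1}+\alpha_\ell)$. By \cref{foldrule}, $\bar\alpha_r^\omega$ is the unique short simple root, and $\mathcal{D}(\DELTA^\omega)$ is of type $B_{r}$. If $\PHI_\re^\omega$ were non-reduced, it would be of type $BC_r$, and $2\beta^\omega$ would be a root for a short root $\beta^\omega$. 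Conjugating by the Weyl group of $\PHI_\re^\omega$ (all short roots are conjugate), we would get $2\bar\alpha_r^\omega \in \PHI_\re^\omega$. So it suffices to show $2\bar\alpha_r^\omega \notin \PHI_\re^\omega$.

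Suppose $\beta = \sum_{i=1}^\ell c_i\alpha_i \in \PHI$ satisfies $\beta^\omega = 2\bar\alpha_r^\omega$. Replacing $\beta$ by $-\beta$ if needed is harmless, since $-2\bar\alpha_r^\omega$ is then also a root. So we may assume $\beta\in\PHI^+$ or handle the sign directly. By \cref{cre}, the coefficients satisfy $\bar c_k - m_k\bar c_0 = 0$ for $k<r$ and $\bar c_r - m_r \bar c_0 = 2$, with all $m_k=1$.

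Next we use the case analysis already carried out for $\bar c_0 = c_1 = 1$. In cases \cref{D1a}, \cref{D1b} and \cref{D1c}, every coefficient $\bar c_k - m_k\bar c_0$ lies in $\{-1,0,1\}$, so the value $2$ never occurs. Hence $\bar c_0 = c_1 = 0$ (for positive $\beta$; the negative case is symmetric, giving $-2$ at most in absolute value $1$ again). Then $\bar c_k = c_{k+1} = 0$ for $1\le k<r$, i.e. $c_2 = \cdots = c_{\ell-2} = 0$, and $c_{\ell-1}+c_\ell = 2$. So $\beta$ is supported on $\{\alpha_{\ell-1},\alpha_\ell\}$, which are orthogonal. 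The only positive roots in that span are $\alpha_{\ell-1}$ and $\alpha_\ell$, and each has coefficient sum $1$, not $2$. This is a contradiction, so $\PHI_\re^\omega$ is reduced.

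The main care point is the reduction to $2\bar\alpha_r^\omega$. Either we invoke Weyl conjugacy of short roots in the folded system, or we argue directly: in $BC_r$ with Dynkin diagram $B_r$ and basis $\DELTA^\omega$ (which is a basis by the already verified \cref{claim}), the short simple root has its double as a root. The remaining steps are the routine coefficient inspection above.
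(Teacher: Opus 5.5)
Your proposal is correct and follows the paper's proof. Both reduce to ruling out $2\bar\alpha_r^\omega$. Both then use the coefficient table for positive roots with $c_1=1$, whose entries all lie in $\{-1,0,1\}$, to force $c_1=0$, and finish by noting that no root supported on the orthogonal pair $\{\alpha_{\ell-1},\alpha_\ell\}$ has $c_{\ell-1}+c_\ell=2$. Your added justification of the reduction (short roots of $BC_r$ are Weyl-conjugate, or equivalently the short simple root of a $BC_r$ basis has its double as a root) and your sign bookkeeping only make explicit what the paper leaves implicit.
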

\begin{proof}
	We can see that $\bar\alpha_r^\omega$ is a short simple root of $\PHI_\re^\omega$.
	Assume that $2\bar\alpha_r^\omega \in \PHI_\re^\omega$.
	Let $\beta = \sum_{i=1}^\ell c_i\alpha_i \in \PHI$ satisfy $\beta^\omega = 2\bar\alpha_r^\omega$.
	According to the above observation, $\bar{c}_0$ must be $0$ since $\bar{c}_r - m_r\bar{c}_0 = 2$.
	Therefore we have
	\begin{align}
		\bar{c}_k = \bar{c}_k - m_k\bar{c}_0 = \begin{cases*}
			0 & if $k \neq r$;\\
			2 & if $k = r$.
		\end{cases*}
	\end{align}
	However, there exists no root $\beta$ that satisfies this condition, which is a contradiction.
	Hence $\PHI_\re^\omega$ is reduced in this case.
\end{proof}

Hence $\PHI_\re^\omega$ is of type $B_{\ell-2}$ ($r = \ell-2$).

\begin{proposition}\label[prop]{extraD1}
	Let $\PHI$ be a root system of type $D_\ell$, and $j = 1$.
	For $\beta^\omega \in (\PHI_\re^\omega)^+$, we have
	\begin{align}
		P(\beta^\omega) = \begin{cases*}
			\{0,\pm1\}	& if $\beta^\omega$ is a short root of $\PHI_\re^\omega$;\\
			\{0\}		& if $\beta^\omega$ is a long root of $\PHI_\re^\omega$.
		\end{cases*}
	\end{align}
\end{proposition}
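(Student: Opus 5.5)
I will mirror the argument for \cref{extraB}, using the case analysis \cref{D1a}--\cref{D1c} carried out above for $D_\ell$ with $j=1$. Since $S_0^j=\{0,1\}$ and $c_0=0$ for every root, we have $(\beta',\pi_0^j)=c_1'$ for any $\beta'=\sum_{i}c_i'\alpha_i$. Because $c_1'\in\{0,\pm1\}$, this already gives $P(\beta^\omega)\subseteq\{0,\pm1\}$. By \cref{lem4.8}, $0\in P(\beta^\omega)$ always. It therefore remains to determine exactly when $\pm1$ occur.

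\textbf{Step 1: short roots.} With $r=\ell-2$, the short positive roots of $\PHI_\re^\omega$, which is of type $B_r$, are
\[
\bar\alpha_{k_0}^\omega+\cdots+\bar\alpha_r^\omega, \qquad k_0\in\{1,\ldots,r\}.
\]
A $0$-extra root of such a root is $\beta'\ceq\alpha_{k_0+1}+\cdots+\alpha_{\ell-1}$. Indeed, $\alpha_{\ell-1}^\omega=\bar\alpha_r^\omega$, and $c_1'=0$.

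Take $\gamma\ceq\alpha_1+\cdots+\alpha_{\ell-2}+\alpha_\ell=e_1+e_\ell\in\PHI_{\omega,0}^+$, which lies in $\PHI_{\omega,0}^+$ by \cref{disapD1}. In coordinates $\beta'=e_{k_0+1}-e_\ell$. Then
\[
\beta'+\gamma=e_1+e_{k_0+1}\in\PHI, \qquad \beta'-(e_1-e_\ell)=e_{k_0+1}-e_1\in\PHI,
\]
where $e_1-e_\ell=\alpha_1+\cdots+\alpha_{\ell-1}$ is the other element of $\PHI_{\omega,0}^+$. Both added or subtracted roots fold to $0$ and have $c_1=1$. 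Hence $\beta'+\gamma$ is a $1$-extra root and $e_{k_0+1}-e_1$ is a $(-1)$-extra root, so $P(\beta^\omega)=\{0,\pm1\}$ for short roots.

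\textbf{Step 2: long roots.} It remains to show that no $\pm1$-extra root folds to a long positive root.

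Let $\beta'$ be a $1$-extra root with $(\beta')^\omega\in(\PHI_\re^\omega)^+$. Then $\beta'\in\PHI^+$ with $c_1'=1$, so it is of type \cref{D1a}, \cref{D1b} or \cref{D1c}. Positivity of the fold excludes \cref{D1a} and \cref{D1b}, whose coefficients are $\le0$ or zero. Hence $\beta'$ is of type \cref{D1c}, and the computation above gives $(\beta')^\omega=\bar\alpha_{i_2-1}^\omega+\cdots+\bar\alpha_r^\omega$, which is short.

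Let $\beta'$ be a $(-1)$-extra root. Then $-\beta'\in\PHI^+$ with $c_1=1$, and its fold must be negative, so $-\beta'$ is of type \cref{D1a} with $i_2\le\ell-2$. This gives $(\beta')^\omega=\bar\alpha_{i_2}^\omega+\cdots+\bar\alpha_r^\omega$, again short.

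The only real obstacle is bookkeeping. One must check that in $B_r$ the roots $\bar\alpha_{k}^\omega+\cdots+\bar\alpha_r^\omega$ are exactly the short positive roots, which follows since $\bar\alpha_r^\omega$ is the short simple root, as in \cref{reducedD1}. One must also verify that the indexing shift $S_k^j=\{k+1\}$ is handled correctly in both steps.
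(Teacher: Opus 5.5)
Your proposal is correct and follows essentially the same route as the paper. You use the same $0$-extra root $\alpha_{k_0+1}+\cdots+\alpha_{\ell-1}$ and the same two disappearing roots $\alpha_1+\cdots+\alpha_{\ell-2}+\alpha_\ell$ and $\alpha_1+\cdots+\alpha_{\ell-1}$ to produce the $\pm1$-extra roots, and the same case analysis (a)--(c) to show that any $\pm1$-extra root folds to a short root. Your coordinate checks ($e_1+e_{k_0+1}$, $e_{k_0+1}-e_1$) and the remark that $c_1'\in\{0,\pm1\}$ bounds $P(\beta^\omega)$ just make explicit what the paper leaves implicit.
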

\begin{proof}
	Suppose that $\beta^\omega \in (\PHI_\re^\omega)^+$ is a short root of $\PHI_\re^\omega$, that is, 
	\begin{align}
		\beta^\omega = \bar\alpha_{k_0}^\omega + \cdots + \bar\alpha_r^\omega
	\end{align}
	for some $k_0 \in \{1,\ldots,r\}$.
	Then $\beta' \ceq \alpha_{k_0+1} + \cdots + \alpha_{\ell-1}$ is a $0$-extra root of $\beta^\omega$.
	Define $\gamma^+,\gamma^- \in \PHI_{\omega,0}^+$ by
	\begin{align}
		\gamma^+ \ceq \alpha_1 + \cdots + \alpha_{\ell-2} + \alpha_\ell,\quad
		\gamma^- \ceq \alpha_1 + \cdots + \alpha_{\ell-2} + \alpha_{\ell-1}.
	\end{align}
	Then $\beta' + \gamma^+$ and $\beta' - \gamma^-$ belong to $\PHI$.
	Since $(\beta' + \gamma^+)^\omega = (\beta' - \gamma^-)^\omega = (\beta')^\omega = \beta^\omega$, the root $\beta' + \gamma^+$ is a $1$-extra root and $\beta' - \gamma^-$ is a $(-1)$-extra root of $\beta^\omega$.
	
	Let $\beta^\omega \in (\PHI_\re^\omega)^+$.
	Suppose that $\beta' = \sum_{i=1}^\ell c_i'\alpha_i \in \PHI$ be a $1$-extra root of $\beta^\omega$, that is, $c_1' = 1$.
	Since $(\beta')^\omega \in (\PHI_\re^\omega)^+$, the above observation implies that $\beta'$ must satisfy \cref{D1c}.
	Hence 
	\begin{align}
		\beta^\omega = (\beta')^\omega = \bar\alpha_{k_0}^\omega + \cdots + \bar\alpha_r^\omega
	\end{align}
	for some $k_0 \in \{1,\ldots,r\}$.
	Therefore $\beta^\omega$ is a short root of $\PHI_\re^\omega$.
	
	Suppose that $\beta' = \sum_{i=1}^\ell c_i'\alpha_i \in \PHI$ be a $(-1)$-extra root of $\beta^\omega$, that is, $c_1' = -1$.
	Since $(\beta')^\omega \in (\PHI_\re^\omega)^+$, $\beta'$ must satisfy \cref{D1a}.
	Hence
	\begin{align}
		\beta^\omega = (\beta')^\omega = \bar\alpha_{k_0}^\omega + \cdots + \bar\alpha_r^\omega
	\end{align}
	for some $k_0 \in \{1,\ldots,r\}$.
	Therefore $\beta^\omega$ is a short root of $\PHI_\re^\omega$.
\end{proof}

\subsection{Type $D_\ell$ ($\ell$ : even, $j \in \{\ell-1,\ell\}$)}\ 

\noindent
Let $\ell \in 2\mathbb{Z}$ and $\PHI$ be a root system of type $D_\ell$.
Suppose that $j \in \{\ell-1,\ell\}$.
We can assume that $j = \ell$ without loss of generality.
Then $\sigma = (0\ \ell)(1\ \ \ell-1)\prod_{i=2}^{\frac{\ell}{2}-1}(i\ \ \ell-i)$ and $r = \frac{\ell}{2}$.
For any $k \in \{0,\ldots,r\}$, we may assume that 
\begin{align}
	S_k^j = \{k,\ \ell - k\}.
\end{align}
Since $\bar{n}_k = 2$ if $k \neq 1$, and $\bar{n}_1 = 1$, we have 
\begin{align}
	m_k = \begin{cases*}
		1 & if $k \in \{1,r\}$;\\
		2 & otherwise.
	\end{cases*}
\end{align}


We consider \cref{claim}.
Let $\beta = \sum_{i=1}^\ell c_i\alpha_i \in \PHI^+$ satisfy $\bar{c}_0 = c_\ell = 1$.
The $\beta$ is equal to either of the following:
\begin{eenumeratea}
	\item\label{Dea} $\alpha_{i_0} + \cdots + \alpha_{\ell-2} + \alpha_\ell$ for $i_0 \in \{1,\ldots,\ell-1\}$;
	\item\label{Deb} $\alpha_{i_1} + \cdots + \alpha_{i_2-1} + 2\alpha_{i_2} + \cdots + 2\alpha_{\ell-2} + \alpha_{\ell-1} + \alpha_\ell$ for $i_1,i_2 \in \{1,\ldots,\ell-1\}$ and $i_1 < i_2$.
\end{eenumeratea}

Suppose that $\beta$ satisfies \cref{Dea}.
If $i_0 > r = \frac{\ell}{2}$, then 
\begin{align}
	\bar{c}_k - m_k\bar{c}_0 = \begin{cases*}
		0 - 1 = -1	& if $k = 1$;\\
		1 - 2 = -1	& if $1 < k \leq \ell - i_0$;\\
		0 - 2 = -2	& if $\ell - i_0 < k < r$;\\
		0 - 1 = -1  & if $k = r$
	\end{cases*}
	\label{Dea1}
\end{align}
for all $k \in \{1,\ldots,r\}$.
In particular, $\beta^\omega \neq 0$ regardless of $i_0$.
If $i_0 \leq \frac{\ell}{2}$, then 
\begin{align}
	\bar{c}_k - m_k\bar{c}_0 = \begin{cases*}
		1 - 1 = 0	& if $k = 1 = i_0$;\\
		0 - 1 = -1	& if $k = 1 < i_0$;\\
		1 - 2 = -1	& if $1 < k < i_0$;\\
		2 - 2 = 0	& if $i_0 \leq k < r$;\\
		1 - 1 = 0	& if $k = r$
	\end{cases*}
	\label{Dea2}
\end{align}
for all $k \in \{1,\ldots,r\}$.
In particular, if $i_0 = 1$, then $\beta^\omega = 0$.

Suppose that $\beta$ satisfies \cref{Deb}.
If $i_2 > i_1 > r = \frac{\ell}{2}$, then we have
\begin{align}
	\bar{c}_k - m_k\bar{c}_0 = \begin{cases*}
		1 - 1 = 0	& if $k = 1$;\\
		2 - 2 = 0	& if $1 < k \leq \ell - i_2$;\\
		1 - 2 = -1	& if $\ell - i_2 < k \leq \ell - i_1$;\\
		0 - 2 = -2	& if $\ell - i_1 < k < r$;\\
		0 - 1 = -1	& if $k = r$
	\end{cases*}
	\label{Deb1}
\end{align}
for all $k \in \{1,\ldots,r\}$.
In particular, $\beta^\omega\neq 0$ regardless of $i_1$ and $i_2$.
If $i_2 > \frac{\ell}{2} \geq i_1$ and $\ell - i_2 < i_1$, then we have
\begin{align}
	\bar{c}_k - m_k\bar{c}_0 = \begin{cases*}
		1 - 1 = 0	& if $k = 1$;\\
		2 - 2 = 0	& if $1 < k \leq \ell - i_2$;\\
		1 - 2 = -1	& if $\ell - i_2 < k < i_1$;\\
		2 - 2 = 0	& if $i_1 \leq k < r$;\\
		1 - 1 = 0	& if $k = r$
	\end{cases*}
	\label{Deb2}
\end{align}
for all $k \in \{1,\ldots,r\}$.
In particular, if $i_1 + i_2 = \ell + 1$, then $\beta^\omega = 0$.
If $i_2 > \frac{\ell}{2} \geq i_1$ and $\ell - i_2 \geq i_1$, then we have
\begin{align}
	\bar{c}_k - m_k\bar{c}_0 = \begin{cases*}
		2 - 1 = 1	& if $k = 1 = i_1$;\\
		1 - 1 = 0	& if $k = 1 < i_1$;\\
		2 - 2 = 0	& if $1 < k < i_1$;\\
		3 - 2 = 1	& if $1 < k$ and $i_1 \leq  k \leq \ell - i_2$;\\
		2 - 2 = 0	& if $\ell - i_2 < k < r$;\\
		1 - 1 = 0	& if $k = r$
	\end{cases*}
	\label{Deb3}
\end{align}
for all $k \in \{1,\ldots,r\}$.
In particular, $\beta^\omega\neq 0$ regardless of $i_1$ and $i_2$.
If $\frac{\ell}{2} \geq i_2 > i_1$, then we have
\begin{align}
	\bar{c}_k - m_k\bar{c}_0 = \begin{cases*}
		2 - 1 = 1	& if $k = 1 = i_1$;\\
		1 - 1 = 0	& if $k = 1 < i_1$;\\
		2 - 2 = 0	& if $1 < k < i_1$;\\
		3 - 2 = 1	& if $1 < k$ and $i_1 \leq  k < i_2$;\\
		4 - 2 = 2	& if $i_2 \leq k < r$;\\
		2 - 1 = 1	& if $k = r$
	\end{cases*}
	\label{Deb4}
\end{align}
for all $k \in \{1,\ldots,r\}$.
In particular, $\beta^\omega\neq 0$ regardless of $i_1$ and $i_2$.

Hence \cref{claim} holds true when $\ell \in 2\mathbb{Z}$, $\PHI$ is of type $D_\ell$ and $j \in \{\ell-1,\, \ell\}$.

\label{ZDe}
\begin{proposition}
	Let $\PHI$ be a root system of type $D_\ell$, $\ell$ is even, and $j \in \{\ell-1,\, \ell\}$.
	Then
	\begin{align}
		\PHI_{\omega,0}^+ 
		& = 
		\begin{cases*}
			\{e_1 - e_{\ell}\} \sqcup \bigset{e_{i} + e_{\ell-i+1}}{i \in \{2,\ldots,\tfrac{\ell}{2}\}} & if $j = \ell-1$;\\
			\bigset{e_{i} + e_{\ell-i+1}}{i \in \{1,\ldots,\tfrac{\ell}{2}\}} & if $j = \ell$.
		\end{cases*}\\
		&= \{\alpha_1 + \cdots + \alpha_{\ell-2} + \alpha_j\} \sqcup \bigset{\alpha_{i_1} + \cdots + \alpha_{\ell-i_1} + 2\alpha_{\ell-i_1+1} + \cdots + 2\alpha_{\ell-2} + \alpha_{\ell-1} + \alpha_\ell}{i_1 \in \{2,\ldots,\tfrac{\ell}{2}\}}.
	\end{align}
	Hence
	\begin{align}
		P(0) = \{\pm1\}.
	\end{align}
\end{proposition}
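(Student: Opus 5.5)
The plan is to read off $\PHI_{\omega,0}^+$ from the case analysis \cref{Dea1}--\cref{Deb4} carried out just above. We first treat $j = \ell$, and then transfer the result to $j = \ell-1$ by a diagram symmetry.

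\emph{Step 1: roots with $\bar c_0 = 0$ never disappear.} Let $\beta = \sum_{i=1}^\ell c_i\alpha_i \in \PHI^+$ with $\bar{c}_0 = c_0 + c_\ell = c_\ell = 0$. Then \cref{cre} reads $\beta^\omega = \sum_{k=1}^r \bar c_k \bar\alpha_k^\omega$. The coefficients $\bar c_k \geq 0$ are not all zero, since $\beta \neq 0$ and every $\alpha_i$ with $i \neq \ell$ lies in some $S_k^j$ with $k \geq 1$. Because $\DELTA^\omega$ is linearly independent (it is the dual basis of $\{\bar\pi_k^\omega\}$), we get $\beta^\omega \neq 0$. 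Hence every $\beta \in \PHI_{\omega,0}^+$ has $c_\ell = 1$, so it is of the form \cref{Dea} or \cref{Deb}.

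\emph{Step 2: extract the zeros from the tables.}
\begin{itemize}
\item In case \cref{Dea}, \cref{Dea1} has a nonzero entry, and so does \cref{Dea2} unless $i_0 = 1$. The exception is $\alpha_1+\cdots+\alpha_{\ell-2}+\alpha_\ell = e_1 + e_\ell$.
\item In case \cref{Deb}, the tables \cref{Deb1}, \cref{Deb3} and \cref{Deb4} always contain a nonzero entry. In \cref{Deb2}, with $i_2 > \frac{\ell}{2} \geq i_1$ and $\ell - i_2 < i_1$, all entries vanish exactly when the range $\ell - i_2 < k < i_1$ is empty. That forces $i_1 = \ell - i_2 + 1$, i.e.\ $i_1 + i_2 = \ell+1$.
\item Since $i_2 \leq \ell-1$, this gives $i_1 \in \{2,\ldots,\frac{\ell}{2}\}$. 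The corresponding root is $\alpha_{i_1}+\cdots+\alpha_{\ell-i_1}+2\alpha_{\ell-i_1+1}+\cdots+2\alpha_{\ell-2}+\alpha_{\ell-1}+\alpha_\ell = e_{i_1} + e_{\ell-i_1+1}$.
\end{itemize}
Together with $e_1 + e_\ell$ this yields the stated set $\{e_i + e_{\ell-i+1}\}_{i=1}^{\ell/2}$ for $j=\ell$. The only care needed here is the boundary bookkeeping, in particular the special index $k=1$ where $m_1 = 1$; the tables already handle it.

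\emph{Step 3: the case $j = \ell-1$.} The diagram automorphism $\tau$ swapping $\alpha_{\ell-1}$ and $\alpha_\ell$ is induced by $e_\ell \mapsto -e_\ell$. It fixes $\alpha_0$ and conjugates $\omega_\ell$ to $\omega_{\ell-1}$, as can be read off from \cref{sigmaj}. Therefore $\PHI_{\omega_{\ell-1},0}^+ = \tau(\PHI_{\omega_\ell,0}^+)$. The map $\tau$ sends $e_1+e_\ell$ to $e_1 - e_\ell = \alpha_1+\cdots+\alpha_{\ell-1}$ and fixes each $e_i + e_{\ell-i+1}$ with $i \geq 2$. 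In both cases the lone exceptional root can be written uniformly as $\alpha_1+\cdots+\alpha_{\ell-2}+\alpha_j$.

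\emph{Step 4: compute $P(0)$.} Since $\pi_0^j = \varpivee_0 + \varpivee_j = \varpivee_j$, we have $(\gamma,\pi_0^j) = c_j$ for $\gamma = \sum_i c_i\alpha_i$. Every $\gamma$ with $\gamma^\omega = 0$ is $\pm\beta$ for some $\beta \in \PHI_{\omega,0}^+$, and each such $\beta$ has $c_j = 1$ by Steps 1–3. Hence $(\gamma,\pi_0^j) = \pm1$, with both signs attained, so $P(0) = \{\pm1\}$.
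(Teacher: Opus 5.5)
Your proposal is correct and follows the paper's route. The paper's proof just reads the vanishing cases off the case tables for the roots with $c_\ell = 1$, after reducing to $j=\ell$ without loss of generality. Your Steps 1, 3 and 4 make explicit what the paper leaves implicit: roots with $c_\ell = 0$ cannot vanish, the case $j=\ell-1$ follows from $j=\ell$ via the automorphism $e_\ell\mapsto -e_\ell$ (which conjugates $\omega_\ell$ to $\omega_{\ell-1}$), and $(\gamma,\pi_0^j)=c_j$ gives $P(0)=\{\pm1\}$.
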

\begin{proof}
	It follows from the above observation.
\end{proof}

If $\ell = 4$, then the diagram $\mathcal{D}(\DELTA^\omega)$ is of type $B_2$.

\begin{proposition}
	Let $\PHI$ be a root system of type $D_4$, and $j \in \{\ell-1,\,\ell\}$.
	Then $\PHI_\re^\omega$ is reduced.
\end{proposition}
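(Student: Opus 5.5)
Type $D_4$ with $j=\ell=4$ (the case $j=\ell-1$ is symmetric, since the diagram automorphism exchanging $\alpha_3$ and $\alpha_4$ conjugates $\omega_3$ to $\omega_4$). Here $r=2$ and $S_0^j=\{0,4\}$, $S_1^j=\{1,3\}$, $S_2^j=\{2\}$, with $m_1=m_2=1$.

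From \cref{foldrule}, $\#N(\alpha_1)=0$, $\#S_1^j=2$, $\#N(\alpha_2)=0$ and $\#S_2^j=1$. Hence $(\bar\alpha_1^\omega,\bar\alpha_1^\omega)=\tfrac12(\alpha_1,\alpha_1)$ and $(\bar\alpha_2^\omega,\bar\alpha_2^\omega)=(\alpha_2,\alpha_2)$. So $\bar\alpha_1^\omega$ is the short simple root of the $B_2$ diagram $\mathcal{D}(\DELTA^\omega)$, consistent with $C_2\cong B_2$. A reduced root system with a $B_2$ diagram fails to be reduced exactly when it is $BC_2$. In that case twice a short root is a root, and since short roots are $W$-conjugate to a simple short root, $2\bar\alpha_1^\omega\in\PHI_\re^\omega$. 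It therefore suffices to show $2\bar\alpha_1^\omega\notin\PHI_\re^\omega$.

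Suppose $\beta=\sum c_i\alpha_i\in\PHI$ satisfies $\beta^\omega=2\bar\alpha_1^\omega$. Replacing $\beta$ by $-\beta$ is not allowed, since it changes the sign of $\beta^\omega$, so I treat $\beta\in\PHI^+$ and $\beta\in\PHI^-$ separately. By \cref{lcom} we need $\bar c_1-\bar c_0=2$ and $\bar c_2-\bar c_0=0$, where $\bar c_0=c_4$, $\bar c_1=c_1+c_3$ and $\bar c_2=c_2$.

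If $\beta$ is positive, then $c_4\in\{0,1\}$. When $c_4=0$ we need $c_2=0$ and $c_1+c_3=2$, which would force $c_1=c_3=1$, $c_2=0$; but $\alpha_1+\alpha_3$ is not a root. When $c_4=1$ we need $c_2=1$ and $c_1+c_3=3$, which is impossible since $c_1,c_3\le1$.

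If $\beta$ is negative, write $\beta=-\gamma$ with $\gamma=\sum d_i\alpha_i$ positive. We need $-(d_1+d_3)+d_4=2$, so $d_4\ge2$, contradicting $d_4\le n_4=1$.

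The same conclusion can be read off the explicit tables \cref{Dea1}--\cref{Deb4} specialized to $\ell=4$: every positive root gives a first coefficient in $\{-1,0,1\}$. Hence $2\bar\alpha_1^\omega\notin\PHI_\re^\omega$ and $\PHI_\re^\omega$ is reduced, of type $B_2=C_2$.

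The only potentially delicate step is the enumeration of positive roots of $D_4$ with $c_4\le1$. This is a finite check using the 12 positive roots listed in \cite[Plate \rom{4}]{Bourbaki}.
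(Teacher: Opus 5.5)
Your proof is correct and follows the paper's argument. You reduce to the short simple root $\bar\alpha_1^\omega$, solve $\bar c_1-\bar c_0=2$ and $\bar c_2-\bar c_0=0$ via \cref{lcom}, and arrive at $\alpha_1+\alpha_3$, which is not a root. Your separate treatment of negative $\beta$, which forces $d_4\ge 2$, makes explicit a step that the paper's ``since $\bar c_1\le 2$'' leaves implicit.

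One aside is false and should be dropped. The tables \cref{Dea1}--\cref{Deb4} only cover positive roots with $c_4=1$, and it is not true that every positive root has first coefficient in $\{-1,0,1\}$: for example, $(\alpha_1+\alpha_2+\alpha_3)^\omega=2\bar\alpha_1^\omega+\bar\alpha_2^\omega$. Your main case analysis on $c_4$ does not need that remark.
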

\begin{proof}
	Without loss of generality, we can assume that $j = \ell = 4$.
	We can see that $\bar\alpha_1^\omega$ is a short simple root of $\PHI_\re^\omega$.
	Assume that $2\bar\alpha_1^\omega \in \PHI_\re^\omega$.
	Let $\beta = \sum_{i=1}^4 c_i\alpha_i \in \PHI$ satisfy $\beta^\omega = 2\bar\alpha_1^\omega$.
	Then, since $m_1 = m_2 = 1$, we have
	\begin{align}
		\bar{c}_2 - \bar{c}_0 = 0,\quad \bar{c}_1 - \bar{c}_0 = 2.
	\end{align}
	Since $\bar{c}_1 \leq 2$, we can see that $\bar{c}_1 = 2$ and $\bar{c}_2 = \bar{c}_0 = 0$.
	Thus $\beta = \alpha_1 + \alpha_3$.
	However, the fact that $\alpha_1 + \alpha_3$ is a root of $\PHI$ contradicts \cref{prop4.2}.
	Therefore $\PHI_\re^\omega$ is reduced in this case.
\end{proof}

Hence $\PHI_\re^\omega$ is of type $B_{2}$ if $\ell = 4$ ($r = 2$).
If $\ell \geq 6$, then $\PHI_\re^\omega$ is of type $C_{\frac{\ell}{2}}$ ($r = \frac{\ell}{2}$).

\begin{proposition}\label[prop]{extraDe}
	Let $\PHI$ be a root system of type $D_\ell$, $\ell \geq 4$ is even, and $j \in \{\ell-1,\, \ell\}$.
	For $\beta^\omega \in (\PHI_\re^\omega)^+$, we have
	\begin{align}
		P(\beta^\omega) = \begin{cases*}
			\{0,\pm1\}	& if $\beta^\omega$ is a short root of $\PHI_\re^\omega$;\\
			\{0\}		& if $\beta^\omega$ is a long root of $\PHI_\re^\omega$.
		\end{cases*}
	\end{align}
\end{proposition}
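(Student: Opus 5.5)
The proof follows the same pattern as \cref{extraB} and \cref{extraD1}, using $j=\ell$ without loss of generality. I first fix which roots of $\PHI_\re^\omega$ are long. By \cref{foldrule} with $(\alpha_i,\alpha_i)=2$:
\begin{itemize}
\item for $1\le k<r$ the orbit $S_k^j=\{k,\ell-k\}$ consists of two orthogonal simple roots, so $N=0$ and $(\bar\alpha_k^\omega,\bar\alpha_k^\omega)=\frac12$;
\item $S_r^j=\{r\}$ is a single root, so $(\bar\alpha_r^\omega,\bar\alpha_r^\omega)=2$.
\end{itemize}
Hence $\bar\alpha_r^\omega$ is the long simple root. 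For $\ell\ge 6$ the positive long roots of the type $C_r$ system are
\begin{align}
2(\bar\alpha_k^\omega+\cdots+\bar\alpha_{r-1}^\omega)+\bar\alpha_r^\omega, \qquad 1\le k\le r,
\end{align}
and all other positive roots are short. For $\ell=4$ the same description holds with $r=2$, namely type $B_2$. The feature I will use is this: a positive root is long iff its $\bar\alpha_r^\omega$-coefficient is $1$ and all other coefficients lie in $\{0,2\}$, with the $2$'s forming a terminal block.

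\emph{Short roots have $\pm1$-extra roots.} By \cref{lem4.8}, $\beta^\omega$ has a $0$-extra root $\beta'$ with $c_\ell=0$. Such a $\beta'$ lies in the $A_{\ell-1}$ subsystem, so $\beta'=e_p-e_q$ with $p<q\le\ell$. The disappearing roots are $\gamma_i=e_i+e_{\ell+1-i}$ for $1\le i\le r$. Each has $\alpha_\ell$-coefficient $1$, so $(\gamma_i,\pi_0^j)=1$, and $\gamma_i^\omega=0$.

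I take $\gamma^+=\gamma_i$ with $\{i,\ell+1-i\}\ni q$. Then $(\beta',\gamma^+)=-1$ unless $p+q=\ell+1$, so $\beta'+\gamma^+\in\PHI$. I take $\gamma^-=\gamma_{i'}$ with $\{i',\ell+1-i'\}\ni p$. Then $(\beta',\gamma^-)=1$ unless $p+q=\ell+1$, so $\beta'-\gamma^-\in\PHI$. These are a $1$-extra root and a $(-1)$-extra root of $\beta^\omega$, respectively.

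The exceptional case $p+q=\ell+1$ gives $\beta'=\alpha_p+\cdots+\alpha_{\ell-p}$. Its fold is $2(\bar\alpha_p^\omega+\cdots+\bar\alpha_{r-1}^\omega)+\bar\alpha_r^\omega$, which is long. Every long root arises this way, so the construction covers exactly the short roots. The choice of $\beta'$ does not matter here, because $p+q=\ell+1$ is equivalent to the fold being long.

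\emph{Long roots have no $\pm1$-extra roots.} Suppose $\beta'$ is a $1$-extra root of a positive $\beta^\omega$. Then $\beta'$ is positive with $c_\ell=1$, so it is one of the roots listed in \cref{Dea1}--\cref{Deb4}, and its fold has nonnegative coefficients. This leaves only \cref{Deb3} and \cref{Deb4}.
\begin{itemize}
\item In \cref{Deb3} the $\bar\alpha_r^\omega$-coefficient is $0$, so the root is short.
\item In \cref{Deb4} a coefficient $1$ occurs for $i_1\le k<i_2$ (including $k=1$ when $i_1=1$). This block is nonempty because $i_1<i_2$, so the root is short.
\end{itemize}

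Suppose instead $\beta'$ is a $(-1)$-extra root. Then $-\beta'$ is one of the roots in \cref{Dea1}--\cref{Deb4}, and its fold has nonpositive coefficients. Negating reduces this to \cref{Dea1}, \cref{Dea2}, \cref{Deb1} and \cref{Deb2}.
\begin{itemize}
\item In \cref{Dea2} and \cref{Deb2} the $\bar\alpha_r^\omega$-coefficient is $0$.
\item In \cref{Dea1} the $\bar\alpha_1^\omega$-coefficient is $1$, alongside the $\bar\alpha_r^\omega$-coefficient $1$.
\item In \cref{Deb1} the coefficient $1$ appears on the nonempty range $\ell-i_2<k\le\ell-i_1$ below $r$.
\end{itemize}
In every case the root is short. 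Together with $P(\beta^\omega)\subseteq\{0,\pm1\}$ (since $o(\omega)=2$) and $0\in P(\beta^\omega)$, this gives the claim.

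The main obstacle is the case bookkeeping in this last step. One has to check the boundary cases in \cref{Deb3} and \cref{Deb4}, such as $i_1=1$, $i_2=r+1$ and $\ell=4$, where index ranges collapse. I would verify these directly against the long-root characterization above.
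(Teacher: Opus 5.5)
Your proposal is correct and is essentially the paper's argument. The $\pm1$-extra roots are produced by adding or subtracting a vanishing root $e_i+e_{\ell+1-i}$ to a $0$-extra root; you do this uniformly in $e$-coordinates, where the paper treats the two families of short roots of $C_r$ separately with explicit $\gamma^\pm$. The converse is read off from the sign patterns \cref{Dea1}--\cref{Deb4} exactly as in the paper.

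There is one harmless slip. For $1\le k<r$ one has $(\bar\alpha_k^\omega,\bar\alpha_k^\omega)=\tfrac14(2+2)=1$, not $\tfrac12$; the value $\tfrac12$ is the ratio to $(\bar\alpha_r^\omega,\bar\alpha_r^\omega)=2$. This does not affect your identification of $\bar\alpha_r^\omega$ as the long simple root.
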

\begin{proof}
	Without loss of generality, we can assume that $j = \ell$.
	Let  $\beta^\omega \in (\PHI_\re^\omega)^+$ be a short root of $\PHI_\re^\omega$.
	Suppose that
	\begin{align}
		\beta^\omega = \bar\alpha^\omega_{k_0} + \bar\alpha^\omega_{k_0+1} + \cdots + \bar\alpha^\omega_{k_1}
	\end{align}
	for some $k_0,k_1 \in \{1,\ldots,r-1\}$.
	Then $\beta' \ceq \alpha_{k_0} + \cdots + \alpha_{k_1}$ is a $0$-extra root of $\beta^\omega$.
	Define $\gamma^+, \gamma^- \in \PHI_{\omega,0}^+$ by
	\begin{align}
		\gamma^+ &\ceq 
			\alpha_{k_1 + 1} + \cdots + \alpha_{\ell-k_1-1} + 2\alpha_{\ell-k_1} + \cdots + 2\alpha_{\ell-1} + \alpha_\ell,\\
		\gamma^- &\ceq 
			\alpha_{k_0} + \cdots + \alpha_{\ell-k_0} + 2\alpha_{\ell-k_0+1} + \cdots + 2\alpha_{\ell-1} + \alpha_\ell.
	\end{align}
	Then $\beta' + \gamma^+$ and $\beta' - \gamma^-$ belong to $\PHI$.
	Since $(\beta' + \gamma^+)^\omega = (\beta' - \gamma^-)^\omega = (\beta')^\omega = \beta^\omega$, the root $\beta' + \gamma^+$ is a $1$-extra root and $\beta' - \gamma^-$ is a $(-1)$-extra root of $\beta^\omega$.

	Suppose that 
	\begin{align}
		\beta^\omega = \bar\alpha_{k_0}^\omega + \cdots + \bar\alpha^\omega_{k_1} + 2\bar\alpha^\omega_{k_1+1} + \cdots + 2\bar\alpha^\omega_{r-1} + \bar\alpha_r^\omega
	\end{align}
	for some $k_0, k_1 \in \{1,\ldots,r\}$ satisfying $k_0 \leq k_1$.
	Then $\beta' \ceq \alpha_{k_0} + \cdots + \alpha_{\ell - k_1 - 1}$ is a $0$-extra root of $\beta^\omega$.
	Define $\gamma^+, \gamma^- \in \PHI_{\omega,0}^+$ by
	\begin{align}
		\gamma^+ &\ceq \begin{cases*}
			\alpha_{r} + 2\alpha_{r+1} + \cdots + 2\alpha_{\ell-1} + \alpha_\ell	& if $k_1 = r = \frac{\ell}{2}$;\\
			\alpha_{k_1+1} + \cdots + \alpha_{\ell - k_1 - 1} + 2\alpha_{\ell - k_1} + \cdots + 2\alpha_{\ell-1} + \alpha_\ell	& if $k_1 < \frac{\ell}{2}$,
		\end{cases*}\\
		\gamma^- &\ceq 
			\alpha_{k_0} + \cdots + \alpha_{\ell-k_0} + 2\alpha_{\ell-k_0+1} + \cdots + 2\alpha_{\ell-1} + \alpha_\ell.
	\end{align}
	Then $\beta' + \gamma^+$ and $\beta' - \gamma^-$ belong to $\PHI$.
	Since $(\beta' + \gamma^+)^\omega = (\beta' - \gamma^-)^\omega = (\beta')^\omega = \beta^\omega$, the root $\beta' + \gamma^+$ is a $1$-extra root and $\beta' - \gamma^-$ is a $(-1)$-extra root of $\beta^\omega$.

	Let $\beta^\omega \in (\PHI_\re^\omega)^+$.
	Suppose that $\beta' = \sum_{i=1}^\ell c_i'\alpha_i \in \PHI$ be a $1$-extra root of $\beta^\omega$, that is, $c_\ell' = 1$.
	Then $(\beta')^\omega$ satisfies either \cref{Deb3} or \cref{Deb4} since $(\beta')^\omega \in (\PHI_\re^\omega)^+$.
	Hence $\beta^\omega = (\beta')^\omega$ is equal to either of the following:
	\begin{align}
		\bar\alpha^\omega_{i_1} + \cdots + \bar\alpha^\omega_{i_2 - 1} + 2\bar\alpha^\omega_{i_2} + \cdots + 2\bar\alpha^\omega_{r-1} + \bar\alpha^\omega_r,\qquad
		\bar\alpha^\omega_{i_1} + \cdots + \bar\alpha^\omega_{\ell-i_2}
	\end{align}
	Both of these are short roots of $\PHI_\re^\omega$.
	
	Suppose that $\beta' = \sum_{i=1}^\ell c_i'\alpha_i \in \PHI$ be a $(-1)$-extra root of $\beta^\omega$, that is, $c_\ell' = -1$.
	Then $(\beta')^\omega$ satisfies either \cref{Dea1}, \cref{Dea2}, \cref{Deb1}, or \cref{Deb2} since $(\beta')^\omega \in (\PHI_\re^\omega)^+$.
	In either case, we can see that $\beta^\omega = (\beta')^\omega$ is a short root of $\PHI_\re^\omega$.
\end{proof}

\subsection{Type $D_\ell$ ($\ell$ : odd, $j \in \{\ell-1,\ell\}$)}\ 

\noindent
Let $\ell \not\in 2\mathbb{Z}$ and $\PHI$ be a root system of type $D_\ell$.
Suppose that $j \in \{\ell-1,\ell\}$.
We can assume that $j = \ell$ without loss of generality.
Then $\sigma = (0\ \ell\ 1\ \ell-1)\prod_{i=2}^{\frac{\ell-1}{2}}$ and $r = \frac{\ell-3}{2}$.
For any $k \in \{1,\ldots,r\}$, we may assume that
\begin{align}
	S_0^j = \{0,\, 1,\, \ell-1,\, \ell\},\qquad 
	S_k^j = \{k+1,\, \ell-k-1\}.
\end{align}
For any $k \in \{1,\ldots,r\}$, since $\bar{n}_k = 2$, we have $m_k = 1$.

We consider \cref{claim}.
\begin{eenumerateA}
	\item\label{DoA} Let $\beta = \sum_{i=1}^\ell c_i\alpha_i \in \PHI^+$ satisfy $\bar{c}_0 = 1$.
	Without loss of generality, we can assume that $c_1 = 1$ and $c_{\ell-1} = c_\ell = 0$.
	Then $\beta = \alpha_1 + \cdots + \alpha_{i_2}$ for some $i_2 \in \{1,\ldots,\ell-2\}$.
	If $i_2 > \frac{\ell}{2}$, then we have
	\begin{align}
		\bar{c}_k - m_k\bar{c}_0 = \begin{cases*}
			1 - 1 = 0	& if $k < \ell - i_2 - 1$;\\
			2 - 1 = 1	& if $k \geq \ell - i_2 - 1$
		\end{cases*}
		\label{Do1}
	\end{align}
	for all $k \in \{1,\ldots,r\}$.
	In particular, $\beta^\omega\neq 0$ regardless of $i_2$.
	If $i_2 < \frac{\ell}{2}$, then 
	\begin{align}
		\bar{c}_k - m_k\bar{c}_0 = \begin{cases*}
			1 - 1 = 0	& if $k \leq i_2$;\\
			0 - 1 = -1	& if $k > i_2$
		\end{cases*}
		\label{Do2}
	\end{align}
	for all $k \in \{1,\ldots,r\}$.
	In particular, if $i_2 = \frac{\ell-3}{2} = r$, then $\beta^\omega = 0$.
	
	
	\item\label{DoB} Let $\beta = \sum_{i=1}^\ell c_i\alpha_i \in \PHI^+$ satisfy $\bar{c}_0 = 2$.
	Then $\beta$ is equal to one of the following:
	\begin{eenumeratea}
		\item\label{DoBa} $\alpha_1 + \cdots + \alpha_{\ell-2} + \alpha_{\ell-1}$;
		\item\label{DoBb} $\alpha_1 + \cdots + \alpha_{\ell-2} + \alpha_{\ell}$;
		\item\label{DoBc} $\alpha_{i_1} + \cdots + \alpha_{i_2-1} + 2\alpha_{i_2} + \cdots + 2\alpha_{\ell-2} + \alpha_{\ell-1} + \alpha_\ell$ for $i_1,i_2 \in \{2,\ldots,\ell-1\}$ and $i_1 < i_2$.
	\end{eenumeratea}
	
	In the case of \cref{DoBa} and \cref{DoBb}, we have
	\begin{align}
		\bar{c}_k - m_k\bar{c}_0 = 2 - 2 = 0
	\end{align}
	for all $k \in \{1,\ldots,r\}$.
	hence $\beta^\omega = 0$.
	
	Suppose that $\beta$ satisfies \cref{DoBc}.
	If $i_2 > i_1 > \frac{\ell}{2}$, then we have
	\begin{align}
		\bar{c}_k - m_k\bar{c}_0 = \begin{cases*}
			2 - 2 = 0	& if $k \leq \ell - i_2$;\\
			1 - 2 = -1	& if $\ell - i_2 < k \leq \ell - i_1$;\\
			0 - 2 = -2	& if $k > \ell - i_1$
		\end{cases*}
		\label{Do3}
	\end{align}
	for all $k \in \{1,\ldots,r\}$.
	In particular, $\beta^\omega\neq 0$ regardless of $i_1$ and $i_2$.
	If $i_2 > \frac{\ell}{2} > i_1$ and $\ell - i_2 < i_1$, then we have
	\begin{align}
		\bar{c}_k - m_k\bar{c}_0 = \begin{cases*}
			2 - 2 = 0	& if $k \leq \ell - i_2$;\\
			1 - 2 = -1	& if $\ell - i_2 < k < i_1$;\\
			2 - 2 = 0	& if $k \geq i_1$
		\end{cases*}
		\label{Do4}
	\end{align}
	for all $k \in \{1,\ldots,r\}$.
	In particular, if $i_1 + i_2 = \ell + 1$, then $\beta^\omega = 0$.
	If $i_2 > \frac{\ell}{2} > i_1$ and $\ell - i_2 \geq i_1$, then we have
	\begin{align}
		\bar{c}_k - m_k\bar{c}_0 = \begin{cases*}
			2 - 2 = 0	& if $k < i_1$;\\
			3 - 2 = 1	& if $i_1 \leq  k \leq \ell - i_2$;\\
			2 - 2 = 0	& if $k > \ell - i_2$
		\end{cases*}
		\label{Do5}
	\end{align}
	for all $k \in \{1,\ldots,r\}$.
	In particular, $\beta^\omega\neq 0$ regardless of $i_1$ and $i_2$.
	If $\frac{\ell}{2} \geq i_2 > i_1$, then we have
	\begin{align}
		\bar{c}_k - m_k\bar{c}_0 = \begin{cases*}
			2 - 2 = 0	& if $k < i_1$;\\
			3 - 2 = 1	& if $i_1 \leq  k < i_2$;\\
			4 - 2 = 2	& if $k \geq i_2$
		\end{cases*}
		\label{Do6}
	\end{align}
	for all $k \in \{1,\ldots,r\}$.
	In particular, $\beta^\omega\neq 0$ regardless of $i_1$ and $i_2$.


	\item\label{DoC} Let $\beta = \sum_{i=1}^\ell c_i\alpha_i \in \PHI^+$ satisfy $\bar{c}_0 = 3$, that is, $c_1 = c_{\ell-1} = c_\ell = 1$.
	Then 
	\begin{align}
		\beta = \alpha_1 + \cdots + \alpha_{i_2 - 1} + 2\alpha_{i_2} + \cdots + 2\alpha_{\ell-2} + \alpha_{\ell-1} + \alpha_\ell
	\end{align}
	for some $i_2 \in \{2,\ldots,\ell-1\}$.
	If $i_2 > \frac{\ell}{2}$, then we have
	\begin{align}
		\bar{c}_k - m_k\bar{c}_0 = \begin{cases*}
			3 - 3 = 0	& if $k \leq \ell - i_2 - 1$;\\
			2 - 3 = -1	& if $k > \ell - i_2 - 1$
		\end{cases*}
		\label{Do7}
	\end{align}
	for all $k \in \{1,\ldots,r\}$.
	In particular, if $i_2 = \frac{\ell+1}{2}$, then $\beta^\omega = 0$.
	If $i_2 < \frac{\ell}{2}$, then we have
	\begin{align}
		\bar{c}_k - m_k\bar{c}_0 = \begin{cases*}
			3 - 3 = 0	& if $k < i_2$;\\
			4 - 3 = 1	& if $i_2 \leq k$
		\end{cases*}
		\label{Do8}
	\end{align}
	for all $k \in \{1,\ldots,r\}$.
	In particular, $\beta^\omega\neq 0$ regardless of $i_1$ and $i_2$.


\end{eenumerateA}

Hence \cref{claim} holds true when $\ell \not\in 2\mathbb{Z}$, $\PHI$ is of type $D_\ell$ and $j \in \{\ell-1,\ell\}$.

\label{ZDo}
\begin{proposition}\label{disapDo}
	Let $\PHI$ be a root system of type $D_\ell$, $\ell$ is odd, and $j \in \{\ell-1,\, \ell\}$.
	Then
	\begin{align}
		\PHI_{\omega,0}^+ 
		&= \left\{ e_1 \pm e_{\frac{\ell+1}{2}},\ e_{\frac{\ell+1}{2}} \pm e_\ell,\ e_1 \pm e_\ell \right\} \sqcup \bigset{e_{i} + e_{\ell-i+1}}{i \in \{2,\ldots,\tfrac{\ell-1}{2}\}}\\
		&= \left\{\alpha_1 + \cdots + \alpha_{\frac{\ell-1}{2}}\right\} 
		\sqcup \bigset{\alpha_{\frac{\ell+1}{2}} + \cdots + \alpha_{\ell-2} + \alpha_p}{p \in \{\ell-1,\, \ell\}}\\ 
		&\qquad \sqcup \bigset{\alpha_1 + \cdots + \alpha_{\ell-2} + \alpha_p}{p \in \{\ell-1,\, \ell\}}\\ 
		&\qquad \sqcup\bigset{\alpha_{i_1} + \cdots + \alpha_{\ell-i_1} + 2\alpha_{\ell-i_1+1} + \cdots + 2\alpha_{\ell-2} + \alpha_{\ell-1} + \alpha_\ell}{i_1 \in \{2,\ldots,\tfrac{\ell-1}{2}\}}\\
		&\qquad \sqcup \left\{ \alpha_1 + \cdots + \alpha_{\frac{\ell-1}{2}} + 2\alpha_{\frac{\ell+1}{2}} + \cdots + 2\alpha_{\ell-2} + \alpha_{\ell-1} + \alpha_\ell \right\}.
	\end{align}
	Hence
	\begin{align}
		P(0) = \{\pm1,\pm2,\pm3\}.
	\end{align}
\end{proposition}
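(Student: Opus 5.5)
The plan is to read off $\PHI_{\omega,0}^+$ from the case analysis \cref{DoA}, \cref{DoB}, \cref{DoC} carried out just before the statement, organised by the value of $\bar{c}_0 = c_1 + c_{\ell-1} + c_\ell \in \{0,1,2,3\}$ for $\beta = \sum_{i=1}^\ell c_i\alpha_i \in \PHI^+$. By the criterion following \cref{lcom}, $\beta \in \PHI_{\omega,0}^+$ if and only if $\bar{c}_k = m_k\bar{c}_0 = \bar{c}_0$ for all $k \in \{1,\ldots,r\}$, where $\bar{c}_k = c_{k+1} + c_{\ell-k-1}$ (recall $m_k = 1$ here).

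First, the case $\bar{c}_0 = 0$ contributes nothing. A positive root has some positive coefficient, and that coefficient must lie outside $S_0^j$, so some $\bar{c}_k > 0 = m_k\bar{c}_0$ and hence $\beta^\omega \neq 0$. The remaining cases are handled as follows.

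\begin{itemize}
\item $\bar{c}_0 = 1$. There are two subcases, which the WLOG in \cref{DoA} conflates.
  \begin{itemize}
  \item If $c_1 = 1$, then $\beta = \alpha_1 + \cdots + \alpha_{i_2}$. The vanishing condition $\bar{c}_k = 1$ for all $k$ forces exactly one of $k+1$, $\ell-k-1$ into $[1,i_2]$ for every $k \le r$. This pins down $i_2 = r+1 = \frac{\ell-1}{2}$, giving $e_1 - e_{\frac{\ell+1}{2}}$. I would recheck the index shift against \cref{Do2}, since there the orbits are indexed by $k+1$.
  \item If instead $c_{\ell-1} = 1$ or $c_\ell = 1$ (with $c_1 = 0$), then $\beta = \alpha_{i} + \cdots + \alpha_{\ell-2} + \alpha_p$ with $p \in \{\ell-1,\ell\}$. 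This is the image of the previous subcase under $\sigma$, and symmetrically it vanishes exactly for $i = \frac{\ell+1}{2}$. This gives $e_{\frac{\ell+1}{2}} \pm e_\ell$.
  \end{itemize}
\item $\bar{c}_0 = 2$. Case \cref{DoB} gives zeros exactly for \cref{DoBa} and \cref{DoBb}, namely $e_1 \pm e_\ell$, and for \cref{DoBc} with $i_1 + i_2 = \ell+1$, $i_1 \ge 2$, namely $e_{i} + e_{\ell-i+1}$. In each of the four subranges I would confirm from \cref{Do3}–\cref{Do6} that no other parameter makes every coefficient vanish.
\item $\bar{c}_0 = 3$. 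Case \cref{DoC} gives, via \cref{Do7} and \cref{Do8}, the single zero $i_2 = \frac{\ell+1}{2}$, which is $e_1 + e_{\frac{\ell+1}{2}}$.
\end{itemize}

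Converting each of these roots between the $e$-coordinates and simple-root coordinates via \cite[Plate \rom{4}]{Bourbaki} gives both displayed descriptions of $\PHI_{\omega,0}^+$.

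For $P(0)$, note that $(\gamma,\pi_0^j) = \bar{c}_0$ for $\gamma = \sum_i c_i\alpha_i$. The roots just found realise $\bar{c}_0 = 1, 2, 3$, and their negatives, which also satisfy $\gamma^\omega = 0$, realise $-1, -2, -3$. Conversely, $0 \notin P(0)$ by the $\bar{c}_0 = 0$ argument applied to $\pm\gamma$, and $|\bar{c}_0| \le 3$ always holds. Hence $P(0) = \{\pm1, \pm2, \pm3\}$.

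The main obstacle is bookkeeping rather than ideas. The orbit labels $S_k^j = \{k+1, \ell-k-1\}$ are shifted relative to the root indices, so the boundary values ($i_2 = \frac{\ell-1}{2}$ versus $r$, and the inequalities $k \le \ell - i_2 - 1$) must be carefully aligned. The other point needing care is making sure the subcase $c_1 = 0$, $\bar{c}_0 = 1$, which the "without loss of generality" in \cref{DoA} hides, is genuinely covered by the $\sigma$-symmetry.
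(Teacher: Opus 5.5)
Your proposal is correct and follows essentially the paper's route: you read $\PHI_{\omega,0}^+$ off the case analysis by $\bar c_0$, make explicit both the $\bar c_0=0$ case and the $\langle\sigma\rangle$-symmetry behind the ``without loss of generality'' in \cref{DoA}, and obtain $P(0)$ from $(\gamma,\pi_0^j)=\bar c_0$. Your suspicion about the index shift is justified: since $S_k^j=\{k+1,\ell-k-1\}$, the vanishing case in \cref{Do2} is $i_2=r+1=\frac{\ell-1}{2}$ (as the proposition correctly lists), not $i_2=r$, and the ranges displayed in \cref{Do3}--\cref{Do6} and \cref{Do8} are off by one in the same way, so do your planned check by recomputing $\bar c_k=c_{k+1}+c_{\ell-k-1}$ directly rather than reading those formulas literally (the vanishing and non-vanishing conclusions stated there are nonetheless right).
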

\begin{proof}
	It follows from the above observation.
\end{proof}

By \cref{reducedlemma}, $\PHI_\re^\omega$ is of type $BC_{\frac{\ell-3}{2}}$.

\begin{proposition}\label[prop]{extraDo}
	Let $\PHI$ be a root system of type $D_\ell$, $\ell \geq 5$ is odd, and $j \in \{\ell-1,\, \ell\}$.
	For $\beta^\omega \in (\PHI_\re^\omega)^+$, we have
	\begin{align}
		P(\beta^\omega) = \begin{cases*}
			\{0,\pm1,\pm2,\pm3\}	& if $\beta^\omega$ is a short root of $\PHI_\re^\omega$;\\
			\{0,\pm2\}		& if $\beta^\omega$ is either middle root or long root of $\PHI_\re^\omega$.
		\end{cases*}
	\end{align}
\end{proposition}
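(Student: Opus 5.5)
We split the statement into the inclusion ``$\subseteq$'' and the inclusion ``$\supseteq$''. We keep the normalization $j=\ell$, $S_0^j=\{0,1,\ell-1,\ell\}$, $S_k^j=\{k+1,\ell-k-1\}$ and $m_k=1$. For a root $\beta'=\sum_{i=1}^\ell c_i'\alpha_i$ we have $(\beta',\pi_0^j)=\bar c_0'=c_1'+c_{\ell-1}'+c_\ell'$. Since $\beta\mapsto-\beta$ sends a $p$-extra root of $\beta^\omega$ to a $(-p)$-extra root of $-\beta^\omega$, the values of $P(\beta^\omega)$ for $\beta^\omega\in(\PHI_\re^\omega)^+$ come from two sources:
\begin{itemize}
\item positive $\beta'$ with $\bar c_0'=p>0$ whose image has non-negative coefficients;
\item positive $\beta'$ with $\bar c_0'=p>0$ whose image has non-positive coefficients, which contribute the value $-p$ via $-\beta'$.
\end{itemize}
We also fix the dictionary for $BC_r$ with basis $\DELTA^\omega$, where $\bar\alpha_r^\omega$ is the short simple root, as in \cref{reducedlemma}. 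The short roots are $\bar\alpha_k^\omega+\cdots+\bar\alpha_r^\omega$. The middle roots are the sums $\bar\alpha_{k}^\omega+\cdots+\bar\alpha_{k'}^\omega$ with $k'<r$, together with those of the form $\bar\alpha_{k}^\omega+\cdots+\bar\alpha_{k'-1}^\omega+2\bar\alpha_{k'}^\omega+\cdots+2\bar\alpha_r^\omega$ with $k<k'$. The long roots are $2(\bar\alpha_k^\omega+\cdots+\bar\alpha_r^\omega)$.

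For ``$\subseteq$'' we read off the case analysis \cref{DoA}--\cref{DoC}, i.e.\ the tables \cref{Do1}--\cref{Do8}.
\begin{itemize}
\item For $\bar c_0=1$: by \cref{Do1} the nonzero images are $\bar\alpha_{k}^\omega+\cdots+\bar\alpha_r^\omega$, and by \cref{Do2} they are the negatives of such sums. These are short, so $\pm1$ only occur for short roots.
\item For $\bar c_0=3$: \cref{Do8} and \cref{Do7} likewise give only $\pm$(short roots). Hence $\pm3$ occur only for short roots.
\item For $\bar c_0=2$: any $\pm2$ is allowed by the statement, so nothing needs to be checked.
\end{itemize}
Together with $P(\beta^\omega)\subseteq\{0,\pm1,\pm2,\pm3\}$ and $0\in P(\beta^\omega)$ from \cref{lem4.8}, this gives both upper bounds.

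For ``$\supseteq$'' we imitate the proofs of \cref{extraC} and \cref{extraDe}. By \cref{lem4.8} we start from a $0$-extra root $\beta'$ supported on $\alpha_2,\ldots,\alpha_{\ell-2}$. To produce the other values we add or subtract a suitable disappearing root $\gamma\in\PHI_{\omega,0}^+$ from the list in \cref{disapDo}, chosen so that $\beta'\pm\gamma\in\PHI$. Then $(\beta'\pm\gamma)^\omega=\beta^\omega$ and $(\beta'\pm\gamma,\pi_0^j)=\pm(\gamma,\pi_0^j)$. The disappearing roots carry the following values of $(\gamma,\pi_0^j)$:
\begin{itemize}
\item value $1$: $e_1+e_{\frac{\ell+1}{2}}$ and $e_1-e_{\frac{\ell+1}{2}}$, and also $e_{\frac{\ell+1}{2}}\pm e_\ell$;
\item value $2$: $e_1\pm e_\ell$ and $e_i+e_{\ell-i+1}$ with $2\le i\le\frac{\ell-1}{2}$;
\item value $3$: $\alpha_1+\cdots+\alpha_{\frac{\ell-1}{2}}+2\alpha_{\frac{\ell+1}{2}}+\cdots+\alpha_\ell=e_1+e_{\frac{\ell+1}{2}}$.
\end{itemize}
We will double-check these values against the coefficients $c_1,c_{\ell-1},c_\ell$.

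In coordinates, a short root $\beta^\omega$ lifts to some $\beta'=e_a-e_b$ with $a\le\frac{\ell-1}{2}<b$. The positive middle and long roots also lift to a root $\beta'=e_a\pm e_b$ with $a,b\in\{2,\ldots,\ell-1\}$. The condition $\beta'\pm\gamma\in\PHI$ becomes the condition that the two vectors share exactly one coordinate index with opposite signs, which is a direct check in the $e$-coordinates. For example, $e_a-e_b+(e_b+e_{\ell-b+1})$ gives value $2$, and $e_a-e_b-(e_a+e_{\ell-a+1})$ gives value $-2$.

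The main obstacle will be the value $\pm2$ for the long roots $2(\bar\alpha_k^\omega+\cdots+\bar\alpha_r^\omega)$ and for the middle roots whose image ends in $2\bar\alpha_r^\omega$. Their $0$-extra lifts are $e_a+e_b$ with $b=\ell-a+1$ or with both indices in one half, and a matching $\gamma$ must share an index with opposite sign. Here I would first try $\gamma=e_b+e_{\ell-b+1}$ subtracted, and if that fails, replace $\beta'$ by a different $0$-extra lift from the same fiber (using $e_a\leftrightarrow e_{\ell-a+1}$, which does not change the image) before subtracting. The values $\pm1$ and $\pm3$ for short roots should follow the same pattern with $\gamma=e_1\pm e_{\frac{\ell+1}{2}}$ or $e_{\frac{\ell+1}{2}}\pm e_\ell$.
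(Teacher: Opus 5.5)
\textbf{The long-root case of the statement is false, so the step you flag as the main obstacle cannot be done.} Your two-sided scheme is the paper's own. The upper bound is read off \cref{Do1}--\cref{Do8}: only roots with $\bar c_0\in\{1,3\}$ are inspected, and these map to $\pm$ short roots. The lower bound comes from modifying a $0$-extra lift $\beta'$ by some $\gamma\in\PHI_{\omega,0}^+$. This works for short and middle roots, but not for long ones.

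For $j=\ell$ one checks $\omega(e_1)=-e_\ell$, $\omega(e_\ell)=e_1$, and $\omega(e_i)=-e_{\ell+1-i}$ for $2\le i\le\ell-1$. Hence $e_1^\omega=e_{(\ell+1)/2}^\omega=e_\ell^\omega=0$, and $e_i^\omega=\frac{1}{2}(e_i-e_{\ell+1-i})$ for the remaining $i$. Also $\pi_0^j=\varpi_1^\vee+\varpi_{\ell-1}^\vee+\varpi_\ell^\vee=2e_1+e_2+\cdots+e_{\ell-1}$.

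The long root $2(\bar\alpha_k^\omega+\cdots+\bar\alpha_r^\omega)$ is $\alpha_{k+1}+\cdots+\alpha_{\ell-k-1}=e_{k+1}-e_{\ell-k}$, which is $\omega$-fixed. A root $\pm e_p\pm e_q$ maps onto it only if $\pm e_p^\omega$ and $\pm e_q^\omega$ both equal $\frac{1}{2}(e_{k+1}-e_{\ell-k})$. That forces the root to be $e_{k+1}-e_{\ell-k}$ itself, whose $\pi_0^j$-value is $0$. So $P(\beta^\omega)=\{0\}$ for every long root. Already in $D_5$, the only root with image $2\bar\alpha_1^\omega$ is $\alpha_2+\alpha_3$.

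Consequently your fallback of passing to another lift in the same fibre is unavailable, since the fibre is a singleton. The only disappearing root meeting its support, $e_{k+1}+e_{\ell-k}$, yields $2e_{k+1}$ and $-2e_{\ell-k}$, which are not roots. The paper's proof breaks at the same point: when $k_2=\ell-k_1$, its $\gamma^+$ and $\gamma^-$ both equal $e_{k_1}+e_{\ell-k_1+1}$, and $\beta'+\gamma^+=2e_{k_1}$ and $\beta'-\gamma^-=-2e_{\ell-k_1+1}$ are not roots. The correct statement is $\{0,\pm2\}$ for middle roots and $\{0\}$ for long roots. Your shortcut in the ``$\subseteq$'' step (``any $\pm2$ is allowed, so nothing needs to be checked'') is exactly where this would have surfaced.

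\textbf{Your coordinate bookkeeping also has errors that would derail the construction in the valid cases.}
\begin{itemize}
\item The root $e_1+e_{(\ell+1)/2}$ has $\pi_0^j$-value $3$, not $1$. The value-$1$ disappearing roots are $e_1-e_{(\ell+1)/2}$ and $e_{(\ell+1)/2}\pm e_\ell$.
\item Every $0$-extra lift supported on $\alpha_2,\dots,\alpha_{\ell-2}$ has the form $e_a-e_b$ with $2\le a<b\le\ell-1$. The vectors $e_a+e_{\ell-a+1}$ that you call lifts of long roots are disappearing roots of value $2$.
\item Long roots lift to $e_a-e_{\ell+1-a}$. Middle roots with coefficient $2$ on $\bar\alpha_r^\omega$ lift to $e_a-e_b$ with $a<\frac{\ell+1}{2}<b\neq\ell+1-a$. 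Lifts with both indices in one half give the middle roots $\bar\alpha_k^\omega+\cdots+\bar\alpha_{k'}^\omega$ with $k'<r$.
\end{itemize}

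With these lifts the valid cases go through. For a middle root with lift $\beta'=e_a-e_b$, the roots $\beta'+(e_b+e_{\ell+1-b})=e_a+e_{\ell+1-b}$ and $\beta'-(e_a+e_{\ell+1-a})=-e_b-e_{\ell+1-a}$ give the values $\pm2$.

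The short root $\bar\alpha_{a-1}^\omega+\cdots+\bar\alpha_r^\omega$ has exactly two $0$-extra lifts, $e_a-e_{(\ell+1)/2}$ and $e_{(\ell+1)/2}-e_{\ell+1-a}$. Your sample move $e_a-e_b+(e_b+e_{\ell-b+1})$ is unavailable for it, because for $b=\frac{\ell+1}{2}$ the vector $e_b+e_{\ell-b+1}=2e_b$ is not a root. Within your scheme, the values $+2$ and $-3$ need the second lift, which is the paper's $\beta''$. It gives $(e_{(\ell+1)/2}-e_{\ell+1-a})+(e_a+e_{\ell+1-a})=e_a+e_{(\ell+1)/2}$ and $(e_{(\ell+1)/2}-e_{\ell+1-a})-(e_1+e_{(\ell+1)/2})=-e_1-e_{\ell+1-a}$.
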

\begin{proof}
	Let  $\beta^\omega \in (\PHI_\re^\omega)^+$.
	Note that $\beta^\omega$ is a short root if and only if the coefficient of $\bar\alpha^\omega_r$ is equal to $1$.
	\begin{eenumeratea}
		\item Suppose that $\beta^\omega$ is a short root, that is, 
		\begin{align}
			\beta^\omega = \bar\alpha^\omega_{k_0} + \cdots + \bar\alpha^\omega_r
		\end{align}
		for some $k_0 \in \{1,\ldots,r\}$.
		Then 
		\begin{align}
			\beta' &\ceq \alpha_{k_0+1} + \cdots + \alpha_{r+1} = \alpha_{k_0+1} + \cdots + \alpha_{\frac{\ell-1}{2}},\\
			\beta'' &\ceq \alpha_{\ell-r-1} + \cdots + \alpha_{\ell-k_0-1} = \alpha_{\frac{\ell+1}{2}} + \cdots + \alpha_{\ell-k_0-1}
		\end{align}
		are $0$-extra roots of $\beta^\omega$.
		
		Define $\gamma_1^+, \gamma_1^- \in \PHI_{\omega,0}^+$ by
		\begin{align}
			\gamma_1^+ &\ceq 
			\alpha_1 + \cdots + \alpha_{\frac{\ell-1}{2}} = \alpha_1 + \cdots + \alpha_{\ell-r-2},\\
			\gamma_1^- &\ceq 
			\alpha_{\frac{\ell+1}{2}} + \cdots + \alpha_{\ell-1} = \alpha_{r+2} + \cdots + \alpha_{\ell-1}.
		\end{align}
		Then $\beta'' + \gamma_1^+$ and $\beta' - \gamma_1^-$ belong to $\PHI$.
		Since $(\beta'' + \gamma_1^+)^\omega = (\beta' - \gamma_1^-)^\omega = \beta^\omega$, the root $\beta'' + \gamma_1^+$ is a $1$-extra root and $\beta' - \gamma_1^-$ is a $(-1)$-extra root of $\beta^\omega$.
		
		Define $\gamma_2 \in \PHI_{\omega,0}^+$ by
		\begin{align}
			\gamma_2 \ceq \alpha_{k_0 + 1} + \cdots + \alpha_{\ell - k_0 - 1} + 2\alpha_{\ell - k_0} + \cdots + 2\alpha_{\ell-2} + \alpha_{\ell-1} + \alpha_\ell.
		\end{align}
		Then $\beta'' + \gamma_2$ and $\beta' - \gamma_2$ belong to $\PHI$.
		Since $(\beta'' + \gamma_2)^\omega = (\beta' - \gamma_2)^\omega = \beta^\omega$, the root $\beta'' + \gamma_2$ is a $2$-extra root and $\beta' - \gamma_2$ is a $(-2)$-extra root of $\beta^\omega$.
		
		Let 
		\begin{align}
			\gamma_3 \ceq \alpha_1 + \cdots + \alpha_{\frac{\ell-1}{2}} + 2\alpha_{\frac{\ell+1}{2}} + \cdots + 2\alpha_{\ell-2} + \alpha_{\ell-1} + \alpha_\ell \in \PHI_{\omega,0}^+.
		\end{align}
		Then $\beta' + \gamma_3$ and $\beta'' - \gamma_3$ belong to $\PHI$.
		Since $(\beta' + \gamma_3)^\omega = (\beta'' - \gamma_3)^\omega = \beta^\omega$, the root $\beta' + \gamma_3$ is a $3$-extra root and $\beta'' - \gamma_3$ is a $(-3)$-extra root of $\beta^\omega$.
		
		\item Suppose that $\beta^\omega$ is either middle root or long root.
		Then there exist $k_1 \in \{2,\ldots,r+1\}$ and $k_2 \in \{k_1,\,\ldots,\,\ell-k_1\}\setminus\{r+1\}$ such that
		\begin{align}
			\beta' \ceq \alpha_{k_1} + \cdots + \alpha_{k_2}
		\end{align}
		is a $0$-extra root of $\beta^\omega$.
		Note that $\beta^\omega$ is a long root if and only if $k_2 = \ell - k_1$.
		Define $\gamma^+,\gamma^- \in \PHI_{\omega,0}^+$ by
		\begin{align}
			\gamma^+ &\ceq \begin{cases*}
				\alpha_{k_2 + 1} + \cdots + \alpha_{\ell-k_2} + 2\alpha_{\ell-k_2+1} + \cdots + 2\alpha_{\ell-2} + \alpha_{\ell-1} + \alpha_\ell & if $k_2 < r+1 = \frac{\ell-1}{2}$;\\
				\alpha_{\ell - k_2} + \cdots + \alpha_{k_2} + 2\alpha_{k_2 + 1} + \cdots + 2\alpha_{\ell-2} + \alpha_{\ell-1} + \alpha_\ell & if $k_2 > r + 1 = \frac{\ell-1}{2}$,
			\end{cases*}\\
			\gamma^- &\ceq \alpha_{k_1} + \cdots + \alpha_{\ell-k_1} + 2\alpha_{\ell-k_1 + 1} + \cdots + 2\alpha_{\ell-2} + \alpha_{\ell-1} + \alpha_\ell
		\end{align}
		Then $\beta' + \gamma^+$ and $\beta' - \gamma^-$ belong to $\PHI$.
		Since $(\beta' + \gamma^+)^\omega = (\beta' - \gamma^-)^\omega = (\beta')^\omega = \beta^\omega$, the root $\beta' + \gamma^+$ is a $2$-extra root and $\beta' - \gamma^-$ is a $(-2)$-extra root of $\beta^\omega$.
	\end{eenumeratea}

	Let $\beta^\omega \in (\PHI_\re^\omega)^+$.
	Suppose that $\beta' = \sum_{i=1}^\ell c_i'\alpha_i \in \PHI$ be a $1$-extra root of $\beta^\omega$.
	Without loss of generality, we can assume that $c_1' = 1$ and $c_{\ell-1}' = c_\ell' = 0$.
	Then $(\beta')^\omega$ must satisfy \cref{Do1} since $(\beta')^\omega \in (\PHI_\re^\omega)^+$.
	Therefore 
	\begin{align}
		\beta^\omega = (\beta')^\omega = \bar{\alpha}^\omega_{\ell-i_2-1} + \cdots + \bar{\alpha}_r^\omega,
	\end{align}
	and hence $\beta^\omega$ is a short roots of $\PHI_\re^\omega$.
	In the same way, if $\beta^\omega$ has a $(-1)$-extra root, then $\beta^\omega$ is a short root of $\PHI_\re^\omega$.
	
	Suppose that $\beta' = \sum_{i=1}^\ell c_i'\alpha_i \in \PHI$ be a $3$-extra root of $\beta^\omega$, that is, $c_1' = c_{\ell-1}' = c_\ell' = 1$.
	Then $\beta'$ must satisfy \cref{Do8} since $(\beta')^\omega \in (\PHI_\re^\omega)^+$.
	Therefore
	\begin{align}
		\beta^\omega = (\beta')^\omega = \bar{\alpha}^\omega_{i_2} + \cdots + \bar{\alpha}_r^\omega,
	\end{align}
	anad hence $\beta^\omega$ is a short roots of $\PHI_\re^\omega$.
	In the same way, if $\beta^\omega$ has a $(-3)$-extra root, then $\beta^\omega$ is a short root of $\PHI_\re^\omega$.
\end{proof}

\subsection{Type $E_6$}\ 

\noindent
Let $\PHI$ be a root system of type $E_6$.
Then
\begin{align}
	\PHI^+ = \bigset{e_i-e_j,\ e_i+e_j}{i,j \in \{1,\ldots,5\},\ i > j} \sqcup \Bigset{\frac{\nu_1e_1 + \cdots + \nu_5e_5 - e_6 -e_7 + e_8}{2}}{
		\begin{lgathered}
			\nu_1,\ldots,\nu_5 \in \{-1,1\},\\ 
			\text{$\nu_1 \cdots \nu_5 = 1$}
		\end{lgathered}
	}
\end{align}
for the standard basis $\{e_1,\ldots,e_8\}$ of $\mathbb{R}^8$.
The roots
\begin{align}
	\DELTA = \left\{\frac{e_1 - e_2 - e_3 - e_4 - e_5 - e_6 - e_7 + e_8}{2}, \ e_1 + e_2,\ e_2 - e_1,\ e_3 - e_2,\ e_4 - e_3,\ e_5 - e_4 \right\}	
\end{align}
is a basis of $\PHI$ (see \cite[Plate \rom{5}]{Bourbaki}).
The coefficient matrix $C = (c_{ij})_{ij}$ of $\PHI^+$ with respect to $\DELTA$ is as follows:
\begin{align}
	\resizebox{\textwidth}{!}{$
	C = \left(\begin{array}{rrrrrrrrrrrrrrrrrrrrrrrrrrrrrrrrrrrr}
		1 & 0 & 0 & 0 & 0 & 0 & \ \, \ 1 & 0 & 0 & 0 & 0 & \ \,\ 1 & 0 & 0 & 0 & 0 & \ \,\ 1 & 1 & 0 & 0 & 0 & \ \,\ 1 & 1 & 0 & 0 & \ \,\ 1 & 1 & 0 & \ \,\ 1 & 1 & 0 & \ \,\ 1 & 1 & \ \,\ 1 & \ \,\ 1 & \ \,\ 1 \\
		0 & 1 & 0 & 0 & 0 & 0 & 0 & 1 & 0 & 0 & 0 & 0 & 1 & 1 & 0 & 0 & 1 & 0 & 1 & 1 & 0 & 1 & 0 & 1 & 1 & 1 & 1 & 1 & 1 & 1 & 1 & 1 & 1 & 1 & 1 & 2 \\
		0 & 0 & 1 & 0 & 0 & 0 & 1 & 0 & 1 & 0 & 0 & 1 & 1 & 0 & 1 & 0 & 1 & 1 & 1 & 0 & 1 & 1 & 1 & 1 & 1 & 1 & 1 & 1 & 2 & 1 & 1 & 2 & 1 & 2 & 2 & 2 \\
		0 & 0 & 0 & 1 & 0 & 0 & 0 & 1 & 1 & 1 & 0 & 1 & 1 & 1 & 1 & 1 & 1 & 1 & 1 & 1 & 1 & 1 & 1 & 1 & 2 & 2 & 1 & 2 & 2 & 2 & 2 & 2 & 2 & 2 & 3 & 3 \\
		0 & 0 & 0 & 0 & 1 & 0 & 0 & 0 & 0 & 1 & 1 & 0 & 0 & 1 & 1 & 1 & 0 & 1 & 1 & 1 & 1 & 1 & 1 & 1 & 1 & 1 & 1 & 1 & 1 & 1 & 2 & 1 & 2 & 2 & 2 & 2 \\
		0 & 0 & 0 & 0 & 0 & 1 & 0 & 0 & 0 & 0 & 1 & 0 & 0 & 0 & 0 & 1 & 0 & 0 & 0 & 1 & 1 & 0 & 1 & 1 & 0 & 0 & 1 & 1 & 0 & 1 & 1 & 1 & 1 & 1 & 1 & 1
	\end{array}\right).$}
\end{align}
Then 
\begin{align}
	\PHI^+ = \bigset{c_{1j}\alpha_1 + \cdots + c_{6j}\alpha_6}{j \in \{1,\ldots,36\}}.
\end{align}

Let $j \in J\setminus \{0\}$ and $\omega = \omega_j$
Without loss off generality, we can assume that $j = 1$.
Then $\sigma = (0\ 1\ 6)(2\ 3\ 5)$ and $r = 2$.
We may assume that 
\begin{align}
	S_0^j = \{0,1,6\},\qquad
	S_1^j = \{2,3,5\},\qquad
	S_2^j = \{4\}.
\end{align}
Furthermore, we have
\begin{align}
	m_1 = \frac{\#S_1^j \bar{n}_1}{\#S_0^j} = \frac{3 \cdot 2}{3} = 2,\qquad 
	m_2 = \frac{\#S_2^j \bar{n}_2}{\#S_0^j} = \frac{1 \cdot 3}{3} = 1.
\end{align}
Define a $2 \times 36$ matrix $X = (x_{ij})_{ij}$ by 
\begin{align}
	x_{1j} = c_{2j} + c_{3j} + c_{5j} - 2(c_{1j} + c_{6j}),\qquad 
	x_{2j} = c_{4j} - (c_{1j} + c_{6j}),
\end{align}
that is, we have $x_{1j}\bar{\alpha}_1^\omega + x_{2j}\bar{\alpha}_2^\omega = (c_{1j}\alpha_1 + \cdots + c_{6j}\alpha_6)^{\omega}$.
Then the matrix $X$ is as follows:
\begin{align}
	\resizebox{\textwidth}{!}{$
		X = \left(\begin{array}{rrrrrrrrrrrrrrrrrrrrrrrrrrrrrrrrrrrrr}
			-2 & 1 & 1 & 0 & 1 & -2 &\ \,\  -1 & 1 & 1 & 1 & -1 & \ \,\ -1 & 2 & 2 & 2 & -1 & \ \,\ 0 & 0 & 3 & 0 & 0 & \ \,\ 1 & -2 & 1 & 3 & \ \,\  1 & -1 & 1 & \ \,\ 2 & -1 & 2 & \ \,\ 0 & 0 & \ \,\ 1 &\ \,\  1 &\ \,\  2 \\
			-1 & 0 & 0 & 1 & 0 & -1 & -1 & 1 & 1 & 1 & -1 & 0 & 1 & 1 & 1 & 0 & 0 & 0 & 1 & 0 & 0 & 0 & -1 & 0 & 2 & 1 & -1 & 1 & 1 & 0 & 1 & 0 & 0 & 0 & 1 & 1 
		\end{array}\right).$}
\end{align}

Hence \cref{claim} holds true when $\PHI$ is of type $E_6$.

\label{ZE6}
\begin{proposition}\label{disapE6}
	Let $\PHI$ be a root system of type $E_6$.
	Then
	\begin{align}
		\PHI_{\omega,0}^+ 
		&= \left\{ \begin{lgathered}
			\frac{1}{2}(e_1 + e_2 + e_3 - e_4 - e_5 - e_6 - e_7 + e_8),\quad
			\frac{1}{2}(-e_1 - e_2 - e_3 + e_4 - e_5 - e_6 - e_7 + e_8),\quad
			e_5 + e_1,\quad 
			e_5 - e_1,\ \\
			\frac{1}{2}(-e_1 + e_2 + e_3 - e_4 + e_5 - e_6 - e_7 + e_8),\quad
			\frac{1}{2}(e_1 - e_2 - e_3 + e_4 + e_5 - e_6 - e_7 + e_8)
		\end{lgathered}
		\right\}\\
		&= \left\{ \begin{lgathered}
			\alpha_1 + \alpha_2 + \alpha_3 + \alpha_4,\quad
			\alpha_1 + \alpha_3 + \alpha_4 + \alpha_5,\quad
			\alpha_2 + \alpha_4 + \alpha_5 + \alpha_6,\quad
			\alpha_3 + \alpha_4 + \alpha_5 + \alpha_6,\ \\
			\alpha_1 + \alpha_2 + 2\alpha_3 + 2\alpha_4 + \alpha_5 + \alpha_6,\quad 
			\alpha_1 + \alpha_2 + \alpha_3 + 2\alpha_4 + 2\alpha_5 + \alpha_6
		\end{lgathered}\right\}.
	\end{align}
	Hence
	\begin{align}
		P(0) = \{\pm1,\pm2\}.
	\end{align}
\end{proposition}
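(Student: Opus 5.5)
The plan is to read everything off the matrix $X$ computed just above, in the same way the earlier types were handled ("it follows from the above observation").

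By \cref{lcom}, a positive root $\beta=\sum_i c_{ij}\alpha_i$ (the $j$-th column of $C$) satisfies $\beta^\omega=0$ exactly when $x_{1j}=x_{2j}=0$. So first I list the columns of $X$ in which both entries vanish. Scanning $X$, these are the columns $j\in\{4,17,18,20,21,32,33\}$ for which the second row also vanishes; I will check each candidate against both rows and keep only the true zero columns. Then I read off the corresponding columns of $C$. These give the coefficient vectors $(1,1,1,1,0,0)$, $(1,0,1,1,1,0)$, $(0,1,0,1,1,1)$, $(0,0,1,1,1,1)$, $(1,1,2,2,1,1)$ and $(1,1,1,2,2,1)$. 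Discarding any spurious candidates should leave exactly six roots, which is the second description in the statement.

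For the first description in terms of $e_1,\dots,e_8$, I will expand each of these six roots using the explicit $\DELTA$ given above, with $\alpha_1=\tfrac12(e_1-e_2-\cdots-e_7+e_8)$, $\alpha_2=e_1+e_2$, $\alpha_3=e_2-e_1$, and so on. For example, $\alpha_3+\alpha_4+\alpha_5+\alpha_6=e_5-e_1$ and $\alpha_2+\alpha_4+\alpha_5+\alpha_6=e_5+e_1$. The remaining four roots are half-integral, and I obtain them by the same direct summation.

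For $P(0)$, a root $\gamma$ with $\gamma^\omega=0$ is $\pm\beta$ with $\beta\in\PHI_{\omega,0}^+$. For such a root, $(\gamma,\pi_0^j)=\pm(c_1+c_6)$, since $S_0^j=\{0,1,6\}$ and $c_0=0$. The values of $c_1+c_6$ on the six roots are $1,1,1,1,2,2$, so $P(0)=\{\pm1,\pm2\}$.

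The only real obstacle is bookkeeping. I have to correctly identify the zero columns of $X$ given the possibly error-prone printed matrix, so I will recompute $x_{1j}=c_{2j}+c_{3j}+c_{5j}-2(c_{1j}+c_{6j})$ and $x_{2j}=c_{4j}-(c_{1j}+c_{6j})$ directly from $C$ for the candidate columns, rather than trusting the printed $X$.
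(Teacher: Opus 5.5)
Your proposal is correct and follows the paper's proof, which simply reads the zero columns off $C$ and $X$. Your recomputation of $X$ from $C$, the explicit $e_i$-expansion, and the count $(\pm\beta,\pi_0^j)=\pm(c_1+c_6)\in\{\pm1,\pm2\}$ just spell that reading out. One small slip: $\{4,17,18,20,21,32,33\}$ is the set of columns where only the \emph{first} row of $X$ vanishes. Column $4$ (that is, $\alpha_4$, with $x_{24}=1$) must be discarded, which leaves exactly $j\in\{17,18,20,21,32,33\}$.
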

\begin{proof}
	See matrices $C$ and $X$.
\end{proof}

By \cref{typetheorem}, $\PHI_\re^\omega$ is of type $G_2$.
A root system of type $G_2$ is a set
\begin{align}
	\PHI(G_2) = \Bigl\{
	\pm(e_1 - e_2),\ 
	\pm(-2e_1 + e_2 + e_3),\ 
	\pm(e_3 - e_1),\ 
	\pm(e_3 - e_2).\ 
	\pm(e_1 - 2e_2 + e_3),\ 
	\pm(-e_1 - e_2 + 2e_3)
	\Bigr\}
\end{align}
for the standard basis $\{e_1,e_2,e_3\}$ of $\mathbb{R}^3$.
The roots
\begin{align}
	\DELTA(G_2) = \left\{
	e_1 - e_2,\ -2e_1 + e_2 + e_3
	\right\}	
\end{align}
is a basis of $\PHI(G_2)$.
The coefficient matrix of $\PHI(G_2)^+$ with respect to $\DELTA(G_2)$ is as follows:
\begin{align}
	\left(\begin{array}{cccccc}
		1 & 0 & 1 & 2 & 3 & 3\\
		0 & 1 & 1 & 1 & 1 & 2
	\end{array}\right).
\end{align}

\begin{proposition}\label[prop]{extraE6}
	Let $\PHI$ be a root system of type $E_6$.
	For any $\beta^\omega \in (\PHI_\re^\omega)^+$, we have
	\begin{align}
		P(\beta^\omega) = \begin{cases*}
			\{0,\pm1,\pm2\} & if $\beta^\omega$ is a short root of $\PHI_\re^\omega$;\\
			\{0\}			& if $\beta^\omega$ is a long root of $\PHI_\re^\omega$.
		\end{cases*}
	\end{align}
\end{proposition}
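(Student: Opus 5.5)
The plan is to read everything off the two explicit matrices $C$ and $X$ already written down for $E_6$, with $j=1$ and $S_0^j=\{0,1,6\}$. For a positive root $\beta$ given by the $j$-th column of $C$, we have $c_0=0$, so
\begin{align}
	(\beta,\pi_0^j)=\bar c_0=c_{1j}+c_{6j},
\end{align}
and $\beta^\omega=x_{1j}\bar\alpha_1^\omega+x_{2j}\bar\alpha_2^\omega$. A negative root $-\beta$ gives the value $-(c_{1j}+c_{6j})$ and the coefficient vector $(-x_{1j},-x_{2j})$. Hence, for a fixed positive $\beta^\omega=a\bar\alpha_1^\omega+b\bar\alpha_2^\omega$, the set $P(\beta^\omega)$ consists of two kinds of values:
\begin{itemize}
	\item the values $c_{1j}+c_{6j}$ over the columns with $(x_{1j},x_{2j})=(a,b)$;
	\item the values $-(c_{1j}+c_{6j})$ over the columns with $(x_{1j},x_{2j})=(-a,-b)$.
\end{itemize}
So the proof reduces to a finite bookkeeping over the $36$ columns.

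First I identify short and long roots of $\PHI_\re^\omega\cong G_2$. The orbit $S_1^j=\{2,3,5\}$ consists of mutually orthogonal simple roots, so $\#N(\alpha_2)=0$. By \cref{foldrule},
\begin{align}
	\frac{(\bar\alpha_1^\omega,\bar\alpha_1^\omega)}{(\bar\alpha_2^\omega,\bar\alpha_2^\omega)}=\frac{1}{3},
\end{align}
so $\bar\alpha_1^\omega$ is short and $\bar\alpha_2^\omega$ is long. From the coefficient matrix of $\PHI(G_2)^+$, the short positive roots are $(1,0),(1,1),(2,1)$ and the long ones are $(0,1),(3,1),(3,2)$ in the basis $\bar\alpha_1^\omega,\bar\alpha_2^\omega$.

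Next I sort the columns of $X$ by their values $(x_{1j},x_{2j})$, recording $\bar c_0=c_{1j}+c_{6j}\in\{0,1,2\}$ for each (the value $3$ cannot occur since $n_1=n_6=1$).
\begin{itemize}
	\item \emph{The value $0$.} By \cref{lem4.8}, $0\in P(\beta^\omega)$ for every $\beta^\omega$. The columns with $\bar c_0=0$ are exactly the roots of the $A_2\times A_1\times\cdots$ sub-system on $\{2,3,4,5\}$, and they realize all six positive $G_2$ vectors.
	\item \emph{Long roots.} Every column with $\bar c_0\ge1$ and nonzero $X$-column should have $\pm(x_{1j},x_{2j})$ among the short vectors $\pm(1,0),\pm(1,1),\pm(2,1)$. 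A scan confirms this: for instance $(-2,-1)$ with $\bar c_0=1$, $(1,0)$ with $\bar c_0=1$, $(-1,-1)$ with $\bar c_0=1$, and $(3,1)$ occurring only with $\bar c_0=0$. This gives $P=\{0\}$ for long roots.
	\item \emph{Short roots.} For each of $(1,0),(1,1),(2,1)$, I exhibit columns with the value pattern $\pm1$ and $\pm2$, where the signs come from matching $(a,b)$ or $(-a,-b)$. For example, $(1,0)$ arises from column $22$ with $\bar c_0=1$, column $34$ with $\bar c_0=2$, and the negative of column $1$ (vector $(-2,-1)$) for $(2,1)$. When a direct column is missing I take its negative, which flips the sign of $\bar c_0$.
\end{itemize}

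The main obstacle is purely the error-prone column scan, in particular checking that no column with $\bar c_0\neq0$ lands on a long vector. A cleaner alternative for the existence part would mimic \cref{extraA}: take a $0$-extra root $\beta'$ and add or subtract suitable elements of $\PHI_{\omega,0}^+$ from \cref{disapE6}, which have $\bar c_0\in\{1,2\}$. I would try this first for the short roots and fall back on the table where it fails. For the non-existence claim on long roots, I would use the table argument.
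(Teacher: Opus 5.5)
Your proposal is correct and is essentially the paper's proof, which also just reads the statement off the matrices $C$ and $X$. The scan confirms it: each short vector $(1,0),(1,1),(2,1)$ occurs with $\bar c_0\in\{0,1,2\}$ and its negative with $\bar c_0\in\{1,2\}$, while $(0,1),(3,1),(3,2)$ occur only with $\bar c_0=0$ and their negatives never occur. One harmless slip: the roots with $\bar c_0=0$ form the $D_4$ subsystem on $\{\alpha_2,\alpha_3,\alpha_4,\alpha_5\}$ (with $\alpha_4$ central), not an $A_2\times A_1\times\cdots$ subsystem.
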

\begin{proof}
	Observing matrices $C$ and $X$, we can see the following holds for any $p \in \{\pm1,\pm2\}$: If $\beta$ is a $p$-extra root, then $\beta^\omega$ is a short root.
	Conversely, every short root $\beta^\omega$ has a $p$-extra root.
\end{proof}

\subsection{Type $E_7$}\ 

\noindent
Let $\PHI$ be a root system of type $E_7$.
Then
\begin{align}
	\PHI^+ &= \bigset{e_i-e_j,\ e_i+e_j}{i,j \in \{1,\ldots,6\},\ i > j} \sqcup \bigl\{ e_8 - e_7 \bigr\} \\
	&\quad \qquad \qquad \sqcup \Bigset{\frac{\nu_1e_1 + \cdots + \nu_6e_6 - e_7 + e_8}{2}}{
		\begin{lgathered}
			\nu_1,\ldots,\nu_6 \in \{-1,1\},\\ 
			\text{$\nu_1 \cdots \nu_6 = -1$}
		\end{lgathered}
	}
\end{align}
for the standard basis $\{e_1,\ldots,e_8\}$ of $\mathbb{R}^8$.
The roots
\begin{align}
	\DELTA = \left\{\frac{e_1 - e_2 - e_3 - e_4 - e_5 - e_6 - e_7 + e_8}{2}, \ e_1 + e_2,\ e_2 - e_1,\ e_3 - e_2,\ e_4 - e_3,\ e_5 - e_4,\ e_6 - e_5 \right\}	
\end{align}
is a basis of $\PHI$ (see \cite[Plate \rom{6}]{Bourbaki}).
The coefficient matrix $C = (c_{ij})_{ij}$ of $\PHI^+$ with respect to $\DELTA$ is as follows:
\begin{align}
	\resizebox{\textwidth}{!}{\begin{minipage}{22cm}
			$C = \left(\begin{array}{cccccc}
				\begin{array}{ccccccc}
					1 & 0 & 0 & 0 & 0 & 0 & 0 \\
					0 & 1 & 0 & 0 & 0 & 0 & 0 \\
					0 & 0 & 1 & 0 & 0 & 0 & 0 \\
					0 & 0 & 0 & 1 & 0 & 0 & 0 \\
					0 & 0 & 0 & 0 & 1 & 0 & 0 \\
					0 & 0 & 0 & 0 & 0 & 1 & 0 \\
					0 & 0 & 0 & 0 & 0 & 0 & 1 
				\end{array} &
				\begin{array}{cccccc}
					1 & 0 & 0 & 0 & 0 & 0 \\
					0 & 1 & 0 & 0 & 0 & 0 \\
					1 & 0 & 1 & 0 & 0 & 0 \\
					0 & 1 & 1 & 1 & 0 & 0 \\
					0 & 0 & 0 & 1 & 1 & 0 \\
					0 & 0 & 0 & 0 & 1 & 1 \\
					0 & 0 & 0 & 0 & 0 & 1 
				\end{array} &
				\begin{array}{cccccc}
					1 & 0 & 0 & 0 & 0 & 0 \\
					0 & 1 & 1 & 0 & 0 & 0 \\
					1 & 1 & 0 & 1 & 0 & 0 \\
					1 & 1 & 1 & 1 & 1 & 0 \\
					0 & 0 & 1 & 1 & 1 & 1 \\
					0 & 0 & 0 & 0 & 1 & 1 \\
					0 & 0 & 0 & 0 & 0 & 1 
				\end{array} &
				\begin{array}{cccccc}
					1 & 1 & 0 & 0 & 0 & 0 \\
					1 & 0 & 1 & 1 & 0 & 0 \\
					1 & 1 & 1 & 0 & 1 & 0 \\
					1 & 1 & 1 & 1 & 1 & 1 \\
					0 & 1 & 1 & 1 & 1 & 1 \\
					0 & 0 & 0 & 1 & 1 & 1 \\
					0 & 0 & 0 & 0 & 0 & 1 
				\end{array} &
				\begin{array}{cccccc}
					1 & 1 & 0 & 0 & 0 & 0 \\
					1 & 0 & 1 & 1 & 1 & 0 \\
					1 & 1 & 1 & 1 & 0 & 1 \\
					1 & 1 & 1 & 2 & 1 & 1 \\
					1 & 1 & 1 & 1 & 1 & 1 \\
					0 & 1 & 1 & 0 & 1 & 1 \\
					0 & 0 & 0 & 0 & 1 & 1 
				\end{array}
				&
				\begin{array}{ccccc}
					1 & 1 & 0 & 0 & 1 \\
					1 & 1 & 1 & 1 & 0 \\
					1 & 1 & 1 & 1 & 1 \\
					2 & 1 & 2 & 1 & 1 \\
					1 & 1 & 1 & 1 & 1 \\
					0 & 1 & 1 & 1 & 1 \\
					0 & 0 & 0 & 1 & 1 
				\end{array}
			\end{array}\right.$\\
			$\left.\begin{array}{ccccccccccccc}
				\qquad \qquad & \qquad \qquad &
				\begin{array}{ccccc}
					1 & 1 & 1 & 0 & 0 \\
					1 & 1 & 1 & 1 & 1 \\
					1 & 1 & 2 & 1 & 1 \\
					1 & 2 & 2 & 2 & 2 \\
					1 & 1 & 1 & 2 & 1 \\
					1 & 1 & 0 & 1 & 1 \\
					1 & 0 & 0 & 0 & 1 
				\end{array}
				&
				\begin{array}{cccc}
					1 & 1 & 1 & 0 \\
					1 & 1 & 1 & 1 \\
					1 & 2 & 1 & 1 \\
					2 & 2 & 2 & 2 \\
					2 & 1 & 1 & 2 \\
					1 & 1 & 1 & 1 \\
					0 & 0 & 1 & 1 
				\end{array}
				&
				\begin{array}{cccc}
					1 & 1 & 1 & 0 \\
					1 & 1 & 1 & 1 \\
					1 & 2 & 2 & 1 \\
					2 & 2 & 2 & 2 \\
					2 & 2 & 1 & 2 \\
					1 & 1 & 1 & 2 \\
					1 & 0 & 1 & 1 
				\end{array}
				&
				\begin{array}{ccc}
					1 & 1 & 1 \\
					1 & 1 & 1 \\
					1 & 2 & 2 \\
					2 & 2 & 3 \\
					2 & 2 & 2 \\
					2 & 1 & 1 \\
					1 & 1 & 0 
				\end{array}
				&
				\begin{array}{ccc}
					1 & 1 & 1 \\
					2 & 1 & 1 \\
					2 & 2 & 2 \\
					3 & 2 & 3 \\
					2 & 2 & 2 \\
					1 & 2 & 1 \\
					0 & 1 & 1 
				\end{array}
				&
				\begin{array}{cc}
					1 & 1 \\
					1 & 2 \\
					2 & 2 \\
					3 & 3 \\
					2 & 2 \\
					2 & 1 \\
					1 & 1 
				\end{array}
				&
				\begin{array}{cc}
					1 & 1 \\
					1 & 2 \\
					2 & 2 \\
					3 & 3 \\
					3 & 2 \\
					2 & 2 \\
					1 & 1 
				\end{array}
				&
				\begin{array}{c}
					1 \\
					2 \\
					2 \\
					3 \\
					3 \\
					2 \\
					1
				\end{array}
				&
				\begin{array}{c}
					1 \\
					2 \\
					2 \\
					4 \\
					3 \\
					2 \\
					1 
				\end{array}
				&
				\begin{array}{c}
					1 \\
					2 \\
					3 \\
					4 \\
					3 \\
					2 \\
					1 
				\end{array}
				&
				\begin{array}{c}
					2 \\
					2 \\
					3 \\
					4 \\
					3 \\
					2 \\
					1 
				\end{array}
			\end{array}\right).$
	\end{minipage}}
\end{align}

Then 
\begin{align}
	\PHI^+ = \bigset{c_{1j}\alpha_1 + \cdots + c_{7j}\alpha_7}{j \in \{1,\ldots,63\}}.
\end{align}

Let $j = 7$ and $\omega = \omega_7$.
Then $\sigma = (0\ 7)(1\ 6)(3\ 5)$ and $r = 4$.
We may assume that 
\begin{align}
	S_0^j = \{0,7\},\qquad
	S_1^j = \{1,6\},\qquad
	S_2^j = \{3,5\},\qquad
	S_3^j = \{4\},\qquad
	S_4^j = \{2\}.
\end{align}
Furthermore, we have
\begin{align}
	m_1 = \frac{\#S_1^j\bar{n}_1}{\#S_0^j} = \frac{2 \cdot 2}{2} = 2,\qquad
	m_2 = \frac{2 \cdot 3}{2} = 3,\qquad
	m_3 = \frac{1 \cdot 4}{2} = 2,\qquad 
	m_4 = \frac{1 \cdot 2}{2} = 1.
\end{align}
Define a $4 \times 63$ matrix $X = (x_{ij})_{ij}$ by 
\begin{align}
	x_{1j} = c_{1j} + c_{6j} - 2c_{7j},\qquad
	x_{2j} = c_{3j} + c_{5j} - 3c_{7j},\qquad
	x_{3j} = c_{4j} - 2c_{7j},\qquad
	x_{4j} = c_{2j} - c_{7j},
\end{align}
that is, we have $x_{1j}\bar{\alpha}_1^\omega + \cdots + x_{4j}\bar{\alpha}_4^\omega = (c_{1j}\alpha_1 + \cdots + c_{7j}\alpha_7)^{\omega}$.
Then the matrix $X$ is as follows:

\begin{align}
	\resizebox{\textwidth}{!}{$\begin{minipage}{24cm} 
			$X = \left(\begin{array}{cccccccccc}
				\begin{array}{ ccccccc }
					1 & 0 & 0 & 0 & 0 & 1 & -2 \\
					0 & 0 & 1 & 0 & 1 & 0 & -3 \\
					0 & 0 & 0 & 1 & 0 & 0 & -2 \\
					0 & 1 & 0 & 0 & 0 & 0 & -1 
				\end{array}
				&
				\begin{array}{ cccccc }
					1 & 0 & 0 & 0 & 1 & -1 \\
					1 & 0 & 1 & 1 & 1 & -3 \\
					0 & 1 & 1 & 1 & 0 & -2 \\
					0 & 1 & 0 & 0 & 0 & -1 
				\end{array}
				&
				\begin{array}{ cccccc }
					1 & 0 & 0 & 0 & 1 & -1 \\
					1 & 1 & 1 & 2 & 1 & -2 \\
					1 & 1 & 1 & 1 & 1 & -2 \\
					0 & 1 & 1 & 0 & 0 & -1 
				\end{array}
				&
				\begin{array}{ cccccc }
					1 & 1 & 0 & 1 & 1 & -1 \\
					1 & 2 & 2 & 1 & 2 & -2 \\
					1 & 1 & 1 & 1 & 1 & -1 \\
					1 & 0 & 1 & 1 & 0 & -1 
				\end{array}
				&
				\begin{array}{ cccccc }
					1 & 2 & 1 & 0 & -1 & -1 \\
					2 & 2 & 2 & 2 & -2 & -1 \\
					1 & 1 & 1 & 2 & -1 & -1 \\
					1 & 0 & 1 & 1 & 0 & -1 
				\end{array}
				&
				\begin{array}{ ccccc }
					1 & 2 & 1 & -1 & 0 \\
					2 & 2 & 2 & -1 & -1 \\
					2 & 1 & 2 & -1 & -1 \\
					1 & 1 & 1 & 0 & -1 
				\end{array}
				& \quad
			\end{array}\right.\\
				\left.\begin{array}{ccccccccccccccccccccccccccccccccccccccc}
					\qquad \qquad &\qquad \qquad & \qquad &
					\begin{array}{ ccccc }
						0 & 2 & 1 & 1 & -1 \\
						-1 & 2 & 3 & 3 & -1 \\
						-1 & 2 & 2 & 2 & 0 \\
						0 & 1 & 1 & 1 & 0 
					\end{array}
					&
					\begin{array}{ cccc }
						2 & 2 & 0 & -1 \\
						3 & 3 & -1 & 0 \\
						2 & 2 & 0 & 0 \\
						1 & 1 & 0 & 0 
					\end{array}
					&
					\begin{array}{ cccc }
						0 & 2 & 0 & 0 \\
						0 & 4 & 0 & 0 \\
						0 & 2 & 0 & 0 \\
						0 & 1 & 0 & 0 
					\end{array}
					&
					\begin{array}{ ccc }
						1 & 0 & 2 \\
						0 & 1 & 4 \\
						0 & 0 & 3 \\
						0 & 0 & 1 
					\end{array}
					&
					\begin{array}{ ccc }
						2 & 1 & 0 \\
						4 & 1 & 1 \\
						3 & 0 & 1 \\
						2 & 0 & 0 
					\end{array}
					&
					\begin{array}{ cc }
						1 & 0 \\
						1 & 1 \\
						1 & 1 \\
						0 & 1 
					\end{array}
					&
					\begin{array}{ cc }
						1 & 1 \\
						2 & 1 \\
						1 & 1 \\
						0 & 1 
					\end{array}
					&
					\begin{array}{ c }
						1 \\
						2 \\
						1 \\
						1 
					\end{array}
					&
					\begin{array}{ c }
						1 \\
						2 \\
						2 \\
						1 
					\end{array}
					&
					\begin{array}{ c }
						1 \\
						3 \\
						2 \\
						1 
					\end{array}
					&
					\begin{array}{ c }
						2 \\
						3 \\
						2 \\
						1 
					\end{array}
				\end{array}\right).$\end{minipage}$}
	\end{align}
	

Hence \cref{claim} holds true when $\PHI$ is of type $E_7$.

\label{ZE7}
\begin{proposition}\label{disapE7}
	Let $\PHI$ be a root system of type $E_7$.
	Then
	\begin{align}
		\PHI_{\omega,0}^+ &= \left\{ \begin{lgathered}
			\alpha_2 + \alpha_3 + 2\alpha_4 + 2\alpha_5 + 2\alpha_6 + \alpha_7,\ \\
			\alpha_1 + \alpha_2 + 2\alpha_3 + 2\alpha_4 + \alpha_5 + \alpha_6 + \alpha_7,\quad
			\alpha_1 + \alpha_2 + \alpha_3 + 2\alpha_4 + 2\alpha_5 + \alpha_6 + \alpha_7
		\end{lgathered}\right\}.
	\end{align}
	In particular, if we set 
	\begin{align}
		\alpha_1 = \frac{1}{2}(e_1 - e_2 - e_3 - e_4 - e_5 - e_6 - e_7 + e_8),\quad 
		\alpha_2 = e_2 + e_1,\quad 
		\alpha_3 = e_2 - e_1,\\ 
		\alpha_4 = e_3 - e_2,\quad 
		\alpha_5 = e_4 - e_3,\quad 
		\alpha_6 = e_5 - e_4,\quad
		\alpha_7 = e_6 - e_5.
	\end{align}
	based on \cite{Bourbaki}, then 
	\begin{align}
		\PHI_{\omega,0}^+ = \left\{ \begin{lgathered}
			e_6 + e_5,\quad
			\frac{1}{2}(-e_1 + e_2 + e_3 - e_4 - e_5 + e_6 - e_7 + e_8),\quad
			\frac{1}{2}(e_1 - e_2 - e_3 + e_4 - e_5 + e_6 - e_7 + e_8)
		\end{lgathered}
		\right\}.
	\end{align}
	Hence
	\begin{align}
		P(0) = \{\pm1\}.
	\end{align}
\end{proposition}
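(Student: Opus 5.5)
The plan is to characterise $\PHI_{\omega,0}^+$ through the coefficient formula \cref{cre}, using the matrix $X$ defined above. A positive root $\beta=\sum_{i=1}^7 c_i\alpha_i$ lies in $\PHI_{\omega,0}^+$ exactly when its column of $X$ vanishes, that is, when
\begin{align}
	c_1+c_6=2c_7,\qquad c_3+c_5=3c_7,\qquad c_4=2c_7,\qquad c_2=c_7 .
\end{align}

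First I dispose of the case $c_7=0$. Since $n_7=1$ and $\beta\in\PHI^+$, we have $c_7\in\{0,1\}$. If $c_7=0$, the four equations force all $c_i=0$, which is impossible for a root. So $c_7=1$, and then
\begin{align}
	c_2=1,\qquad c_4=2,\qquad c_1+c_6=2,\qquad c_3+c_5=3 .
\end{align}

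Next I enumerate the solutions. The coefficients are bounded by those of the highest root, $(n_1,\ldots,n_7)=(2,2,3,4,3,2,1)$. The candidates are therefore $(c_1,c_6)\in\{(0,2),(1,1),(2,0)\}$ and $(c_3,c_5)\in\{(0,3),(1,2),(2,1),(3,0)\}$. I then scan the columns of $C$ with $c_7=1$, $c_2=1$ and $c_4=2$. Equivalently, I read off the zero columns of $X$, which is quicker. This should leave exactly three roots:
\begin{align}
	(0,1,1,2,2,2,1),\qquad (1,1,2,2,1,1,1),\qquad (1,1,1,2,2,1,1).
\end{align}
The main obstacle is this bookkeeping step: confirming that no other combination, such as $(2,1,1,2,2,0,1)$ or $(0,1,2,2,1,2,1)$, is a root. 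I would handle it by direct inspection of the listed columns of $C$, or of the zero columns of $X$.

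For the description in $e$-coordinates, I substitute the Bourbaki realisation of $\alpha_1,\ldots,\alpha_7$ given in the statement into each of the three roots. This is a routine linear computation. It should produce $e_6+e_5$ and the two half-integer vectors listed.

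Finally, for $P(0)$, recall $c_0=0$ and $S_0^j=\{0,7\}$, so $(\gamma,\pi_0^j)=c_0+c_7=c_7$ for any root $\gamma$. Every $\gamma\in\PHI_{\omega,0}^+$ has $c_7=1$ by the argument above, and $-\gamma$ gives $-1$, so $\pm1\in P(0)$. To show $0\notin P(0)$: a root with $\gamma^\omega=0$ and $(\gamma,\pi_0^j)=0$ would have $\bar c_0=0$, and the case $c_7=0$ was excluded for all roots (the same argument applies to $-\gamma$ for negative roots). Hence $P(0)=\{\pm1\}$.
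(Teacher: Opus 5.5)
Your proposal is correct and follows essentially the same route as the paper. The paper's proof is simply to read off the zero columns of $X$ (equivalently, to inspect $C$). Your preliminary reduction to $c_7=c_2=1$, $c_4=2$, $c_1+c_6=2$, $c_3+c_5=3$ only shortens that inspection. It leaves exactly the columns $(0,1,1,2,2,2,1)$, $(1,1,2,2,1,1,1)$, $(1,1,1,2,2,1,1)$. Your substitution into the Bourbaki coordinates and your observation that $(\gamma,\pi_0^j)=c_7\in\{\pm1\}$ correctly supply the remaining details.
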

\begin{proof}
	See matrices $C$ and $X$.
\end{proof}

By \cref{typetheorem}, $\PHI_\re^\omega$ is of type $F_4$.
A root system of type $F_4$ is a set
\begin{align}
	\PHI(F_4) &= \bigset{\pm(e_i-e_j),\ \pm(e_i+e_j),\ \pm e_k}{i,j,k \in \{1,2,3,4\},\ i < j} \\
	&\qquad \qquad \sqcup \Bigset{\frac{\nu_1e_1 + \nu_2e_2 + \nu_3e_3 + \nu_4e_4}{2}}{
		\nu_1,\nu_2,\nu_3,\nu_4 \in \{-1,1\}
	}
\end{align}
for the standard basis $\{e_1,e_2,e_3,e_4\}$ of $\mathbb{R}^4$.
The roots 
\begin{align}
	\DELTA(F_4) = \left\{
	e_2 - e_3,\ e_3 - e_4,\ e_4,\ \frac{e_1 - e_2 - e_3 - e_4}{2}
	\right\}	
\end{align}
is a basis of $\PHI(F_4)$.
The coefficient matrix of $\PHI(F_4)^+$ with respect to $\DELTA(F_4)$ is as follows:
\begin{align}
	\left(\begin{array}{cccccccccccccccccccccccccccc}
		1 & 0 & 0 & 0 & 1 & 0 & 0 & 1 & 0 & 0 & 1 & 1 & 0 & 1 & 1 & 0 & 1 & 1 & 1 & 1 & 1 & 1 & 1 & 2 \\
		0 & 1 & 0 & 0 & 1 & 1 & 0 & 1 & 1 & 1 & 1 & 1 & 1 & 1 & 2 & 1 & 1 & 2 & 2 & 2 & 2 & 2 & 3 & 3 \\
		0 & 0 & 1 & 0 & 0 & 1 & 1 & 1 & 1 & 2 & 1 & 2 & 2 & 2 & 2 & 2 & 2 & 2 & 2 & 3 & 3 & 4 & 4 & 4 \\
		0 & 0 & 0 & 1 & 0 & 0 & 1 & 0 & 1 & 0 & 1 & 0 & 1 & 1 & 0 & 2 & 2 & 1 & 2 & 1 & 2 & 2 & 2 & 2
	\end{array}\right)
\end{align}

\begin{proposition}\label[prop]{extraE7}
	Let $\PHI$ be a root system of type $E_7$.
	For any $\beta^\omega \in (\PHI_\re^\omega)^+$, we have
	\begin{align}
		P(\beta^\omega) = \begin{cases*}
			\{0,\pm1\}		& if $\beta^\omega$ is a short root of $\PHI_\re^\omega$;\\
			\{0\}			& if $\beta^\omega$ is a long root of $\PHI_\re^\omega$.
		\end{cases*}
	\end{align}
\end{proposition}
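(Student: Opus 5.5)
The claim concerns the extra roots for $E_7$ with $j=7$, where $S_0^j=\{0,7\}$. For a root $\beta=\sum c_i\alpha_i\in\PHI$ we have $(\beta,\pi_0^j)=c_7$, and $c_7\in\{0,\pm1\}$ because $n_7=1$. Hence $P(\beta^\omega)\subseteq\{0,\pm1\}$ for every $\beta^\omega$, and \cref{lem4.8} already gives $0\in P(\beta^\omega)$. Two things remain: every short positive root of $\PHI_\re^\omega$ has both a $1$-extra and a $(-1)$-extra root, and no long root has either. We argue as in \cref{extraB} and \cref{extraD1}, reading the matrices $C$ and $X$ column by column, with $(\PHI_\re^\omega)^+$ identified with $\PHI(F_4)^+$ through $\DELTA^\omega=\{\bar\alpha_1^\omega,\ldots,\bar\alpha_4^\omega\}$.

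First we fix the root lengths on the $F_4$ side. By \cref{foldrule}, the simple roots $\bar\alpha_1^\omega,\bar\alpha_2^\omega$ (orbits of size $2$ with $\#N=0$) are short, and $\bar\alpha_3^\omega,\bar\alpha_4^\omega$ (orbits of size $1$) are long. A positive root $\sum x_k\bar\alpha_k^\omega$ can then be classified as short or long by the standard criterion: $(x,x)$ is computed from the Gram matrix of $\DELTA^\omega$, or equivalently one matches the column against the coefficient list of $\PHI(F_4)^+$ with the ordering of simple roots adjusted.

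Next we treat the $\pm1$-extra roots.

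\emph{Roots with $c_7=1$.} These correspond to the columns of $C$ whose last entry is $1$. For each such column, the corresponding column of $X$ is either zero (the three roots listed in \cref{disapE7}), entirely non-positive (its negative then has $c_7=-1$ and gives a $(-1)$-extra root of a positive root), or entirely non-negative (giving a $1$-extra root). We check that every non-zero column arising this way is a short root of $F_4$. Since the orthogonal complement of $E^\omega$ is spanned by $\omega$-anti-invariant vectors, this is plausible: a root $\beta$ with $c_7=1$ is not $\omega$-fixed, so $\beta^\omega$ is a genuine average of two distinct roots, and by \cref{lem3.10} and \cref{aa} it has squared length $(\beta,\beta)/2$, i.e.\ it is short. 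This gives a conceptual proof that long roots have no $\pm1$-extra roots. Indeed, if $\beta^\omega$ is long, it is the image of an $\omega$-fixed root, and any other preimage $\gamma$ with $c_7(\gamma)=\pm1$ is not fixed, because a fixed root has $c_0=c_7$ and hence $c_7=0$ for positive roots. So $\gamma^\omega$ has squared length $(\gamma,\gamma)/2$, which is short, a contradiction.

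\emph{Existence for short roots.} Conversely, for each of the $12$ short positive roots of $F_4$, the tables supply a $1$-extra root. For the $(-1)$-extra root we take a $0$-extra root $\beta'$ together with a suitable $\gamma\in\PHI_{\omega,0}^+$ such that $\beta'-\gamma\in\PHI$, in the same way as the proofs of \cref{extraC} and \cref{extraDe}. This also gives a uniform alternative for the $1$-extra root, namely $\beta'+\gamma$.

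The main obstacle is this existence step, which is largely bookkeeping: showing that every short root is reached. The plan is to use the $W$-invariance trick. For a $0$-extra root $\beta'$ of a short root, $\beta'$ is not $\omega$-fixed, so $\beta'\neq\omega(\beta')$ and by \cref{lem6.2} the two are orthogonal. Then for each of the three roots $\gamma\in\PHI_{\omega,0}^+$ we test whether $(\beta',\gamma)<0$ or $(\beta',\gamma)>0$; by \cref{prop2.2} this yields $\beta'\pm\gamma\in\PHI$ with $c_7=\pm1$. If no single $\gamma$ works in some case, we fall back on direct inspection of the tables.
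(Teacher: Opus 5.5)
Your proposal is correct, but it takes a partly different route from the paper, which settles both directions purely by reading off the matrices $C$ and $X$.

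\textbf{Nonexistence for long roots.} You get $P(\beta^\omega)\subseteq\{0,\pm1\}$ from $n_7=1$. You then prove structurally that $\pm1$-extra roots occur only for short roots. A root $\gamma$ with $c_7=\pm1$ is moved by $\omega$; this is cleanest via \cref{lem6.1}. So \cref{aa} gives $(\gamma^\omega,\gamma^\omega)\le(\gamma,\gamma)/2$. The long roots of $\Phi^\omega_{\mathrm{re}}$, by contrast, have the length of the fixed simple roots $\bar\alpha_3^\omega=\alpha_4$ and $\bar\alpha_4^\omega=\alpha_2$. Only this inequality is needed. Your stated equality $(\gamma^\omega,\gamma^\omega)=(\gamma,\gamma)/2$ presupposes $\#N(\gamma)=0$, which you do not justify (it holds, e.g.\ because $F_4$ is reduced). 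Likewise, \cref{lem6.2} alone gives only $(\beta',\omega(\beta'))\le 0$, not orthogonality.

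\textbf{Existence for short roots.} Here you fall back on the same table inspection as the paper. However, the shortcut you sketch never fails, so the fallback is unnecessary. The three roots $\gamma_1,\gamma_2,\gamma_3$ of \cref{disapE7} are mutually orthogonal and satisfy $\omega(\gamma_i)=-\gamma_i$. Hence they form a basis of the $(-1)$-eigenspace of $\omega$, which has dimension $7-4=3$. Moreover, $\gamma_1+\gamma_2+\gamma_3=2\varpi_7^\vee$ is orthogonal to $\alpha_1,\ldots,\alpha_6$.

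Now take a $0$-extra root $\beta'$ of a short root; it is not $\omega$-fixed. The three numbers $(\beta',\gamma_i)$ are not all zero, and they sum to $0$. So one is positive and another negative, and \cref{prop2.2} produces both a $(-1)$-extra root $\beta'-\gamma_i$ and a $1$-extra root $\beta'+\gamma_k$. Alternatively, since $(\omega(\beta'),\gamma)=-(\beta',\gamma)$, you can flip the sign by passing to the other $0$-extra root $\omega(\beta')$.

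\textbf{Comparison.} The paper's inspection is mechanical and handles $E_6$ and $E_7$ the same way. Your route explains why exactly the short roots acquire $\pm1$-extra roots. Completed as above, it uses the tables only to exhibit three orthogonal roots in the $(-1)$-eigenspace.
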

\begin{proof}
	Observing matrices $C$ and $X$, we can see the following holds for any $p \in \{-1,1\}$: If $\beta$ is a $p$-extra root, then $\beta^\omega$ is a short root.
	Conversely, every short root $\beta^\omega$ has a $p$-extra root.
\end{proof}

We have now shown that \cref{claim} holds true in all cases.
Hence the proof of \cref{basistheorem} is complete.

\begin{proposition}[Summary above]
	Let $\PHI$ be an arbitrary irreducible reduced root system.
	Then 
	\begin{align}
		P(0) = \{\pm1,\, \ldots,\, \pm(o(\omega)-1)\}.
	\end{align}
	For each $\beta^\omega \in (\PHI_\re^\omega)^+$, $P(\beta^\omega)$ is as shown in \cref{tableE}.
\end{proposition}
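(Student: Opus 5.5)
This is a summary statement, so I will prove it by collecting the type-by-type results of this section rather than by a uniform argument. The plan is to run through the cases of \cref{tableP} one at a time, and for each case read off two things: $P(0)$ from the proposition describing $\PHI_{\omega,0}^+$, and $P(\beta^\omega)$ for nonzero $\beta^\omega$ from the corresponding ``extra'' proposition.

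First, the formula for $P(0)$. Since $-\PHI_{\omega,0}^+$ also maps to $0$, we have
\[
P(0)=\bigset{\pm(\gamma,\pi_0^j)}{\gamma\in\PHI_{\omega,0}^+}.
\]
A root $\gamma\in\PHI_{\omega,0}^+$ cannot have $\bar c_0=0$. Otherwise \cref{lcom} would force all $\bar c_k$ to vanish, i.e.\ $\gamma=0$. Hence $0\notin P(0)$, and this is consistent with the claimed set.

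For the individual cases:
\begin{itemize}
\item \cref{disapA} gives $P(0)=\{\pm1,\ldots,\pm(o(\omega)-1)\}$.
\item \cref{disapB}, \cref{disapC}, \cref{disapD1} and the proposition for $D_\ell$ with $\ell$ even and $j\in\{\ell-1,\ell\}$ each give $P(0)=\{\pm1\}$. Here $o(\omega)=2$, so this matches.
\item \cref{disapDo} gives $\{\pm1,\pm2,\pm3\}$ with $o(\omega)=4$.
\item \cref{disapE6} gives $\{\pm1,\pm2\}$ with $o(\omega)=3$.
\item \cref{disapE7} gives $\{\pm1\}$ with $o(\omega)=2$.
\end{itemize}
In every case I only need to compare the listed value with $o(\omega)=\#S_0^j$, read off from \cref{sigmaj}. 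The case $A_\ell$ with $r=0$ is vacuous for the second part, since there $\PHI_\re^\omega=\varnothing$. For $P(0)$ in that case I would check directly: the roots $\alpha_{i_1}+\cdots+\alpha_{i_2}$ with $i_2-i_1+1=p(\ell+1)$ do not exist except as stated. So I would only assert $P(0)$ when $r\geq1$, or handle $r=0$ via \cref{disapA} with $g=1$.

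Second, the table of $P(\beta^\omega)$. I will fill in \cref{tableE} row by row from \cref{extraA}, \cref{extraB2}, \cref{extraB}, \cref{extraC}, \cref{extraD1}, \cref{extraDe}, \cref{extraDo}, \cref{extraE6} and \cref{extraE7}. Each of these gives $P(\beta^\omega)$ according to whether $\beta^\omega$ is short, middle or long.

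The main obstacle is bookkeeping. The root-length language used in those propositions has to match the type recorded in \cref{tableP}. For instance, in $D_4$ with $j\in\{3,4\}$ the folded system is $B_2$, so I need to confirm that the short/long distinction in \cref{extraDe} agrees with the $B_2$ labelling. In the rank-one and $BC$ cases, the ``middle'' roots must be correctly identified. I would settle these by computing $(\beta^\omega,\beta^\omega)$ with \cref{aa} on the simple roots of $\DELTA^\omega$.
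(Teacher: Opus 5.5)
Your plan is the same as the paper's: the paper gives no separate argument and simply assembles the type-by-type results, namely the descriptions of $\PHI_{\omega,0}^+$ (\cref{disapA} through \cref{disapE7}) for $P(0)$, and \cref{extraA} through \cref{extraE7} for the rows of \cref{tableE}. One small slip is in your aside on $A_\ell$ with $r=0$: there $g=1$ and $o(\omega)=\ell+1$ (you used $\ell+1$ in the role of $g$), so every positive root vanishes with $\bar c_0$ equal to its height, and \cref{disapA} gives $P(0)=\{\pm1,\ldots,\pm\ell\}$ directly; since the statement covers this case, use that route rather than restricting to $r\geq1$.
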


\begin{table}[h]
	
	\caption{List of $P(\beta^\omega)$}\label{tableE}

	\centering
	\scalebox{0.9}{
	\begin{tabular}{ll|c|c|Sc|c}
		\multicolumn{2}{c|}{$\PHI$}                             & $j$                              & $\PHI^{\omega}_\re$ 			& $P(\beta^\omega)$ & cf. \\ \hline\hline
		$A_\ell$  &                         & $\gcd\{\ell+1,j\} = 1$		   & $\varnothing$         			& --- & ---\\ 
		$A_\ell$  &                         & $g \ceq \gcd\{\ell+1,j\} \neq 1$ & $A_{g-1}$         				& $\{0,\, \pm1,\, \ldots,\, \pm(o(\omega)-1)\}$ & \cref{extraA}  \\ 
		$B_2$     &                         & $j = 1$                          & $A_1$               		   	& $\{0,\pm1\}$ & \cref{extraB2} \\ 
		$B_\ell$  & $(\ell \geq 3)$			& $j = 1$                          & $B_{\ell-1}$          		   	& $\begin{cases*}
			\{0,\pm1\}		& if $\beta^\omega$ is a short root of $\PHI_\re^\omega$;\\
			\{0\}			& if $\beta^\omega$ is a long root of $\PHI_\re^\omega$
		\end{cases*}$ & \cref{extraB}  \\ 
		$C_\ell$  & $(\ell \geq 3$, odd)	& $j = \ell$                       & $BC_{\frac{\ell-1}{2}_{_{}}}$ 	& \multirow{2}{*}{$\{0,\pm1\}$} & \multirow{2}{*}{\cref{extraC}} \\ 
		$C_\ell$  & $(\ell \geq 4$, even) 	& $j = \ell$                       & $C_{\frac{\ell}{2}_{_{}}}$    	&  & \\ 
		$D_\ell$  & $(\ell \geq 4$)   		& $j = 1 $                         & $B_{\ell-2}$    				& $\begin{cases*}
			\{0,\pm1\}		& if $\beta^\omega$ is a short root of $\PHI_\re^\omega$;\\
			\{0\}			& if $\beta^\omega$ is a long root of $\PHI_\re^\omega$
		\end{cases*}$ & \cref{extraD1} \\ 
		$D_\ell$  & $(\ell \geq 4$, even) 	& $j \in \{\ell-1,\, \ell\}$       & $C_{\frac{\ell}{2}_{_{}}}$    	& $\begin{cases*}
			\{0,\pm1\}		& if $\beta^\omega$ is a short root of $\PHI_\re^\omega$;\\
			\{0\}			& if $\beta^\omega$ is a long root of $\PHI_\re^\omega$
		\end{cases*}$ & \cref{extraDe} \\ 
		$D_\ell$  & $(\ell \geq 5$, odd)  	& $j \in \{\ell-1,\, \ell\}$       & $BC_{\frac{\ell-3}{2}_{_{}}}$  & $\begin{cases*}
			\{0,\pm1,\pm2,\pm3\}		& if $\beta^\omega$ is a short root of $\PHI_\re^\omega$;\\
			\{0,\pm2\}			& if $\beta^\omega$ is not a short root of $\PHI_\re^\omega$
		\end{cases*}$ & \cref{extraDo} \\ 
		$E_6$     &                         & $j \in \{1,6\}$                  & $G_2$          			    & $\begin{cases*}
			\{0,\pm1,\pm2\}		& if $\beta^\omega$ is a short root of $\PHI_\re^\omega$;\\
			\{0\}			& if $\beta^\omega$ is a long root of $\PHI_\re^\omega$
		\end{cases*}$ & \cref{extraE6} \\ 
		$E_7$     &                         & $j = 7$                          & $F_4$               			& $\begin{cases*}
			\{0,\pm1\}		& if $\beta^\omega$ is a short root of $\PHI_\re^\omega$;\\
			\{0\}			& if $\beta^\omega$ is a long root of $\PHI_\re^\omega$
		\end{cases*}$ & \cref{extraE7}\\ 
	\end{tabular}}

	\

\end{table}

\section*{Acknowledgement}

The author would like to thank Professor Masahiko Yoshinaga for the helpful discussions and comments on this research. 
The author also acknowledge support by JSPS KAKENHI, Grant Number 25KJ1735.


\bibliographystyle{amsplain}
\bibliography{bibfile}

\newpage 

\begin{figure}
	\begin{tabular}{m{0.5cm}m{2cm}m{7cm}}
		$A_\ell$ & ($\ell \geq 1$) & \DA     \\
		$B_\ell$ & ($\ell \geq 2$) & \DB     \\
		$C_\ell$ & ($\ell \geq 3$) & \DC     \\
		$D_\ell$ & ($\ell \geq 4$) & \DD     \\
		$E_6$    &                 & \DEvi   \\
		$E_7$    &                 & \DEvii  \\
		$E_8$    &                 & \DEviii \\
		$F_4$    &                 & \DF     \\
		$G_2$    &                 & \DG    
	\end{tabular}
	
	\caption{List of Dynkin diagrams (the numbers on the vertices represent the values of $n_i$)}
	\label{figDD}
\end{figure}

\begin{figure}
	\begin{tabular}{m{0.5cm}m{2cm}m{7cm}}
		$A_1$     &                 & \eDAi    \\
		$A_\ell$  & ($\ell \geq 2$) & \eDAell  \\
		$BC_\ell$ & ($\ell \geq 1$) & \eDBC    \\
		$B_2$     &                 & \eDBii   \\
		$B_\ell$  & ($\ell \geq 3$) & \eDBell  \\
		$C_\ell$  & ($\ell \geq 3$) & \eDC     \\
		$D_\ell$  & ($\ell \geq 4$) & \eDD     \\
		$E_6$     &                 & \eDEvi   \\
		$E_7$     &                 & \eDEvii  \\
		$E_8$     &                 & \eDEviii \\
		$F_4$     &                 & \eDF     \\
		$G_2$     &                 & \eDG    
	\end{tabular}
	
	\caption{List of extended Dynkin diagrams (the numbers on the vertices represent the values of $n_i$)}
	\label{figeDD}
\end{figure}

\begin{figure}
	\begin{flushleft}
		\begin{tabular}{m{0.5cm}m{3cm}m{3cm}m{1cm}m{1cm}m{1cm}m{5cm}}
			$\PHI$ & & $\mathcal{D}_0(\PHI)$ & $\lra$ & $\mathcal{D}_0(\PHI)^\omega$\\
			\\
			$A_\ell$                                      & ($\gcd\{\ell+1,\, j\} = 1$)                                        & \FFO    &&& \Dyaji &         \\
			\multicolumn{1}{c}{\multirow{2}{*}{$A_\ell$}} & \multirow{2}{*}{($\gcd\{\ell+1,\, j\} \neq 1$)}                    & \FFi    &        &                       \\
			\multicolumn{1}{c}{}                          &                                                                    && \Fi     &        &                       \\
			$B_2$                                         & ($j = 1$)                                                          & \FFii    & \Fii   \\
			\multirow{2}{*}{$B_\ell$}                     & \multirow{2}{*}{($\ell \geq 3$,  $j = 1$)}                         & \FFiii  &        &                       \\
			&                                                                    && \Fiii   &        &                       \\
			\multirow{2}{*}{$C_\ell$}                     & \multirow{2}{*}{($\ell \geq 3$, odd, $j = \ell$)}                  & \FFiv   &        &                       \\
			&                                                                    && \Fiv    &        &                       \\
			\multirow{2}{*}{$C_\ell$}                     & \multirow{2}{*}{($\ell \geq 4$, even, $j = \ell$)}                 & \FFv    &        &                       \\
			&                                                                    && \Fv     &        &                       
		\end{tabular}
	\end{flushleft}
	\caption{List of folding the extended Dynkin diagram I (the numbers on the vertices represent the values of $m_k$)}
	\label{figfDDI}
\end{figure}

\begin{figure}
	\begin{flushleft}
		\begin{tabular}{m{0.5cm}m{3cm}m{3cm}m{1cm}m{1cm}m{1cm}m{5cm}}
			$\PHI$ & & $\mathcal{D}_0(\PHI)$ & $\lra$ & $\mathcal{D}_0(\PHI)^\omega$\\
			\\
			\multirow{2}{*}{$D_\ell$}                     & \multirow{2}{*}{($\ell \geq 4$,  $j = 1$)}                         & \FFvi   &        &                       \\
			&                                                                    && \Fvi    &        &                       \\
			$D_4$                                         & ($j \in \{\ell-1,\, \ell\}$)                                       & \FFvii   & \Fvii  \\
			$D_5$                                         & ($j \in \{\ell-1,\, \ell\}$)                                       & \FFviii  & \Fviii \\
			\multirow{2}{*}{$D_\ell$}                     & \multirow{2}{*}{$\left(\,\begin{lgathered}
					\ell \geq 6,\ \text{even},\\ j \in \{\ell-1,\, \ell\}
				\end{lgathered}\,\right)$} & \FFix   &        &                       \\
			&                                                                    && \Fix    &        &                       \\
			\multirow{2}{*}{$D_\ell$}                     & \multirow{2}{*}{$\left(\,\begin{lgathered}
					\ell \geq 7,\ \text{odd},\\ j \in \{\ell-1,\, \ell\}
				\end{lgathered}\,\right)$}  & \FFx    &        &                       \\
			&                                                                    && \Fx     &        &                       \\
			$E_6$                                         & ($j \in \{1,6\}$)                                                  & \FFxi   &&  \Fxi   \\
			$E_7$                                         & ($j = 7$)                                                          & \FFxii  &&&  \Fxii 
		\end{tabular}
	\end{flushleft}

	\caption{List of folding the extended Dynkin diagram II (the numbers on the vertices represent the values of $m_k$)}
	\label{figfDDII}
\end{figure}

\end{document}